\theoremstyle{plain}
\newtheorem{theorem}{Theorem}
\theoremstyle{plain}
\newtheorem{proposition}{Proposition}
\theoremstyle{plain}
\newtheorem{corollary}{Corollary}
\theoremstyle{plain}
\newtheorem{lemma}{Lemma}
\theoremstyle{definition}
\newtheorem{definition}{Definition}
\theoremstyle{definition}
\newtheorem{assumption}{Assumption}
\theoremstyle{definition}
\newtheorem{remark}{Remark}
\icmltitlerunning{Optimal Mini-Batch and Step Sizes for SAGA}
\begin{document}

\twocolumn[

\icmltitle{Optimal Mini-Batch and Step Sizes for SAGA}

% It is OKAY to include author information, even for blind
% submissions: the style file will automatically remove it for you
% unless you've provided the [accepted] option to the icml2019
% package.

% List of affiliations: The first argument should be a (short)
% identifier you will use later to specify author affiliations
% Academic affiliations should list Department, University, City, Region, Country
% Industry affiliations should list Company, City, Region, Country

% You can specify symbols, otherwise they are numbered in order.
% Ideally, you should not use this facility. Affiliations will be numbered
% in order of appearance and this is the preferred way.
\icmlsetsymbol{equal}{*}

\begin{icmlauthorlist}
\icmlauthor{Nidham Gazagnadou}{tpt}
\icmlauthor{Robert M.~Gower}{tpt}
\icmlauthor{Joseph Salmon}{um}
\end{icmlauthorlist}

\icmlaffiliation{tpt}{LTCI, T\'el\'ecom ParisTech, Universit\'e Paris-Saclay, Paris, France}
\icmlaffiliation{um}{IMAG, Univ Montpellier, CNRS, Montpellier, France}

\icmlcorrespondingauthor{Nidham Gazagnadou}{nidham.gazagnadou@telecom-paristech.fr}
\icmlcorrespondingauthor{Robert M.~Gower}{robert.gower@telecom-paristech.fr}
% \icmlcorrespondingauthor{Joseph Salmon}{joseph.salmon@umontpellier.fr}

% You may provide any keywords that you
% find helpful for describing your paper; these are used to populate
% the "keywords" metadata in the PDF but will not be shown in the document
\icmlkeywords{Machine Learning, ICML}

\vskip 0.3in
]

% this must go after the closing bracket ] following \twocolumn[ ...

% This command actually creates the footnote in the first column
% listing the affiliations and the copyright notice.
% The command takes one argument, which is text to display at the start of the footnote.
% The \icmlEqualContribution command is standard text for equal contribution.
% Remove it (just {}) if you do not need this facility.

\printAffiliationsAndNotice{}  % leave blank if no need to mention equal contribution
% \printAffiliationsAndNotice{\icmlEqualContribution} % otherwise use the standard text.
% \printAffiliationsAndDate{}  % leave blank if no need to mention equal contribution

%%%%%%%%%%%%%%%%%%%%%%%%%%%%%%%%%%%%%%%%%%%%%%%%%%%%%%%%%%%%%%%%%%%%%%%%%%%%%%%
\begin{abstract}
%!TEX root = ../optimal_mini-batch.tex
%
%This work aims at determining an optimal mini-batch size for stochastic variance reduced gradient algorithms typically used to solve the empirical risk minimization (ERM) problem.
%For generalized linear models, we provide closed-form and functional estimates of the optimal mini-batch size with respect to the smooth objective function.
%Our approach relies on the expected smoothness, a technical tool used to determine larger step sizes and the batch size achieving the minimum of the total complexity of our stochastic gradient algorithm.
%% \nidham{"... of the JacSketch implementation of SAGA." would be more precise but too detailed in the mean time.\\
%% }
%Through refined estimation of this expected smoothness constant, we get tractable proxys of the true complexity allowing practical implementation and computing time speed-ups.
%We show on several standard datasets that our estimates of the expected smoothness constant lead to better mini-batch and step sizes.\jo{Rob a pass might be needed on the abstract. I am not satisfied with the end in particular...}
%
%
%
%\rob{my re-write:}
Recently it has been shown that the step sizes of a family of variance reduced gradient methods called the JacSketch methods depend on the expected smoothness constant.
In particular, if this expected smoothness constant could be calculated a priori, then one could safely set much larger step sizes which would result in a much faster convergence rate.
We fill in this gap, and provide simple closed form expressions for the expected smoothness constant and careful numerical experiments verifying these bounds.
Using these bounds, and since the SAGA algorithm is part of this JacSketch family, we suggest a new standard practice for setting the step and mini-batch sizes for SAGA that are competitive with a numerical grid search.
Furthermore, we can now show that the total complexity of the SAGA algorithm decreases linearly in the mini-batch size up to a pre-defined value: the optimal mini-batch size.
This is a rare result in the stochastic variance reduced literature, only previously shown for the Katyusha algorithm.
Finally we conjecture that this is the case for many other stochastic variance reduced methods and that our bounds and analysis of the expected smoothness constant is key to extending these results.
\end{abstract}
%%%%%%%%%%%%%%%%%%%%%%%%%%%%%%%%%%%%%%%%%%%%%%%%%%%%%%%%%%%%%%%%%%%%%%%%%%%%%%%
%%%%%%%%%%%%%%%%%%%%%%%%%%%%%%%%%%%%%%%%%%%%%%%%%%%%%%%%%%%%%%%%%%%%%%%%%%%%%%%
%!TEX root = ../optimal_mini-batch.tex

%%%%%%%%%%%%%%%%%%%%%%%%%%%%%%%%%%%%%%%%%%%%%%%%%%%%%%%%%%%%%%%%%%%%%%%%%%%%%%%
%%%%%%%%%%%%%%%%%%%%%%%%%%%%%%%%%%%%%%%%%%%%%%%%%%%%%%%%%%%%%%%%%%%%%%%%%%%%%%%

\section{Introduction}
\label{sec:introduction}
%%%%%%%%%%%%%%%%%%%%%%%%%%%%%%%%%%%%%%%%%%%%%%%%%%%%%%%%%%%%%%%%%%%%%%%%%%%%%%%
%%%%%%%%%%%%%%%%%%%%%%%%%%%%%%%%%%%%%%%%%%%%%%%%%%%%%%%%%%%%%%%%%%%%%%%%%%%%%%%

% The different ways of citing
% \cite{SAGA_Nips}\\
% \citep{SAGA_Nips}\\
% \citet{SAGA_Nips}\\
% \citealt{SAGA_Nips}

% \rob{An optimization problem that occurs throughout machine learning is to minimize a sum functions.
% This sum of functions is the averaged loss over independent observations and is commonly referred to as the empirical risk minimization problem.}

%An optimization problem that occurs throughout machine learning is to minimize a finite sum of functions.
%This sum of functions is the averaged loss over independent observations.
Consider the empirical risk minimization (ERM) problem:
\begin{equation} \label{eq:ERM_pb}
    w^* \in \argmin_{w \in \R^d} \bigg(f(w) := \frac{1}{n} \sum_{i=1}^n f_i (w)\bigg)\enspace,
\end{equation}
where each $f_i$ is $L_i$-smooth and $f$ is $\mu$-strongly convex. Each $f_i$ represents a regularized loss over a sampled data point.
Solving the ERM problem is often time consuming for large number of samples $n$, so much so that algorithms scanning through all the data points at each iteration are not competitive.
Gradient descent (GD) falls into this category, and in practice its stochastic version is preferred.

Stochastic gradient descent (SGD), on the other hand, allows to solve the ERM incrementally by computing at each iteration an unbiased estimate of the full gradient,
$\nabla f_i (w^k)$ for $i$ randomly sampled in $[n] \eqdef \{1,2,\ldots,n\}$~\cite{RobbinsMonro:1951}.
On the downside, for SGD to converge one needs to tune a sequence of asymptotically vanishing step sizes, a cumbersome and time-consuming task for the user.
Recent works have taken advantage of the sum structure in~\cref{eq:ERM_pb} to design stochastic variance reduced gradient algorithms~\cite{Johnson2013, SDCA, SAGA_Nips,  SAG}.
In the strongly convex setting, these methods lead to fast linear convergence instead of the slow $\cO (1/t)$ rate of SGD. Moreover, they only require a constant step size, informed by theory, instead of sequence of decreasing step sizes.

In practice, most variance reduced methods rely on a mini-batching strategy for better performance.
Yet most convergence analysis (with the Katyusha algorithm of~\citet{katyusha} being an exception) indicates that a mini-batch size of $b=1$ gives the best overall complexity, disagreeing with practical findings, where larger mini-batch often gives better results.
Here, we show both theoretically and numerically that $b=1$ is not the optimal mini-batch size for the SAGA algorithm~\cite{SAGA_Nips}.

%
% Though increasing the mini-batch size does incur a large computational cost per iteration, it also de
Our analysis leverages recent results in~\cite{gower2018stochastic}, where the authors prove that the iteration complexity and the step size of SAGA, and a larger family of methods called the JacSketch methods, depend on an expected smoothness constant.
This constant governs the trade-off between the increased cost of an iteration as the mini-batch size is increased, and the decreased total complexity.
Thus if this expected smoothness constant could be calculated a priori, then we could set the optimal mini-batch size and step size.
We provide simple formulas for computing the expected smoothness constant when sampling mini-batches without replacement, and use them to calculate optimal mini-batches and significantly larger step sizes for SAGA.

%
%Thus one is faced with a tradeoff between increasing the mini-batch size and decreasing the iteration complexity, \ie increasing the quality of our gradient estimate and the convergence rate.
%  In the mean time, increasing the mini-batch size implies a larger computational cost.

In particular, we provide two bounds on the expected smoothness constant, each resulting in a particular step size formula.
We first derive the \emph{simple bound} and then develop a matrix concentration inquality to obtain the refined \emph{Bernstein bound}.
We also provide substantial theoretical motivation and numerical evidence for practical estimate of the expected smoothness constant.
%\jo{should be \Cref{fig:intro_step_size_example} here right?}
%\nidham{No, Fig 4 proves that we achieve step sizes closed to the best one given by the true expected smoothness on a toy example.
%Whereas Fig 1 just claims : we've got better step sizes for real problems.}
For illustration, we plot in Figure~\ref{fig:intro_step_size_example2} the evolution of each resulting step size as the mini-batch size grows on a classification problem (\Cref{sec:numerical_study} has more details on our experimental settings).

\begin{figure}[t]
    \vskip 0.2in
    \begin{center}
        \begin{subfigure}[b]{\columnwidth}
            \includegraphics[width=0.9\textwidth]{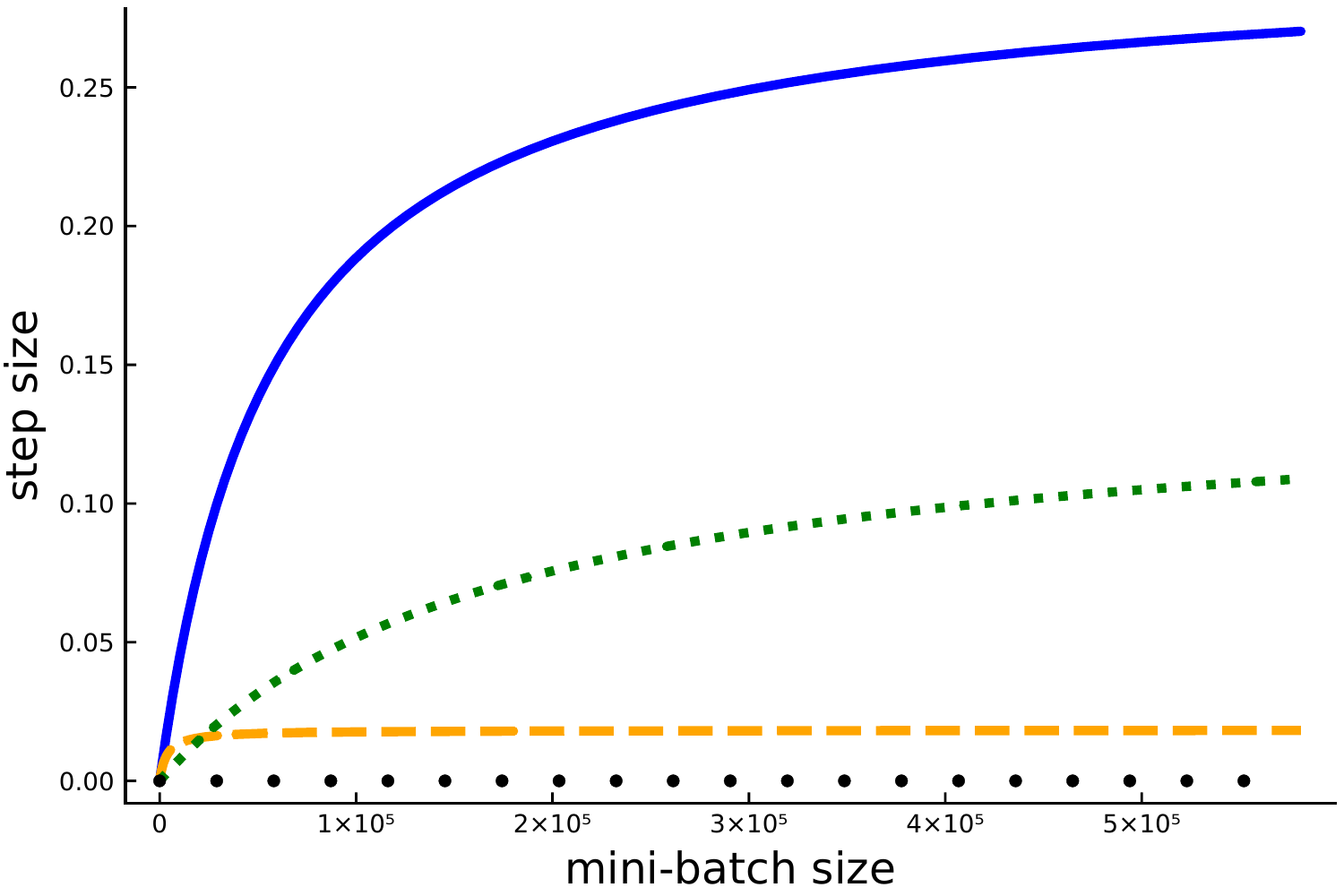}
        \end{subfigure}\\
        \begin{subfigure}[b]{\columnwidth}
            \centerline{\includegraphics[trim={0 16mm 0 10mm},clip,width=\textwidth]{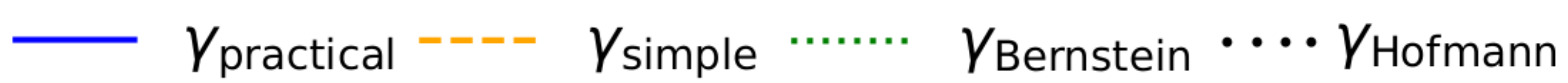}}
        \end{subfigure}
    % \centerline{\includegraphics[width=\columnwidth]{exp2/lgstc_covtype_binary-column-scaling-regularizor-1e-03-exp2-stepsizes}}\\
    \caption{Step size as a function of the mini-batch size for a regularized ($\lambda = 10^{-3}$) logistic regression problem applied to the feature-scaled \textit{covtype.binary} dataset from LIBSVM.}
    \label{fig:intro_step_size_example2}
    \end{center}
    \vskip -0.2in
\end{figure}

Furthermore, our bounds provide new insight into the \emph{total complexity}, denoted $K_{\mathrm{total}}$ hereafter, of SAGA.
For example, when using our \emph{simple bound} we show for regularized generalized linear models (GLM), with $\lambda >0$ as in \cref{eq:ERM_expended}, that $K_{\mathrm{total}}$ is piecewise linear in the mini-batch size $b$:
\begin{align}
    K_{\mathrm{total}}(b) = \max
      \bigg\{ \bigg. &n \frac{b -1}{n-1} \frac{4\Lbarconst}{\mu} + \frac{n -b}{n-1} \frac{4L_{\max}}{\mu}+\frac{4b \lambda}{\mu },\nonumber\\
      &n + \frac{n-b}{n-1}\frac{4(L_{\max}+\lambda)}{\mu}
      \bigg. \bigg\} \log \left( \frac{1}{\epsilon} \right)\enspace, \nonumber
\end{align}
with $\Lmax \eqdef \max_{i\in[n]} L_i$, $\Lbar \eqdef \frac{1}{n} \sum_{i=1}^n L_i$ and $\epsilon >0$ is the desired precision.
\begin{figure}[ht]
    \vskip 0.2in
    \begin{center}
    \centerline{\includegraphics[width=\columnwidth]{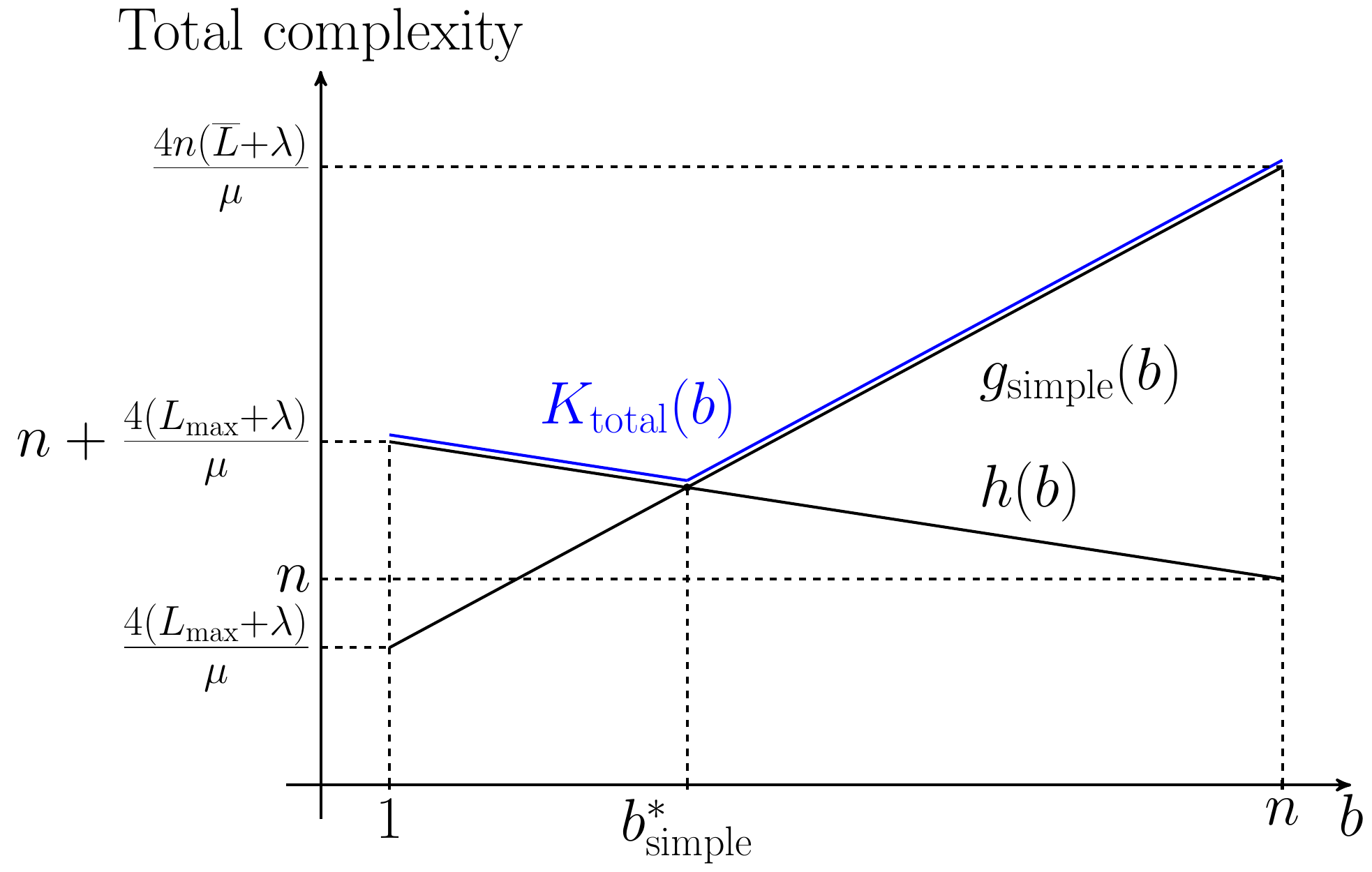}}
    \caption{Optimal mini-batch size $b_{\text{simple}}^*$ for the \emph{simple bound}, where $K_{\mathrm{total}}(b) =\max\{g_{\simple}(b), h(b)\} $.}
    \label{fig:optimal_mini_batch_diagram}
    \end{center}
    \vskip -0.2in
\end{figure}
% \jo{XXX for revision: update figure with notation from paper (font etc.)}
This complexity bound, and others presented in~\cref{sub:upper_bounds_of_expsmoothness} show that SAGA enjoys a linear speedup as we increase the mini-batch size until an optimal one (as illustrated in \Cref{fig:optimal_mini_batch_diagram}).
After this point, the total complexity increases.
We use this observation to develop optimal and practical mini-batch sizes and step sizes.

The rest of the paper is structured as follows.
In \cref{sec:background_and_related_work} we first
introduce variance reduction techniques after presenting our main assumption, the expected smoothnes assumption.
We highlight how this assumption is necessary to capture the improvement in iteration complexity, and conclude the section by showing that to calculate the expected smoothness constant we need evaluate an intractable expectation. Which brings us to~\cref{sec:bounding_expsmoothness} where we directly address this issue and provide several tractable upper-bounds of the expected smoothness constant.
We then calculate optimal mini-batch sizes and step sizes by using our new bounds.
Finally, we give numerical experiments in \Cref{sec:numerical_study} that verify our theory on artificial and real datasets.
We also show how these new settings for the mini-batch size and step size lead to practical performance gains.

\section{Background}
\label{sec:background_and_related_work}
%%%%%%%%%%%%%%%%%%%%%%%%%%%%%%%%%%%%%%%%%%%%%%%%%%%%%%%%%%%%%%%%%%%%%%%%%%%%%%%
%%%%%%%%%%%%%%%%%%%%%%%%%%%%%%%%%%%%%%%%%%%%%%%%%%%%%%%%%%%%%%%%%%%%%%%%%%%%%%%

%%%%%%%%%%%%%%%%%%%%%%%%%%%%%%%%%%%%%%%%%%%%%%%%%%%%%%%%%%%%%%%%%%%%%%%%%%%%%%%
\subsection{Controlled stochastic reformulation and JacSketch}
\label{sub:controlled_stochastic_reformulation_and_jacsketch}
%%%%%%%%%%%%%%%%%%%%%%%%%%%%%%%%%%%%%%%%%%%%%%%%%%%%%%%%%%%%%%%%%%%%%%%%%%%%%%%
We can introduce variance reduced versions of SGD in a principled manner by using a \emph{sampling vector}.
\begin{definition}\label{def:v} We say that a random vector $v \in \R^n$ with distribution $\cD$ is a sampling vector if
\[\EE{\cD}{v_i} =1\enspace,\quad \mbox{for all }i \in [n]\enspace. \]
\end{definition}
With a sampling vector we can re-write~\eqref{eq:ERM_pb} through the following stochastic reformulation
\begin{equation} \label{eq:subsampled_function}
w^* = \arg\min_{w \in \R^d} \EE{\cD}{f_v(w) \eqdef \frac{1}{n} \sum_{i=1}^n f_i(w) \cdot v_i},
\end{equation}
where $f_v(w)$ is called a \emph{subsampled function}.
The stochastic Problem~\eqref{eq:subsampled_function} and our original Problem~\eqref{eq:ERM_pb} are equivalent :
% \[\EE{\cD}{f_v(w)} = \frac{1}{n} \sum_{i=1}^n f_i(w) . \EE{\cD}{v_i} \overset{\text{\scriptsize Def.~\ref{def:v}} }{=} \frac{1}{n} \sum_{i=1}^n f_i(w). \]
\[\EE{\cD}{f_v(w)} = \frac{1}{n} \sum_{i=1}^n f_i(w) \cdot \EE{\cD}{v_i} \overset{\text{\scriptsize Definition~\ref{def:v}} }{=} \frac{1}{n} \sum_{i=1}^n f_i(w). \]
Consequently the gradient $\nabla f_v(w)$ is an unbiased estimate of $\nabla f (w)$ and we could use the SGD method to solve~\eqref{eq:subsampled_function}.
To tackle the variance of these stochastic gradients
% $\nabla f_v(w)$
we can further modify~\eqref{eq:subsampled_function} by introducing \emph{control variates} which leads to the following
 \emph{controlled stochastic reformulation}:
\begin{equation} \label{eq:controlled_reformulation}
    w^* \in \argmin_{w \in \R^d}  \mathbb{E}_{\cD} \big[ f_v (w) - z_v (w) + \EE{\cD}{z_v(w)} \big] \enspace,
\end{equation}
where $z_v (w) \in \R$ are the control variates.
Clearly~\eqref{eq:controlled_reformulation} is also equivalent to~\eqref{eq:ERM_pb} since $- z_v (w) + \EE{\cD}{z_v(w)} $ has zero expectation.
Thus, we can solve~\eqref{eq:controlled_reformulation} using an SGD algorithm where the stochastic gradients are given by
\begin{equation} \label{eq:gradient_estimator_reformulation}
    g_v (w)  \eqdef \nabla f_v (w) - \nabla z_v (w) + \EE{\cD}{\nabla z_v(w)} \enspace.
\end{equation}
That is, starting from a vector $w^0$,  given a positive step size $\gamma$, we can iterate the steps
\begin{equation} \label{eq:gradient_step}
    w^{k+1} = w^{k} - \gamma g_{v^k} (w^k) \enspace,
\end{equation}
where $v^k \sim \cD$ are \iid samples at each iteration.

The JacSketch algorithm introduced by \citet{gower2018stochastic} fits this format~\eqref{eq:gradient_step} and uses a linear control $z_v (w)=\frac{1}{n}\dotprod{J^\top w, v}$, where $J$ is a $d \times n$ matrix of parameters.
This matrix is updated at each iteration so as to %increase the correlation between $\nabla z_v(w)$ and $\nabla f_v(w)$ and
decrease the variance of the resulting stochastic gradients.
%Throughout its iterations, the JacSketch algorithm keeps taking stochastic gradient steps as in \eqref{eq:gradient_step} and updating $J$, used to compute the estimate of the gradient as in \eqref{eq:gradient_estimator_reformulation}.
%Thus, the iterates $w^k$ and the Jacobian estimate $J^k$ converge respectively to the solution $w^*$ and to the Jacobian at the solution $\nabla F(w^*) = [\nabla f_1 (w^*), \ldots, \nabla f_n (w^*)] \in \R^{d \times n}$.
Carefully updating the covariates through $J$ results in a method that has stochastic gradients with decreasing variance, \ie $\lim_{w^k \rightarrow w^*}\mathbb{E} \big[\normin{g_{v^k} (w^k) - \nabla f (w^k)}_2^2 \big] = 0$, which is why JacSketch is a stochastic variance reduced algorithm.
This is also why the user can set a single constant step size a priori instead of tuning a sequence of decreasing ones.
The SAGA algorithm, and all of its mini-batching variants, are instances of the JacSketch method.

%%%%%%%%%%%%%%%%%%%%%%%%%%%%%%%%%%%%%%%%%%%%%%%%%%%%%%%%%%%%%%%%%%%%%%%%%%%%%%%
\subsection{The expected smoothness constant}
\label{sub:the_expected_smoothness_constant}
%%%%%%%%%%%%%%%%%%%%%%%%%%%%%%%%%%%%%%%%%%%%%%%%%%%%%%%%%%%%%%%%%%%%%%%%%%%%%%%

In order to analyze stochastic variance reduced methods,
some form of smoothness assumption needs to be made. The most common assumption is
\begin{equation}\label{eq:smoothness}
\norm{\nabla f_i(w) -\nabla f_i(y)} \leq  L_{\max} \norm{w-y},
\end{equation}
for each $i \in [n]$.
That is each $f_i$ is uniformly smooth with smoothness constant $L_{\max}$, as is assumed in
\citep{SAGA_Nips,hofmann2015variance,stichlimited2018} for variants of SAGA\footnote{The same assumption is made in proofs of SVRG \citep{Johnson2013}, S2GD \citep{S2GD} and the SARAH algorithm \citep{nguyen17b}.}. In the analyses of these papers it was shown that the iteration complexity of SAGA is proportional to $L_{\max},$ and the step size is inversely proportional to $L_{\max}.$

%
%
% Though assuming that the $f_i$'s are uniformly smooth, as is done in \citep{SAGA_Nips,hofmann2015variance,stichlimited2018} for variants of SAGA\footnote{The same assumption is made in proofs of SVRG \citep{Johnson2013}, S2GD \citep{S2GD} and the SARAH algorithm \citep{nguyen17b}.}, makes it virtually impossible to explain the gains in faster convergence rates by using larger mini-batch sizes \cite{hofmann2015variance}.
%Instead, to show these gains, we need to leverage the increased smoothness present in the subsampled function $f_v$.
But as was shown in~\citep{gower2018stochastic}, we can set a much larger step size by making use of the  smoothness of the subsampled functions $f_v.$
For this~\citet{gower2018stochastic} introduced the notion of expected smoothness, which we extend here to all sampling vectors and control variates.
\begin{definition}[Expected smoothness constant] \label{def:expected_smoothness}
Consider a sampling vector $v$ with distribution $\cD.$ We say that the expected smoothness assumption holds with constant $\cL$ if for every $w \in \R^d$ we have that
\begin{equation} \label{eq:expected_smoothness}
    \ED{\norm{ \nabla f_v (w) - \nabla f_v (w^*) }_2^2} \leq 2 \cL (f(w) - f(w^*)) \enspace.
\end{equation}
\end{definition}
\begin{remark} \label{rem:expected_smoothness}
    Note that we refer to any positive constant $\cL$ that satisfies~\eqref{eq:expected_smoothness} as an expected smoothness constant. Indeed $\cL \rightarrow \infty$ is a valid constant in the extended reals, but as we will see, the smaller $\cL$, the better for our complexity results.
\end{remark}

\citet{gower2018stochastic} show that
the expected smoothness constant plays the same role that $L_{\max}$ does in the previously existing analysis of SAGA, namely that the step size is inversely proportional to $\cL$ and the iteration complexity is proportional to $\cL$ (see details in Theorem~\ref{theo:conv}).
Furthermore, by assuming that $f$ is $L$--smooth,
 the expected smoothness constant is bounded
\begin{align}\label{eq:cLbound}
 L \quad \leq \quad \cL \quad \leq \quad L_{\max} \enspace,
\end{align}
as was proven in Theorem 4.17 in~\cite{gower2018stochastic}. Also, the bounds $L_{\max}$ and $L$ are attained when using a uniform single element sampling and a full batch, respectively. And as we will show, the constants $L_{\max}$ and $L$ can be orders of magnitude apart on large dimensional problems. Thus we could set much larger step sizes for larger mini-batch sizes if we could calculate $\cL$.
%the authors have shown that the introduced expected smoothness constant rules the SAGA algorithm rate.
%Moreover, they proved in their Theorem 3.6 that $\cL$ embodies the iteration complexity, \ie the convergence rate of the JacSketch algorithm, and leads to new theoretical step sizes.
%Yet, in their analysis, the larger the batch size, the smoother is the subsampled function $f_v$.
%For this reason, previous works were overestimating the smoothness constant of $f_v$, and could not capture the gain in increasing the mini-batch size.
Though calculating $\cL$ is not easy, as we see in the next lemma.
\begin{lemma} \label{lemma:master_lemma}
    Let $v$ be an unbiased sampling vector. Suppose that $f_v (w) = \frac{1}{n} \sum_{i=1}^n f_i(w)v_i$  is $L_v$-smooth and each $f_i$ is convex for $i=1,\ldots, n$.
    It follows that the expected smoothness constant holds with $\cL = \max_{i \in [n]}\E{ L_v v_i}$.
%    \begin{equation}\label{eq:expsmoothmaster}
%    % \cL \quad \leq \quad \E{ L_v v_i} .
%    \cL \leq \E{ L_v v_i} \enspace.
%    \end{equation}
\end{lemma}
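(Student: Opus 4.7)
The plan is to combine the standard convex-smooth descent inequality applied pointwise to the random function $f_v$ with the unbiasedness of the sampling vector, and exploit nonnegativity of the per-sample Bregman divergences.

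First, I would invoke the classical fact that if a function $g$ is convex and $L$-smooth then $\|\nabla g(w)-\nabla g(y)\|_2^2 \le 2L\,(g(w)-g(y)-\langle \nabla g(y),w-y\rangle)$ for every $w,y$. Applying this to $g=f_v$ (which is $L_v$-smooth by hypothesis and convex because each $f_i$ is convex and sampling vectors have nonnegative coordinates) with $y=w^*$ gives, pointwise in $v$,
\begin{equation*}
\bigl\|\nabla f_v(w)-\nabla f_v(w^*)\bigr\|_2^2 \;\le\; 2L_v\Bigl(f_v(w)-f_v(w^*)-\bigl\langle \nabla f_v(w^*),\,w-w^*\bigr\rangle\Bigr).
\end{equation*}

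Next, I would rewrite the right-hand side using $f_v(w)=\tfrac1n\sum_i v_i f_i(w)$. Setting $D_i(w):=f_i(w)-f_i(w^*)-\langle \nabla f_i(w^*),w-w^*\rangle$, convexity of each $f_i$ gives $D_i(w)\ge 0$, and
\begin{equation*}
f_v(w)-f_v(w^*)-\bigl\langle \nabla f_v(w^*),w-w^*\bigr\rangle \;=\; \frac{1}{n}\sum_{i=1}^n v_i\, D_i(w).
\end{equation*}
Since $v_i\ge 0$ (sampling vectors are coordinate-wise nonnegative) and $L_v\ge 0$, each term $L_v v_i D_i(w)$ is nonnegative, which is what lets me pull the max outside in the next step.

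Taking expectation and using linearity, followed by $\mathbb{E}[L_v v_i]\le \max_{j\in[n]}\mathbb{E}[L_v v_j]=\mathcal L$, I get
\begin{equation*}
\mathbb{E}\Bigl[\bigl\|\nabla f_v(w)-\nabla f_v(w^*)\bigr\|_2^2\Bigr] \;\le\; \frac{2}{n}\sum_{i=1}^n \mathbb{E}[L_v v_i]\,D_i(w) \;\le\; \frac{2\mathcal L}{n}\sum_{i=1}^n D_i(w).
\end{equation*}
Finally, because $v$ is unbiased ($\mathbb{E}[v_i]=1$ for all $i$), the sum $\tfrac1n\sum_i D_i(w)$ equals $f(w)-f(w^*)-\langle \nabla f(w^*),w-w^*\rangle$, and the inner-product term vanishes since $\nabla f(w^*)=0$ at the minimizer. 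This yields the desired inequality with constant $\mathcal L=\max_{i\in[n]}\mathbb{E}[L_v v_i]$.

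The one place that requires care, and which I view as the only subtle step, is justifying that one may take the maximum outside the expectation; this relies crucially on $v_i\ge 0$ and $D_i(w)\ge 0$ so that all summands are nonnegative and an index-wise upper bound on $\mathbb{E}[L_v v_i]$ translates into a valid bound on the entire sum. Everything else is just the convex-smooth inequality plus unbiasedness.
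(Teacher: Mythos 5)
Your proposal is correct and follows essentially the same route as the paper's proof: apply Nesterov's inequality $\|\nabla g(w)-\nabla g(y)\|^2\le 2L\,(g(w)-g(y)-\langle\nabla g(y),w-y\rangle)$ to $f_v$ at $y=w^*$, expand $f_v$ into the per-sample Bregman divergences, take expectation, pull out $\max_i\mathbb{E}[L_v v_i]$ using nonnegativity of the divergences, and kill the inner product via $\nabla f(w^*)=0$. Your explicit remark that the max step needs $D_i(w)\ge 0$ (and that convexity of $f_v$ needs $v_i\ge 0$) is a point the paper leaves implicit, but it is not a different argument.
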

\begin{proof}
The proof is given in \Cref{proof:master_lemma}.
\end{proof}
If the sampling has a very large combinatorial number of possible realizations --- for instance sampling mini-batches without replacement --- then this expectation becomes intractable to calculate.
This observation motivates the development of functional upper-bounds of the expected smoothness constant that can be efficiently evaluated.

%%%%%%%%%%%%%%%%%%%%%%%%%%%%%%%%%%%%%%%%%%%%%%%%%%%%%%%%%%%%%%%%%%%%%%%%%%%%%%%
\subsection{Mini-batch without replacement: $b$--nice sampling}
\label{sub:samplings}
%%%%%%%%%%%%%%%%%%%%%%%%%%%%%%%%%%%%%%%%%%%%%%%%%%%%%%%%%%%%%%%%%%%%%%%%%%%%%%%

%By choosing a particular distribution for the sampling vector $v$ we get different mini-batching and non-uniform sampling variants of these aforementioned methods.

Now we will choose a distribution of the sampling vector $v$ based on a mini-batch sampling without replacement.
We denote a mini-batch as $B \subseteq [n]$ and its size as $b= |B|$.

%In what follows, we always refer to a sampling which selects $b$ points uniformly without replacement from $[n]$ as a \emph{$b$-nice sampling}:
\begin{definition}[\emph{$b$-nice sampling}] \label{def:bnice}
 $S$ is a $b$-nice sampling if $S$ is a set valued map with a probability distribution given by% $\Prb{S = B} = \frac{1}{\binom{b}{n}}$ for all $B \subseteq [n]$ with $|B| = b.$
    \begin{equation*}
        \Prb{S = B} = \frac{1}{\binom{n}{b}}, \quad \forall B \subseteq [n] \ \ \text{\st} \ \ |B| = b\enspace.
    \end{equation*}
    % Moreover, its support is denoted
    % \begin{equation*}
    %     \cB \eqdef \{ B \subseteq [n] : |B| = b \wedge i \neq j \, \forall (i,j) \in B \} .
    % \end{equation*}
\end{definition}
We can construct a sampling vector based on a $b$-nice sampling by setting $v = \tfrac{n}{b} \sum_{i\in S} e_i$, where $e_1, \ldots, e_n$ is the canonical basis of $\R^n$.
Indeed, $v$ is a sampling vector according to \Cref{def:v} since for every $i \in [n]$ we have
 \begin{equation}\label{eq:vi}
 v_i = \bigg( \frac{n}{b} \sum_{j \in S} e_j \bigg)_i
 = \frac{n}{b}  \mathds{1}_{S}(i)\enspace,
 \end{equation}
 where $ \mathds{1}_{S}$ denotes the indicator function of the random set $S$.
 Now taking expectation in~\eqref{eq:vi} gives
\begin{align}
\EE{}{v_i} &= \frac{n}{b}\frac{1}{\binom{n}{b}}\sum_{B \subseteq [n] \,:\, |B| = b}   \mathds{1}_{B}(i) \nonumber
% &= & \frac{n}{b}\frac{1}{\binom{n}{b}} \sum_{\substack{B \subseteq [n] \,:\\ |B| = b \wedge i \in B}} 1 \nonumber
= \frac{n}{b\binom{n}{b}} \binom{n-1}{b-1}
\,=\,  1,
\end{align}
using $|\{{{B \subseteq [n]: |B| = b \wedge i \in B}}\}|=\binom{n-1}{b-1}$.

Here we are interested in the mini-batch SAGA algorithm with $b$-nice sampling, which we refer to as the $b$-nice SAGA. In particular, $b$-nice SAGA is the result of using $b$-nice sampling, together with a linear model for the control variate $z_v(w)$.
%\rob{This doesn't fit very well here. Still thinking about what to do with this and the tables.}
Different choices of the control variate $z_v(w)$ also recover popular algorithms such as gradient descent, SGD or the standard SAGA method (see \cref{tab:recovered_algorithms_parameters} for some examples).
%tab:recovered_algorithms_sampling

A naive implementation of $b$-nice SAGA based on the JacSketch algorithm is given in \cref{alg:JacSketch_SAGA}\footnote{We also provide a more efficient implementation that we used for our experiments in the appendix in \cref{alg:practical_SAGA_implementation}.}.

% \begin{algorithm}[tb]
%     \caption{JacSketch implementation of $b$-nice SAGA}
%     \label{alg:JacSketch_SAGA}
%     \begin{algorithmic}
%         \STATE {\bfseries Input:} mini-batch size $b$, step size $\gamma > 0$
%         \STATE {\bfseries Initialize:} $w^0 \in \R^d$, $J^0 \in \R^{d \times n}$
%         \FOR{$k=0, 1, 2, \ldots$}
%         \STATE Sample a fresh batch $B \subseteq [n]$ s.t. $|B| = b$
%         \STATE $g^k = \frac{1}{n} J^k e + \frac{1}{b}\sum_{i \in B} (\nabla f_i (w^k) - J_{:i}^k)$
%         \STATE  $J^{k+1}_{:i} = \begin{cases} J^k_{:i}, & \text{if } i\notin B\\
%                                               \nabla f_i(w^k), & \text{if } i\in B.\end{cases}$ %\Comment Update Jacobian estimate
%         \STATE $w^{k+1} = w^{k} - \gamma g^k$
%         \ENDFOR
%     \end{algorithmic}
% \end{algorithm}

{\fontsize{4}{4}\selectfont
%above is to make smaller fonts in algorithm
\begin{algorithm}[tb]
\SetKwInOut{Input}{Input}
\SetKwInOut{Init}{Initialize}
\caption{\textsc{JacSketch version of $b$-nice SAGA}}
\label{alg:JacSketch_SAGA}
\Input{mini-batch size $b$, step size $\gamma > 0$}
\Init{$w^0 \in \R^d$, $J^0 \in \R^{d \times n}$}
\For{$k=0, 1, 2, \ldots$}{

    Sample a fresh batch $B \subseteq [n]$ s.t. $|B| = b$

    $g^k = \frac{1}{n} J^k e + \frac{1}{b}\sum_{i \in B} (\nabla f_i (w^k) - J_{:i}^k)$

    \tcp*[r]{update the gradient estimate}

    $J^{k+1}_{:i} = \begin{cases} J^k_{:i}, & \quad \text{if } i\notin B\\
                                  \nabla f_i(w^k), & \quad \text{if } i\in B.\end{cases}$ %\Comment Update Jacobian estimate

    % $\arraycolsep=1.4pt\def\arraystretch{1.2}
    % J^{k+1}_{:i} = \left\{\begin{array}{ll}
    %                          J^k_{:i}, & \text{if } i\notin B\\
    %                          \nabla f_i(w^k), & \text{if } i\in B.
    %                        \end{array}\right.$

    \tcp*[r]{update the Jacobian estimate}

    $w^{k+1} = w^{k} - \gamma g^k$ \tcp*[r]{take a step}
}
\Return{$w^k$}
\end{algorithm}
}

%
%\begin{table}[t]
%    \caption{Algorithms covered by JacSketch and corresponding mini-batch $B$ and probability of sampling.}
%    \label{tab:recovered_algorithms_sampling}
%    \vskip 0.15in
%    \begin{center}
%    \begin{scriptsize}
%    \begin{sc}
%    \begin{tabular*}{\columnwidth}{l@{\extracolsep{10pt}}c@{\extracolsep{25pt}}c}
%        \toprule
%        Sampling &$B$ & $\Prb{S=B}$\\
%        \midrule
%        GD & $[n]$ & 1\\
%        SGD & $i \in [n]$ & $1/n$\\
%        SAGA & $i \in [n]$ & $1/n$\\
%        $b$-nice SAGA & \begin{tabular}{@{}c@{}}$\{i_1,\ldots,i_b\} \subseteq [n]$\\
%                                                   s.t. $i_l \neq i_p \, \forall (l,p) \subseteq [b]$\end{tabular} & $1/\binom{n}{b}$\\
%        \bottomrule
%    \end{tabular*}
%    \end{sc}
%    \end{scriptsize}
%    \end{center}
%    \vskip -0.1in
%\end{table}

%!TEX root = ../optimal_mini-batch.tex

%%%%%%%%%%%%%%%%%%%%%%%%%%%%%%%%%%%%%%%%%%%%%%%%%%%%%%%%%%%%%%%%%%%%%%%%%%%%%%%
%%%%%%%%%%%%%%%%%%%%%%%%%%%%%%%%%%%%%%%%%%%%%%%%%%%%%%%%%%%%%%%%%%%%%%%%%%%%%%%
\section{Upper Bounds on the Expected Smoothness}
\label{sec:bounding_expsmoothness}
%%%%%%%%%%%%%%%%%%%%%%%%%%%%%%%%%%%%%%%%%%%%%%%%%%%%%%%%%%%%%%%%%%%%%%%%%%%%%%%
%%%%%%%%%%%%%%%%%%%%%%%%%%%%%%%%%%%%%%%%%%%%%%%%%%%%%%%%%%%%%%%%%%%%%%%%%%%%%%%
To determine an optimal mini-batch size $b^*$ for $b$-nice SAGA, we first  state our assumptions and provide bounds of the smoothness of the subsampled function.
We then define $b^*$ as the mini-batch size that minimizes the total complexity of the considered algorithm, \ie the total number of stochastic gradients computed.
Finally we provide upper-bounds on the expected smoothness constant $\cL$, through which we  can deduce optimal mini-batch sizes.
Many proofs are deferred to the supplementary material.

\begin{table}[t]
    \caption{Algorithms covered by JacSketch and corresponding sampling vector $v$ and control variates $z_v$.}
    \label{tab:recovered_algorithms_parameters}
    \vskip 0.15in
    \begin{center}
    \begin{scriptsize}
    \begin{sc}
    \begin{tabular*}{\columnwidth}{l@{\extracolsep{28pt}}c@{\extracolsep{40pt}}c}
        \toprule
        Parameters & $v$ & $\nabla z_v(w)$\\
        \midrule
        GD & $e=e_1+\dots+e_n$ & $\nabla f_i (w)$ \\
        SGD & $ne_i, \;i \sim \frac{1}{n}$ & 0\\
        SAGA & $ne_i, \;i \sim \frac{1}{n}$ & $J_{:i}$\\
        $b$-nice SAGA & $(n/b) \sum_{i\in S} e_i$ & $(1/b)\sum_{i \in S}J_{:i}$\\
        \bottomrule
    \end{tabular*}
    \end{sc}
    \end{scriptsize}
    \end{center}
    \vskip -0.1in
\end{table}

%!TEX root = ../optimal_mini-batch.tex

%%%%%%%%%%%%%%%%%%%%%%%%%%%%%%%%%%%%%%%%%%%%%%%%%%%%%%%%%%%%%%%%%%%%%%%%%%%%%%%
\subsection{Assumptions and notation}
\label{sub:assumptions_and_smoothness_constants}
%%%%%%%%%%%%%%%%%%%%%%%%%%%%%%%%%%%%%%%%%%%%%%%%%%%%%%%%%%%%%%%%%%%%%%%%%%%%%%%

We consider that the objective function is a GLM with quadratic regularization controlled by a parameter $\lambda>0$:
\begin{equation} \label{eq:ERM_expended}
    w^* \in \argmin_{w \in \R^d} f(w) = \frac{1}{n} \sum_{i=1}^n \phi_i(a_i^\top w) + \frac{\lambda}{2} \norm{w}_2^2\enspace,
\end{equation}
with $\norm{\cdot}_2$ is the Euclidean norm, $\phi_1, \ldots, \phi_n$ are convex functions and $a_1, \ldots, a_n$ a sequence of observations in $\R^d$.
%\footnote{Note that~\eqref{eq:ERM_expended} fits the  format~\eqref{eq:ERM_pb} if we define $f_i(w) = \phi_i(a_i^\top w)+ \frac{\lambda}{2} \norm{w}_2^2.$}.
This framework covers regularized logistic regression by setting $\phi_i (z) = \log (1+\exp (-y_iz))$ for some binary labels $y_i, \ldots, y_n$ in $\{ \pm 1\}$,  ridge regression if $\phi_i (z) = (z-y_i)^2/2$ for real observations $y_i, \ldots, y_n$, and conditional random fields for when the $y_i$'s are structured outputs.

%We make the assumptions on our class of problems, both of which are satisfied by our aforementioned examples.
We assume that the second derivative of each $\phi_i$ is uniformly bounded, which holds for our aforementioned examples.
\begin{assumption}[Bounded second derivatives] \label{as:bounded_second_derivatives}
There exists $U  \geq 0$ such that $\phi_i''(x) \leq U, \forall x \in \R, \forall i \in [n]$.
\end{assumption}

For a batch $B \subseteq [n]$, we rewrite the subsampled function as
\begin{equation*}
    f_B (w) \eqdef \frac{1}{|B|} \sum_{i \in B} \phi_i (a_i^\top w) +\frac{\lambda}{2} \norm{w}_2^2 \enspace,
\end{equation*}
and its second derivative is thus given by
\begin{equation} \label{eq:bound_AB}
    \nabla^2 f_B(w) = \frac{1}{|B|} \sum_{i \in B} \phi_i'' (a_i^\top w) a_i a_i^\top + \lambda I_d \enspace,
\end{equation}
where $I_d$ denotes the identity matrix of size $d$.%\gls{I_d}

 For a symmetric matrix $M$, we write $\lambda_{\max}(M)$ (resp. $\lambda_{\min}(M)$) for its largest (resp. smallest) eigenvalue.
Assumption~\ref{as:bounded_second_derivatives} directly implies the following.
\begin{lemma}[Subsample smoothness constant] \label{lemma:LB_def_1}
    Let $B\subset[n]$, and let
$A_B = [a_i]_{i\in B}$ denote the column concatenation of the vectors $a_i$ with $i\in B.$
     The smoothness constant of the subsampled loss function $\frac{1}{|B|} \sum_{i \in B} \phi_i ( a_i^\top w)$ is given by
    \begin{equation} \label{eq:LBdef}
        L_B \hspace{-1pt} \eqdef \hspace{-1pt} \frac{U}{|B|}\lambda_{\max} \hspace{-1pt}\left(\sum_{i \in B} a_i a_i^\top \hspace{-2pt}\right) \hspace{-2pt} = \hspace{-1pt} \frac{U}{|B|}\lambda_{\max}\left( A_B A_B^\top\right).
    \end{equation}
\end{lemma}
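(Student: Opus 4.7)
The plan is to read off the smoothness constant from the Hessian of the subsampled loss, using the fact that for a twice-differentiable convex function, an upper bound on $\lambda_{\max}(\nabla^2 g)$ uniform in $w$ yields a smoothness constant for $g$. Concretely, let $g_B(w) \eqdef \frac{1}{|B|} \sum_{i \in B} \phi_i(a_i^\top w)$. By the chain rule,
\[
\nabla^2 g_B(w) \;=\; \frac{1}{|B|} \sum_{i \in B} \phi_i''(a_i^\top w)\, a_i a_i^\top.
\]
This is the starting point.

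Next I would invoke Assumption~\ref{as:bounded_second_derivatives}: since $0 \le \phi_i''(a_i^\top w) \le U$ (nonnegativity coming from convexity of $\phi_i$) and each matrix $a_i a_i^\top$ is positive semidefinite, each term in the sum is dominated in the PSD order by $U\, a_i a_i^\top$. Summing preserves the Loewner ordering, hence
\[
\nabla^2 g_B(w) \;\preceq\; \frac{U}{|B|} \sum_{i \in B} a_i a_i^\top.
\]
Monotonicity of $\lambda_{\max}$ with respect to $\preceq$ then gives
\[
\lambda_{\max}\bigl(\nabla^2 g_B(w)\bigr) \;\le\; \frac{U}{|B|}\, \lambda_{\max}\!\left(\sum_{i \in B} a_i a_i^\top\right),
\]
uniformly in $w$.

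Finally, I would conclude by the standard equivalence between Hessian bounds and gradient Lipschitz constants for twice-differentiable convex functions: if $\nabla^2 g_B(w) \preceq L\, I_d$ for all $w$, then $g_B$ is $L$-smooth. Applying this with $L = \frac{U}{|B|}\lambda_{\max}\!\left(\sum_{i \in B} a_i a_i^\top\right)$ yields the claimed smoothness. The identity $\sum_{i \in B} a_i a_i^\top = A_B A_B^\top$ is immediate from the definition of $A_B$ as the column concatenation of the $a_i$'s.

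There is essentially no hard step here: the proof is a direct chain of (i) Hessian computation, (ii) pointwise application of Assumption~\ref{as:bounded_second_derivatives} lifted to the PSD order, and (iii) the eigenvalue/smoothness correspondence. The only mild subtlety worth flagging is that $L_B$ as defined is an \emph{upper bound} rather than necessarily the tightest smoothness constant, since we replace the varying $\phi_i''(a_i^\top w)$ by its uniform upper bound $U$; this is exactly the sense in which $L_B$ is to be understood throughout the paper, and is consistent with how $L_{\max}$ and $L$ already arise in the excerpt.
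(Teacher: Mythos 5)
Your proof is correct and follows essentially the same route as the paper: compute the Hessian of the subsampled loss, bound $\phi_i''(a_i^\top w)$ by $U$ to obtain $\nabla^2 g_B(w) \preceq \frac{U}{|B|} A_B A_B^\top$ in the Loewner order, and read off the smoothness constant from the largest eigenvalue. Your added remark that $L_B$ is an upper bound rather than necessarily the tightest smoothness constant is a fair and accurate observation, consistent with how the paper uses these constants.
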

\noindent{\emph{Proof}.}
The proof follows from Assumption~\ref{as:bounded_second_derivatives} as
%~\eqref{eq:bound_AB}, which can now be re-written as
\begin{align*}
    \pushQED{\qed}
    \frac{1}{|B|} \sum_{i \in B} \nabla^2 \phi_i ( a_i^\top w) &=
    \frac{1}{|B|} \sum_{i \in B}\phi_i'' ( a_i^\top w) a_i a_i^\top \\
    &\preceq \frac{U}{|B|} A_B A_B^\top. \qedhere
    \popQED
\end{align*}
Combined with \eqref{eq:bound_AB}, we get that $f_B$ is $(L_B+\lambda)$-smooth.

Another key quantity in our analysis is the strong convexity parameter.
\begin{definition}
The strong convexity parameter is given by
\[\mu \eqdef \min_{w \in \R^d} \lambda_{\min} \left( \nabla^2 f(w) \right).\]
\end{definition}
Since we have an explicit regularization term with $\lambda >0$, $f$ is strongly convex and $\mu \geq \lambda > 0.$
%
%Indeed, $\mu \eqdef \min_{w \in \R^d} \lambda_{\min} \left( \nabla^2 f(w) \right) \geq \lambda > 0$.

We additionally define $L_i$, resp. $L$, as the smoothness constant of the individual function $\phi_i ( a_i^\top w)$, resp. the whole function $\frac{1}{n} \sum_{i=1}^n \phi_i ( a_i^\top w)$.
We also recall the definitions of the maximum of the individual smoothness constants by $\Lmax \eqdef \max_{i\in[n]} L_i$ and their average by $\Lbar \eqdef \frac{1}{n} \sum_{i=1}^n L_i$.
The three constants satisfies
\begin{equation}\label{eq:LoverlineLLmax}
L \quad \leq \quad \overline{L} \quad \leq \quad L_{\max}.
\end{equation}
%This holds because, by repeatedly applying ~\Cref{lem:lemma1} in the appendix, we have that
%\begin{align*}
%L &= \max_{w \in \R^d}\frac{1}{n}\lambda_{\max} (\sum_{i \in [n]}\phi_i'' ( a_i^\top w)) a_i a_i^\top) \\ & \leq \max_{w \in \R^d} \frac{1}{n}\sum_{i \in [n]}\phi_i'' ( a_i^\top w)) \lambda_{\max} (a_i a_i^\top) \\
%&\leq \max_{w \in \R^d} \max_{i \in [n]}\{ \phi_i'' ( a_i^\top w)) \lambda_{\max} (a_i a_i^\top)\} = L_{\max}.
%\end{align*}
The proof of~\eqref{eq:LoverlineLLmax} is given in \Cref{lem:lemma_smoothness_cst_ineq} in the appendix.

%In this paper, we focus on $b$-nice sampling, see \cref{def:bnice}, which means that we only consider mini-batchs $B$ with $b$ elements sampled uniformly without replacement from $[n]$.
% For a fixed $i \in \{1,\ldots, n\}$ let
% \begin{equation}
%     c_1(i) \eqdef |\{ B \in \cB \; : \; i\in B\}|\enspace,
% \end{equation}
% that is, the number of sampled sets that contain a fixed element $i$. But in the case of $b$-nice sampling, this quantity does not depend on the point $i$ since all points are considered equally.
% Indeed, for the $b$-nice sampling we have that $c_1 \eqdef c_1(i) = \binom{n-1}{b-1}$.

%!TEX root = ../optimal_mini-batch.tex

%%%%%%%%%%%%%%%%%%%%%%%%%%%%%%%%%%%%%%%%%%%%%%%%%%%%%%%%%%%%%%%%%%%%%%%%%%%%%%%
\subsection{Path to the optimal mini-batch size}
\label{sub:path_to_the_optimal_mini_batch_size}
%%%%%%%%%%%%%%%%%%%%%%%%%%%%%%%%%%%%%%%%%%%%%%%%%%%%%%%%%%%%%%%%%%%%%%%%%%%%%%%

Our starting point is the following theorem taken from combining Theorem 3.6 and Eq.~(103) in~\citep{gower2018stochastic}\footnote{Note that $\lambda$ has been added to every smoothness constant since the analysis in~\citet{gower2018stochastic} depends on the $(L+\lambda)$-smoothness of $f$ and the $(L_B+\lambda)$-smoothness of the subsampled functions $f_B$.}.

\begin{theorem}\label{theo:conv}
 Consider the iterates $w^k$ of Algorithm~\ref{alg:JacSketch_SAGA}.
Let the step size be given by
\begin{equation}\label{eq:gammamaster}
\gamma  = \frac{1}{4} \frac{1}{\max \left\{ \cL+\lambda, \displaystyle\frac{1}{b} \frac{n-b}{n-1} \left( L_{\max}+\lambda\right) + \frac{\mu}{4}\frac{n}{b} \right\}}.
\end{equation}
Given an $\epsilon >0$, if $k \geq K_{\mathrm{iter}}(b)$ where
\begin{equation} \label{eq:itercomplex}
K_{\mathrm{iter}}(b)\eqdef \Bigg\{ \Bigg. \hspace{-2pt} \frac{4 (\cL \hspace{-2pt} + \hspace{-2pt} \lambda)}{\mu}, \frac{n}{b} + \frac{n-b}{n-1}\frac{4(L_{\max} \hspace{-2pt} + \hspace{-2pt} \lambda)}{b \mu} \hspace{-2pt} \Bigg. \Bigg\} \hspace{-2pt} \log\left( \frac{1}{\epsilon} \right),
\end{equation}
then
$
\E{\norm{w^k-w^*}^2} \leq \epsilon \, C,
$
where $C>0$ is a constant
\footnote{Specifically, let $J^0 \in \R^{d \times n}$ be the initiated Jacobian of the $b$-nice SAGA Algorithm~\ref{alg:JacSketch_SAGA}. Then this constant is
\[C \eqdef \norm{w^0 -w^*}^2 +  \tfrac{\gamma}{2 L_{\max}}\sum_{i \in [n]}\norm{J^0_{:i} -\nabla f(w^*)}^2.\]

% $C \eqdef \norm{w^0 -w^*}^2 +  \frac{\gamma}{2b L_{\max}}\sum_{i \in [n]}\norm{J^0_{:i} -\nabla f(w^*)}^2.$
}.
\end{theorem}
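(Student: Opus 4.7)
The plan is to reduce the statement to the generic JacSketch convergence theorem (Theorem 3.6 of \citet{gower2018stochastic}) by correctly identifying the smoothness constants of the regularized objective. Because $f_B(w) = \tfrac{1}{|B|}\sum_{i\in B}\phi_i(a_i^\top w) + \tfrac{\lambda}{2}\|w\|_2^2$, the generic framework must be instantiated with the \emph{regularized} smoothness constants $L+\lambda$ for $f$, $L_B+\lambda$ for $f_B$, $L_{\max}+\lambda$ for the per-sample bound, and $\cL+\lambda$ for the expected smoothness constant of $\nabla f_v$. The $+\lambda$ shift on $\cL$ follows because the deterministic quadratic regularizer contributes $\lambda(w-w^*)$ to $\nabla f_v(w)-\nabla f_v(w^*)$, whose squared norm is bounded by $2\lambda(f(w)-f(w^*))$ using strong convexity $\mu\geq\lambda$. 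Once these identifications are made, the bulk of the argument lies inside Gower et al., and the proof reduces to a parameter-matching exercise.

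Operationally, the second step is to invoke the standard JacSketch Lyapunov function
\[
\Psi^k \eqdef \|w^k - w^*\|^2 + \sigma\,\frac{1}{n}\sum_{i=1}^n \|J^k_{:i} - \nabla f_i(w^*)\|_2^2,
\]
and assemble its one-step recursion. The gradient-step part of the recursion uses Definition~\ref{def:expected_smoothness} to bound $\E_k\|g^k-\nabla f(w^*)\|_2^2$ in terms of $(\cL+\lambda)(f(w^k)-f(w^*))$ plus a Jacobian bias, then uses $\mu$-strong convexity to convert suboptimality into $\|w^k-w^*\|^2$. The Jacobian part uses the $b$-nice sampling property that each coordinate of $J^k$ is refreshed with probability $b/n$, so that the Jacobian error term contracts at rate $b/n$ while absorbing a $\tfrac{n-b}{n-1}(L_{\max}+\lambda)/b$-sized bias arising from $f_B$'s smoothness. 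Choosing the Lyapunov weight $\sigma = \gamma/(2L_{\max})$ (as recorded in the footnote on the constant $C$) balances the two contributions and produces a contraction $\E[\Psi^{k+1}] \leq (1-\rho)\Psi^k$.

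The third step is to solve for the largest $\gamma$ such that both parts of the contraction are simultaneously nonnegative; the standard JacSketch algebra then shows $\rho = \gamma\mu$, and the step-size constraint collapses exactly to the maximum in~\eqref{eq:gammamaster}. Unrolling the contraction with $(1-\rho)^k \leq e^{-\rho k}$ and requiring $\rho k \geq \log(1/\epsilon)$ yields $K_{\mathrm{iter}}(b) = \max\{4(\cL+\lambda)/\mu,\;n/b + \tfrac{n-b}{n-1}\tfrac{4(L_{\max}+\lambda)}{b\mu}\}\log(1/\epsilon)$, matching~\eqref{eq:itercomplex}. The main obstacle is not conceptual but bookkeeping: one must propagate the $+\lambda$ shift through every smoothness constant, verify that the $b$-nice specialization produces precisely the $\tfrac{1}{b}\tfrac{n-b}{n-1}$ and $\tfrac{n}{b}$ factors in~\eqref{eq:gammamaster} and~\eqref{eq:itercomplex}, and confirm that the Lyapunov weight choice $\sigma = \gamma/(2L_{\max})$ gives the constant $C$ exactly as stated in the footnote. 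No new estimate is required beyond what Definition~\ref{def:expected_smoothness} and the JacSketch master theorem already provide.
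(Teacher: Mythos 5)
Your proposal matches the paper's treatment: the paper does not prove Theorem~\ref{theo:conv} internally but obtains it by combining Theorem~3.6 and Eq.~(103) of \citet{gower2018stochastic} and adding $\lambda$ to every smoothness constant (as noted in the paper's footnote), which is exactly the parameter-matching reduction you describe. Your additional sketch of the JacSketch Lyapunov argument with weight $\sigma=\gamma/(2L_{\max})$ and contraction rate $\rho=\gamma\mu$ is consistent with the cited source and with the constant $C$ in the footnote, so no gap.
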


Through Theorem~\ref{theo:conv} we can now explicitly see how the expected smoothness constant $\cL$ controls both the step size and the resulting iteration complexity. This is why we need bounds on $\cL$ so that we can set the step size. In particular, we will show that the expected smoothness constant is a function of the mini-batch size $b$. Consequently so is the step size, the iteration complexity and the \emph{total complexity}. We denote $K_{\text{total}}$ the total complexity defined as the number of stochastic gradients computed, hence with~\eqref{eq:itercomplex},
\begin{equation} \label{eq:total_cplx_b_nice}
    \begin{aligned}[b]
        & K_{\mathrm{total}}(b) = b K_{\mathrm{iter}}(b)\\
        &\hspace{-5pt}= \max \hspace{-2pt} \Bigg\{ \Bigg. \hspace{-2pt} \frac{4b (\cL \hspace{-2pt} + \hspace{-2pt} \lambda)}{\mu}, n + \frac{n-b}{n-1}\frac{4(L_{\max} \hspace{-2pt} + \hspace{-2pt} \lambda)}{\mu} \hspace{-2pt} \Bigg. \Bigg\} \hspace{-2pt} \log\left( \frac{1}{\epsilon} \right).
    \end{aligned}
\end{equation}
Once we have determined $\cL$ as a function of $b$, we will calculate the mini-batch size $b^*$ that optimizes the total complexity
%Here, we aim to clarify our methodology.
%The goal of this work is to find an optimal mini-batch size $b = |B|$ associated to the lowest total complexity denoted by $K_{\text{total}}$ (amount of computed stochastic gradients per iteration times the iteration complexity $K_{\mathrm{iter}}$) for a mini-batch SAGA algorithm with $b$-nice sampling.
%In order to do so, we formulate our problem as the minimization of the total complexity
% <<<<<<< HEAD
$b^* \in \argmin_{b \in [n]} K_{\mathrm{total}}(b)$.

As we have shown in Lemma~\ref{lemma:master_lemma}, computing a precise bound on $\cL$ can be computationally intractable.
This is why we focus on finding upper bounds on $\cL$ that can be computed, but also tight enough to be useful.
To verify that our bounds are sufficiently tight, we will always have in mind the bounds $L \leq \cL \leq L_{\max}$ given in~\eqref{eq:cLbound}.
In particular, after expressing our bounds of $\cL=\cL(b)$ as a function of $b$,we would like the bounds~\eqref{eq:cLbound} to be attained for $\cL(1) = L_{\max}$ and $\cL(n) = L.$

\subsection{Expected smoothness} %previous name was same as the full sec
\label{sub:upper_bounds_of_expsmoothness}
%%%%%%%%%%%%%%%%%%%%%%%%%%%%%%%%%%%%%%%%%%%%%%%%%%%%%%%%%%%%%%%%%%%%%%%%%%%%%%%

All bounds we develop on $\cL$ are based on the following lemma, which is a specialization of~\eqref{lemma:master_lemma} for $b$-nice sampling.
%With bounds on the smoothness of the subsampled functions and a particular sampling in mind ($\tau$-nice), we can now provide an explicit formula for the expected smoothness constant.
%Finally, from Lemma~\ref{lemma:master_lemma} we deduce that for the $b$-nice sampling the expected smoothness constant $\cL$ has a more explicit formula:
\begin{proposition}[Expected smoothness constant] \label{prop:expsmooth}
    For the $b$-nice sampling, with $b \in [n]$, the expected smoothness constant is given by
    \begin{equation} \label{eq:expsmooth}
        \cL = \frac{1}{\binom{n-1}{b-1}} \max_{i=1,\ldots, n} \Bigg\{\sum_{\substack{B \subseteq [n] \,: |B| = b \wedge i \in B}}  L_B \Bigg\}\enspace.
    \end{equation}
\end{proposition}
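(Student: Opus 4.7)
The plan is to specialize Lemma~\ref{lemma:master_lemma} to the $b$-nice sampling. Recall that the master lemma asserts $\cL = \max_{i \in [n]} \E{L_v v_i}$ whenever each $f_i$ is convex and $f_v$ is $L_v$-smooth; convexity of the $\phi_i$ (hence of the $f_i$) is part of our GLM setup, so only the two ingredients $L_v$ and $v_i$ remain to be made explicit as functions of the random batch $S$.

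First, I would make explicit what $f_v$ and $v_i$ look like for the $b$-nice sampling vector $v = (n/b) \sum_{i \in S} e_i$. Plugging this $v$ into the definition of $f_v$ collapses the outer factor $1/n$ against $n/b$ and turns the sum into an average over $S$, so $f_v(\cdot) = \frac{1}{b}\sum_{i \in S} \phi_i(a_i^\top \cdot) \; (+ \tfrac{\lambda}{2}\|\cdot\|_2^2)$, i.e., $f_v = f_S$. Lemma~\ref{lemma:LB_def_1} then identifies $L_v = L_S$ on the event $\{S = B\}$. For the sampling vector coordinate, equation~\eqref{eq:vi} gives $v_i = (n/b)\mathds{1}_S(i)$.

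Next, I would compute the expectation in the master lemma directly by conditioning on the realization of $S$:
\begin{equation*}
\E{L_v v_i} \;=\; \sum_{\substack{B\subseteq[n]\\|B|=b}} \Prb{S = B}\, L_B \cdot \frac{n}{b}\, \mathds{1}_B(i) \;=\; \frac{n}{b\binom{n}{b}} \sum_{\substack{B \subseteq [n]\\|B|=b,\; i\in B}} L_B.
\end{equation*}
Applying the standard identity $\tfrac{n}{b}\binom{n-1}{b-1} = \binom{n}{b}$ rewrites the prefactor as $1/\binom{n-1}{b-1}$, and taking the maximum over $i$ yields exactly~\eqref{eq:expsmooth}.

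There is no real obstacle here beyond bookkeeping: the only point that requires a small amount of care is confirming that the master lemma's hypothesis $f_v$ is $L_v$-smooth translates to $L_v = L_S$, which is immediate once one observes that $f_v = f_S$ for the $b$-nice sampling vector (the factor $n/b$ coming from $v_i$ exactly compensates the $1/n$ in the definition of $f_v$, leaving a clean average over the batch). The rest of the argument is a routine probability computation combined with one combinatorial identity.
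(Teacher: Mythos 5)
Your proposal is correct and follows essentially the same route as the paper's own proof: specialize Lemma~\ref{lemma:master_lemma} to the $b$-nice sampling vector, identify $f_v = f_S$ and $v_i = \tfrac{n}{b}\mathds{1}_S(i)$, expand the expectation over the $\binom{n}{b}$ equiprobable batches, and simplify via $\binom{n}{b} = \tfrac{n}{b}\binom{n-1}{b-1}$ before taking the maximum over $i$. No gaps.
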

\begin{proof}
    Let $S$ the $b$-nice sampling as defined in \Cref{def:bnice} and let $v =  \frac{n}{b} \sum_{j \in S} e_j $ be its corresponding sampling vector.
    Note that
    \[f_v(w) = \frac{1}{n}\sum_{i \in [n]} f_i(w) v_i =
     \frac{1}{b}\sum_{i \in S} f_i(w) = f_S(w).
    \]
    Finally from Lemma~\ref{lemma:master_lemma}, we have that:
    \begin{align*}
        % \cL & =  \E{ L_v v_i} \, \overset{\eqref{eq:vi}}{=}\, \E{L_S \frac{n}{b}  \mathds{1}_{\{i \in S\}}}\\
        \cL & =  \E{ L_v v_i} \, \overset{\eqref{eq:vi}}{=}\, \mathrm{E} \bigg[L_S \frac{n}{b}  \mathds{1}_{\{i \in S\}}\bigg]\\
              &= \frac{1}{\binom{n}{b}}  \frac{n}{b} \sum_{B \subseteq [n] \,:\, |B| = b} L_B  \mathds{1}_{\{i \in B\}} \\
              &= \frac{1}{\binom{n}{b}} \frac{n}{b} \sum_{\substack{B \subseteq [n] \,:\\ |B| = b \wedge i \in B}} L_B = \frac{1}{\binom{n-1}{b-1}} \sum_{\substack{B \subseteq [n] \,:\\ |B| = b \wedge i \in B}} L_B.
    \end{align*}
    Taking the maximum over all $i\in [n]$ gives the result.
\end{proof}
%    Finally, we assimilate the expected smoothness constant to this upper-bound as explained in \Cref{rem:expected_smoothness}.
The first bound we present is  technically the simplest to derive, which is why we refer to it as the \emph{simple bound}.
\begin{theorem}[Simple bound] \label{thm:upper_bound_exp_smooth_simple_combination}
    % For all sampling $S$, for $b\in [n]$ we have that
    % \begin{equation} \label{eq:cLbndoverlineL}
    %     \cL \leq \frac{c_1 -c_2 }{|B| c_1}L_{\max} +\frac{c_2 n }{|B| c_1}\Lbarconst \enspace,
    % \end{equation}
    % where, in the same way we defined $c_1$, we have
    % \begin{equation}
    %     c_2 \eqdef |\{ B \in \cB \; : \; i,j\in B\}| \enspace,
    % \end{equation}
    % for all $i,j \in [n].$

    % In particular, when $S$ is a $b$-nice sampling the bound~\eqref{eq:cLbndoverlineL} becomes
    For a $b$-nice sampling $S$, for $b\in [n]$, we have that
    \begin{equation} \label{eq:cLbndLmacbarLnice}
        \cL \leq \cL_{\simple} (b) \eqdef \frac{n }{b}\frac{b -1}{n-1} \Lbarconst + \frac{1}{b} \frac{n-b}{n-1} L_{\max} \enspace,
    \end{equation}
\end{theorem}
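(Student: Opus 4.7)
The plan is to start from the exact formula for $\cL$ in Proposition~\ref{prop:expsmooth} and replace the intractable spectral quantity $L_B$ by a tractable sum of individual smoothness constants, then do a double-counting argument over subsets.

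First, I would derive a sub-additive upper bound on $L_B$. By Lemma~\ref{lemma:LB_def_1} we have $L_B = (U/b)\,\lambda_{\max}\bigl(\sum_{j\in B} a_j a_j^{\top}\bigr)$. Since each $a_j a_j^{\top}$ is positive semi-definite and $\lambda_{\max}$ is sub-additive on the cone of Hermitian matrices, and $\lambda_{\max}(a_j a_j^{\top}) = \|a_j\|_2^{2}$, we get
\begin{equation*}
L_B \;\leq\; \frac{1}{b}\sum_{j \in B} U\|a_j\|_2^{2} \;=\; \frac{1}{b}\sum_{j \in B} L_j,
\end{equation*}
where the last equality uses that the individual smoothness constant of $\phi_j(a_j^\top w)$ is exactly $L_j = U\|a_j\|_2^{2}$ under Assumption~\ref{as:bounded_second_derivatives}.

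Second, I would fix an index $i \in [n]$ and plug this bound into the right-hand side of \eqref{eq:expsmooth}, then swap the order of summation:
\begin{equation*}
\sum_{\substack{B\subseteq[n]\\|B|=b,\; i\in B}}\!\!\!\!L_B
\;\leq\; \frac{1}{b}\sum_{\substack{B\subseteq[n]\\|B|=b,\; i\in B}}\sum_{j \in B} L_j
\;=\; \frac{1}{b}\Bigl(L_i\,N_i + \sum_{j\neq i} L_j\,N_{ij}\Bigr),
\end{equation*}
where $N_i = |\{B:|B|=b,\,i\in B\}| = \binom{n-1}{b-1}$ counts the batches containing $i$, and $N_{ij} = |\{B:|B|=b,\,\{i,j\}\subseteq B\}| = \binom{n-2}{b-2}$ counts those containing both $i$ and $j$. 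Dividing by $\binom{n-1}{b-1}$ and using $\binom{n-2}{b-2}/\binom{n-1}{b-1} = (b-1)/(n-1)$, I obtain
\begin{equation*}
\frac{1}{\binom{n-1}{b-1}}\sum_{\substack{B:\,|B|=b\\ i\in B}}\!\!\!L_B
\;\leq\; \frac{1}{b}\Bigl(L_i + \frac{b-1}{n-1}\sum_{j\neq i} L_j\Bigr)
\;=\; \frac{1}{b}\Bigl(\frac{n-b}{n-1}L_i + \frac{n(b-1)}{n-1}\overline{L}\Bigr),
\end{equation*}
after writing $\sum_{j\neq i} L_j = n\overline{L} - L_i$ and grouping the $L_i$ terms. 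Finally, taking the maximum over $i\in[n]$ replaces $L_i$ by $L_{\max}$, which yields exactly $\cL_{\simple}(b)$ as defined in \eqref{eq:cLbndLmacbarLnice}.

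I do not expect a serious obstacle: the spectral step is a one-line application of sub-additivity of $\lambda_{\max}$ on PSD matrices, and the rest is purely combinatorial. The only place to be careful is the double-counting identity and the simplification of the ratio of binomial coefficients; a quick sanity check against the two extremes is reassuring, since $\cL_{\simple}(1) = L_{\max}$ matches the known bound $\cL \leq L_{\max}$ from \eqref{eq:cLbound}, while $\cL_{\simple}(n) = \overline{L}$ is a valid (though not tight, since $L \leq \overline{L}$) upper bound at the full-batch extreme.
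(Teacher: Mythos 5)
Your proposal is correct and follows essentially the same route as the paper's proof: the same sub-additivity bound $L_B \leq \frac{1}{b}\sum_{j\in B} L_j$, the same double-counting of batches containing $i$ (and pairs $i,j$), and the same simplification of the binomial ratio before taking the maximum over $i$. The only cosmetic difference is that the paper treats $b=1$ as a separate trivial case, whereas your counting with $\binom{n-2}{b-2}=0$ handles it uniformly.
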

\begin{proof}
The proof, given in \Cref{sub:proof_of_the_simple_bound}, starts by using the that $ L_B \leq \frac{1}{b}\sum_{j\in B} L_j$ for all subsets $B$, which follows from repeatedly applying~\Cref{lem:lemma1} in the appendix.  The remainder of the proof follows by straightforward counting arguments.
\end{proof}

The previous bound interpolates, respectively for $b =1$ and $b = n$, between $L_{\max}$ and $\Lbarconst.$ On the one hand, we have that $\cL_{\simple}(b)$ is a good bound for when $b$ is small, since $\cL_{\simple}(1) = L_{\max}$. Though $\cL_{\simple}(b)$ may not be a good bound for large $b$, since $\cL_{\simple}(n) = \overline{L} \quad {\geq} \quad L$, thanks to \eqref{eq:LoverlineLLmax}.
Thus $\cL_{\simple}(b)$ does not achieve the left-hand side of~\eqref{eq:cLbound}.
Indeed $\overline{L}$ can be far from $L$.
For instance\footnote{We numerically explore such extreme settings in~\Cref{sec:numerical_study}.}, if $f(w) = \frac{1}{n} \sum_{i \in [n]} \frac{1}{2}(a_i^\top w -b_i)^2$ is a quadratic function, then we have that $\overline{L} = \frac{1}{n} \Tr{AA^\top}$ and $L = \frac{1}{n} \lambda_{\max}(AA^\top)$.
Thus if the eigenvalues of $AA^\top$ are all equal then $\overline{L} = d L$. Alternatively, if one eigenvalue is significantly larger than the rest then $\overline{L} \approx L$.

%
%$\cL_{\simple}$ should be close to the true value of $\cL$, since $\cL$ is close to $\Lmax$ for a small $b$.
%On the other hand, for $b$ close to $n$, the accuracy of this bound highly depends on the gap between $L$ and $\Lbarconst$.
%% For instance, if $n=d$, this gap will be null for $A=I_n$, where $A$ stands for the transpose of the full design matrix.
%In particular, this gap could be as small as desired by setting $L_i$  arbitrarily large for one $i \in [n]$, so that $L_i \gg L_j, \forall j \in [n]\setminus \{i\}$.
%Indeed, this implies that $\Lbarconst \approx \frac{L_i}{n} = \frac{\Lmax}{n}$.
%Then, one easily gets\footnote{The analysis relies on the \Cref{lem:lemma_smoothness_cst_ineq}-\textit{(iii)} in the Appendix.} $L \approx \Lbarconst \approx \frac{\Lmax}{n}$.
%On the other extreme, this gap between $L$ and $\Lbarconst$ can be large if all the individual smoothness constants are close, \ie $L_i \approx L_j \approx \Lmax \, \forall i,j \in [n]$.
%Thus, $\Lbarconst \approx \Lmax$  and $\frac{\Lmax}{n} \leq L \leq \Lmax$.
%So this gap can be large (of order $n$) as $n$ increases.
%\nidham{@Robert: Is the above argumentation right?}

% \begin{remark}
%     One can note that we have verified experimentally in \cref{appendix:additional_experiments} that this upped bound is close to the true expected smoothness constant when $L \approx \Lbarconst$.
%     This can be the case if the spectrum is homogeneous.
%     \nidham{Update this comment later.}
% \end{remark}
Due to this shortcoming of $\cL_{\simple},$ we now derive the \emph{Bernstein bound}. This bound explicitly depends on $L$ instead of $\Lbarconst$, and is developed through a specialized variant of a matrix Bernstein inequality~\citep{Tropp2012, tropp2015introduction} for sampling \emph{without} replacement in~\cref{appendix:matrix_bernstein_inequality}.
%We call this the \emph{Bernstein bound} the latter since its proof, developped in
%\cref{appendix:matrix_bernstein_inequality},
%makes use a matrix Bernstein inequality we adapted from \citep{Tropp2012, tropp2015introduction} for sampling \emph{without} replacement.
\begin{theorem}[Bernstein bound] \label{thm:cL_bound_direct_expectation}
    The expected smoothness constant is upper bounded by
    \begin{equation}
     \label{eq:cL_bound_direct_expectation}
        \cL \leq \cL_{\text{Bernstein}} (b)
        \eqdef 2 \tfrac{b -1}{b}\tfrac{n}{n-1} L \hspace{-1pt} + \hspace{-1pt} \tfrac{1}{b} \hspace{-1pt} \left( \tfrac{n-b}{n-1} \hspace{-2pt} + \hspace{-2pt} \tfrac{4}{3} \log d \hspace{-1pt} \right) \hspace{-2pt} L_{\max}.
\end{equation}
\end{theorem}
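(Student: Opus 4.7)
The starting identity is \Cref{prop:expsmooth}, which can be rewritten as
\[
\cL \;=\; \max_{i\in[n]}\, \mathbb{E}\!\left[L_S \mid i\in S\right],
\]
for $S$ a $b$-nice sampling. Letting $Y_j \eqdef U\,a_j a_j^\top \succeq 0$, \Cref{lemma:LB_def_1} gives $L_S = \tfrac{1}{b}\lambda_{\max}\!\bigl(\sum_{j\in S}Y_j\bigr)$, together with $\|Y_j\| = L_j \le L_{\max}$ and $\bigl\|\sum_{j\in[n]}Y_j\bigr\| = nL$. My goal is thus to bound $\mathbb{E}[\lambda_{\max}(\sum_{j\in S}Y_j)\mid i\in S]$ and divide by $b$.

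The first observation is that, conditional on $i \in S$, the set $S\setminus\{i\}$ is a $(b{-}1)$-nice sampling over $[n]\setminus\{i\}$, so the conditional mean is
\[
M_i \eqdef \mathbb{E}\!\left[\sum_{j\in S}Y_j \,\Big|\, i\in S\right] \;=\; Y_i + \tfrac{b-1}{n-1}\sum_{j\neq i}Y_j \;=\; \tfrac{n-b}{n-1}Y_i + \tfrac{b-1}{n-1}\sum_{j\in[n]}Y_j.
\]
Sub-additivity of $\lambda_{\max}$ on positive semidefinite summands then yields $\lambda_{\max}(M_i)\le \tfrac{n-b}{n-1}L_{\max} + \tfrac{(b-1)n}{n-1}L$, already accounting for the full $L_{\max}$ coefficient in~\eqref{eq:cL_bound_direct_expectation} as well as one copy of $\tfrac{(b-1)n}{n-1}L$. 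Writing $\lambda_{\max}(X)\le \lambda_{\max}(\mathbb{E} X) + \|X-\mathbb{E} X\|$ reduces the remaining task to bounding the deviation $\Delta_i \eqdef \bigl\|\sum_{j\in S}Y_j - M_i\bigr\|$ in expectation.

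For $\Delta_i$ I would apply the matrix Bernstein inequality for sampling without replacement proved in \Cref{appendix:matrix_bernstein_inequality}. The ingredients are the uniform norm bound $R = L_{\max}$ and a variance proxy obtained from the rank-one identity $Y_j^2 = L_j\,Y_j \preceq L_{\max}Y_j$, which gives $\bigl\|\sum_{j\in[n]}Y_j^2\bigr\|\le L_{\max}\cdot nL$ and hence a variance parameter $\sigma^2 \le \tfrac{(b-1)n}{n-1}L\,L_{\max}$ for the $(b{-}1)$-element sub-sampling. The Bernstein estimate returns $\mathbb{E}[\Delta_i]\le \sqrt{2\sigma^2\log d}+\tfrac{R}{3}\log d$; applying the scalar AM--GM $\sqrt{2pq}\le p+q$ with $p = \tfrac{(b-1)n}{n-1}L$ and $q = L_{\max}\log d$ converts the square-root term into $\tfrac{(b-1)n}{n-1}L + L_{\max}\log d$, so that $\mathbb{E}[\Delta_i]\le \tfrac{(b-1)n}{n-1}L + \tfrac{4}{3}L_{\max}\log d$.

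Summing the bounds on $\lambda_{\max}(M_i)$ and $\mathbb{E}[\Delta_i]$, dividing by $b$, and taking the maximum over $i$ (the bound is $i$-independent) produces exactly~\eqref{eq:cL_bound_direct_expectation}. The main obstacle is the matrix Bernstein step itself: sampling without replacement breaks the usual independent-sum framework, so the tailored inequality of \Cref{appendix:matrix_bernstein_inequality} and its variance proxy must be calibrated precisely so that the two $\tfrac{(b-1)n}{(n-1)b}L$ contributions (one from the mean, one recovered from the square-root term via AM--GM) combine into the factor $2$, and so that the logarithmic $L_{\max}$-term lands on the prescribed constant $\tfrac{4}{3}$ after summing the AM--GM contribution $L_{\max}\log d$ with the Bernstein linear tail $\tfrac{1}{3}L_{\max}\log d$.
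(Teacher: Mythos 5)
Your proposal is correct and follows essentially the same route as the paper's own proof: conditioning on $i\in S$ to compute the mean (the paper's \Cref{lem:lemma_expect_B_minus_i}), splitting off the deviation from that mean (\Cref{lem:lemma_bound_expect_B,lem:practicalmotivate}), applying the without-replacement matrix Bernstein inequality of \Cref{appendix:matrix_bernstein_inequality} with the same uniform bound $L_{\max}$ and variance proxy $\tfrac{(b-1)n}{n-1}L\,L_{\max}$, and absorbing the square root via $\sqrt{2ab}\le a+b$. The only cosmetic differences are that you phrase the deviation in spectral norm rather than as $\lambda_{\max}$ of the centered sum (the one-sided $\lambda_{\max}$ bound, which is what \Cref{thm:mat_bernstein_expect_without_replac} actually delivers, is all that is needed and is what the paper uses) and that you defer the $1/b$ normalization to the end.
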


% <<<<<<< HEAD
Checking again the bounds of $\cL_{\text{Bernstein}} (b) $, we have on the one hand that $\cL_{\text{Bernstein}} (1) = \left( 1 + \tfrac{4}{3} \log d \right) L_{\max} \geq L_{\max},$
 thus there is a little bit of slack for $b$ small.
On the other hand, using $\tfrac{1}{n}L_{\max} \leq L$ (see \Cref{lem:lemma_smoothness_cst_ineq} in appendix),
% =======
% Checking again the bounds of $\cL_{\text{Bernstein}} (b) $, we have on the one hand that
% \[\cL_{\text{Bernstein}} (1) = \left( 1 + \frac{4}{3} \log d \right) L_{\max} \geq L_{\max},\] thus there is a little bit of slack for $b$ small. On the other hand, using that $\frac{L_{\max}}{n} \leq L$ as proven in~\Cref{lem:lemma_smoothness_cst_ineq} in the appendix,
% >>>>>>> 499b6ea7dbc3f7a4f4a6c924c6f3c85f2ed60f15
 we have that
 \[\cL_{\text{Bernstein}} (n) = 2L + \frac{1}{n}\frac{4}{3} \log d \, L_{\max} \leq \left(2 +\frac{4}{3} \log d\right)L, \]
which depends only logarithmically on $d$. Thus we expect the \emph{Bernstein bound} to be more useful in the large $d$ domains, as compared to the \emph{simple bound}.
We confirm this numerically in~\Cref{sub:experiment_1_upper_bounds_of_the_expected_smoothness_constant}.

   %The proof starts from \Cref{eq:expsmooth} defining the expected smoothness constant. It interprets $\cL$ as an expectation (see \Cref{lemma:master_lemma}), bounded using a matrix concentration inequality.

\begin{remark}
 The simple bound is relatively tight for $b$ small, while the Bernstein bound is better for large $b$ and large $d$. Fortunately,  we can obtain a more refined bound by taking the minimum of the \emph{simple} and the \emph{Bernstein bounds}. This is highlighted numerically in \Cref{sec:numerical_study}.
\end{remark}

Next we propose a \emph{practical estimate} of $\cL$ that is tight for both small and large mini-batch sizes.
\begin{definition}[Practical estimate] \label{def:practical_bound}
    \begin{equation} \label{eq:practical_bound}
        \cL_{\text{practical}} (b) \eqdef \frac{n }{b}\frac{b -1}{n-1} L + \frac{1}{b} \frac{n-b}{n-1} L_{\max} \enspace.
    \end{equation}
\end{definition}
Indeed $\cL_{\text{practical}} (1) = L_{\max}$ and $\cL_{\text{practical}} (n) =L,$ achieving both limits of~\eqref{eq:cLbound}. The downside to $\cL_{\text{practical}} (b)$ is that it is not an upper bound of $\cL$. Rather, we are able to show that $\cL_{\text{practical}} (b)$ is very close to a valid smoothness constant, but it can be slightly smaller. Our theoretical justification for using $\cL_{\text{practical}} (b)$ comes from a mid step in the proof of the Bernstein bound which is captured in the next lemma.
\begin{restatable}{lemma}{practicalmotivate} \label{lem:practicalmotivate}
% \begin{lemma} \label{lem:practicalmotivate}
    Let  $a_j \in \R^d$ for $j \in [n]$ and let $S^i$ be a $(b-1)$-nice sampling over $[n]\setminus\{i\},$ for every $i \in [n]$. It follows that
    \begin{equation}\label{eq:withpracticalmotivate}
      \cL \leq \cL_{\text{practical}} (b)
      +U \max_{i\in[n]}\E{\lambda_{\max} \left( N_i \right)}\enspace,
      % \tfrac{n }{b}\tfrac{b -1}{n-1} L \!+\! \tfrac{1}{b} \tfrac{n-b}{n-1} L_{\max}
    \end{equation}
    with $N_i \eqdef \frac{1}{b} \sum_{ j \in S^i} a_j a_j^\top - \frac{1}{b} \frac{b -1}{n-1} \sum_{j\in [n]\setminus\{i\}} a_j a_j^\top$.
% \end{lemma}
\end{restatable}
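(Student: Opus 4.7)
The plan is to start from the closed form for $\cL$ in Proposition~\ref{prop:expsmooth} and rewrite it as a conditional expectation over a $(b-1)$-nice sampling. Specifically, since among the $\binom{n-1}{b-1}$ batches $B \subseteq [n]$ with $|B|=b$ and $i \in B$ each is equally likely, we may identify such a $B$ with $\{i\}\cup S^i$ where $S^i$ is the $(b-1)$-nice sampling over $[n]\setminus\{i\}$. Plugging in the formula for $L_B$ from \Cref{lemma:LB_def_1} this gives
\begin{equation*}
  \cL \;=\; \max_{i\in[n]}\E{L_{\{i\}\cup S^i}} \;=\; \frac{U}{b}\max_{i\in[n]}\E{\lambda_{\max}\!\left(a_ia_i^\top + \sum_{j\in S^i} a_ja_j^\top\right)}.
\end{equation*}

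Next I would substitute the identity
\begin{equation*}
  \sum_{j\in S^i} a_ja_j^\top \;=\; b\,N_i + \tfrac{b-1}{n-1}\sum_{j\in[n]\setminus\{i\}} a_ja_j^\top,
\end{equation*}
which is just the definition of $N_i$ rearranged, and then collect the non-random part by rewriting the sum over $[n]\setminus\{i\}$ in terms of the full sum over $[n]$. An easy calculation gives
\begin{equation*}
  \frac{1}{b}a_ia_i^\top + \frac{b-1}{b(n-1)}\sum_{j\in[n]\setminus\{i\}} a_ja_j^\top \;=\; \frac{n-b}{b(n-1)}\,a_ia_i^\top + \frac{b-1}{b(n-1)}\sum_{j=1}^n a_ja_j^\top,
\end{equation*}
where the coefficients $\tfrac{n-b}{b(n-1)}$ and $\tfrac{b-1}{b(n-1)}$ are precisely what will produce $\cL_{\text{practical}}(b)$ after multiplying by $U$.

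Then I would apply subadditivity of $\lambda_{\max}$ for symmetric (in fact positive semidefinite) matrices: $\lambda_{\max}(X+Y+Z) \leq \lambda_{\max}(X)+\lambda_{\max}(Y)+\lambda_{\max}(Z)$. The first piece contributes $\tfrac{n-b}{b(n-1)}\lambda_{\max}(a_ia_i^\top) = \tfrac{n-b}{b(n-1)}\|a_i\|_2^2$, which after multiplying by $U$ is at most $\tfrac{1}{b}\tfrac{n-b}{n-1}L_{\max}$. The second piece contributes $\tfrac{b-1}{b(n-1)}\lambda_{\max}(\sum_j a_ja_j^\top)$, which after multiplying by $U$ equals $\tfrac{n}{b}\tfrac{b-1}{n-1}L$ by definition of $L$. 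The sum of these two terms is exactly $\cL_{\text{practical}}(b)$. The third piece is $U\,\lambda_{\max}(N_i)$, which is the random remainder. Taking expectation and then the maximum over $i\in[n]$ yields~\eqref{eq:withpracticalmotivate}.

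The main obstacle is bookkeeping in the algebraic rearrangement of the $(b-1)/(n-1)$ weights to arrive at the exact coefficients in $\cL_{\text{practical}}(b)$; everything else is a direct application of Proposition~\ref{prop:expsmooth}, \Cref{lemma:LB_def_1}, and the subadditivity of $\lambda_{\max}$ on symmetric matrices. No concentration inequality is needed at this stage—that would only come in if one wanted to further bound $\E{\lambda_{\max}(N_i)}$ (which is the jump from this lemma to the Bernstein bound of \Cref{thm:cL_bound_direct_expectation}).
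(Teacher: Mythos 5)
Your proposal is correct and follows essentially the same route as the paper: the paper also rewrites $\cL$ as $\max_i U\,\mathbb{E}\,\lambda_{\max}\bigl(\tfrac{1}{b}\sum_{j\in S^i\cup\{i\}}a_ja_j^\top\bigr)$, splits off the (deterministic) expectation $a_ia_i^\top+\tfrac{b-1}{n-1}\sum_{j\neq i}a_ja_j^\top$ so that the centered remainder is exactly $N_i$, and applies subadditivity of $\lambda_{\max}$ to recover $\cL_{\text{practical}}(b)$ plus the fluctuation term. The only cosmetic remark is that $N_i$ is symmetric but not positive semidefinite, so your parenthetical should be dropped; subadditivity for symmetric matrices (Weyl) is all that is needed, and that is what both you and the paper use.
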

\begin{proof}
The proof is given in~\Cref{proof:proof_practical}.
\end{proof}
Lemma~\ref{lem:practicalmotivate}  shows that
the expected smoothness constant is upper-bounded by
$\cL_{\text{practical}}(b)$ and an additional term.
In this additional term we have the largest eigenvalue of a random matrix.
% $N^i$
This matrix is zero in expectation, and we also find that its eigenvalues oscillate around zero.
Indeed, we provide extensive experiments in~\Cref{sec:numerical_study} confirming that $\cL_{\text{practical}}(b)$ is very close to $\cL$ given in~\eqref{eq:expsmooth}.

%
%Our hope is that this \emph{practical estimate} is a relatively sharp proxy of $\cL$ because it nicely interpolates between $L_{\max}$ and $L$, exactly like $\cL$.
%In~\Cref{sec:numerical_study}, numerical experiments show that it is a valid approximation for artificially generated datasets with a small number of points $n$.

%!TEX root = ../optimal_mini-batch.tex

%%%%%%%%%%%%%%%%%%%%%%%%%%%%%%%%%%%%%%%%%%%%%%%%%%%%%%%%%%%%%%%%%%%%%%%%%%%%%%%
%%%%%%%%%%%%%%%%%%%%%%%%%%%%%%%%%%%%%%%%%%%%%%%%%%%%%%%%%%%%%%%%%%%%%%%%%%%%%%%
%\section{Larger step and mini-batch sizes}
\label{sec:larger_step_and_mini_batch_sizes}
%%%%%%%%%%%%%%%%%%%%%%%%%%%%%%%%%%%%%%%%%%%%%%%%%%%%%%%%%%%%%%%%%%%%%%%%%%%%%%%
%%%%%%%%%%%%%%%%%%%%%%%%%%%%%%%%%%%%%%%%%%%%%%%%%%%%%%%%%%%%%%%%%%%%%%%%%%%%%%%

%\input{sections/4_1_larger_step_sizes}
%!TEX root = ../optimal_mini-batch.tex

%%%%%%%%%%%%%%%%%%%%%%%%%%%%%%%%%%%%%%%%%%%%%%%%%%%%%%%%%%%%%%%%%%%%%%%%%%%%%%%
\section{Optimal Mini-Batch Sizes}
\label{sub:optimal_mini_batch_size}
%%%%%%%%%%%%%%%%%%%%%%%%%%%%%%%%%%%%%%%%%%%%%%%%%%%%%%%%%%%%%%%%%%%%%%%%%%%%%%%

%\rob{And using the coresponding step size~\eqref{eq:gammamaster}}

Now that we have established the \emph{simple} and the \emph{Bernstein bounds}, we can minimize the total complexity~\eqref{eq:total_cplx_b_nice} in the mini-batch size.

%As explained in \Cref{sub:path_to_the_optimal_mini_batch_size},w e look for the mini-batch size minimizing \Cref{eq:total_cplx_b_nice}.
%Now that we have established the \emph{simple} and the \emph{Bernstein bounds}, we can upper bound the total complexity.
For instance for the \emph{simple bound}, given $\epsilon >0$ and plugging in~\eqref{eq:cLbndLmacbarLnice} into~\eqref{eq:total_cplx_b_nice} gives
\begin{align*}
  K_{\mathrm{total}}(b)
  & \leq  \max \left\{ g_{\text{simple}}(b), h(b) \right\} \log \left(\frac{1}{\epsilon}\right)\enspace,
\end{align*}
where
$g_{\text{simple}}(b) \eqdef \tfrac{4\left( n \Lbarconst - L_{\max} +(n-1)\lambda\right)}{\mu (n-1)}b + \tfrac{4 n \left( L_{\max} - \Lbarconst \right)}{\mu (n-1)},$
and
$h(b) \eqdef -\tfrac{4(L_{\max}+\lambda)}{\mu (n-1)}b  + n \left( 1 + \tfrac{1}{n-1}\tfrac{4(L_{\max}+\lambda)}{\mu} \right).$

\begin{remark}
    The right-hand side term $h(b)$ is common to all our bounds since it does not depend on $\cL$. It linearly decreases from $h(1) = n + \frac{4(L_{\max}+\lambda)}{\mu}$ to $h(n) = n$.
\end{remark}

\begin{figure}[h!]
    \vskip 0.2in
    \begin{center}
        \begin{subfigure}[b]{\columnwidth}
            \includegraphics[width=0.85\textwidth]{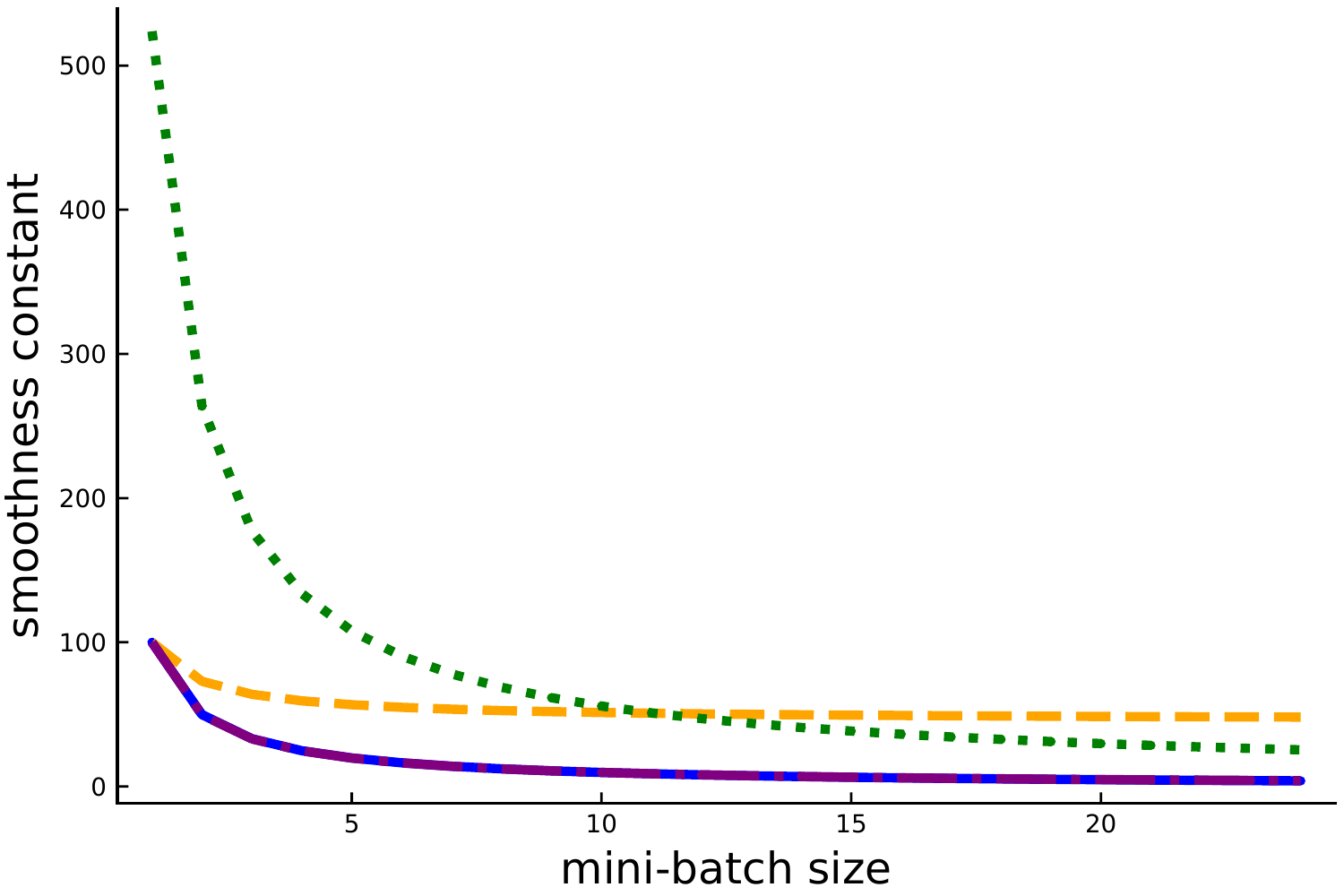}
        \end{subfigure}\\[-0.1cm]
        \begin{subfigure}[b]{\columnwidth}
            \centerline{\includegraphics[width=0.85\textwidth]{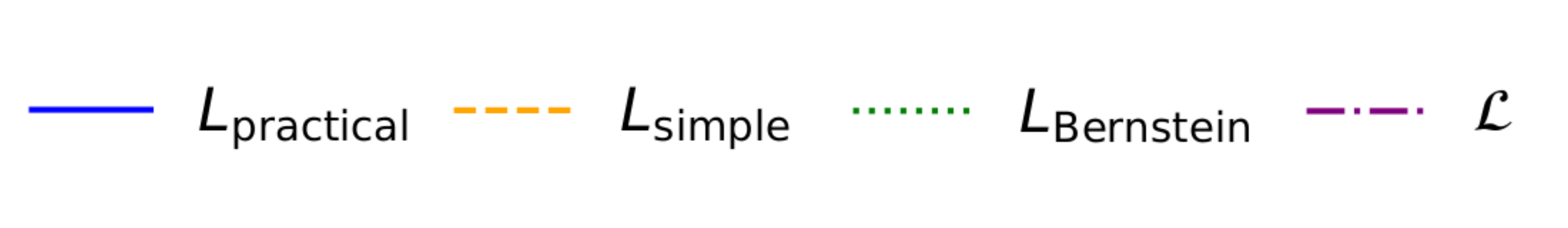}}
        \end{subfigure}
        \caption{Expected smoothness constant $\cL$ and its upper-bounds as a function of the mini-batch size $b$ for ridge regression on the unscaled \emph{staircase eigval} dataset ($\lambda = 10^{-3}$).}
        \label{fig:intro_step_size_example}
    \end{center}
    \vskip -0.2in
\end{figure}
We note that $g_{\text{simple}}(b)$ is a linearly increasing function of $b$, because $L_{\max} \leq n\Lbarconst$ (as proven in~\Cref{lem:lemma_smoothness_cst_ineq}).
One can easily verify that $g_{\text{simple}}(b)$ and $h(b)$ cross, as presented in \Cref{fig:optimal_mini_batch_diagram}, by looking at initial and final values:
\begin{itemize}[leftmargin=*]
    \item At $b\! =\! 1$, $g_{\text{simple}}(1) = \tfrac{4}{\mu}(L_{\max}+\lambda) = h(1) - n$. So, $g_{\text{simple}}(1) \leq h(1)$.
    \item At $b\! =\! n$, $g_{\text{simple}}(n) = \tfrac{4(\Lbarconst+\lambda)}{\mu}n = \tfrac{4(\Lbarconst+\lambda)}{\mu} h(n)$. Since $\Lbarconst \geq \mu$, we get $g_{\text{simple}}(n) \geq h(n)$.
\end{itemize}

% \begin{figure}[ht]
% \vskip 0.2in
% \begin{center}
% \centerline{\includegraphics[width=\columnwidth]{optimal_mini_batch_diagram.pdf}}
% \caption{Diagram of the optimal mini-batch size $b_{\text{simple}}^*$ for the \emph{simple bound}.}
% \label{fig:optimal_mini_batch_diagram}
% \end{center}
% \vskip -0.2in
% \end{figure}

Consequently, solving $g_{\text{simple}}(b)=h(b)$ in $b$ gives the optimal mini-batch size
\begin{equation} \label{eq:opt_tau_1st_bound}
    b_{\text{simple}}^* = \left\lfloor 1 + \frac{\mu (n-1)}{4 (\Lbarconst+\lambda)} \right\rfloor\enspace.
\end{equation}
For the \emph{Bernstein bound}, plugging \eqref{eq:cL_bound_direct_expectation} into \eqref{eq:total_cplx_b_nice} leads to
\begin{equation}
K_{\text{total}}(b) \leq  \max \left\{ g_{\text{Bernstein}}(b), h(b) \right\} \log \left(\frac{1}{\epsilon}\right) \enspace,
\end{equation}
where
\begin{align*}
    g_{\text{Bernstein}}(b) &\eqdef \tfrac{4}{\mu (n-1)} \left( 2nL - L_{\max} +(n-1)\lambda\right) b \\
    &\quad + \tfrac{4n}{\mu (n-1)} \left( L_{\max} -2L \right) + \tfrac{16}{3\mu} \log (d) L_{\max} \enspace.
\end{align*}
The function $g_{\text{Bernstein}}$ is also linearly increasing in $b$ and its initial and final values are
\begin{itemize}[leftmargin=*]
    \item At $b\! =\! 1$, $g_{\text{Bernstein}}(1) = (1+\frac{4}{3} \log d) \tfrac{4L_{\max}}{\mu} + \tfrac{4\lambda}{\mu}.$
    \item At $b\! =\! n$, $g_{\text{Bernstein}}(n) \! = \! n \tfrac{4(2L+\lambda)}{\mu} \! +\!  \tfrac{16}{3\mu} {(L_{\max} \! + \! \lambda)}\log (d)$. Since $L \geq \mu$, we get $g_{\text{Bernstein}} (n) \geq h(n)$.
\end{itemize}
% When computing $g_{\text{Bernstein}}(1) = (1+\frac{4}{3} \log d) \tfrac{4L_{\max}}{\mu} + \tfrac{4\lambda}{\mu}$
%and $g_{\text{Bernstein}}(n) = n \tfrac{4(2L+\lambda)}{\mu} + \tfrac{16}{3\mu}  {(L_{\max}+\lambda)}\log (d)$,
% we note that $g_{\text{Bernstein}} (n) \geq h(n)$ since $L \geq \mu$.
Yet, it is unclear whether $g_{\text{Bernstein}} (1)$ is dominated by $h(1)$. This is why we need to distinguish two cases to  minimize the total complexity, which leads to the following solution
\begin{align*}
    &b_{\text{Bernstein}}^* \nonumber\\
    &= \hspace{-4pt}
    \begin{cases}
        \hspace{-2pt} \left\lfloor  \hspace{-2pt} 1 \hspace{-2pt} + \hspace{-2pt} \frac{\mu (n-1)}{4(2 L+\lambda)} \hspace{-2pt} - \hspace{-2pt} \frac{4}{3} \log d \frac{n-1}{n} \frac{L_{\max}}{2L+\lambda} \hspace{-2pt} \right\rfloor\!, & \!\!\!\!\text{if } \frac{4}{3} \frac{4L_{\max}}{\mu} \log d\hspace{-2pt} \leq \hspace{-2pt} n,\\
        \hspace{-2pt}1, & \text{otherwise} \enspace.
    \end{cases}
\end{align*}
% \nidham{This is ugly. Any idea to embellish this? Define a function as first case?}
% \small
% \begin{equation}
%     b_{\text{Bernstein}}^* \hspace{-3pt}=\hspace{-3pt}
%     \begin{cases}
%         \hspace{-2pt} \left\lfloor  \hspace{-2pt} 1 \hspace{-2pt} + \hspace{-2pt} \frac{\mu (n-1)}{4(2 L+\lambda)} \hspace{-2pt} - \hspace{-2pt} \frac{4}{3} \! \log d \frac{n-1}{n} \frac{L_{\max}}{2L+\lambda} \hspace{-2pt} \right\rfloor\!, & \!\!\!\!\text{if } \frac{4}{3} \frac{4L_{\max}}{\mu} \! \log d\hspace{-2pt} \leq \hspace{-2pt} n,\\
%         \hspace{-2pt}1, & \!\!\!\!\text{otherwise} \enspace .
%     \end{cases}
% \end{equation}
% \normalsize

In the first case, the problem is well-conditioned and $g_{\text{Bernstein}}$ and $h$ do cross at a mini-batch size between $1$ and $n$.
In the second case, the total complexity $K_{\mathrm{total}}$ is governed by $g_{\text{Bernstein}}$ because $g_{\text{Bernstein}} (b) \geq h(b)$ for all $b\in[n]$, and the resulting optimal mini-batch size is $b =1$.

%!TEX root = ../optimal_mini-batch.tex

%%%%%%%%%%%%%%%%%%%%%%%%%%%%%%%%%%%%%%%%%%%%%%%%%%%%%%%%%%%%%%%%%%%%%%%%%%%%%%%
%%%%%%%%%%%%%%%%%%%%%%%%%%%%%%%%%%%%%%%%%%%%%%%%%%%%%%%%%%%%%%%%%%%%%%%%%%%%%%%
\section{Numerical Study}
\label{sec:numerical_study}
%%%%%%%%%%%%%%%%%%%%%%%%%%%%%%%%%%%%%%%%%%%%%%%%%%%%%%%%%%%%%%%%%%%%%%%%%%%%%%%
%%%%%%%%%%%%%%%%%%%%%%%%%%%%%%%%%%%%%%%%%%%%%%%%%%%%%%%%%%%%%%%%%%%%%%%%%%%%%%%

\begin{figure}[t!]
    \vskip 0.2in
    \begin{center}
        \begin{subfigure}[b]{\columnwidth}
            \includegraphics[width=0.85\textwidth]{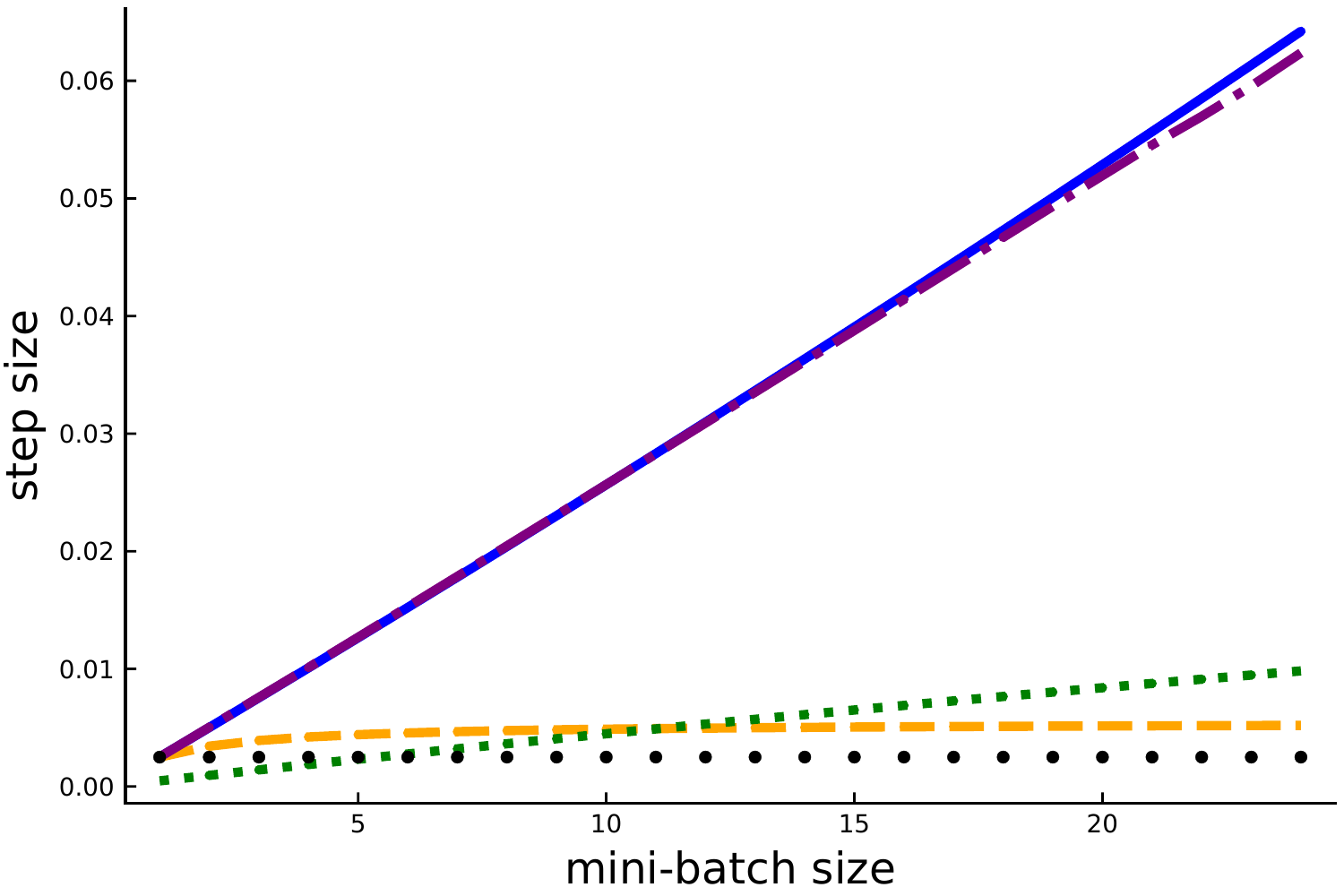}
        \end{subfigure}\\[-0.05cm]
        \begin{subfigure}[b]{\columnwidth}
            \centerline{\includegraphics[width=0.95\textwidth]{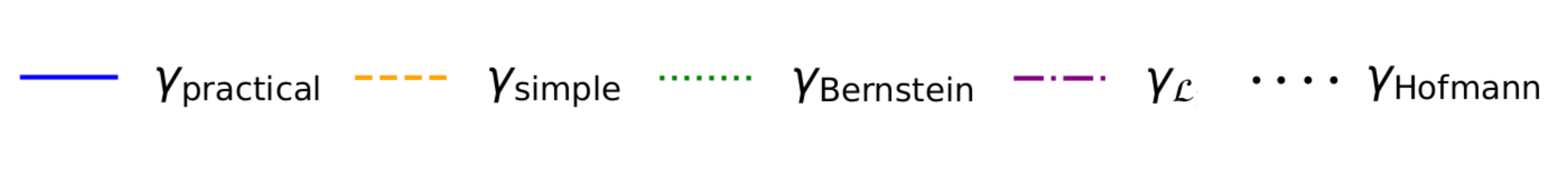}}
        \end{subfigure}
    \caption{Step size estimates as a function the mini-batch size $b$ for ridge regression on the unscaled \emph{staircase eigval} dataset ($\lambda = 10^{-3}$).}
    \label{fig:intro_step_size_example}
    \end{center}
    \vskip -0.2in
\end{figure}

All the experiments were run in Julia and the code is freely available on~\url{https://github.com/gowerrobert/StochOpt.jl}.
% \jo{XXX Rob: do you know how to proceed for that? It could be just a zip file with code as mentioned on the ICML submission page.}
%%%%%%%%%%%%%%%%%%%%%%%%%%%%%%%%%%%%%%%%%%%%%%%%%%%%%%%%%%%%%%%%%%%%%%%%%%%%%%%
\subsection{Upper-bounds of the expected smoothness constant}
\label{sub:experiment_1_upper_bounds_of_the_expected_smoothness_constant}
%%%%%%%%%%%%%%%%%%%%%%%%%%%%%%%%%%%%%%%%%%%%%%%%%%%%%%%%%%%%%%%%%%%%%%%%%%%%%%%
First we experimentally verify that our upper-bounds hold and how much slack there is between them and $\cL$ given in~\Cref{eq:expsmooth}.
For ridge regression applied to artificially generated small datasets, we compute~\Cref{eq:expsmooth} and compare it to our \emph{simple} and \emph{Bernstein bounds}, and our \emph{practical} estimate.
Our data are matrices $A \in \R^{d \times n}$ defined as follows
\begin{align*}
&\textit{$\bullet$ uniform } (n=24, d=50):  [A]_{ij} \sim \cU([0,1)) \enspace,\\
&\textit{$\bullet$ alone eigval } (n=d=24):
A = \text{diag} \left( 1, \ldots , 1, 100 \right) \enspace,\\
&\textit{$\bullet$ staircase eigval } (n=d=24):\\
&\quad \quad A = \text{diag} \left( 1, 10\sqrt{1/n}, \ldots , 10\sqrt{(n-2)/n}, 10 \right) \enspace.
\end{align*}
%\nidham{Explain the 3 generation procedures: uniform, alone eigenvalue, stair-case eigenvalues. We can rotate the generated design matrices by multiplying it on both sides by an orthogonal matrix
%which we get from the QR decomposition of another randomly generated matrix.}

\begin{figure}[t]
\vskip 0.2in
\begin{center}
\centerline{\includegraphics[width=0.88\columnwidth]{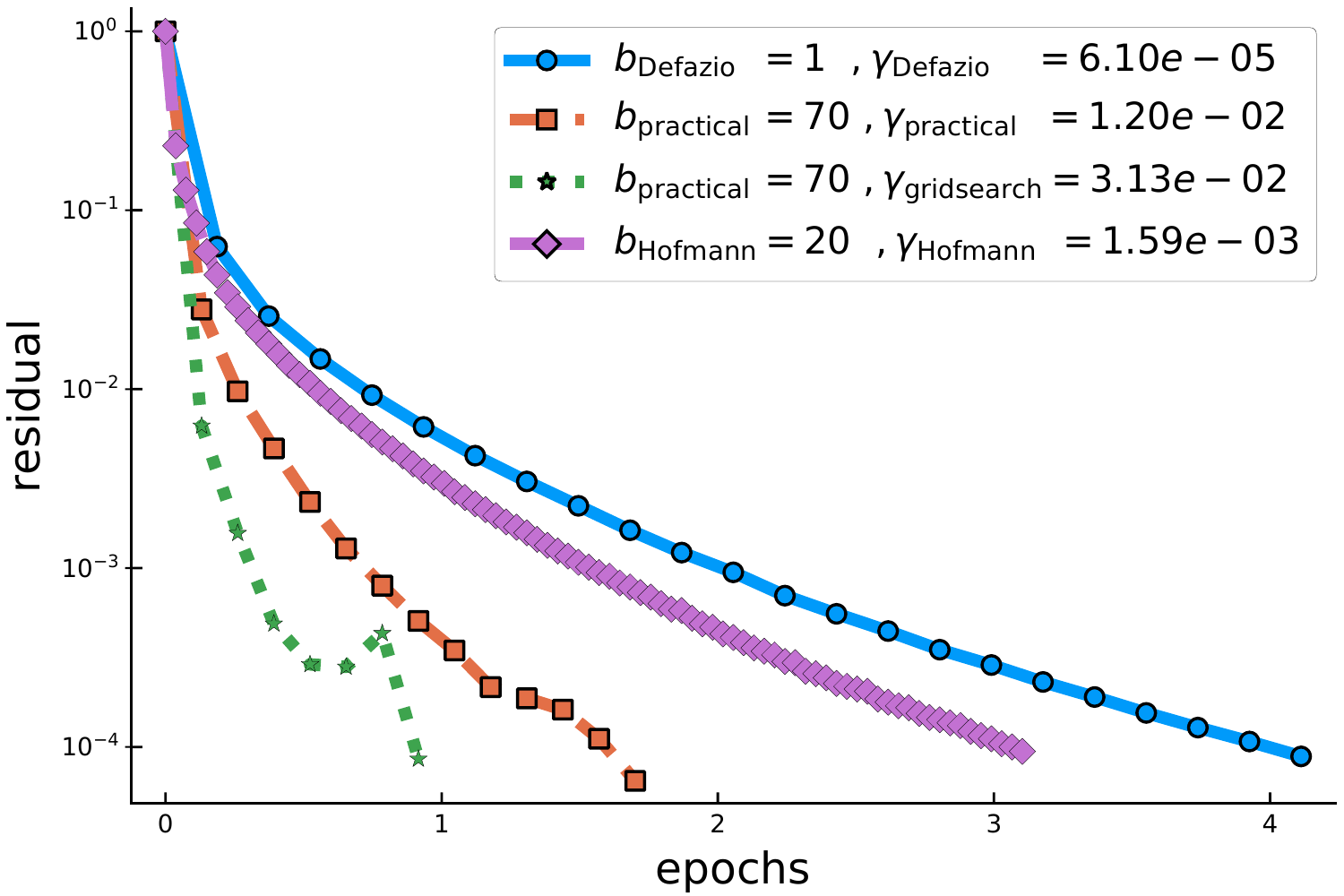}}
% \caption{Comparison of SAGA settings for ridge regression applied to the unscaled \emph{slice} dataset ($\lambda = 10^{-1}$).}
\caption{Comparison of SAGA settings for ridge regression on the unscaled \emph{slice} dataset ($\lambda = 10^{-1}$).}
\label{exp_3_main}
\end{center}
\vskip -0.2in
\end{figure}

In Figure~\ref{fig:intro_step_size_example} we see that
$\cL_{\text{practical}}$ is arbitrarily close to $\cL$, making it hard to distinguish the two line plots. This was the case in many other experiments, which we defer to \Cref{appendix:exp1}. For this reason, we use $\gamma_{\text{practical}}$ in our experiments with the SAGA method.

Furthermore, in accordance with our discussion in~\Cref{sub:upper_bounds_of_expsmoothness}, we have that $\cL_{\text{simple}}$ and  $\cL_{\text{Bernstein}}$ are close to $\cL$ when $b$ is small and large, respectively.
In~\Cref{appendix:exp_1_real_datasets} we show, by applying ridge and regularized logistic regression to publicly available datasets from LIBSVM\footnote{\tiny{\url{https://www.csie.ntu.edu.tw/~cjlin/libsvmtools/datasets/}}} and the UCI repository\footnote{\tiny\url{https://archive.ics.uci.edu/ml/datasets/}}, that the \emph{simple bound} performs better than the \emph{Bernstein bound} when $n \gg d$, and conversely for $d$ slightly smaller than $n$, larger than $n$ or when scaling the data.
% \nidham{In \Cref{appendix:exp_1_real_datasets}, we also launched this experiment on publicly available datasets from LIBSVM\footnote{\tiny{\url{https://www.csie.ntu.edu.tw/~cjlin/libsvmtools/datasets/}}} provided by \citet{chang2011libsvm} and from the UCI repository\footnote{\tiny\url{https://archive.ics.uci.edu/ml/datasets/}} provided by \citet{Dua:2017}.
% When a test set was available we concatenated it with the train set. For real ridge regression we used \textit{YearPredicitonMSD} ($n=515,345$, $d=90$) and \textit{slice-localization} ($n=141,691,d=22$), and for binary classification with logistic regression t{ijcnn1} ($n=141,691$, $d=22$), \textit{covtype.binary} ($n=581,012$, $d=54$) and \textit{real-sim} ($n=72,309,d=20,958$).
% Our experiments show that the \emph{simple bound} performs better than the \emph{Bernstein bound} when $n \gg d$, and conversely when $d$ gets of the order of $n$ or larger or when scaling the data.}

%%%%%%%%%%%%%%%%%%%%%%%%%%%%%%%%%%%%%%%%%%%%%%%%%%%%%%%%%%%%%%%%%%%%%%%%%%%%%%%
\subsection{Related step size estimation}
\label{sub:experiment_2_related_step_size_estimation}
%%%%%%%%%%%%%%%%%%%%%%%%%%%%%%%%%%%%%%%%%%%%%%%%%%%%%%%%%%%%%%%%%%%%%%%%%%%%%%%

Different bounds on $\cL$ also give different step sizes~\eqref{eq:gammamaster}.
Plugging in our estimates $\cL_\text{simple}$, $\cL_\text{Bernstein}$ and $\cL_\text{practical}$ into~\eqref{eq:gammamaster} gives the step sizes
$\gamma_\text{simple}$, $\gamma_\text{Bernstein}$ and $\gamma_\text{practical}$, respectively.
We compare our resulting step sizes to $\gamma_{\cL}$ where $\cL$ is given by~\cref{eq:expsmooth} and to
the step size  given by~\citet{hofmann2015variance}, which is
$\gamma_\text{Hofmann} (b) = \frac{K}{2L_{\max}(1+K+\sqrt{1+K^2})}$,
where $K\eqdef \frac{4b L_{\max}}{n \mu}.$
%We compare our resulting step sizes alternatives in the literature, such as the the step size $\gamma_\text{Hofmann}$ given by~\citet{hofmann2015variance} and  $\gamma_\text{Defazio}$ given by~\citet{SAGA_Nips}.
%\begin{itemize}
%    % \item $\gamma_\text{simple} = \frac{1}{4} \frac{1}{\max \left\{ \cL_\text{simple} , \frac{1}{b}\frac{n-b}{n-1} L_{\max} + \frac{\mu}{4}\frac{n}{b} \right\}}$
%    % \item $\gamma_\text{practical} = \frac{1}{4} \frac{1}{\max \left\{ \cL_\text{practical} , \frac{1}{b}\frac{n-b}{n-1} L_{\max} + \frac{\mu}{4}\frac{n}{b} \right\}}$
%    % \item $\gamma_\text{Bernstein} = \frac{1}{4} \frac{1}{\max \left\{ \cL_\text{Bernstein} , \frac{1}{b}\frac{n-b}{n-1} L_{\max} + \frac{\mu}{4}\frac{n}{b} \right\}}$
%%    \item $\gamma_\text{Defazio} =  \frac{1}{3(n\mu + L_{\max})}$ (developped for $b=1$)
%    \item $\gamma_\text{Hofmann} (b) = \frac{K}{2L_{\max}(1+K+\sqrt{1+K^2})}$ for $K\eqdef \frac{4b L_{\max}}{n \mu}$.
%\end{itemize}
We can see in Figure~\ref{fig:intro_step_size_example}, that for $b=1$, all the step sizes are approximately the same, with the exceptions of the Bernstein step size.
For $b>5$, all of our step sizes are larger than $\gamma_\text{Hofmann} (b)$, in particular $\gamma_\text{practical} (b)$ is significantly larger. These observations are verified in other artificial and real data examples in \Cref{appendix:exp_2_artificial_datasets,appendix:exp_2_real_datasets}.
%This section shows that those upper-bounds of $\cL$ lead to larger step sizes than what is already advise by theory \cite{hofmann2015variance} for mini-batch sizes greater than $1$.

%%%%%%%%%%%%%%%%%%%%%%%%%%%%%%%%%%%%%%%%%%%%%%%%%%%%%%%%%%%%%%%%%%%%%%%%%%%%%%%
\subsection{Comparison with previous SAGA settings}
\label{sub:experiment_3_comparison_with_previous_saga_settings}
%%%%%%%%%%%%%%%%%%%%%%%%%%%%%%%%%%%%%%%%%%%%%%%%%%%%%%%%%%%%%%%%%%%%%%%%%%%%%%%

%Here we experimentally verify if our practical step size $\gamma_{\text{practical}}$ together with the optimal mini-batch size $b^* = $given
Here we compare the performance of SAGA when using the mini-batch size and step size
$b =1, \gamma_{\text{Defazio}} \eqdef 1/{3(n\mu + L_{\max})}$ given in \citep{SAGA_Nips},
 $b =20$ and $ \gamma_{\text{Hofmann}} = {20}/{n\mu}$ given in \citet{hofmann2015variance}, to our new practical mini-batch size $b_{\text{practical}} = \left\lfloor 1 + \frac{\mu (n-1)}{4 (L+\lambda)} \right\rfloor$ and step size $\gamma_{\text{practical}}$. Our goal is to verify how much our parameter setting can improve practical performance. We also compare
%$ = \left. 1 \middle/ \left( 4 \frac{1}{b} \frac{n-b}{n-1} L_{\max} + \frac{n}{b} \max \left\{ 4 \frac{b -1}{n-1} L, \mu \right\} \right) \right.$
with a step size $\gamma_{\text{gridsearch}}$ obtained by grid search over odd powers of $2$. These methods are run until they reach a relative error of $10^{-4}$.

We find in Figure~\ref{exp_3_main} that our parameter settings $(\gamma_{\text{practical}}, b_{\text{practical}})$ significantly outperforms the previously suggested parameters, and is even comparable to grid search. Finally, In we show in \Cref{appendix:exp_3} that the settings $(\gamma_{\text{Hofmann}}, b=20)$ can lead to very poor performance compared to our settings.

% \begin{table}[t]
% \caption{Average total complexity (number of stochastic gradients required to achieve a relative error of $10^{-6}$) and 95\% confidence interval for the \textit{YearPredicitonMSD} dataset.}
% \label{sample-table}
% \vskip 0.15in
% \begin{center}
% \begin{small}
% \begin{sc}
% \begin{tabular}{lcccr}
% \toprule
% Regularizer &  $\lambda = 0.1$ & $\lambda = 0.001$ \\
% \midrule
% $\left( b = 1,\ \gamma_{\text{grid search}} \right)$ & 100.0$\pm$ 0.2& 100.0$\pm$ 0.2\\
% $\left( b = 1,\ \gamma_{\text{Defazio et al}} \right)$ & 100.0$\pm$ 0.2& 100.0$\pm$ 0.2\\
% $\left( b = 20,\ \gamma_{\text{Hofmann et al}} \right)$ & 100.0$\pm$ 0.2& 100.0$\pm$ 0.2\\
% $\left( b = 20,\ \gamma_{\text{practical}} \right)$ & 100.0$\pm$ 0.2& 100.0$\pm$ 0.2\\
% $\left( b_{\text{practical}},\ \gamma_{\text{practical}} \right)$ & 100.0$\pm$ 0.2& 100.0$\pm$ 0.2\\
% \bottomrule
% \end{tabular}
% \end{sc}
% \end{small}
% \end{center}
% \vskip -0.1in
% \end{table}
% \nidham{Should I put values or names in the paper? For instance, $b_{practical}$ and give its value in the text or write directly $b_{practical} = 1233$ inside the table}

%
%First, we compare SAGA-nice runs with a batch size of $1$ and different step sizes:
%$\gamma_{\text{Defazio}} = \frac{1}{3(n\mu + L_{\max})}$\\
%$\gamma_{\text{Hofmann}} = \frac{K}{2L_{\max}(1+K+\sqrt(1+K^2))}$ with $K\eqdef \frac{4L_{\max}}{n \mu}$\\
%$\gamma_{\text{practical}} = \frac{1}{n\mu + 4L_{\max}}$\\

%%%%%%%%%%%%%%%%%%%%%%%%%%%%%%%%%%%%%%%%%%%%%%%%%%%%%%%%%%%%%%%%%%%%%%%%%%%%%%%
\subsection{Optimality of our mini-batch size}
\label{sub:experiment_4_optimality_of_our_mini_batch_size}
%%%%%%%%%%%%%%%%%%%%%%%%%%%%%%%%%%%%%%%%%%%%%%%%%%%%%%%%%%%%%%%%%%%%%%%%%%%%%%%

In the last experiment, detailed in \Cref{appendix:exp_4}, we show that our estimation of the optimal mini-batch size $b_{\text{practical}}$ leads to a faster implementation of SAGA.
We build a grid of mini-batch sizes\footnote{Our grid is $\{2^i, i=0,\dots, 14\}$, with $2^{16}, 2^{18}$ and $n$ being added when needed.}
% XXX \jo{if needed ??? state clearly when and why.}(todo for revision)
and compute the empirical total complexity required to achieve a relative error of $10^{-4}$, as in \cref{sub:experiment_3_comparison_with_previous_saga_settings}.
\begin{figure}[t]
    \vskip 0.2in
    \begin{center}
    \centerline{\includegraphics[width=0.9\columnwidth]{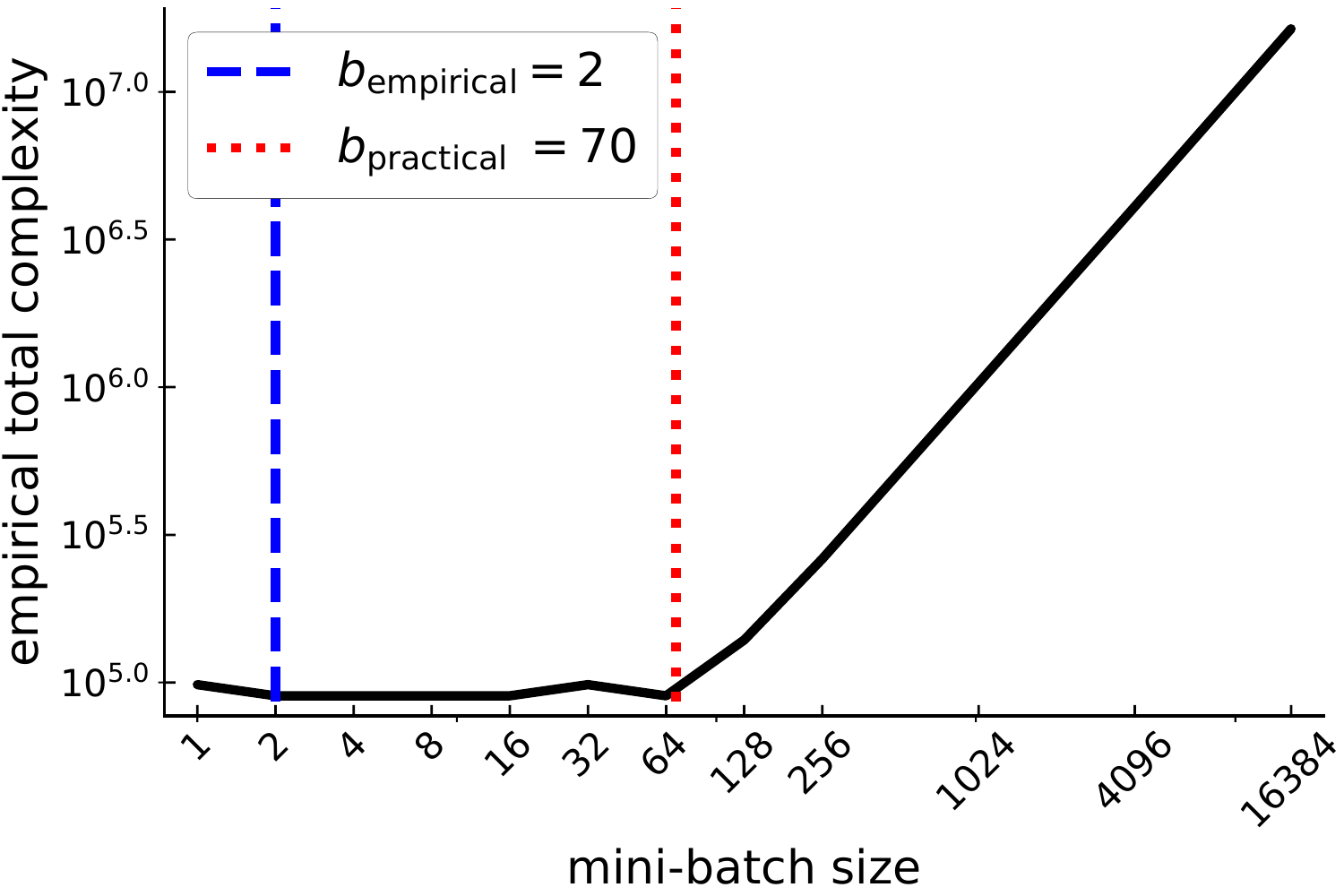}}
    % \caption{Empirical total complexity versus mini-batch size for ridge regression applied to the unscaled \emph{slice} dataset ($\lambda = 10^{-1}$).}
    \caption{Total complexity versus the mini-batch size for ridge regression on the unscaled \emph{slice} dataset ($\lambda = 10^{-1}$).}
    \label{exp_4main}
    \end{center}
    \vskip -0.2in
\end{figure}
In Figure~\ref{exp_4main} we can see that the empirical complexity when using $b_{\text{practical}}$ is almost the same as one for the optimal mini-batch size calculated through grid search. Yet, our $b_{\text{practical}}$ being always larger than the mini-batch obtained by grid search, it leads to a faster algorithm for two reasons. Firstly, the corresponding step size is larger and secondly, and secondly, computing the stochastic gradients in parallel improves the running time.
What is even more interesting, is that $b_{\text{practical}}$ always predicts a regime change, where using a larger mini-batch size results in a much larger empirical complexity.

%Even if such curves are very irregular due to the randomness of the mini-batch choice, one can notice that $b_{\text{practical}}$ is located around the largest values leading to reasonable empirical complexities (the complexity explodes after that).
%!TEX root = ../optimal_mini-batch.tex

%%%%%%%%%%%%%%%%%%%%%%%%%%%%%%%%%%%%%%%%%%%%%%%%%%%%%%%%%%%%%%%%%%%%%%%%%%%%%%%
%%%%%%%%%%%%%%%%%%%%%%%%%%%%%%%%%%%%%%%%%%%%%%%%%%%%%%%%%%%%%%%%%%%%%%%%%%%%%%%
\section{Conclusions}
\label{conclusions}
%%%%%%%%%%%%%%%%%%%%%%%%%%%%%%%%%%%%%%%%%%%%%%%%%%%%%%%%%%%%%%%%%%%%%%%%%%%%%%%
%%%%%%%%%%%%%%%%%%%%%%%%%%%%%%%%%%%%%%%%%%%%%%%%%%%%%%%%%%%%%%%%%%%%%%%%%%%%%%%

We have explained the crucial role of the expected smoothness constant $\cL$ in the convergence of a family of stochastic variance-reduced descent algorithms.
We have developed functional upper-bounds of this constant to build larger step sizes and closed-form optimal mini-batch values for the $b$-nice SAGA algorithm.
Our experiments on artificial and real datasets showed the validity of our upper-bounds and the improvement in the total complexity using our step and optimal mini-batch sizes.
%Further improvement in the step size and extension to other stochastic gradient methods are to be studied.
Our results suggest a new parameter setting for mini-batch SAGA, that significantly outperforms previous suggested ones, and is even comparable with a grid search approach, without the computational burden of the later.
% Not having to run a gridsearch to set the step size of SAGA in practice will save significant computational resources.

%%%%%%%%%%%%%%%%%%%%%%%%%%%%%%%%%%%%%%%%%%%%%%%%%%%%%%%%%%%%%%%%%%%%%%%%%%%%%%%

% In the unusual situation where you want a paper to appear in the
% references without citing it in the main text, use \nocite
% \nocite{langley00}

% Acknowledgements should only appear in the accepted version.

\newpage
\section*{Acknowledgements}
This work was supported by grants from
DIM Math Innov R\'egion Ile-de-France (ED574 - FMJH)
and by a public grant as part of the Investissement d'avenir project, reference ANR-11-LABX-0056-LMH, LabEx LMH, in a joint call with Gaspard Monge Program for optimization, operations research and their interactions with data sciences.

%%%%%%%%%%%%%%%%%%%%%%%%%%%%%%%%%%%%%%%%%%%%%%%%%%%%%%%%%%%%%%%%%%%%%%%%%%%%%%%
\bibliography{optimal_mini-batch}
\bibliographystyle{icml2019}
%%%%%%%%%%%%%%%%%%%%%%%%%%%%%%%%%%%%%%%%%%%%%%%%%%%%%%%%%%%%%%%%%%%%%%%%%%%%%%%
\newpage
\onecolumn % two columns style is not required for the appendix, and it's much easier to prove things in single columns mode
\appendix

\icmltitle{SUPPLEMENTARY MATERIAL \\ Optimal Mini-Batch and Step Sizes for SAGA}
%%%%%%%%%%%%%%%%%%%%%%%%%%%%%%%%%%%%%%%%%%%%%%%%%%%%%%%%%%%%%%%%%%%%%%%%%%%%%%%
% \nidham{Still need to reorde the appendix}
%!TEX root = ../optimal_mini-batch.tex

%%%%%%%%%%%%%%%%%%%%%%%%%%%%%%%%%%%%%%%%%%%%%%%%%%%%%%%%%%%%%%%%%%%%%%%%%%%%%%%
%%%%%%%%%%%%%%%%%%%%%%%%%%%%%%%%%%%%%%%%%%%%%%%%%%%%%%%%%%%%%%%%%%%%%%%%%%%%%%%
\section{Proofs of the Upper Bounds of $\cL$}
\label{appendix:proof_of_the_upper_bounds}
%%%%%%%%%%%%%%%%%%%%%%%%%%%%%%%%%%%%%%%%%%%%%%%%%%%%%%%%%%%%%%%%%%%%%%%%%%%%%%%
%%%%%%%%%%%%%%%%%%%%%%%%%%%%%%%%%%%%%%%%%%%%%%%%%%%%%%%%%%%%%%%%%%%%%%%%%%%%%%%

%%%%%%%%%%%%%%%%%%%%%%%%%%%%%%%%%%%%%%%%%%%%%%%%%%%%%%%%%%%%%%%%%%%%%%%%%%%%%%%
\subsection{Master lemma}
\label{sub:master_lemma}
%%%%%%%%%%%%%%%%%%%%%%%%%%%%%%%%%%%%%%%%%%%%%%%%%%%%%%%%%%%%%%%%%%%%%%%%%%%%%%%

% \rob{I've include this new lemma because, it makes our story about expected smoothness easier to tell (it's an intractable expectation).
% Furthermore, we can easily deduce our initial bound as a straightforward consequence of this, thus having a complete, and novel, story.}
% \begin{lemma} \label{lemma:master_lemma}
% Let $v$ be an unbiased sampling vector. Suppose that $f_v = \frac{1}{n} \sum_{i=1}^n f_i(x)v_i$  is $L_v$-smooth and each $f_i$ is convex for $i=1,\ldots, n$.
% It follows that
% \begin{equation}\label{eq:expsmoothmaster}
% \cL \quad \leq \quad \E{ L_v v_i} .
% \end{equation}
% \end{lemma}
\begin{proof}[Proof of \Cref{lemma:master_lemma}] \label{proof:master_lemma}
Since the $f_i$'s are convex, each realization of $f_v$ is convex, and it follows from equation~2.1.7 in~\cite{NesterovBook}  that
\begin{equation}\label{eq:fvas2}
	\|\nabla f_v(x) - \nabla f_v(y)\|^2_2 \quad \leq \quad 2L_v\left( f_v(x) - f_v(y) - \langle \nabla f_v(y), x-y \rangle \right).
\end{equation}
Taking expectation over the sampling gives
\begin{eqnarray*}
		\mathbb{E} [\|\nabla f_v(x) - \nabla f_v(x^*)\|^2_2 ] &\leq & 2\E{L_v \left( f_v(x) - f_v(x^*) - \langle \nabla f_v(x^*), x-x^* \rangle \right)} \\
		& \overset{\eqref{eq:fvas2}}{=}&
 \frac{2}{n}\E{\sum_{i =1}^n L_v v_i\left( f_i(x) - f_i(x^*) - \langle \nabla f_i(x^*), x-x^* \rangle \right)}  \\
 & =& 	 \frac{2}{n}\sum_{i =1}^n \E{ L_v v_i} \left( f_i(x) - f_i(y) - \langle \nabla f_i(x^*), x-x^* \rangle \right) \\
 & \leq & 2\max_{i=1,\ldots, n} \E{ L_v v_i} \left( f(x) - f(x^*) - \langle \nabla f(x^*), x-x^*\rangle \right) \\
 & =& 2\max_{i=1,\ldots, n} \E{ L_v v_i} \left( f(x) - f(x^*)\right).
\end{eqnarray*}
where in the last equality the full gradient vanishes because it is computed at optimality.
% \nidham{@Robert: can you please recheck the correction in the 3rd line?}
% \jo{looks good. Should we remind that the last gradient disapears because it is taken at optimilaty? this is implicit both in the proof and in the statement...}
The result now follows by comparing the above with the definition of expected smoothness in~\eqref{eq:expected_smoothness}.
\end{proof}

%Now let  $v = \hat{\mP}e_S$ where $S$ is a non-vacuous and proper sampling.
%It follows that
%\begin{eqnarray}
%\E{ L_v v_i} &= & \sum_{C} p_C L_C \frac{1_{i\in C}}{p_i}
%\; =\;   \sum_{C: i\in C} \frac{p_C L_C }{p_i}.
%\end{eqnarray}

%%%%%%%%%%%%%%%%%%%%%%%%%%%%%%%%%%%%%%%%%%%%%%%%%%%%%%%%%%%%%%%%%%%%%%%%%%%%%%%
\subsection{Proof of the simple bound}
\label{sub:proof_of_the_simple_bound}
%%%%%%%%%%%%%%%%%%%%%%%%%%%%%%%%%%%%%%%%%%%%%%%%%%%%%%%%%%%%%%%%%%%%%%%%%%%%%%%

\begin{proof}[Proof of~\Cref{thm:upper_bound_exp_smooth_simple_combination}]
  To derive this bound on $\cL$ we use that
  \begin{equation} \label{eq:LcupperLis}
    L_B \leq \frac{1}{b}\sum_{j\in B} L_j\enspace,
  \end{equation}
  which follows from repeatedly applying~\Cref{lem:lemma1}.
  For $b \geq 2$, it follows from~\Cref{eq:expsmooth} and~\Cref{eq:LcupperLis} that
  \begin{align} \label{eq:asd9ka903k}
    \cL &\leq \frac{1 }{b \binom{n-1}{b-1}} \max_{i=1,\ldots, n} \Bigg\{ \sum_{\substack{B \subseteq [n] \,:\\ |B| = b \wedge i \in B}} \; \sum_{j\in B} L_j \Bigg\}\enspace.
    % && \eqref{eq:LcupperLis}+\eqref{eq:expsmooth}
  \end{align}
  Using a double counting argument we can show that
  \begin{align}
    \sum_{\substack{B \subseteq [n] \,:\\ |B| = b \wedge i \in B}} \;\sum_{j\in B} L_j
    & = \sum_{j=1}^n \; \sum_{\substack{B \subseteq [n] \,:\\ |B| = b \wedge i,j \in B}} L_j  \nonumber \\
    & = \sum_{j\neq i} \; \sum_{\substack{B \subseteq [n] \,:\\ |B| = b \wedge i,j \in B}} L_j + \sum_{\substack{B \subseteq [n] \,:\\ |B| = b \wedge i \in B}} L_i \nonumber \\
    & = \sum_{j\neq i} \binom{n-2}{b-2} L_j  + \binom{n-1}{b-1} L_i \nonumber \\
    & = \binom{n-2}{b-2} (n\Lbarconst-L_i) + \binom{n-1}{b-1} L_i\enspace.
  \end{align}
  Inserting this into~\Cref{eq:asd9ka903k} gives
  \begin{align}
    \cL  &\leq \frac{1 }{b \binom{n-1}{b-1}} \max_{i=1,\ldots, n} \left\{\binom{n-2}{b-2} n\Lbarconst  + \left(\binom{n-1}{b-1} - \binom{n-2}{b-2} \right) L_{\max} \right\} \nonumber \\
    &= \frac{n \binom{n-2}{b-2} }{b \binom{n-1}{b-1}}\Lbarconst + \frac{\binom{n-1}{b-1} -\binom{n-2}{b-2} }{b \binom{n-1}{b-1}}L_{\max} \nonumber \\
    &= \frac{n }{b}\frac{b -1}{n-1} \Lbarconst + \frac{1}{b} \frac{n-b}{n-1} L_{\max} \enspace.
  \end{align}
  % For the $b$-nice sampling we have that $|B| = b$, $c_1 = \displaystyle\binom{n-1}{b -1}$ and $c_2 = \displaystyle\binom{n-2}{b -2}$. So the result follows
  % \begin{align*}
  %   \cL  &\leq \frac{n\displaystyle\binom{n-2}{b -2}}{b \displaystyle\binom{n-1}{b -1}} \Lbarconst + \frac{1}{b} \left( 1-\frac{\displaystyle\binom{n-2}{b -2}}{\displaystyle\binom{n-1}{b -1}} \right) L_{\max} \\
  %   &= \frac{n }{b}\frac{b -1}{n-1} \Lbarconst + \frac{1}{b} \frac{n-b}{n-1} L_{\max} \enspace.
  % \end{align*}
  We also verify that this bound is valid for $1$-nice sampling. Indeed, we already have that in this case $\cL = L_{\max}$.
\end{proof}

%%%%%%%%%%%%%%%%%%%%%%%%%%%%%%%%%%%%%%%%%%%%%%%%%%%%%%%%%%%%%%%%%%%%%%%%%%%%%%%
\subsection{Proof of the Bernstein bound}
\label{sub:proof_of_the_bernstein_bound}
%%%%%%%%%%%%%%%%%%%%%%%%%%%%%%%%%%%%%%%%%%%%%%%%%%%%%%%%%%%%%%%%%%%%%%%%%%%%%%%

To start the proof of \cref{thm:cL_bound_direct_expectation}, we re-write the expected smoothness constant as the maximum over an expectation. Let $S^i$ be a $(b-1)$-nice sampling over $[n]\setminus\{i\}.$
We can write
\begin{eqnarray}
  \cL &= & \frac{1}{\binom{n-1}{b-1}} \max_{i=1,\ldots, n} \Bigg\{\sum_{\substack{B \subseteq [n] \,:\\ |B| = b \wedge i \in B}}  L_B \Bigg\}\nonumber \\
  &= &  \max_{i=1,\ldots, n} \E{  L_{S^i\cup\{i\}} } \nonumber \\
  &\overset{\text{\scriptsize \Cref{lemma:LB_def_1}} }{=}& \max_{i=1,\ldots, n} U \E{ \lambda_{\max}\left(\frac{1}{b}\sum_{j \in S^i\cup\{i\}} a_j a_j^\top \right) } \enspace. \label{eq:cL1_rewrite_expectation_lambda_max}
\end{eqnarray}

One can come back to the definition of the subsample smoothness constant~\Cref{eq:LBdef} and interpret previous expression as an expectation of the largest eigenvalue of a sum of matrices.
This insight allows us to apply a matrix Bernstein inequality, see~\Cref{thm:mat_bernstein_expect_without_replac}, to bound $\cL$.

For the proof of~\Cref{thm:cL_bound_direct_expectation}, we first need the two following results.
\begin{lemma} \label{lem:lemma_expect_B_minus_i}
  Let $a_j \in \R^d$, $i \in \{1,\ldots, n \}$ and let $S^i$ be a $(b-1)$-nice sampling over the set $[n]\setminus\{i\}$. It follows that
  \begin{equation} \label{eq:equality_expect_B_minus_i}
    \E{\sum_{j \in S^i\cup\{i\}} a_j a_j^\top } = a_i a_i^\top + \frac{b -1}{n-1} \sum_{j=1, j\neq i}^{n} a_j a_j^\top \enspace.
  \end{equation}
\end{lemma}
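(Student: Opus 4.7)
The plan is to observe that this is a direct computation once we split off the deterministic term and compute the marginal probability that a fixed index belongs to the random set $S^i$. Since $S^i \subseteq [n]\setminus\{i\}$ by definition, $i$ does not appear in $S^i$, so I would first write
\begin{equation*}
\sum_{j \in S^i \cup \{i\}} a_j a_j^\top \;=\; a_i a_i^\top \;+\; \sum_{j \in S^i} a_j a_j^\top,
\end{equation*}
and note that the first term is deterministic, so it passes through the expectation unchanged.

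Next, I would rewrite the random sum using indicators, $\sum_{j \in S^i} a_j a_j^\top = \sum_{j \in [n]\setminus\{i\}} \mathds{1}_{\{j \in S^i\}}\, a_j a_j^\top$, and invoke linearity of expectation to reduce the problem to computing $\Prb{j \in S^i}$ for each fixed $j \neq i$. Because $S^i$ is a $(b-1)$-nice sampling over the ground set $[n]\setminus\{i\}$ of size $n-1$, a counting argument identical to the one already carried out in the main text for $b$-nice sampling (see the display just after \eqref{eq:vi}) gives
\begin{equation*}
\Prb{j \in S^i} \;=\; \frac{\binom{n-2}{b-2}}{\binom{n-1}{b-1}} \;=\; \frac{b-1}{n-1},
\end{equation*}
which is independent of $j$.

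Combining the two steps yields
\begin{equation*}
\E{\sum_{j \in S^i \cup \{i\}} a_j a_j^\top}
\;=\; a_i a_i^\top \;+\; \frac{b-1}{n-1} \sum_{\substack{j=1 \\ j \neq i}}^n a_j a_j^\top,
\end{equation*}
as claimed. There is no genuine obstacle here; the lemma is a bookkeeping identity whose only subtle point is to make sure we are conditioning correctly, namely that $i \notin S^i$ so that the splitting of the sum into a deterministic and a random part is clean, and that the marginal probability of inclusion in a $(b-1)$-nice sampling over $n-1$ elements is $(b-1)/(n-1)$ uniformly across the remaining indices.
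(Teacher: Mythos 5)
Your proof is correct and follows essentially the same route as the paper: the paper's explicit double-counting over subsets $B$ containing a fixed $j$ (yielding the factor $\binom{n-2}{b-2}/\binom{n-1}{b-1}$) is exactly your computation of $\Prb{j \in S^i} = (b-1)/(n-1)$ phrased via indicators and linearity of expectation. No gap; the splitting off of the deterministic term $a_i a_i^\top$ and the marginal inclusion probability are the only two ingredients in both arguments.
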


\begin{proof}[Proof of~\Cref{lem:lemma_expect_B_minus_i}]
  This results follows using a double-counting argument at the fourth line of the computation.
  \begin{align*}
    \E{\sum_{j \in S^i\cup\{i\}} a_j a_j^\top } &= \frac{1}{\binom{n-1}{b-1}} \sum_{\substack{B \subseteq [n]\setminus \{i\} :\\ |B| = b-1}} \; \sum_{j \in B\cup \{i\}} a_j a_j^\top\\
    &= \frac{1}{\binom{n-1}{b-1}} \left(\binom{n-1}{b-1} a_ia_i^\top + \sum_{\substack{B \subseteq [n]\setminus \{i\} :\\ |B| = b-1}} \; \sum_{j \in B} a_j a_j^\top \right) \\
    &= a_ia_i^\top+ \frac{1}{\binom{n-1}{b-1}} \sum_{\substack{B \subseteq [n]\setminus \{i\} :\\ |B| = b-1}} \; \sum_{j \in B} a_j a_j^\top \\
    &= a_ia_i^\top+ \frac{1}{\binom{n-1}{b-1}} \sum_{j=1, j\neq i}^n \; \sum_{\substack{B \subseteq [n]\setminus \{i\} :\\ |B| = b-1 \wedge j \in B}} a_j a_j^\top\\
    &= a_ia_i^\top+ \frac{\binom{n-2}{b -2}}{\binom{n-1}{b-1}}  \sum_{j=1, j\neq i}^n   a_j a_j^\top\\
    &= a_i a_i^\top + \frac{b -1}{n-1} \sum_{j=1, j\neq i}^{n} a_j a_j^\top \enspace.
  \end{align*}
\end{proof}
We then introduce another two lemmas which give a first intermediate bound.
\begin{lemma} \label{lem:lemma_bound_expect_B}
  Let  $a_j \in \R^d$ for $j \in [n]$, let $i \in [n]$ and let $S^i$ be a $(b-1)$-nice sampling over $[n]\setminus\{i\}.$ We have
  \begin{align}
    U\E{\lambda_{\max}\left(\frac{1}{b}\sum_{j \in S^i\cup\{i\}} a_j a_j^\top \right)} \leq& \frac{1}{b (n-1)} \left( (n-b)L_i + n(b -1) L\right) \nonumber \\
    &+ U\E{\lambda_{\max} \left( \frac{1}{b} \sum_{ j \in S^i} a_j a_j^\top - \frac{1}{b} \frac{b -1}{n-1} \sum_{j\in [n]\setminus\{i\}} a_j a_j^\top \right)}\enspace. \label{eq:bound_expect_B}
  \end{align}
\end{lemma}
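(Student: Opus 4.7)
The plan is to decompose $\tfrac{1}{b}\sum_{j \in S^i \cup\{i\}} a_j a_j^\top$ as its expected value plus a mean-zero fluctuation, then use the subadditivity property $\lambda_{\max}(A+B) \leq \lambda_{\max}(A) + \lambda_{\max}(B)$ for symmetric matrices to split the largest eigenvalue accordingly. The fluctuation part is precisely the matrix appearing inside the second $\lambda_{\max}$ on the right-hand side of~\eqref{eq:bound_expect_B}, so that term is handled for free by the decomposition; the real work is to upper bound $U\lambda_{\max}$ of the deterministic (mean) part by $\tfrac{(n-b)L_i + n(b-1)L}{b(n-1)}$.

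First I would invoke Lemma~\ref{lem:lemma_expect_B_minus_i} to identify the mean and write
\begin{equation*}
\tfrac{1}{b}\sum_{j\in S^i\cup\{i\}} a_j a_j^\top \;=\; \tfrac{1}{b}a_i a_i^\top + \tfrac{b-1}{b(n-1)}\sum_{j\neq i} a_j a_j^\top \;+\; \Bigl(\tfrac{1}{b}\sum_{j \in S^i} a_j a_j^\top - \tfrac{b-1}{b(n-1)}\sum_{j \neq i} a_j a_j^\top\Bigr),
\end{equation*}
where the bracketed term has zero expectation by Lemma~\ref{lem:lemma_expect_B_minus_i}. Applying subadditivity of $\lambda_{\max}$, multiplying by $U$, and taking expectations produces the $U \E{\lambda_{\max}(\cdot)}$ fluctuation term in~\eqref{eq:bound_expect_B} directly, and leaves just the deterministic piece to analyze.

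The crucial step is to re-split $a_i a_i^\top = \tfrac{n-b}{n-1} a_i a_i^\top + \tfrac{b-1}{n-1} a_i a_i^\top$ before applying subadditivity a second time, so that the $\tfrac{b-1}{n-1}$ portion merges with the sum over $j \neq i$ to form the full sum $\sum_{j=1}^n a_j a_j^\top$. This gives
\begin{equation*}
\tfrac{1}{b}a_i a_i^\top + \tfrac{b-1}{b(n-1)}\sum_{j\neq i} a_j a_j^\top \;=\; \tfrac{n-b}{b(n-1)} a_i a_i^\top + \tfrac{b-1}{b(n-1)}\sum_{j=1}^n a_j a_j^\top,
\end{equation*}
and subadditivity together with the two identities $U\lambda_{\max}(a_i a_i^\top) = U\|a_i\|_2^2 = L_i$ and $U\lambda_{\max}(\sum_{j=1}^n a_j a_j^\top) = nL$, both instances of Lemma~\ref{lemma:LB_def_1} with $B=\{i\}$ and $B=[n]$ respectively, yield the target bound $\tfrac{(n-b)L_i + n(b-1)L}{b(n-1)}$.

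The main obstacle is spotting this convex regrouping of $a_i a_i^\top$. A naive application of subadditivity directly to $\tfrac{1}{b}a_i a_i^\top$ alone produces the coefficient $\tfrac{1}{b}$ on $L_i$, strictly larger than the required $\tfrac{n-b}{b(n-1)}$; the balancing trick is what aligns the coefficients with the statement. Everything else is routine: subadditivity of $\lambda_{\max}$, linearity of expectation, and substitution of the definitions of $L_i$ and $L$.
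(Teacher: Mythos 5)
Your proof is correct and follows essentially the same route as the paper's: add and subtract the mean computed in \Cref{lem:lemma_expect_B_minus_i}, apply subadditivity of $\lambda_{\max}$ (\Cref{lem:lemma1}) to peel off the zero-mean fluctuation term, and then regroup $\tfrac{1}{b}a_ia_i^\top + \tfrac{b-1}{b(n-1)}\sum_{j\neq i}a_ja_j^\top$ as $\tfrac{n-b}{b(n-1)}a_ia_i^\top + \tfrac{b-1}{b(n-1)}\sum_{j=1}^n a_ja_j^\top$ before bounding via $L_i$ and $nL$. The ``convex regrouping'' you highlight is exactly the step the paper performs in its second equality, so there is nothing to add.
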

% \newpage
\begin{proof}[Proof of~\Cref{lem:lemma_bound_expect_B}]
  Expanding the expectation we have
  \begin{align*}
    &\E{\lambda_{\max}\left(\frac{1}{b}\sum_{j \in S^i\cup\{i\}} a_j a_j^\top \right)}\\
    &\leq \lambda_{\max} \left( \E{\frac{1}{b}\sum_{j \in S^i\cup\{i\}} a_j a_j^\top } \right)
    + \E{ \lambda_{\max} \left( \frac{1}{b} \sum_{j \in S^i\cup\{i\}} a_j a_j^\top - \E{\sum_{j \in S^i\cup\{i\}} a_j a_j^\top } \right) }\\
    &= \frac{1}{b} \lambda_{\max} \left( a_i a_i^\top + \frac{b -1}{n-1} \sum_{j=1,\, j\neq i }^{n} a_j a_j^\top \right)
    + \E{\lambda_{\max} \left( \frac{1}{b}\sum_{j \in S^i\cup\{i\}} a_j a_j^\top - \frac{1}{b} \left( a_i a_i^\top + \frac{b -1}{n-1} \sum_{j=1,\, j\neq i }^{n} a_j a_j^\top \right) \right) } \\
    &= \frac{1}{b} \lambda_{\max} \left( \frac{1}{n-1} \left( (n-b) a_i a_i^\top + (b -1) \sum_{j=1}^{n} a_j a_j^\top \right) \right)
    + \mathbb{E} \left[ \lambda_{\max} \left( \frac{1}{b} \sum_{ j \in S^i} a_j a_j^\top - \frac{1}{b} \frac{b -1}{n-1} \sum_{j\in [n]\setminus\{i\}} a_j a_j^\top \right) \right]\\
    &\leq \frac{1}{b (n-1)} \left( (n-b) \frac{L_i}{U} + n(b -1) \frac{L}{U} \right)
    + \E{\lambda_{\max} \left( \frac{1}{b} \sum_{ j \in S^i} a_j a_j^\top - \frac{1}{b} \frac{b -1}{n-1} \sum_{j\in [n]\setminus\{i\}} a_j a_j^\top \right)}\ ,
  \end{align*}
  where in the first inequality we add and remove the mean and then apply \Cref{lem:lemma1}. In the second equality we explicit the mean with \Cref{lem:lemma_expect_B_minus_i} and in the last inequality we use again \Cref{lem:lemma1} for the left-hand side term.
  Finally, we multiply by $U$ on both sides of the inequality.
\end{proof}

We recall the following lemma used to introduced the \emph{practical estimate} given by
\begin{equation*}
  \cL_{\text{practical}} (b) \overset{\text{\scriptsize \Cref{def:practical_bound}} }{\eqdef} \frac{n }{b}\frac{b -1}{n-1} L + \frac{1}{b} \frac{n-b}{n-1} L_{\max} \enspace.
\end{equation*}
\practicalmotivate*

% \begin{lemma} \label{lem:lemma_bound_expect_explaining_practical_bound}
%   Let  $a_j \in \R^d$ for $j \in [n]$, let $i \in [n]$ and let $S^i$ be a $(b-1)$-nice sampling over $[n]\setminus\{i\}.$ It follows that
%   \begin{equation}
%     \cL \leq \frac{n }{b}\frac{b -1}{n-1} L + \frac{1}{b} \frac{n-b}{n-1} L_{\max}  + \max_{i=1,\ldots, n}U\E{\lambda_{\max} \left( N \right)} \enspace. \label{eq:cL_expectation_interpretation}
%   \end{equation}
%   with $N \eqdef \frac{1}{b} \sum_{ j \in S^i} a_j a_j^\top - \frac{1}{b} \frac{b -1}{n-1} \sum_{j\in [n]\setminus\{i\}} a_j a_j^\top$.
% \end{lemma}
% \newpage
\begin{proof}[Proof of~\Cref{lem:practicalmotivate}] \label{proof:proof_practical}
  The result comes from applying re-writing $\cL$ as an expectation of the largest eigenvalue of a sum of matrices. Then we apply \Cref{lem:lemma_bound_expect_B} and then taking the maximum over all $i \in [n]$. Thus, we have
  \begin{align*}
    \cL &\overset{\text{\scriptsize \eqref{eq:cL1_rewrite_expectation_lambda_max}} }{=} \max_{i=1,\ldots, n} U \E{ \lambda_{\max}\left(\frac{1}{b}\sum_{j \in S^i\cup\{i\}} a_j a_j^\top \right) } \\
    &\overset{\text{\scriptsize \Cref{lem:lemma_bound_expect_B}} }{\leq} \max_{i=1,\ldots, n} \left\{
      \frac{1}{b (n-1)} \left( (n-b)L_i + n(b -1) L\right) + U \E{\lambda_{\max} \left( \frac{1}{b} \sum_{ j \in S^i} a_j a_j^\top - \frac{1}{b} \frac{b -1}{n-1} \sum_{j\in [n]\setminus\{i\}} a_j a_j^\top \right)}
    \right\}\\
    &\leq \frac{n }{b}\frac{b -1}{n-1} L + \frac{1}{b} \frac{n-b}{n-1} L_{\max} + \max_{i=1,\ldots, n}U\E{\lambda_{\max} \left( \frac{1}{b} \sum_{ j \in S^i} a_j a_j^\top - \frac{1}{b} \frac{b -1}{n-1} \sum_{j\in [n]\setminus\{i\}} a_j a_j^\top \right)} \enspace.
  \end{align*}
\end{proof}

\begin{proof}[Proof of~\Cref{thm:cL_bound_direct_expectation}]
  Applying the previous lemma we get
  \begin{equation}\label{eq:backup_for_dead_link}
    \cL \overset{\text{\scriptsize \eqref{eq:withpracticalmotivate}} }{\leq} \frac{n }{b}\frac{b -1}{n-1} L + \frac{1}{b} \frac{n-b}{n-1} L_{\max} + \max_{i=1,\ldots, n}U\E{\lambda_{\max} \left( N \right)} \enspace,
  \end{equation}
  with $N \eqdef \frac{1}{b} \sum_{ j \in S^i} a_j a_j^\top - \frac{1}{b} \frac{b -1}{n-1} \sum_{j\in [n]\setminus\{i\}} a_j a_j^\top$.

  To further our argument, we will encode different samplings using unit coordinate vectors. Let $e_1,\ldots, e_n$ $\in \R^n$ be the unit coordinate vectors.
  Let $S^i = \{ S_{1}^i, \ldots, S_{b}^i \}$ denote an arbitrary but fixed ordering of the elements of $S^i$. With this we can encode the sampling without replacement as
  \begin{equation}\label{eq:encodesamp}
    \sum_{ j \in S^i} a_j a_j^\top = \sum_{k=1}^{b -1} \sum_{j\in [n]\setminus\{i\}} (e_{j})_{S^i_k}  a_j a_j^\top \enspace.
  \end{equation}
  Using this notation, the matrix $N$ which can be further decomposed as
  \begin{align*}
    N &= \frac{1}{b} \sum_{ j \in S^i} a_j a_j^\top - \frac{1}{b} \frac{b -1}{n-1} \sum_{j\in [n]\setminus\{i\}} a_j a_j^\top \\
    &= \frac{1}{b} \sum_{k=1}^{b -1} \sum_{j\in [n]\setminus\{i\}} (e_{j})_{S^i_k} a_j a_j^\top - \frac{1}{b} \frac{b -1}{n-1} \sum_{j\in [n]\setminus\{i\}} a_j a_j^\top \\
    &= \sum_{k=1}^{b -1} \frac{1}{b} \sum_{j\in [n]\setminus\{i\}} \left( (e_{j})_{S^i_k} - \frac{1}{n-1} \right) a_j a_j^\top \\
    &\eqdef \sum_{k=1}^{b -1} M_k \enspace.
  \end{align*}
  where we have encoded the sampling $S^i$ using unit coordinate vectors. The matrices $M_1, \ldots, M_{b -1}$ are sampled \emph{without} replacement from the set
  \begin{equation} \label{eq:samplMseti}
    % ( \big( \Big( \bigg( \Bigg(
    \vast\{ \vast. \sum_{j \in [n]\setminus\{i\}} \frac{1}{b} \left(x_j - \frac{1}{n-1} \right)a_j a_j^\top \; : \; x \in \{e_1, \ldots, e_{i-1},e_{i+1}, \ldots ,e_n \} \vast. \vast\}\enspace.
  \end{equation}
  Now let $X_1, \ldots, X_{b}$ be matrices sampled \emph{with} replacement from~\eqref{eq:samplMseti} and let $X_k \eqdef \frac{1}{b}\sum_{j\in [n]\setminus\{i\}} \left(z_{j}^k - \frac{1}{n-1} \right)a_j a_j^\top$ and $Y \eqdef \sum_{k=1}^{b -1} X_k$
  thus the vectors $z^k$ are sampled with replacement from $\{e_1, \ldots, e_{i-1},e_{i+1}, \ldots ,e_n \}.$
  Consequently
  \begin{equation*}
    \Prb{z_j^k = 1} = \frac{1}{n-1} \enspace, \quad \forall j \in \{1,\ldots, i-1,i+1,\ldots, n\} \enspace.
  \end{equation*}
  We are now in a position to apply the Bernstein matrix inequality.
  %XXX \jo{to fix in revision} to bound~\eqref{eq:cL_expectation_interpretation}.
  To this end we have
  \begin{itemize}[leftmargin=*]
    \item A sum of centered random matrices: $\E{X_k}=0$.
    \item Let $k^*$ be the unique index such that $z^k_{k^*}=1.$ We have a  uniform bound of the largest eigenvalue of our $X_k$
          \noindent \begin{align}
            \lambda_{\max} (X_k)
            & = \frac{1}{b} \lambda_{\max} \left(\sum_{j\in [n]\setminus\{i\}} z_{j}^k a_j a_j^\top - \frac{1}{n-1} \sum_{j\in [n]\setminus\{i\}} a_j a_j^\top \right) \nonumber \\
            &\leq \frac{1}{b} \lambda_{\max} \left( \sum_{j\in [n]\setminus\{i\}} z_{j}^k a_j a_j^\top \right) \nonumber \\
            &= \frac{1}{b} \lambda_{\max} \left( a_{k^*} a_{k^*}^\top \right) \nonumber \\
            &\leq \frac{1}{b} \frac{L_{\max}}{U} \enspace, \label{eq:bound_lambda_max_x_j_3}
          \end{align}
          where we applied the \Cref{lem:lemma2} in the first inequality.
    \item And a bound on the variance too
          \begin{align}
            \E{X_k^2} &= \mathbb{E} \left( \frac{1}{b} \sum_{j\in [n]\setminus\{i\}} \left(z_{j}^k - \frac{1}{n-1} \right) a_j a_j^\top \right) ^2 \nonumber \\
            &= \frac{1}{b ^2} \mathbb{E} \, \left( \sum_{j,p \in [n]\setminus\{i\} } z_{j}^k z_{p}^k a_j a_j^\top a_p a_p^\top
            - \frac{2}{n-1} \sum_{j,p \in [n]\setminus\{i\} } z_{j}^k a_j a_j^\top a_p a_p^\top
            + \frac{1}{(n-1)^2} \sum_{j,p \in [n]\setminus\{i\} } a_j a_j^\top a_p a_p^\top \right) \nonumber \\
            &= \frac{1}{b ^2} \sum_{j,p \in [n]\setminus\{i\} } \left(( \E{z_{j}^k z_{p}^k} a_j a_j^\top a_p a_p^\top
            - \frac{2}{n-1} \E{z_{j}^k} a_j a_j^\top a_p a_p^\top
            + \frac{1}{(n-1)^2} a_j a_j^\top a_p a_p^\top \right) \nonumber \\
            &= \frac{1}{b ^2} \sum_{j,p \in [n]\setminus\{i\} } \left( \E{z_{j}^k z_{p}^k} a_j a_j^\top a_p a_p^\top
            - \frac{2}{(n-1)^2} a_j a_j^\top a_p a_p^\top
            + \frac{1}{(n-1)^2} a_j a_j^\top a_p a_p^\top \right) \nonumber \\
            &= \frac{1}{b^2} \left( \frac{1}{n-1} \sum_{j\in [n]\setminus\{i\}} a_j a_j^\top a_j a_j^\top
            - \frac{1}{(n-1)^2} \sum_{j,p \in [n]\setminus\{i\} } a_j a_j^\top a_p a_p^\top \right) \enspace, \label{eq:intermediate_step_variance_3}
          \end{align}
          where, in the last equality, we used that $z_{j}^k z_{p}^k = 0$ if $j \neq p$ and $\E{z_{j}^k z_{j}^k} = \E{z_{j}^k} = \frac{1}{n-1}$, so that
          \begin{align*}
            \sum_{j,p \in [n]\setminus\{i\} } \E{z_{j}^k z_{p}^k} a_j a_j^\top a_p a_p^\top
            &= \E{\sum_{j,p \in [n]\setminus\{i\} } z_{j}^k z_{p}^k} a_j a_j^\top a_p a_p^\top \\
            &= \sum_{j\in [n]\setminus\{i\}} \E{z_{j}^k z_{j}^k} a_j a_j^\top a_p a_p^\top \\
            &= \frac{1}{n-1} \sum_{j\in [n]\setminus\{i\}} a_j a_j^\top a_p a_p^\top \enspace.
          \end{align*}
          Summing in \eqref{eq:intermediate_step_variance_3}, taking the largest eigenvalue and applying~\Cref{lem:lemma2} results in
          \begin{align}
            \lambda_{\max} \left( \sum_{k=1}^{b-1} \E{X_k^2} \right) &\leq \lambda_{\max} \left( \sum_{k=1}^{b-1} \frac{1}{b ^2} \frac{1}{n-1} \sum_{j\in [n]\setminus\{i\}} a_j a_j^\top a_j a_j^\top \right) \nonumber \\
            % && \text{\eqref{eq:intermediate_step_variance_3}+\Cref{lem:lemma2}} \nonumber \\
            &\leq \frac{b-1}{b^2} \left( \max_{j \in [n]\setminus\{i\}} \lambda_{\max} \left( a_j a_j^\top \right) \right) \cdot \lambda_{\max} \left( \frac{1}{n-1} \sum_{j\in [n]\setminus\{i\}} a_j a_j^\top \right) \nonumber \\
            &\leq \frac{b-1}{b^2} \frac{L_{\max}}{U^2} L_{[n] \setminus \{i \}} \enspace. \label{eq:bound_variance_3}
          \end{align}
  \end{itemize}

  Considering~\Cref{eq:bound_lambda_max_x_j_3,eq:bound_variance_3} and applying the matrix Bernstein concentration inequality in Theorem~\ref{thm:mat_bernstein_expect_without_replac} we get
  \begin{equation*}
    U\E{\lambda_{\max}(N)} \leq \sqrt{2 \frac{b-1}{b^2} L_{\max} L_{[n] \setminus \{i \}} \log d} + \frac{1}{3} \frac{L_{\max}}{b} \log d
  \end{equation*}
  Taking the maximum over $i$ and using $L_{[n] \setminus \{i \}} \leq \frac{n}{n-1}L$ we have that
  \begin{equation*}
    \max_{i=1,\ldots, n} U\E{\lambda_{\max}(N)} \leq \sqrt{2 \frac{b-1}{b^2}\frac{n}{n-1} L_{\max} L \log d} + \frac{1}{3} \frac{L_{\max}}{b} \log d
  \end{equation*}
  Combining the above result with~\eqref{eq:backup_for_dead_link} leads us to
  \begin{align*}
    \cL & \leq  \frac{(n-b)L_{\max}}{b (n-1)} + \frac{n(b -1) L }{b (n-1)}
    + \sqrt{2 \left(\frac{b-1}{b} \frac{n}{n-1} L\right) \cdot \left(\frac{1}{b}L_{\max} \log d\right)}
    + \frac{1}{3} \frac{L_{\max}}{b} \log d \\
    &\leq \frac{(n-b)L_{\max}}{b (n-1)} + \frac{n(b -1) L }{b (n-1)}
    + \frac{b-1}{b} \frac{n}{n-1}L + \frac{4}{3}\frac{L_{\max}}{b}\log(d)\\
    &= 2\frac{b -1 }{b }\frac{n}{n-1}L + \frac{1}{b}\left(\frac{4}{3}\log(d) +\frac{n-b}{n-1} \right)L_{\max}.
  \end{align*}
  where in the second inequality we used the inequality $\sqrt{2ab} \leq a+ b$.
\end{proof}
%!TEX root = ../optimal_mini-batch.tex

%%%%%%%%%%%%%%%%%%%%%%%%%%%%%%%%%%%%%%%%%%%%%%%%%%%%%%%%%%%%%%%%%%%%%%%%%%%%%%%
%%%%%%%%%%%%%%%%%%%%%%%%%%%%%%%%%%%%%%%%%%%%%%%%%%%%%%%%%%%%%%%%%%%%%%%%%%%%%%%
\section{Linear Algebra Tools}
\label{appendix:linear_algebra_tools}
%%%%%%%%%%%%%%%%%%%%%%%%%%%%%%%%%%%%%%%%%%%%%%%%%%%%%%%%%%%%%%%%%%%%%%%%%%%%%%%
%%%%%%%%%%%%%%%%%%%%%%%%%%%%%%%%%%%%%%%%%%%%%%%%%%%%%%%%%%%%%%%%%%%%%%%%%%%%%%%

This appendix is dedicated to the presentation of useful results to manipulate more easily the smoothness constants.

%%%%%%%%%%%%%%%%%%%%%%%%%%%%%%%%%%%%%%%%%%%%%%%%%%%%%%%%%%%%%%%%%%%%%%%%%%%%%%%
\subsection{Spectral Lemmas}
\label{sub:spectral_lemmas}
%%%%%%%%%%%%%%%%%%%%%%%%%%%%%%%%%%%%%%%%%%%%%%%%%%%%%%%%%%%%%%%%%%%%%%%%%%%%%%%

Let us recall some useful spectral results on Hermitian and positive semi-definite matrices.
\begin{lemma}{(Weyl's inequality)} \label{lem:Weyl}
    Let $A, B \in \R^{n \times n}$ symmetric matrices.
    Assume that the eigenvalues of $A$ (resp. $B$) are sorted \ie $\lambda_1 (A) \geq \dots \geq \lambda_n (A)$ (resp. $\lambda_1 (B) \geq \dots \geq \lambda_n (B)$).
    Then, we have
    \begin{equation}
        \lambda_{i+j-1} (A+B) \leq \lambda_i (A) + \lambda_j (B)\enspace.
    \end{equation}
    whenever $i,j \geq 1$ and $i+j-1 \leq n \enspace.$
\end{lemma}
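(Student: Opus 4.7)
My plan is to prove this by invoking the Courant--Fischer min-max characterization of eigenvalues of a symmetric matrix, together with a dimension-counting argument on the intersection of two invariant subspaces. Concretely, for any symmetric $M \in \R^{n\times n}$ with sorted eigenvalues $\lambda_1(M) \geq \dots \geq \lambda_n(M)$, I will use the identity
\begin{equation*}
  \lambda_k(M) \;=\; \min_{\substack{S \subseteq \R^n \\ \dim S = n-k+1}} \; \max_{\substack{x \in S \\ \|x\| = 1}} \; x^\top M x\enspace,
\end{equation*}
which I would either quote from a standard reference or justify in one line from the spectral theorem.

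Next, I would construct two specific subspaces. Let $v_1, \dots, v_n$ be an orthonormal eigenbasis of $A$ (with $A v_k = \lambda_k(A) v_k$) and let $w_1, \dots, w_n$ be one for $B$. Define
\begin{equation*}
  V \;\eqdef\; \mathrm{span}(v_i, v_{i+1}, \dots, v_n), \qquad W \;\eqdef\; \mathrm{span}(w_j, w_{j+1}, \dots, w_n)\enspace,
\end{equation*}
so that $\dim V = n-i+1$ and $\dim W = n-j+1$. A direct expansion in the respective eigenbases shows that for every unit vector $x \in V$ one has $x^\top A x \leq \lambda_i(A)$, and for every unit vector $x \in W$ one has $x^\top B x \leq \lambda_j(B)$.

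The key step is then a dimension count: since $V, W \subseteq \R^n$,
\begin{equation*}
  \dim(V \cap W) \;\geq\; \dim V + \dim W - n \;=\; n - i - j + 2\enspace,
\end{equation*}
which is at least $1$ precisely under the hypothesis $i + j - 1 \leq n$. Choose any subspace $S \subseteq V \cap W$ of dimension exactly $n - (i+j-1) + 1 = n - i - j + 2$. For every unit $x \in S$, splitting the quadratic form gives $x^\top(A+B)x \leq \lambda_i(A) + \lambda_j(B)$. Applying the min-max identity with $k = i+j-1$ to the symmetric matrix $A+B$ and this particular $S$ yields
\begin{equation*}
  \lambda_{i+j-1}(A+B) \;\leq\; \max_{x \in S,\, \|x\|=1} x^\top(A+B)x \;\leq\; \lambda_i(A) + \lambda_j(B)\enspace,
\end{equation*}
which is the claim. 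The only nontrivial ingredient is the min-max formula itself; the dimension count and the subspace-wise quadratic-form bounds are elementary, so I do not anticipate any real obstacle. If the paper prefers a fully self-contained argument, I would include a two-line proof of the min-max identity by decomposing an arbitrary unit vector in the eigenbasis and bounding the Rayleigh quotient.
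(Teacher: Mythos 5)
Your proof is correct: the Courant--Fischer characterization, the subspace-wise bounds on the quadratic forms, and the dimension count $\dim(V\cap W)\geq n-i-j+2\geq 1$ under $i+j-1\leq n$ all check out, and together they give exactly the claimed inequality. Note, however, that the paper does not prove this lemma at all --- it is stated in the appendix as a recalled classical fact --- so there is no paper argument to compare against; your write-up is the standard textbook proof and is complete as given.
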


Moreover, as a direct consequence of the variational characterization of eigenvalues, namely
\begin{equation}
    \lambda_{\max} (A) = \max_{v \neq 0} \frac{v^\top A v}{\norm{v}_2^2} \enspace,
\end{equation}
we have an inequality between the maximum diagonal term of a positive semi-definite matrices and its maximum eigenvalue.
\begin{lemma}
    Let $A \in \R^{n \times n}$ positive semi-definite matrix and the vector containing its diagonal $d \eqdef \mathrm{diag}(A)$. Then, we have
    \begin{equation}
        \max_{i = 1,\dots,n} d_i \leq \lambda_{\max} (A)\enspace.
    \end{equation}
\end{lemma}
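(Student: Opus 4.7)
The plan is to apply the variational (Rayleigh--Ritz) characterization of the largest eigenvalue that is recalled just above the lemma, namely $\lambda_{\max}(A) = \max_{v \neq 0} \frac{v^\top A v}{\|v\|_2^2}$, and to evaluate it on a cleverly chosen family of test vectors. The point is that this identity yields an upper bound for \emph{every} nonzero test vector $v$, so all I need is to pick vectors that isolate each diagonal entry of $A$.

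First I would fix an arbitrary index $i \in \{1, \ldots, n\}$ and substitute $v = e_i$, the $i$-th canonical basis vector of $\R^n$. Since $\|e_i\|_2 = 1$ and $e_i^\top A e_i = A_{ii} = d_i$, the variational inequality immediately yields $d_i \leq \lambda_{\max}(A)$. Taking the maximum over $i \in \{1, \ldots, n\}$ then gives $\max_{i=1,\ldots,n} d_i \leq \lambda_{\max}(A)$, which is exactly the statement of the lemma.

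The main (non-)obstacle is that there is essentially no obstacle: the argument is a one-line specialization of the Rayleigh quotient formula, and the hardest step is merely recognizing that the canonical basis vectors are the right test vectors for extracting diagonal entries. It is worth pointing out that the positive semi-definiteness hypothesis is not actually used in the derivation --- the inequality holds for every real symmetric matrix --- but keeping it in the statement is harmless and matches the setting in which $A$ arises in the paper (as a Gram-type matrix whose diagonal entries are automatically non-negative). Consequently I would present the proof in two lines: cite the variational characterization and plug in $v = e_i$.
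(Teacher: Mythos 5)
Your proof is correct and is exactly the argument the paper intends: the lemma is stated there as a direct consequence of the variational characterization $\lambda_{\max}(A) = \max_{v \neq 0} \frac{v^\top A v}{\|v\|_2^2}$, and your choice $v = e_i$ makes that deduction explicit. Your side remark that positive semi-definiteness is not actually needed (symmetry suffices) is also accurate.
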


The following lemma is a direct consequence of Weyl's inequality for $i=j=1.$
\begin{lemma} \label{lem:lemma1}
    Let $A, B \in \R^{n \times n}$ symmetric matrices. Then, we have
    \begin{equation}
        \lambda_{\max} (A+B) \leq \lambda_{\max} (A) + \lambda_{\max}(B) \enspace.
    \end{equation}
\end{lemma}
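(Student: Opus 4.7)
The plan is to derive this inequality as an immediate corollary of Weyl's inequality stated in Lemma~\ref{lem:Weyl}. Specifically, I would apply that lemma with $i = j = 1$: this is a valid choice since $i+j-1 = 1 \leq n$, and it yields $\lambda_1(A+B) \leq \lambda_1(A) + \lambda_1(B)$. Because the eigenvalues in Lemma~\ref{lem:Weyl} are assumed sorted in non-increasing order, $\lambda_1(M) = \lambda_{\max}(M)$ for any symmetric $M$, and the claim follows in a single line.

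As a sanity check (and an alternative proof path should one prefer a self-contained argument rather than appealing to Weyl), I would fall back on the variational characterization of the maximum eigenvalue of a symmetric matrix,
\begin{equation*}
\lambda_{\max}(M) \;=\; \max_{v \neq 0} \frac{v^\top M v}{\norm{v}_2^2}\enspace,
\end{equation*}
which is already recalled in the paper just above the statement. Using bilinearity $v^\top(A+B)v = v^\top A v + v^\top B v$ and the elementary fact that $\max(f+g) \leq \max f + \max g$ for real-valued functions over the same domain, one obtains
\begin{equation*}
\lambda_{\max}(A+B) \;\leq\; \max_{v\neq 0} \frac{v^\top A v}{\norm{v}_2^2} + \max_{v\neq 0} \frac{v^\top B v}{\norm{v}_2^2} \;=\; \lambda_{\max}(A) + \lambda_{\max}(B)\enspace.
\end{equation*}

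There is essentially no obstacle here: the only subtlety is notational, namely verifying that the indexing convention in Lemma~\ref{lem:Weyl} makes $\lambda_1$ coincide with $\lambda_{\max}$, which is immediate from the stated sorting. I would therefore present the proof as a one-line invocation of Weyl, with the variational argument mentioned only if a self-contained exposition is desired.
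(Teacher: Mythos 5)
Your proposal is correct and matches the paper's own argument exactly: the paper derives \Cref{lem:lemma1} as a direct consequence of Weyl's inequality (\Cref{lem:Weyl}) applied with $i=j=1$. The alternative variational argument you sketch is also valid but is not needed here.
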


Lastly, we present a result arising from previous lemma.
\begin{lemma} \label{lem:lemma2}
    Let $A, B \in \R^{n \times n}$ symmetric matrices such that $B$ is positive semi-definite. Then, we have
    \begin{equation}
        \lambda_{\max} (A-B) \leq \lambda_{\max} (A)\enspace.
    \end{equation}
\end{lemma}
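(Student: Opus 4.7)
The plan is to prove the inequality directly from the Rayleigh--Ritz variational characterization of the largest eigenvalue of a symmetric matrix, which the appendix itself recalls just above this lemma. Note first that $A-B$ is symmetric because both $A$ and $B$ are, so the characterization
\[
\lambda_{\max}(M) \;=\; \max_{v \neq 0} \frac{v^\top M v}{\|v\|_2^2}
\]
applies equally well to $M = A-B$ and to $M = A$.

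From here the argument is essentially one line. For any nonzero $v \in \R^n$ I would split the quadratic form as $v^\top (A-B) v = v^\top A v - v^\top B v$, and then use the hypothesis that $B$ is positive semi-definite, which by definition means $v^\top B v \geq 0$. This yields the pointwise bound $v^\top (A-B) v \leq v^\top A v$ for every nonzero $v$. Dividing both sides by $\|v\|_2^2$ and taking the maximum over $v \neq 0$ on each side preserves the inequality, and by the variational formula each side evaluates to the corresponding largest eigenvalue, giving $\lambda_{\max}(A-B) \leq \lambda_{\max}(A)$.

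An alternative route, if one prefers to stay within the Weyl framework already cited, is to apply Weyl's inequality (\Cref{lem:Weyl}) in its lower form to the decomposition $A = (A-B) + B$ with indices $i=1$ and $j=n$, giving $\lambda_1(A) \geq \lambda_1(A-B) + \lambda_n(B)$, and then to invoke $\lambda_n(B) \geq 0$ since $B$ is positive semi-definite. However, the paper only explicitly states Weyl in its upper form, so the Rayleigh quotient proof is cleaner and self-contained. There is no real obstacle here: the only thing to double-check is that symmetry of $A-B$ is in force so that the max-characterization of $\lambda_{\max}$ is valid, which is immediate from the hypotheses.
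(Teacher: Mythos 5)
Your proof is correct, but it takes a different route from the paper's. The paper writes $A-B = A + (-B)$ and applies its \Cref{lem:lemma1} (subadditivity of $\lambda_{\max}$, itself a consequence of Weyl's inequality) to get $\lambda_{\max}(A-B) \leq \lambda_{\max}(A) + \lambda_{\max}(-B) = \lambda_{\max}(A) - \lambda_{\min}(B)$, and then drops the last term using $B \succeq 0$. You instead argue pointwise on the Rayleigh quotient: $v^\top (A-B) v = v^\top A v - v^\top B v \leq v^\top A v$ for every $v \neq 0$, and take the maximum. Your argument is more elementary and self-contained, since it needs only the variational characterization and the definition of positive semi-definiteness, whereas the paper's is a one-line corollary of machinery it has already set up in the same appendix; the two are essentially equivalent in content (your pointwise bound is exactly what makes $\lambda_{\max}(-B) = -\lambda_{\min}(B) \leq 0$ work). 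Your side remark about the lower form of Weyl is also a valid third route, and your check that $A-B$ is symmetric is the only hypothesis that needs verifying in either version.
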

\begin{proof}
    Let $A, B \in \R^{n \times n}$ symmetric matrices such that $B$ is positive semi-definite. We get directly
    \begin{align*}
      \lambda_{\max} (A-B) &\leq \lambda_{\max} (A) + \lambda_{\max} (-B)\\
                           &= \lambda_{\max} (A) - \lambda_{\min} (B)\\
                           &\leq \lambda_{\max} (A) \enspace,
    \end{align*}
    where the first inequality stems from \Cref{lem:lemma1} and the second from $B \succeq 0.$
\end{proof}

%%%%%%%%%%%%%%%%%%%%%%%%%%%%%%%%%%%%%%%%%%%%%%%%%%%%%%%%%%%%%%%%%%%%%%%%%%%%%%%
\subsection{Basic properties of the smoothness constants}
\label{sub:smooth_lemmas}
%%%%%%%%%%%%%%%%%%%%%%%%%%%%%%%%%%%%%%%%%%%%%%%%%%%%%%%%%%%%%%%%%%%%%%%%%%%%%%%

The complexity results of \citet{gower2018stochastic} depends on smoothness constants defined in \Cref{sub:assumptions_and_smoothness_constants}.
Here are some inequalities giving an idea of the order of those constants.
\begin{lemma} \label{lem:lemma_smoothness_cst_ineq}
    Let $\emptyset \neq B \subseteq [n] = \{1,\dots,n\}$ a batch set drawn randomly without replacement. The following inequalities hold
    \begin{enumerate}[label=(\roman*)]
        \item \begin{equation} L_i \leq L_{\max} \quad \forall i=1,\dots,n \enspace. \end{equation}
        \item \label{enum:avg_B_upper_bound} \begin{equation} L_B \leq \frac{1}{|B|} \sum_{i \in B} L_i \quad \forall i=1,\dots,n \enspace. \end{equation}
        \item \begin{equation} \label{eq:smoothness_cst_chain_ineq}
                L \overset{\text{(a)}}{\leq} \Lbarconst \overset{\text{(b)}}{\leq} L_{\max} \overset{\text{(c)}}{\leq} nL \overset{\text{(d)}}{\leq} n\Lbarconst \enspace.
              \end{equation}
    \end{enumerate}
\end{lemma}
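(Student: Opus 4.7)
The plan is to reduce each inequality to a direct application of the spectral lemmas stated in Appendix~B.1, together with the matrix representations $L_i = U\lambda_{\max}(a_i a_i^\top) = U\|a_i\|_2^2$, $L_B = \frac{U}{|B|}\lambda_{\max}\bigl(\sum_{i\in B} a_i a_i^\top\bigr)$ from Lemma~\ref{lemma:LB_def_1}, and $L = \frac{U}{n}\lambda_{\max}\bigl(\sum_{i\in[n]} a_i a_i^\top\bigr)$. Once the smoothness constants are written as maximum eigenvalues of Gram-like matrices, all the inequalities are essentially subadditivity and monotonicity statements on the PSD cone.

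Item (i) is just the definition of $L_{\max}$. For item (ii), I would apply the subadditivity Lemma~\ref{lem:lemma1} inductively to the sum $\sum_{i\in B} a_i a_i^\top$, obtaining $\lambda_{\max}\bigl(\sum_{i\in B}a_i a_i^\top\bigr) \leq \sum_{i\in B}\lambda_{\max}(a_i a_i^\top)$, and then divide by $|B|$ and multiply by $U$ to recover $L_B \leq \frac{1}{|B|}\sum_{i\in B} L_i$.

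For the chain in (iii), I would handle the four inequalities in order. Inequality (a) is exactly item (ii) specialized to $B=[n]$. Inequality (b) is the trivial fact that an average is dominated by the maximum. Inequality (c) is the only one requiring a slightly different spectral argument: for any fixed $j\in[n]$, the matrix $\sum_{i\in[n]} a_i a_i^\top - a_j a_j^\top = \sum_{i\neq j} a_i a_i^\top$ is positive semi-definite, so Lemma~\ref{lem:lemma2} yields $\lambda_{\max}(a_j a_j^\top) \leq \lambda_{\max}\bigl(\sum_{i\in[n]} a_i a_i^\top\bigr)$; multiplying by $U$ gives $L_j \leq nL$, and taking the max over $j$ delivers $L_{\max}\leq nL$. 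Inequality (d) is immediate from (a).

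There is no real obstacle here; the only step that is not a pure one-line invocation of subadditivity is (c), where one must recognize that dropping one rank-one PSD summand from the Gram matrix can only decrease the largest eigenvalue. Once that observation is made, the entire chain~\eqref{eq:smoothness_cst_chain_ineq} assembles itself from the appendix lemmas.
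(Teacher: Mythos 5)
Your proposal is correct, and for items (i), (ii), and parts (a), (b), (d) of the chain it coincides with the paper's proof: (i) is definitional, (ii) is iterated subadditivity of $\lambda_{\max}$ via \Cref{lem:lemma1}, (a) is (ii) with $B=[n]$, (b) is average-versus-maximum, and (d) follows from (a). The one place where you genuinely diverge is inequality (c), $L_{\max}\leq nL$. The paper works with the $n\times n$ Gram matrix $A^\top A$: it writes $L_{\max} = U\max_i e_i^\top A^\top A e_i$ and invokes the variational characterization $\lambda_{\max}(A^\top A)=\max_{x\neq 0} x^\top A^\top A x/\norm{x}_2^2 \geq \max_i e_i^\top A^\top A e_i$, i.e.\ the largest eigenvalue dominates the largest diagonal entry, then divides by $n$. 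You instead stay with the $d\times d$ matrix $\sum_i a_i a_i^\top = AA^\top$ and observe that removing the PSD summand $\sum_{i\neq j} a_i a_i^\top$ can only decrease the top eigenvalue (\Cref{lem:lemma2}), giving $\lambda_{\max}(a_j a_j^\top)=\norm{a_j}_2^2 \leq \lambda_{\max}(AA^\top)$ and hence $L_j \leq nL$ for every $j$. Both arguments are one-line consequences of the variational characterization of $\lambda_{\max}$ and are equally rigorous; yours has the mild advantage of reusing only the appendix lemmas already needed for (ii) (so the whole chain rests on \Cref{lem:lemma1,lem:lemma2} alone), while the paper's version makes the equality case more transparent, since it exhibits $L_{\max}/U$ explicitly as a diagonal entry of the same matrix whose spectral radius is $nL/U$.
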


\begin{proof}
    \begin{enumerate}[label=(\roman*)]
        \item One directly gets that $L_i \leq \max_{j=1,\ldots,n} L_j = \Lmax$.
        \item This inequality states that the smoothness constant $L_B$ of the averaged function $f_B$ is upper bounded by the average of the corresponding smoothness constants $L_i$, over the batch $B$.
              The proof consists in $|B|$ repetitive calls of \Cref{lem:lemma1}.
        \item $(a)$ Direct implication of \ref{enum:avg_B_upper_bound} for $B=[n]$.

            $(b)$ Direct calculation
            \begin{equation*}
                \Lbarconst = \frac{1}{n} \sum_{i=1}^n L_i \leq \frac{1}{n} \sum_{i=1}^n L_{\max} \leq L_{\max} \enspace.
            \end{equation*}

            $(c)$ Let us first recall the matrix formulation of our smoothness constants:
            \begin{equation*}
                L = \frac{U}{n}\lambda_{\max}(A A^\top) = \frac{U}{n}\lambda_{\max}(A^\top A)
            \end{equation*}
            and
            \begin{equation*}
                L_{\max} = U\max_{i=1,\dots,n} e_i^\top A^\top A e_i \enspace,
            \end{equation*}
            Using the min-max theorem, we have that
            \begin{equation*}
                \lambda_{\max}(A^\top A) = \max_{x\neq 0} \frac{x^\top A^\top A x}{\norm{x}_2^2} \geq \max_{i=1,\ldots, n} e_i^\top A^\top A e_i.
            \end{equation*}
           Dividing the above by $n$ on both sides gives
            \begin{equation*}
                L \geq \frac{L_{\max}}{n} \enspace.
            \end{equation*}

            $(d)$ Direct consequence of $(a)$.
    \end{enumerate}
\end{proof}
%!TEX root = ../optimal_mini-batch.tex

%%%%%%%%%%%%%%%%%%%%%%%%%%%%%%%%%%%%%%%%%%%%%%%%%%%%%%%%%%%%%%%%%%%%%%%%%%%%%%%
%%%%%%%%%%%%%%%%%%%%%%%%%%%%%%%%%%%%%%%%%%%%%%%%%%%%%%%%%%%%%%%%%%%%%%%%%%%%%%%
\section{Matrix Bernstein Inequality: Sampling Without Replacement}
\label{appendix:matrix_bernstein_inequality}
%%%%%%%%%%%%%%%%%%%%%%%%%%%%%%%%%%%%%%%%%%%%%%%%%%%%%%%%%%%%%%%%%%%%%%%%%%%%%%%
%%%%%%%%%%%%%%%%%%%%%%%%%%%%%%%%%%%%%%%%%%%%%%%%%%%%%%%%%%%%%%%%%%%%%%%%%%%%%%%

In this appendix, we present the matrix Bernstein inequality for independent Hermitian matrices from \citet{tropp2015introduction}.
We also provide another version of this theorem for matrices sampled \emph{without} replacement and prove it as explicitly as possible, taking our inspiration from \citet{Tropp2011}.
The proof is based the possibility of transferring the results from sampling \emph{with} to \emph{without} through the inequality \eqref{eq:sampling_mgf_ineq} due to \citet{Gross2010}.
The exact same work can be done for the tail bound, which is for instance used in \citet{Bach2013}.

%%%%%%%%%%%%%%%%%%%%%%%%%%%%%%%%%%%%%%%%%%%%%%%%%%%%%%%%%%%%%%%%%%%%%%%%%%%%%%%
\subsection{Original Bernstein inequality for independent matrices}
\label{sub:Bernstein_with_replacement}
%%%%%%%%%%%%%%%%%%%%%%%%%%%%%%%%%%%%%%%%%%%%%%%%%%%%%%%%%%%%%%%%%%%%%%%%%%%%%%%

We first present \Cref{thm:mat_bernstein_expect} which gives a Bernstein inequality for a sum of random and independent Hermitian matrices whose eigenvalues are upper bounded.
If the matrices $X_k$ are sampled from a finite set $\cX$, one can interpret this random sampling of independent matrices as a random sampling \emph{with} replacement.
\begin{theorem}[\citet{tropp2015introduction}, Theorem 6.6.1: Matrix Bernstein Inequality]
    \label{thm:mat_bernstein_expect}
  Consider a finite sequence $\{X_k\}_{k=1,\dots,n}$ of $n$ independent, random, Hermitian matrices with dimension $d$. Assume that
  \begin{equation*}
    \mathbb{E} \, X_k = 0 \quad \mathrm{and} \quad \lambda_{\max} (X_k) \leq L  \quad \text{for each index} \,k \enspace.
  \end{equation*}
  Introduce the random matrix
  \begin{equation*}
    S_X \eqdef \sum_{k=1}^n X_k \enspace.
  \end{equation*}
  Let $v(S_X)$ be the matrix variance statistic of the sum:
  \begin{equation}
    v(S_X) \eqdef \norm{\mathbb{E} \, S_X^2} = \norm{\sum_{k=1}^n \mathbb{E}\, X_k^2} = \lambda_{\max} \left( \sum_{k=1}^n \mathbb{E} \, X_k^2 \right) \ .
  \end{equation}
  Then
  \begin{equation}
    \mathbb{E} \, \lambda_{\max} (S_X) \leq \sqrt{2v(S_X) \log d} + \frac{1}{3} L \log d \enspace.
  \end{equation}
\end{theorem}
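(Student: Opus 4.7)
The plan is to prove the matrix Bernstein inequality via the matrix Laplace transform method, following the standard Ahlswede--Winter/Tropp strategy. The overall idea is to control $\mathbb{E}\,\lambda_{\max}(S_X)$ by the trace of a matrix moment generating function, exploit subadditivity of the matrix cumulant generating function, and then minimize in a scalar parameter $\theta$.

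First, I would establish the \emph{matrix Laplace transform bound}: for any $\theta>0$,
\[
\mathbb{E}\,\lambda_{\max}(S_X) \;\leq\; \frac{1}{\theta}\log\mathbb{E}\,\mathrm{tr}\,\exp(\theta S_X),
\]
which follows from Jensen's inequality together with the fact that $\lambda_{\max}(A) \leq \log\mathrm{tr}\,e^{A}$ for any Hermitian matrix $A$. Next, using Lieb's concavity theorem (which gives concavity of $A \mapsto \mathrm{tr}\,\exp(H + \log A)$ on the positive cone), one obtains the Tropp master subadditivity inequality
\[
\mathbb{E}\,\mathrm{tr}\,\exp\!\Bigl(\theta\sum_{k=1}^n X_k\Bigr)
\;\leq\; \mathrm{tr}\,\exp\!\Bigl(\sum_{k=1}^n \log\mathbb{E}\,e^{\theta X_k}\Bigr),
\]
by conditioning one summand at a time and applying Lieb's theorem inside the expectation.

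The second key step is to bound each matrix cumulant $\log\mathbb{E}\,e^{\theta X_k}$ using the Bernstein moment assumption. Since $\lambda_{\max}(X_k)\leq L$ and $\mathbb{E}\,X_k=0$, I would use the scalar inequality
\[
e^{\theta x} \;\leq\; 1 + \theta x + \frac{\theta^2 x^2/2}{1 - \theta L/3},\qquad x\leq L,\ 0<\theta<3/L,
\]
which transfers to the matrix setting via the operator monotonicity of polynomial comparisons on the spectrum of $X_k$. Taking expectations kills the linear term, and applying $\log(I+A)\preceq A$ yields
\[
\log\mathbb{E}\,e^{\theta X_k} \;\preceq\; \frac{\theta^2/2}{1-\theta L/3}\,\mathbb{E}\,X_k^2.
\]
Summing, using $\mathrm{tr}\,\exp(A)\leq d\cdot\exp(\lambda_{\max}(A))$, and identifying $v(S_X)=\lambda_{\max}(\sum_k\mathbb{E}\,X_k^2)$ gives
\[
\mathbb{E}\,\mathrm{tr}\,e^{\theta S_X} \;\leq\; d\cdot\exp\!\left(\frac{\theta^2\, v(S_X)/2}{1-\theta L/3}\right).
\]

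Finally, combining with the Laplace transform bound,
\[
\mathbb{E}\,\lambda_{\max}(S_X) \;\leq\; \frac{\log d}{\theta} + \frac{\theta\, v(S_X)/2}{1-\theta L/3},
\]
and the optimization is done by choosing $\theta = \sqrt{2\log d / v(S_X)}$ (valid provided this lies in $(0,3/L)$; otherwise one uses the endpoint), followed by the elementary bound $1/(1-\theta L/3)\leq 1 + \text{small}$ to separate the two terms. This yields the claimed $\sqrt{2v(S_X)\log d} + \tfrac{1}{3}L\log d$.

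The main obstacle is the second step: transferring the scalar Bernstein comparison $e^{\theta x}-1-\theta x\leq \frac{\theta^2 x^2/2}{1-\theta L/3}$ to a semidefinite inequality on matrices, which requires care because the exponential is not operator-monotone. One has to either invoke the transfer principle (comparing spectra when $X_k\preceq LI$) or work with the explicit power series and use that $\mathbb{E}\,X_k^{p} \preceq L^{p-2}\mathbb{E}\,X_k^2$ in the semidefinite order. The rest is bookkeeping and scalar optimization.
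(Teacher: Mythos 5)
Your proposal is correct and follows exactly the route the paper relies on: this theorem is quoted from \citet{tropp2015introduction} (Theorem 6.6.1), and the paper's appendix assembles precisely your chain of ingredients --- the Laplace-transform expectation bound, Lieb's theorem and subadditivity of matrix cgfs, the Bernstein mgf/cgf bound $\log \mathbb{E}\, e^{\theta X_k} \preceq \frac{\theta^2/2}{1-\theta L/3}\,\mathbb{E}\, X_k^2$, the trace-to-eigenvalue bound $\mathrm{tr}\,e^{A} \leq d\,\lambda_{\max}(e^{A})$, and the final scalar optimization over $\theta \in (0,3/L)$ --- in order to prove its without-replacement variant. The only cosmetic difference is in the last step: the infimum of $\frac{\log d}{\theta} + \frac{\theta v/2}{1-\theta L/3}$ equals $\sqrt{2v\log d} + \frac{1}{3}L\log d$ exactly (e.g.\ via the substitution $u = \theta/(1-\theta L/3)$), so no endpoint case analysis is needed.
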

This theorem is the one we extend in \Cref{thm:mat_bernstein_expect_without_replac} to the case when the random matrices $X_k$ are sampled \emph{without} replacement from a finite set $\cX$.
We drew our inspiration from the proof of the matrix Chernoff inequality in \citet{Tropp2011} and the one of the matrix Bernstein tail bound in \citet{Bach2013}, both in the case of sampling without replacement.

%%%%%%%%%%%%%%%%%%%%%%%%%%%%%%%%%%%%%%%%%%%%%%%%%%%%%%%%%%%%%%%%%%%%%%%%%%%%%%%
\subsection{Technical random matrices prerequisites}
\label{sub:technical_random_matrices_prerequisites}
%%%%%%%%%%%%%%%%%%%%%%%%%%%%%%%%%%%%%%%%%%%%%%%%%%%%%%%%%%%%%%%%%%%%%%%%%%%%%%%

Before proving \Cref{thm:mat_bernstein_expect_without_replac}, which extends the matrix Bernstein inequality to sampling without replacement, we need to introduce the key tools of the matrix Laplace transform technique.
This technique is precious to prove tail bounds for sums of random matrices such as Chernoff, Hoeffding or Bernstein bounds, as presented in \citep{Tropp2012}.

Here, $\norm{\cdot}$ denotes the spectral norm, which is defined for any Hermitian matrix $H$ by
\begin{equation} \label{eq:spectral_norm}
  ||H|| = \max \left\{ \lambda_{\max} (H), -\lambda_{\min} (H) \right\} \enspace.
\end{equation}
We also introduce the moment generating function (mgf) and the cumulant generating function (cgf) of a random matrix, which are essential in the Laplace transform method approach.
\begin{definition}[Matrix Mgf and Cgf] \label{def:mgf_and_cgf}
  Let $X$ be a random Hermitian matrix. For all $\theta \in \R$, the matrix generating function $M_X$ and the matrix cumulant generating function $\Xi_X$ are given by
  \begin{equation*}
    M_X (\theta) \eqdef \mathbb{E} \, e^{\theta X}
  \end{equation*}
  and
  \begin{equation*}
    \Xi_X (\theta) \eqdef \log \mathbb{E} \, e^{\theta X} \enspace.
  \end{equation*}
%   \begin{equation}
%     M_X (\theta) \eqdef \mathbb{E} \, e^{\theta X} \quad \text{and} \quad \Xi_X (\theta) \eqdef \log \mathbb{E} \, e^{\theta X} \quad \forall \theta \in \R \enspace.
%   \end{equation}
\end{definition}
\begin{remark}
    These expectations may not exist for all values of $\theta$.
\end{remark}

\begin{proposition}[\citet{tropp2015introduction}, Proposition 3.2.2: Expectation Bound of the Maximum Eigenvalue] \label{prop:expect_bounds_eigenvalues}
  Let $X$ be a random Hermitian matrix. Then
  \begin{equation}
    \mathbb{E} \, \lambda_{\max} \left( X \right) \leq \inf_{\theta > 0} \left\{ \frac{1}{\theta} \log \mathbb{E} \, \mathrm{tr} \,  \mathrm{e}^{ \theta X } \right\} \enspace.
  \end{equation}
\end{proposition}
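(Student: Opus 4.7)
The plan is to apply the standard matrix Laplace transform technique. Fix an arbitrary $\theta > 0$. By the spectral mapping theorem, the eigenvalues of $e^{\theta X}$ are exactly $e^{\theta \lambda_i(X)}$ where $\lambda_i(X)$ are the eigenvalues of $X$, so since $x \mapsto e^{\theta x}$ is monotonically increasing for $\theta > 0$,
\[
\lambda_{\max}(X) = \frac{1}{\theta}\log \lambda_{\max}\!\left(e^{\theta X}\right).
\]

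Next, since $e^{\theta X}$ is positive definite, all its eigenvalues are positive, so the maximum eigenvalue is dominated by the sum of the eigenvalues, that is,
\[
\lambda_{\max}\!\left(e^{\theta X}\right) \;\leq\; \mathrm{tr}\,e^{\theta X}.
\]
Monotonicity of $\log$ then gives $\lambda_{\max}(X) \leq \tfrac{1}{\theta}\log \mathrm{tr}\,e^{\theta X}$. Taking expectations of both sides and using that $\log$ is concave, Jensen's inequality applied to the scalar random variable $\mathrm{tr}\,e^{\theta X} > 0$ yields
\[
\E{\lambda_{\max}(X)} \;\leq\; \frac{1}{\theta}\,\E{\log \mathrm{tr}\,e^{\theta X}} \;\leq\; \frac{1}{\theta}\log \E{\mathrm{tr}\,e^{\theta X}}.
\]

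Since this bound holds for every $\theta > 0$, taking the infimum over $\theta > 0$ on the right-hand side gives the claimed inequality. There is no real obstacle here: the proof is essentially three lines once one recognizes that (i) the spectral mapping theorem linearizes $\lambda_{\max}$ through an exponential, (ii) a positive semidefinite matrix has $\lambda_{\max}$ bounded by its trace, and (iii) Jensen pushes the expectation inside the logarithm. The only point to verify is that $\mathbb{E}\,\mathrm{tr}\,e^{\theta X}$ is finite for at least some $\theta > 0$; otherwise the bound is trivially valid with right-hand side $+\infty$.
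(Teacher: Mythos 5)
Your proof is correct and follows essentially the same route as the paper's: both rest on the same three ingredients (spectral mapping theorem, domination of $\lambda_{\max}$ by the trace for a positive-definite matrix, and Jensen's inequality for the concave logarithm), with only a harmless reordering — you apply the trace bound pointwise before taking expectations, whereas the paper applies Jensen to $\mathrm{e}^{\lambda_{\max}(\theta X)}$ first and the trace bound last. Your closing remark about finiteness of $\mathbb{E}\,\mathrm{tr}\,\mathrm{e}^{\theta X}$ is a reasonable aside but not needed, since the bound holds trivially in the extended reals.
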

\begin{remark}
    This proposition is an adaptation of the Laplace transform method to obtain a bound of the expectation of the maximum eigenvalue of a random Hermitian matrix. Contrary to the tail bounds, there is no exact analog of the expectation bounds in the scalar setting.
\end{remark}

\begin{proof}[Proof of~\Cref{prop:expect_bounds_eigenvalues}]
Fix a positive number $\theta$. Because $\lambda_{\max} ( \cdot )$ is a positive-homogeneous map, we have
\begin{align*}
\mathbb{E} \, \lambda_{\max} \left(X \right)
& = \frac{1}{\theta} \mathbb{E} \, \lambda_{\max} \left(\theta X \right)\\
& = \frac{1}{\theta} \mathbb{E} \, \log \mathrm{e}^{\lambda_{\max} \left( \theta X \right)}\\
& \leq \frac{1}{\theta} \log \mathbb{E} \, \mathrm{e}^{\lambda_{\max} \left( \theta X \right) }\\
& = \frac{1}{\theta} \log \mathbb{E} \, \lambda_{\max} \left( \mathrm{e}^{ \theta X } \right)\\
& \leq \frac{1}{\theta} \log \mathbb{E} \, \mathrm{tr} \, \mathrm{e}^{ \theta X } \enspace,
\end{align*}
where in the third line we used the Jensen's inequality, in the fourth one the spectral mapping theorem and in the last line the domination by the trace of a positive-definite matrix.
\end{proof}

\begin{theorem}[\citet{tropp2015introduction}, Theorem 8.1.1: Lieb] \label{thm:lieb}
  Let $H$ be a fixed Hermitian matrix with dimension $d$. The function
  \begin{equation*}
    X \rightarrow \mathrm{tr} \exp \left( H + X \right)
  \end{equation*}
  is a concave map on the the convex cone of $d \times d$ positive-definite matrices.
\end{theorem}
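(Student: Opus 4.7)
The plan is to prove Lieb's concavity theorem. I would not attempt a self-contained proof from scratch (the result is deep and classical) but rather reduce it to a more primitive joint-concavity statement for matrix functionals, which can then be cited from the operator-theory literature.

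The first step is to apply the Lie--Trotter product formula,
\[
\exp(H+X) \;=\; \lim_{n\to\infty}\bigl(e^{H/n}\,e^{X/n}\bigr)^{n},
\]
so that, since the trace is linear and continuous and a pointwise limit of concave functions is concave, it suffices to show that for every fixed $n$ the map
\[
X \;\longmapsto\; \mathrm{tr}\bigl[(e^{H/n}\,e^{X/n})^{n}\bigr]
\]
is concave on the Hermitian matrices. Changing variables to $Y=e^{X/n}$, which is a diffeomorphism onto the positive-definite cone, this in turn reduces to showing a concavity property of $Y\mapsto\mathrm{tr}[(AY)^{n}]$ with $A=e^{H/n}$, restricted to the image of the exponential map.

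The second step is to invoke \emph{Lieb's joint concavity theorem}: for any fixed matrix $K$ and any $p\in(0,1)$, the map
\[
(A,B)\;\longmapsto\;\mathrm{tr}\bigl(K^{*}A^{p}\,K\,B^{1-p}\bigr)
\]
is jointly concave on pairs of positive-definite matrices. From this, by iterated application with appropriately chosen $K$'s and partial evaluations along the diagonal $A=B$, one extracts the concavity of each of the finite-$n$ trace functionals above. Passing to the limit $n\to\infty$ via continuity of $e^{X}$ in the operator norm and of the trace completes the argument.

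The main obstacle is clearly the joint-concavity statement itself, which is equivalent in strength to Lieb's theorem and has no elementary proof. I would simply cite one of the standard approaches rather than reproduce it: Lieb's original 1973 argument via complex interpolation of operator means, Ando's reduction to joint concavity of the matrix geometric mean, Epstein's analytic-interpolation proof, or the modern derivation from the joint convexity of the Umegaki quantum relative entropy. In a writeup for this paper, I would state the joint concavity as a black box, give the Trotter reduction and limit argument in full, and point the reader to a reference (e.g.\ Bhatia's \emph{Matrix Analysis} or Carlen's survey) for the remaining step.
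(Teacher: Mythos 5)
The paper does not actually prove this statement: its ``proof'' is the single line ``See Chapter 8 in \citet{tropp2015introduction},'' so the comparison is really between your sketch and the literature. Before assessing your route, note that the statement you were asked to prove is mis-transcribed: Tropp's Theorem~8.1.1 asserts concavity of $A \mapsto \mathrm{tr}\exp\left(H + \log A\right)$ on the positive-definite cone, not of $X \mapsto \mathrm{tr}\exp\left(H+X\right)$. The latter is the composition of the convex function $\mathrm{tr}\exp$ with an affine map and is therefore \emph{convex}; the claim as written is false already in dimension one, where $x\mapsto e^{h+x}$ is strictly convex. The way the paper uses the theorem in \Cref{cor:jensen_lieb} --- substituting $Y=e^{X}$ and applying concavity to $\mathrm{tr}\exp\left(H+\log Y\right)$ before invoking Jensen --- confirms that the $\log$ is intended.

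This mis-statement is also exactly where your argument breaks. After the Lie--Trotter step you change variables $Y=e^{X/n}$ and claim it ``suffices'' to prove a concavity property of $Y\mapsto\mathrm{tr}\bigl[(AY)^{n}\bigr]$. But concavity is not preserved under a non-affine diffeomorphism, so concavity in $Y$ says nothing about concavity in $X$; indeed no reduction can succeed, since the target statement in $X$ is false. The repair is precisely to restore the $\log$: then $\exp\left(H+\log A\right)=\lim_{n}\bigl(e^{H/(2n)}A^{1/n}e^{H/(2n)}\bigr)^{n}$, the variable $A$ enters through the fractional power $A^{1/n}$ with $1/n\in(0,1]$, and the concavity of the finite-$n$ functionals $A\mapsto\mathrm{tr}\bigl[(K^{*}A^{p}K)^{1/p}\bigr]$ with $p=1/n$ is exactly what Epstein's theorem, or Lieb's joint concavity, delivers. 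With that correction your plan --- Trotter reduction spelled out, joint concavity cited as a black box --- is a legitimate classical route, and is no less self-contained than the paper, which black-boxes the entire theorem. Be aware, though, that Tropp's own Chapter~8 argument follows a different path, deriving the result from a variational formula for $\mathrm{tr}\exp\left(H+\log A\right)$ together with the joint convexity of quantum relative entropy rather than from a Trotter/interpolation argument.
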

\begin{proof}[Proof of~\Cref{thm:lieb}]
  See Chapter 8 in \citet{tropp2015introduction}.
\end{proof}
% \vspace*{15pt} % to avoid overlapping
\begin{corollary} \label{cor:jensen_lieb}
  Let $H$ be a fixed Hermitian matrix with dimension $d$. Let $X$ be a random Hermitian matrix of same dimension. The following inequality holds
  \begin{equation*}
    \mathbb{E} \, \mathrm{tr} \exp \left( H + X \right) \leq \mathrm{tr} \exp \left( H + \log \mathbb{E} \, e^{X} \right)
  \end{equation*}
  is a concave map on the the convex cone of $d \times d$ positive-definite matrices.
\end{corollary}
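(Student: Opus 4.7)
The plan is to reduce Corollary~\ref{cor:jensen_lieb} to Lieb's theorem (\Cref{thm:lieb}) via the operator Jensen inequality applied to a concave function. The key observation is that Lieb's theorem identifies concavity of the map $Y \mapsto \mathrm{tr}\exp(H + \log Y)$ on the positive-definite cone, and the matrix exponential $e^X$ always lies in this cone, so we can perform a change of variables $Y = e^X$ to bring the statement into Lieb's form.

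First, I would set $Y \eqdef e^X$, which is almost surely positive-definite since $X$ is Hermitian. Writing $X = \log Y$, the quantity of interest becomes $\mathbb{E}\,\mathrm{tr}\exp(H + \log Y)$. Next, by \Cref{thm:lieb}, the function $\Phi: Y \mapsto \mathrm{tr}\exp(H + \log Y)$ is concave on the convex cone of $d \times d$ positive-definite matrices. Applying the (scalar-valued) Jensen inequality to the real-valued concave function $\Phi$ and the random positive-definite matrix $Y$ yields
\begin{equation*}
  \mathbb{E}\,\Phi(Y) \;\leq\; \Phi\bigl(\mathbb{E}\,Y\bigr).
\end{equation*}
Substituting back $Y = e^X$ and $\mathbb{E}\,Y = \mathbb{E}\,e^X$ gives exactly the inequality
\begin{equation*}
  \mathbb{E}\,\mathrm{tr}\exp(H + X) \;\leq\; \mathrm{tr}\exp\bigl(H + \log \mathbb{E}\,e^X\bigr),
\end{equation*}
as claimed.

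The only subtlety worth flagging is the applicability of Jensen's inequality in the matrix setting: since $\Phi$ takes values in $\mathbb{R}$ (not in a matrix space), we are only using the standard scalar Jensen inequality for a concave function of a random vector, which requires integrability of $Y$ and finiteness of $\Phi(\mathbb{E}\,Y)$. These mild regularity conditions are implicit in the theorem's hypothesis that $\mathbb{E}\,e^X$ exists (cf.\ the remark following \Cref{def:mgf_and_cgf}); under that assumption, no further obstacle arises. Thus the proof is essentially a one-line consequence of Lieb concavity, with the main conceptual content already absorbed into \Cref{thm:lieb}.
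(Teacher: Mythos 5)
Your proof is correct and follows exactly the same route as the paper: introduce $Y = e^X$, invoke Lieb's concavity of $Y \mapsto \mathrm{tr}\exp(H+\log Y)$, and apply the scalar Jensen inequality to conclude $\mathbb{E}\,\Phi(Y)\leq \Phi(\mathbb{E}\,Y)$. Your explicit remark on the integrability conditions needed for Jensen is a welcome addition that the paper leaves implicit.
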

\begin{proof}[Proof of~\Cref{cor:jensen_lieb}]
  Introducing $Y=e^{X}$, we have directly
  \begin{align*}
    \mathbb{E} \, \mathrm{tr} \exp \left( H + X \right) &= \mathbb{E} \, \mathrm{tr} \exp \left( H + \log e^{X} \right) \\
    &= \mathbb{E} \, \mathrm{tr} \exp \left( H + \log Y \right) \\
    &\leq \mathrm{tr} \exp \left( H + \log \mathbb{E} \, Y \right)\\
    &= \mathrm{tr} \exp \left( H + \log \mathbb{E} \, e^{X} \right) \enspace.
  \end{align*}
  where the inequality comes from the application of \Cref{thm:lieb} and Jensen's inequality.
\end{proof}
\begin{lemma}[\citet{tropp2015introduction}, Lemma 3.5.1 or \citet{Tropp2012}, Lemma 3.4: Subadditivity of Matrix Cgfs] \label{lem:subbaditivity_mat_cfgs}
  Consider a finite sequence $\{X_k\}$ of independent, random, Hermitian matrices of the same dimension. Let $\theta \in \R$, then
  \begin{align*}
    \mathrm{tr} \exp \left( \Xi_{\sum_k X_k} (\theta) \right) &= \mathbb{E} \, \mathrm{tr} \exp \left( \theta \sum\nolimits_k X_k \right)\\
    &\leq \mathrm{tr} \exp \left( \sum\nolimits_k \log \mathbb{E} \, \mathrm{e}^{\theta X_k} \right)\\
    &= \mathrm{tr} \exp \left( \sum\nolimits_k \Xi_{X_k} (\theta) \right) \enspace.
  \end{align*}
  % Remark: this lemma is corresponds to Lemma 3.5.1 in \citet{tropp2015introduction} or to Lemma 3.4 \citet{Tropp2012}.
\end{lemma}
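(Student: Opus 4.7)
The outer equalities are routine unpackings of the definitions. By \Cref{def:mgf_and_cgf}, $\Xi_{\sum_k X_k}(\theta) = \log \mathbb{E}\, e^{\theta \sum_k X_k}$. Since $\mathbb{E}\, e^{\theta \sum_k X_k}$ is positive definite (it is an average of positive-definite matrices), we have $\exp(\log \mathbb{E}\, e^{\theta \sum_k X_k}) = \mathbb{E}\, e^{\theta \sum_k X_k}$; taking the trace and exchanging it with the expectation by linearity yields the first equality. The third equality is then immediate from $\Xi_{X_k}(\theta) = \log \mathbb{E}\, e^{\theta X_k}$.

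The heart of the proof is the middle inequality, which I would establish by finite induction on the number of summands, peeling off one $X_k$ at a time via \Cref{cor:jensen_lieb}. Write $n$ for the number of summands and set
\[
H_j \;\eqdef\; \theta \sum_{k=1}^{j} X_k \,+\, \sum_{k=j+1}^{n} \log \mathbb{E}\, e^{\theta X_k}, \qquad j = 0, 1, \ldots, n,
\]
so that $H_n = \theta \sum_k X_k$ and $H_0 = \sum_k \log \mathbb{E}\, e^{\theta X_k}$. Each $H_j$ is Hermitian, since sums of Hermitian matrices and matrix logarithms of positive-definite expectations remain Hermitian. My goal is to prove the chain $\mathbb{E}\, \mathrm{tr}\, \exp H_n \leq \mathbb{E}\, \mathrm{tr}\, \exp H_{n-1} \leq \cdots \leq \mathrm{tr}\, \exp H_0$.

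For the step from $H_j$ to $H_{j-1}$, condition on $X_1,\ldots,X_{j-1},X_{j+1},\ldots,X_n$ and isolate $X_j$. With those matrices fixed, the quantity
\[
H_j - \theta X_j \;=\; \theta \sum_{k=1}^{j-1} X_k + \sum_{k=j+1}^{n} \log \mathbb{E}\, e^{\theta X_k}
\]
is a deterministic Hermitian matrix, so \Cref{cor:jensen_lieb} applied with this matrix playing the role of $H$ and $\theta X_j$ playing the role of the random matrix gives
\[
\mathbb{E}_{X_j}\, \mathrm{tr}\, \exp\!\big((H_j - \theta X_j) + \theta X_j\big) \;\leq\; \mathrm{tr}\, \exp\!\big((H_j - \theta X_j) + \log \mathbb{E}\, e^{\theta X_j}\big) \;=\; \mathrm{tr}\, \exp H_{j-1}.
\]
Independence of the $X_k$ lets me identify $\mathbb{E}_{X_j}[\,\cdot\,]$ with the relevant conditional expectation; taking outer expectation via the tower property gives $\mathbb{E}\, \mathrm{tr}\, \exp H_j \leq \mathbb{E}\, \mathrm{tr}\, \exp H_{j-1}$. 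Iterating from $j=n$ down to $j=1$ and noting that $H_0$ is deterministic (so the final outer expectation drops) yields the desired middle inequality.

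The main obstacle, and really the only subtle point, is the bookkeeping: at each step I must verify that the ``frozen'' part remains Hermitian so that \Cref{cor:jensen_lieb} actually applies, and I must be careful to invoke independence precisely where the conditional step needs it. No genuinely hard analytic input is required beyond \Cref{cor:jensen_lieb} (equivalently Lieb's concavity theorem), which is already available.
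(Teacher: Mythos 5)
Your proof is correct and follows essentially the same route as the paper's: both peel off one summand at a time using \Cref{cor:jensen_lieb} (Lieb plus Jensen) together with independence, with your interpolating matrices $H_j$ simply making explicit the telescoping chain that the paper writes via iterated expectations $\mathbb{E}_1\cdots\mathbb{E}_n$. Your handling of general $\theta$ and the explicit conditioning/tower-property step are minor presentational refinements of the same argument.
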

\begin{proof}[Proof of~\Cref{lem:subbaditivity_mat_cfgs}]
  % See \citet{tropp2015introduction} or \citet{Tropp2012}.
  Let us assume, without loss of generality, that $\theta = 1$. Let a finite sequence $\{X_k\}_{k=1}^n$ of $n$ independent, random, Hermitian matrices of the same dimension.
  We write down $\mathbb{E}_k$ the expectation with respect only to the $k$-th random matrix $X_k$.
  \begin{align*}
    \mathrm{tr} \exp \left( \Xi_{\sum_{k=1}^n X_k} (1) \right)
    &= \mathrm{tr} \exp \left( \log \mathbb{E} \, \exp \left( \sum_{k=1}^{n} X_k \right) \right)\\
    &= \mathbb{E} \, \mathrm{tr} \exp \left( \sum_{k=1}^{n} X_k \right) \\
    &= \mathbb{E}_1 \hspace{-1pt} \dots \mathbb{E}_{n-1} \mathbb{E}_{n} \, \mathrm{tr} \exp \left( \sum_{k=1}^{n-1} X_k + X_{n+1} \right) \\
    &\leq \mathbb{E}_1 \dots \mathbb{E}_{n-1} \, \mathrm{tr} \exp \left( \sum_{k=1}^{n-1} X_k + \log \mathbb{E}_{n} \, e^{X_{n+1}} \right)\\
    % && \text{\Cref{cor:jensen_lieb}} \\
    &= \mathbb{E}_1 \hspace{-1pt} \dots \mathbb{E}_{n-1} \, \mathrm{tr} \exp \left( \sum_{k=1}^{n-2} X_k + X_{n-1} + \log \mathbb{E}_{n} \, e^{X_{n+1}} \right) \\
    &\leq \mathbb{E}_1 \hspace{-1pt} \dots \mathbb{E}_{n-2} \, \mathrm{tr} \exp \left( \sum_{k=1}^{n-2} X_k + \log \mathbb{E}_{n-1} \, e^{X_{n-1}} + \log \mathbb{E}_{n} \, e^{X_{n}} \right)\\
    % && \text{\Cref{cor:jensen_lieb}} \\
    &\leq \dots \leq \mathrm{tr} \exp \left( \sum\nolimits_k \log \mathbb{E} \, \mathrm{e}^{\theta X_k} \right)\\
    % && \mathbb{E}_{k} \, e^{X_{k}} = \mathbb{E} \, e^{X_{k}} \quad \forall k \in [n] \\
    &= \mathrm{tr} \exp \left( \sum_k \Xi_{X_k} (\theta) \right) \enspace.
    % && \text{\Cref{def:mgf_and_cgf}}
  \end{align*}
  where first and second inequalities result from \Cref{cor:jensen_lieb}, the last one comes the fact that $\mathbb{E}_{k} \, e^{X_{k}} = \mathbb{E} \, e^{X_{k}}, \forall k \in [n]$ and the final equality directly comes from an indentification of \Cref{def:mgf_and_cgf}.
\end{proof}

% \vspace*{25pt} % to avoid overlapping

\begin{lemma}[\citet{tropp2015introduction}, Lemma 6.6.2: Matrix Bernstein Mgf and Cgf Bounds] \label{lem:matrix_bernstein_mgf_and_cgf_bound}
  Let $X$ a random Hermitian matrix such that
  \begin{equation*}
    \mathbb{E} \, X = 0 \quad \text{and} \quad \lambda_{\max} \left( X \right) \leq L \enspace.
  \end{equation*}
  Then, for $0 < \theta < 3 / L$,
  \begin{equation*}
    M_X (\theta):= \mathbb{E} \, e^{\theta X} \preceq \exp \left( \frac{\theta^2 / 2}{1 - \theta L / 3} \cdot \mathbb{E} \, X^2 \right)
  \end{equation*}
  and
  \begin{equation*}
    \Xi_X (\theta):= \log \mathbb{E} \, e^{\theta X} \preceq \frac{\theta^2 / 2}{1 - \theta L / 3} \cdot \mathbb{E} \, X^2 \enspace.
  \end{equation*}
\end{lemma}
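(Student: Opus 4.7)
The plan is to reduce the operator-valued claim to a single scalar inequality and then transfer it to matrices through the usual functional-calculus machinery. Concretely, I would first establish that for $0 < \theta < 3/L$ and every real $x \leq L$,
\begin{equation*}
  e^{\theta x} \;\leq\; 1 + \theta x + \frac{\theta^{2}/2}{1 - \theta L/3}\, x^{2}.
\end{equation*}
Once this pointwise inequality is in hand, the semidefinite bound $e^{\theta X} \preceq I + \theta X + \tfrac{\theta^{2}/2}{1-\theta L/3}\,X^{2}$ is immediate from the spectral mapping theorem, since $\lambda_{\max}(X) \leq L$ forces every eigenvalue of $X$ to lie in the range where the scalar inequality holds.

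To prove the scalar inequality, I would study $g(x) \eqdef (e^{\theta x} - 1 - \theta x)/x^{2}$, extended by continuity to $g(0) = \theta^{2}/2$. Integration by parts gives the representation $g(x) = \theta^{2} \int_{0}^{1} (1-t)\, e^{\theta t x}\, dt$, so $g'(x) = \theta^{3} \int_{0}^{1} t(1-t)\, e^{\theta t x}\, dt \geq 0$ for every $x \in \mathbb{R}$. Hence $g$ is monotone increasing and $g(x) \leq g(L)$ for all $x \leq L$. For $g(L)$, I would expand $e^{\theta L}$ in a Taylor series and use the elementary bound $(k+2)! \geq 2 \cdot 3^{k}$ (easy induction) to obtain
\begin{equation*}
  g(L) \;=\; \theta^{2} \sum_{k \geq 0} \frac{(\theta L)^{k}}{(k+2)!}
  \;\leq\; \frac{\theta^{2}}{2} \sum_{k \geq 0} (\theta L/3)^{k}
  \;=\; \frac{\theta^{2}/2}{1 - \theta L/3},
\end{equation*}
where convergence of the geometric series uses precisely the hypothesis $\theta L < 3$.

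To finish the mgf claim I would take expectations in the matrix inequality $e^{\theta X} \preceq I + \theta X + h\, X^{2}$ with $h \eqdef \tfrac{\theta^{2}/2}{1-\theta L/3}$. The hypothesis $\mathbb{E}\, X = 0$ kills the linear term, giving $\mathbb{E}\, e^{\theta X} \preceq I + h\, \mathbb{E}\, X^{2}$. Since $h\, \mathbb{E}\, X^{2}$ is positive semi-definite, the scalar inequality $1 + t \leq e^{t}$, lifted by functional calculus, yields $I + h\, \mathbb{E}\, X^{2} \preceq \exp\!\bigl(h\, \mathbb{E}\, X^{2}\bigr)$, which is the desired bound on $M_X(\theta)$. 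The cgf bound then follows by taking the matrix logarithm of both sides and invoking operator monotonicity of $\log$ on the positive-definite cone.

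The main obstacle I anticipate is the monotonicity claim for $g$ on all of $\mathbb{R}$, in particular at negative $x$ (naive power-series manipulation mixes signs, so the integral representation is the cleanest route); one must also verify that the scalar inequality is genuinely used at every eigenvalue of $X$, including the negative ones, so that the transfer rule applies without further assumptions on $\lambda_{\min}(X)$. A secondary subtlety is the appeal to operator monotonicity of $\log$, which is a nontrivial fact (Löwner's theorem) but entirely standard in this literature.
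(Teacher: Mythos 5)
Your proposal is correct and is essentially the standard argument for this lemma: the paper itself gives no proof (it simply defers to \citet{tropp2015introduction}), and your scalar inequality $e^{\theta x}\leq 1+\theta x+\tfrac{\theta^2/2}{1-\theta L/3}x^2$ for $x\leq L$, established via the integral representation of $g$ and the bound $(k+2)!\geq 2\cdot 3^k$, followed by the transfer rule, $\mathbb{E}\,X=0$, the lift of $1+t\leq e^t$, and operator monotonicity of $\log$, is precisely Tropp's proof of Lemma 6.6.2. No gaps; the handling of negative eigenvalues via monotonicity of $g$ on all of $\mathbb{R}$ is the right way to justify the transfer step under the one-sided hypothesis $\lambda_{\max}(X)\leq L$.
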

\begin{proof}[Proof of~\Cref{lem:matrix_bernstein_mgf_and_cgf_bound}]
  See \citet{tropp2015introduction}.
\end{proof}

%%%%%%%%%%%%%%%%%%%%%%%%%%%%%%%%%%%%%%%%%%%%%%%%%%%%%%%%%%%%%%%%%%%%%%%%%%%%%%%
\subsection{Extended results for sampling without replacement}
\label{sub:random_mat_ineq_without_replacement}
%%%%%%%%%%%%%%%%%%%%%%%%%%%%%%%%%%%%%%%%%%%%%%%%%%%%%%%%%%%%%%%%%%%%%%%%%%%%%%%

This section is dedicated to the main result, \Cref{lem:trace_mgf_without_dominated_by_with_replacement},
needed for transferring results from sampling \emph{with} to \emph{without} replacement.
This lemma is actually the matrix version of a classical result from \citet{Hoeffding1963}.
We then combine it with previous results of \Cref{sub:technical_random_matrices_prerequisites} to produce a new master bound in \Cref{thm:master_bound_without},
which is the key inequality of the proof of \Cref{thm:mat_bernstein_expect_without_replac}.
\begin{lemma}[\citet{Gross2010}, Domination of the Trace of the Mgf of a Sample Without Replacement] \label{lem:trace_mgf_without_dominated_by_with_replacement}
  Consider two finite sequences, of same length $n$, $\{X_k\}_{k=1,\dots,n}$ and $\{Y_k\}_{k=1,\dots,n}$ of Hermitian random matrices sampled respectively \emph{with} and \emph{without} replacement from a finite set $\cX$.
  Let $\theta \in \R$, $S_{X}:=\sum_{k=1}^n X_k$ and $S_{Y}:=\sum_{k=1}^n Y_k$, then
  \begin{align} \label{eq:sampling_mgf_ineq}
    \mathrm{tr} \, M_{S_{Y}} (\theta) :=
    \mathbb{E} \, \mathrm{tr} \exp \left( \theta S_{Y} \right)
    \leq \mathbb{E} \, \mathrm{tr} \exp \left( \theta S_{X} \right) \enspace.
  \end{align}
\end{lemma}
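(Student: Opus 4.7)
The plan is to prove this as the matrix analogue of Hoeffding's (1963) classical reduction from sampling without replacement to sampling with replacement, which is exactly the contribution of~\citet{Gross2010} cited in the statement. The argument rests on two ingredients: (i) convexity of the scalar functional $\phi_\theta(H) := \mathrm{tr}\exp(\theta H)$ on the real vector space of Hermitian $d \times d$ matrices, and (ii) a representation of the without-replacement partial sum as a conditional expectation of the with-replacement one.

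For (i), I would simply invoke the standard fact that $H \mapsto \mathrm{tr}\exp(H)$ is convex on Hermitian matrices (a consequence of, e.g., Lieb's concavity results already used in~\Cref{thm:lieb}, or of a direct spectral-theorem argument); scaling by $\theta\in\mathbb{R}$ preserves convexity.

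For (ii), let the finite set be $\mathcal{X} = \{A_1,\dots,A_N\}$ with $N \geq n$, let $(I_1,\dots,I_n)$ be i.i.d.\ uniform on $[N]$ so that $S_X = \sum_k A_{I_k}$, and let $(J_1,\dots,J_n)$ be uniform over ordered injections $[n]\hookrightarrow[N]$ so that $S_Y = \sum_k A_{J_k}$. Following Hoeffding's original symmetrisation, I would construct a $\sigma$-field $\mathcal{G}$ with respect to which
\[
S_Y \;\stackrel{d}{=}\; \mathbb{E}\bigl[\,S_X \bigm| \mathcal{G}\,\bigr],
\]
obtained by coupling the with-replacement draw to a uniformly random permutation of $[n]$ and then conditioning on an invariant summary of the joint draw that kills repetition patterns but retains the correct marginal. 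Because every operation involved is linear in the $A_j$'s, it maps Hermitian matrices to Hermitian matrices, so the resulting conditional expectation is itself a Hermitian random matrix and (i) applies to it.

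With (i) and (ii) in hand, the conclusion is immediate by conditional Jensen applied to the convex functional $\phi_\theta$ followed by the tower property:
\[
\mathbb{E}\,\phi_\theta(S_Y) \;=\; \mathbb{E}\,\phi_\theta\!\bigl(\mathbb{E}[S_X\mid\mathcal{G}]\bigr) \;\leq\; \mathbb{E}\,\mathbb{E}\bigl[\phi_\theta(S_X)\bigm|\mathcal{G}\bigr] \;=\; \mathbb{E}\,\phi_\theta(S_X),
\]
which is the claim. The main obstacle is carrying out step (ii) cleanly: constructing the $\sigma$-field $\mathcal{G}$ so that the conditional representation holds exactly is the genuine combinatorial content of the argument, and the reason Gross's result is needed rather than a one-line transfer of a textbook inequality. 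Everything after that is standard convex analysis on the Hermitian cone plus the tower property.
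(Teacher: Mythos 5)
Your outline reproduces the shape of the argument in \citet{Gross2010} --- note that the paper itself does not prove this inequality at all: its ``proof'' of the right-hand side defers entirely to that reference --- and the reduction you describe is the correct one: convexity of $H \mapsto \mathrm{tr}\exp(\theta H)$ on the Hermitian matrices, a representation of $S_Y$ as a conditional expectation of $S_X$, then Jensen and the tower property. Step (i) is unproblematic, though citing Lieb for it is both overkill and slightly off target: plain convexity of $\mathrm{tr}\exp$ is elementary (e.g.\ via the Peierls variational formula), whereas \Cref{thm:lieb} concerns the much deeper concavity of $X \mapsto \mathrm{tr}\exp(H + \log X)$, which is not what you need here.

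The genuine gap is step (ii), which you flag yourself but do not close, and the construction you gesture at does not work as described. The natural permutation-based couplings fail: if $\sigma$ is a uniform permutation of $[N]$, $Y_k = A_{\sigma(k)}$, and $X_k = A_{\sigma(U_k)}$ with $U_k$ i.i.d.\ uniform and independent of $\sigma$, then the $X_k$ are indeed i.i.d.\ uniform on $\cX$, but $\mathbb{E}[S_X \mid \sigma] = \tfrac{n}{N}\sum_{j=1}^N A_j$ is a deterministic constant, not $S_Y$ (and variants drawing $U_k$ from $[n]$ do not even have the right marginal). Hoeffding's actual route is different: condition the with-replacement draw on its multiplicity pattern $(m_1,\dots,m_r)$, observe that conditionally on the pattern $S_X$ is distributed as $\sum_{j} m_j Z_j$ for a without-replacement sample $(Z_j)$, and compare this with $\sum_j Z_j$ using Schur convexity of $w \mapsto \mathbb{E}\, f\bigl(\sum_j w_j Z_j\bigr)$ for exchangeable $(Z_j)$, via the Birkhoff--von Neumann decomposition of $(1,\dots,1)$ as an average of permutations of $(m_1,\dots,m_r,0,\dots,0)$. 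Only linearity in the population elements and convexity of $f$ enter, which is precisely why the scalar proof transfers verbatim to Hermitian matrices. Note also that you cannot obtain the coupling $S_Y \overset{d}{=} \mathbb{E}[S_X \mid \mathcal{G}]$ by appealing to Strassen's characterization of the convex order, since that equivalence presupposes the inequality you are trying to prove. As written, your proposal is a correct plan whose only nontrivial step is left open.
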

\begin{proof}[Proof of~\Cref{lem:trace_mgf_without_dominated_by_with_replacement}]
  The left-hand side equality directly arises from \Cref{def:mgf_and_cgf} and the fact that the trace commutes with the expectation because it is a linear operator.
  For the right-hand side inequality, see the proof in \citet{Gross2010}.
\end{proof}

\begin{theorem}[Master Bound for a Sum of Random Matrices Sampled Without Replacement] \label{thm:master_bound_without}
  Consider two finite sequences, of same length $n$, $\{X_k\}_{k=1,\dots,n}$ and $\{Y_k\}_{k=1,\dots,n}$ of Hermitian random matrices of same size sampled respectively \emph{with} and \emph{without} replacement from a finite set $\cX$. Then
  \begin{align}
    & \mathbb{E} \, \lambda_{\max} \left( \sum_{k=1}^n Y_k \right)
    \leq \inf_{\theta>0} \left\{ \frac{1}{\theta} \log \mathrm{tr} \exp \left( \sum_{k=1}^n \log \mathbb{E} \, \mathrm{e}^{\theta X_k} \right) \right\} \enspace.
  \end{align}
\end{theorem}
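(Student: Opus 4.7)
The plan is to chain together three ingredients already established in Appendix~\ref{sub:technical_random_matrices_prerequisites} and~\ref{sub:random_mat_ineq_without_replacement}: the matrix Laplace transform expectation bound, the Gross domination inequality for sampling without replacement, and the subadditivity of matrix cumulant generating functions.

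First, fix $\theta>0$ and apply Proposition~\ref{prop:expect_bounds_eigenvalues} to the sum $S_Y = \sum_{k=1}^n Y_k$, which yields
\begin{equation*}
\mathbb{E}\,\lambda_{\max}(S_Y) \;\leq\; \frac{1}{\theta}\log \mathbb{E}\,\mathrm{tr}\exp(\theta S_Y).
\end{equation*}
This reduces the task to bounding the trace mgf of $S_Y$. Next, since the $Y_k$ are sampled without replacement from $\cX$ while the $X_k$ are sampled with replacement from the same set, Lemma~\ref{lem:trace_mgf_without_dominated_by_with_replacement} (Gross) gives
\begin{equation*}
\mathbb{E}\,\mathrm{tr}\exp(\theta S_Y) \;\leq\; \mathbb{E}\,\mathrm{tr}\exp(\theta S_X),
\end{equation*}
where $S_X = \sum_{k=1}^n X_k$. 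This is precisely the point at which the without/with-replacement transfer occurs, and it is the only place where the joint distribution of the $Y_k$ enters.

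Because the $X_k$ are independent (sampling with replacement produces i.i.d.\ draws), Lemma~\ref{lem:subbaditivity_mat_cfgs} applies and gives
\begin{equation*}
\mathbb{E}\,\mathrm{tr}\exp(\theta S_X) \;\leq\; \mathrm{tr}\exp\!\left(\sum_{k=1}^n \log \mathbb{E}\,e^{\theta X_k}\right).
\end{equation*}
Combining the three displays, taking logarithms, dividing by $\theta$, and finally taking the infimum over $\theta>0$ yields the claimed bound.

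The only genuine subtlety is the middle step: the domination inequality~\eqref{eq:sampling_mgf_ineq} holds for the \emph{trace} of the matrix mgf (not for the mgf itself in Loewner order), so we must keep the trace throughout the chain rather than trying to pull out eigenvalue bounds earlier. Accordingly, the proof must invoke Proposition~\ref{prop:expect_bounds_eigenvalues} \emph{before} attempting the subadditivity reduction, because Proposition~\ref{prop:expect_bounds_eigenvalues} is precisely what turns an eigenvalue expectation into a trace mgf. The remaining computations are routine and amount only to rearranging inequalities and taking the infimum.
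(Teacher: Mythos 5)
Your proof is correct and follows exactly the same three-step chain as the paper's own argument: Proposition~\ref{prop:expect_bounds_eigenvalues} to pass to the trace mgf, Lemma~\ref{lem:trace_mgf_without_dominated_by_with_replacement} to transfer from sampling without replacement to sampling with replacement, and Lemma~\ref{lem:subbaditivity_mat_cfgs} to exploit the independence of the $X_k$. Your closing remark about why the trace must be kept throughout (since the Gross domination holds only at the level of the trace mgf) is a correct and worthwhile observation, but the route is identical to the paper's.
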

\begin{remark}
    This theorem is a modified version of Theorem 3.6.1 in \citet{tropp2015introduction} for a sum of matrices sampled without replacement.
\end{remark}

\begin{proof}[Proof of~\Cref{thm:master_bound_without}]
  Consider two finite sequences, of same length, $\{X_k\}$ and $\{Y_k\}$ of Hermitian random matrices of same size sampled respectively \emph{with} and \emph{without} replacement from a finite set $\cX$.
  Let $\theta$ a positive number.
  \begin{align*}
    \mathbb{E} \, \lambda_{\max} \left( \sum_{k=1}^n Y_k \right)
    &\leq \inf_{\theta > 0} \left\{ \frac{1}{\theta} \log \mathbb{E} \, \mathrm{tr} \, \exp \left( \theta \sum_{k=1}^n Y_k \right) \right\}
    % && \text{\Cref{prop:expect_bounds_eigenvalues}} \\
     \leq \inf_{\theta > 0} \left\{ \frac{1}{\theta} \log \mathbb{E} \, \mathrm{tr} \, \exp \left( \theta \sum_{k=1}^n X_k \right) \right\}\\
    % && \text{\Cref{lem:trace_mgf_without_dominated_by_with_replacement}} \\
    & \leq \inf_{\theta > 0} \left\{ \frac{1}{\theta} \log \mathrm{tr} \exp \left( \sum_{k=1}^n \log \mathbb{E} \, \mathrm{e}^{\theta X_k} \right) \right\} \enspace.
    % && \text{\Cref{lem:subbaditivity_mat_cfgs}}
  \end{align*}
  where we used successively \Cref{prop:expect_bounds_eigenvalues}, \Cref{lem:trace_mgf_without_dominated_by_with_replacement} and \Cref{lem:subbaditivity_mat_cfgs}.
  First, we use the expectation bound for the  maximum eigenvalue.
  % First, we use the expectation bound for the  maximum eigenvalue, \Cref{prop:expect_bounds_eigenvalues}.
  % Let $\theta$ a positive number, we have:
  % \begin{equation*}
  %   \mathbb{E} \, \lambda_{\max} \left( \sum\nolimits_k Y_k \right)
  %   \leq \inf_{\theta > 0} \left\{ \frac{1}{\theta} \log \mathbb{E} \, \mathrm{tr} \, \exp \left( \theta \sum\nolimits_k Y_k \right) \right\} \enspace.
  % \end{equation*}
  % We then use \Cref{lem:trace_mgf_without_dominated_by_with_replacement}, which is the main result of \citet{Gross2010} and invoked in \citet{Tropp2011} to extend the matrix Chernoff bound for matrices sampled \emph{without} replacement.
  % This lemma allows us to transfer our results to sampling \emph{with} replacement:
  We then use the main result of \citet{Gross2010} and invoked in \citet{Tropp2011} to extend the matrix Chernoff bound for matrices sampled \emph{without} replacement.
  This lemma allows us to transfer our results to sampling \emph{with} replacement.
  % \begin{align*}
  %   \mathbb{E} \,\mathrm{tr} \exp \left( \theta \sum\nolimits_k Y_k \right) \leq \mathbb{E} \,\mathrm{tr} \exp \left( \theta \sum\nolimits_k X_k \right) \enspace.
  % \end{align*}
  % So we get the following inequality to which we then apply the subadditivity of matrix cgfs:
  And finally, we then apply the subadditivity of matrix cgfs to get the desired result.
\end{proof}

%%%%%%%%%%%%%%%%%%%%%%%%%%%%%%%%%%%%%%%%%%%%%%%%%%%%%%%%%%%%%%%%%%%%%%%%%%%%%%%
\subsection{Bernstein inequality for sampling without replacement}
\label{sub:Bernstein_without_replacement}
%%%%%%%%%%%%%%%%%%%%%%%%%%%%%%%%%%%%%%%%%%%%%%%%%%%%%%%%%%%%%%%%%%%%%%%%%%%%%%%

The following theorem is almost the same than \Cref{thm:mat_bernstein_expect}, but in the case of matrices sampled \emph{without} replacement from a finite set.
The proof stems from results established in previous \Cref{sub:random_mat_ineq_without_replacement,sub:technical_random_matrices_prerequisites}.
\begin{theorem}[Matrix Bernstein Inequality Without Replacement] \label{thm:mat_bernstein_expect_without_replac}
  Let $\cX$ be a finite set of Hermitian matrices with dimension $d$ such that
  \begin{equation*}
    \lambda_{\max} (X) \leq L, \quad \forall X \in \cX \enspace.
  \end{equation*}
  Sample two finite sequences, of same length $n$, $\{X_k\}_{k=1,\dots,n}$ and $\{Y_k\}_{k=1,\dots,n}$ uniformly at random from $\cX$ respectively \emph{with} and \emph{without} replacement such that
  \begin{equation*}
    \mathbb{E} \, X_k = 0 \quad \forall k \enspace.
  \end{equation*}
  Introduce the random matrices
  \begin{equation*}
    S_X \eqdef \sum_{k=1}^n X_k \quad \text{and} \quad S_Y \eqdef \sum_{k=1}^n Y_k \enspace.
  \end{equation*}
  Let $v(S_X)$ be the matrix variance statistic of the second sum
  \begin{align}
    v(S_X) &\eqdef \norm{\mathbb{E} \, S_X^2} = \norm{\sum_{k=1}^n \mathbb{E}\, X_k^2} = \lambda_{\max} \left( \sum_{k=1}^n \mathbb{E} \, X_k^2 \right).
  \end{align}
  Then
  \begin{equation}
    \mathbb{E} \, \lambda_{\max} (S_Y) \leq \sqrt{2v(S_X) \log d} + \frac{1}{3} L \log d \enspace.
  \end{equation}
\end{theorem}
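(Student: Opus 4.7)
The plan is to chain three ingredients that are already available in the excerpt: the master bound for sampling without replacement (Theorem \ref{thm:master_bound_without}), the matrix Bernstein cgf estimate (Lemma \ref{lem:matrix_bernstein_mgf_and_cgf_bound}), and a single-variable calculus optimization. The key observation is that Theorem \ref{thm:master_bound_without} already transfers the problem to the world of independent sampling at the level of the trace moment generating function, so from that point on the argument is essentially the classical proof of Theorem \ref{thm:mat_bernstein_expect}.

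First I would invoke Theorem \ref{thm:master_bound_without} to obtain
\[
\mathbb{E}\,\lambda_{\max}(S_Y) \;\leq\; \inf_{\theta>0}\Big\{\tfrac{1}{\theta}\log \mathrm{tr}\exp\Big(\sum_{k=1}^n \log \mathbb{E}\,e^{\theta X_k}\Big)\Big\}.
\]
Since $\mathbb{E} X_k = 0$ and $\lambda_{\max}(X_k) \leq L$ almost surely, Lemma \ref{lem:matrix_bernstein_mgf_and_cgf_bound} supplies, for every $\theta \in (0, 3/L)$ and every $k$, the Loewner bound $\log \mathbb{E}\,e^{\theta X_k} \preceq \tfrac{\theta^2/2}{1-\theta L/3}\,\mathbb{E} X_k^2$. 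Summing in $k$ and using that $A \mapsto \mathrm{tr}\exp(A)$ is monotone with respect to the Loewner order (a consequence of Weyl's monotonicity principle applied eigenvalue by eigenvalue), I would get
\[
\mathrm{tr}\exp\Big(\sum_{k=1}^n \log \mathbb{E}\,e^{\theta X_k}\Big) \;\leq\; \mathrm{tr}\exp\Big(\tfrac{\theta^2/2}{1-\theta L/3}\sum_{k=1}^n \mathbb{E} X_k^2\Big).
\]
The elementary bound $\mathrm{tr}\exp(H) \leq d\,\exp(\lambda_{\max}(H))$ for a $d \times d$ Hermitian $H$, combined with the definition $v(S_X) = \lambda_{\max}\big(\sum_k \mathbb{E} X_k^2\big)$, would then collapse the previous display into
\[
\mathbb{E}\,\lambda_{\max}(S_Y) \;\leq\; \inf_{\theta\in(0,3/L)}\Big\{\tfrac{\log d}{\theta} + \tfrac{\theta\,v(S_X)/2}{1-\theta L/3}\Big\}.
\]

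The remaining step is scalar calculus. Setting the derivative in $\theta$ to zero yields the stationary point $\theta^\star = \sqrt{\log d}/(\sqrt{v(S_X)/2} + L\sqrt{\log d}/3)$, which lies in $(0,3/L)$; substituting back and simplifying produces exactly the claimed bound $\sqrt{2\,v(S_X)\log d} + \tfrac{1}{3}L\log d$. I do not anticipate any conceptual obstacle, since the deep inputs (Lieb concavity, subadditivity of the matrix cgf, Gross's domination lemma, and the scalar cgf estimate) have all been packaged into the earlier lemmas; the only care needed is to keep track of the constraint $\theta < 3/L$ throughout so that Lemma \ref{lem:matrix_bernstein_mgf_and_cgf_bound} remains applicable, and then to verify once at the end that $\theta^\star$ lies in this admissible range.
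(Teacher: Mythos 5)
Your proposal is correct and follows essentially the same route as the paper's proof: Theorem~\ref{thm:master_bound_without}, then Lemma~\ref{lem:matrix_bernstein_mgf_and_cgf_bound} with monotonicity of $\mathrm{tr}\exp(\cdot)$, then $\mathrm{tr}(M)\leq d\,\lambda_{\max}(M)$ with the spectral mapping theorem, reducing everything to the scalar infimum $\inf_{0<\theta<3/L}\{\log d/\theta + \tfrac{\theta/2}{1-\theta L/3}v(S_X)\}$. The only (cosmetic) difference is that you carry out the final optimization by hand — and your stationary point $\theta^\star$ is indeed the correct one, lying in $(0,3/L)$ and yielding exactly $\sqrt{2v(S_X)\log d}+\tfrac{1}{3}L\log d$ — whereas the paper defers this last step to a computer algebra system.
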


\begin{proof}[Proof of~\Cref{thm:mat_bernstein_expect_without_replac}]
  Consider $\cX$ a finite set of Hermitian matrices of dimension $d$ such that
  \begin{equation*}
    \lambda_{\max} (X) \leq L \quad \forall X \in \cX \enspace.
  \end{equation*}
  Sample two finite sequences, of same length, $\{X_k\}$ and $\{Y_k\}$ uniformly at random from $\cX$ respectively \emph{with} and \emph{without} replacement such that
  \begin{equation*}
    \mathbb{E} \, X_k = 0 \quad \forall k \enspace.
  \end{equation*}
  The $\{X_k\}$ matrices are thus independent.
  Introduce the sums $S_X =\sum_{k=1}^n X_k$ and $S_Y = \sum_{k=1}^n Y_k$. Let us bound the expectation of the largest eigenvalue of the latter
\begin{align*}
  \mathbb{E} \, \lambda_{\max} (S_Y)
  &= \mathbb{E} \, \lambda_{\max} \left( \sum_{k=1}^n Y_k \right)
  \leq \inf_{\theta >0} \left\{ \frac{1}{\theta} \log \mathrm{tr} \exp \left( \sum_{k=1}^n \log \mathbb{E} \, \mathrm{e}^{\theta X_k} \right) \right\}\\
%   && \text{\Cref{thm:master_bound_without}} \\
  &\leq \inf_{0 < \theta < 3 / L} \left\{ \frac{1}{\theta} \log \mathrm{tr} \exp \left( \frac{\theta^2 / 2}{1 - \theta L / 3} \sum_{k=1}^n \mathbb{E} \, X_k ^2 \right) \right\}\\
%   && \text{\Cref{lem:matrix_bernstein_mgf_and_cgf_bound}} + \mathrm{tr} \exp (\cdot) \text{ is monotone} \\
  &\leq \inf_{0 < \theta < 3 / L} \left\{ \frac{1}{\theta} \log \left[d \,\lambda_{\max} \left(\exp \left( \frac{\theta^2 / 2}{1 - \theta L / 3} \mathbb{E} \, S_X^2 \right) \right) \right] \right\}\\
%   && \mathrm{tr} (M) \leq d \,\lambda_{\max} (M) , \,\forall M \in \R^{d \times d} \\
  &\leq \inf_{0 < \theta < 3 / L} \left\{ \frac{1}{\theta} \log \left[d \,\exp \left( \frac{\theta^2 / 2}{1 - \theta L / 3} \lambda_{\max} \left( \mathbb{E} \, S_X^2 \right) \right) \right] \right\}\\
%   && \text{Spectral Mapping Theorem} \\
  &\leq \inf_{0 < \theta < 3 / L} \left\{ \frac{1}{\theta} \log \left[d \,\exp \left( \frac{\theta^2 / 2}{1 - \theta L / 3} v(S_X) \right) \right] \right\}\\
%   && \eqref{eq:spectral_norm} + \mathbb{E} \, Y^2 \succeq 0 \\
  &= \inf_{0 < \theta < 3 / L} \left\{ \frac{\log d}{\theta} + \frac{\theta / 2}{1 - \theta L / 3} v(S_X) \right\} \enspace.
\end{align*}
where the inequalities sucessively derive from \Cref{thm:master_bound_without}, \Cref{lem:matrix_bernstein_mgf_and_cgf_bound} combined with the monotony of $\mathrm{tr} \exp (\cdot)$, the fact that $\mathrm{tr} (M) \leq d \,\lambda_{\max} (M) , \,\forall M \in \R^{d \times d}$, the spectral mapping theorem and lastly \eqref{eq:spectral_norm} with $\mathbb{E} \, Y^2 \succeq 0$.
Finally, one can complete the infimum, for instance using a computer algebra system, to finish the proof as it was stated in the original proof by \citet{tropp2015introduction}
\footnote{For instance : \href{https://www.wolframalpha.com/input/?i=Minimize\%5B\%7B(log(d)\%2Fx)+\%2B+((x\%2F2)\%2F(1-(L\%2F3)*x))*v,+x+\%3E0,+x+\%3C+(3\%2FL)\%7D,+x\%5D}{\texttt{Minimize[{(log(d)/x) + ((x/2)/(1-(L/3)*x))*v, x >0, x < (3/L)}, x]}} in Wolfram Alpha.}.
In conclusion,
\begin{equation*}
  \mathbb{E} \, \lambda_{\max} ( S_Y ) \leq \sqrt{2v( S_X ) \log d} + \frac{1}{3} L \log d \enspace.
\end{equation*}
\end{proof}
%!TEX root = ../optimal_mini-batch.tex

%%%%%%%%%%%%%%%%%%%%%%%%%%%%%%%%%%%%%%%%%%%%%%%%%%%%%%%%%%%%%%%%%%%%%%%%%%%%%%%
\section{Miscellaneous}
\label{appendix:miscellaneous}
%%%%%%%%%%%%%%%%%%%%%%%%%%%%%%%%%%%%%%%%%%%%%%%%%%%%%%%%%%%%%%%%%%%%%%%%%%%%%%%

\begin{lemma}[Double counting]\label{lem:2count}
Let $a_{i,C} \in \R$ for $i=1,\ldots, n$ and $C \in \cC$, where $\cC$ is a collection of subsets of $[n]$. Then
\begin{equation}
\sum_{C \in \cC} \sum_{i\in C} a_{i,C} \quad
=\quad   \sum_{i=1}^n \sum_{C \in \cC \; : \; i \in C} a_{i,C}.
\end{equation}
\end{lemma}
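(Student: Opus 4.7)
The plan is to reduce the identity to a swap of order of summation via indicator functions. This is a purely combinatorial statement with no analytic subtleties, since $[n]$ is finite and $\cC$ is a collection of subsets of a finite set, so any finite or absolutely convergent summation reorders freely.

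First I would rewrite each inner sum on the left-hand side by extending it over all of $[n]$ using an indicator:
\begin{equation*}
\sum_{i \in C} a_{i,C} \;=\; \sum_{i=1}^n a_{i,C}\,\mathds{1}_{\{i \in C\}}.
\end{equation*}
Substituting this into the left-hand side gives a double sum over $C \in \cC$ and $i \in [n]$ whose summand is $a_{i,C}\,\mathds{1}_{\{i \in C\}}$. Then I would swap the two summations (legitimate because $\cC$ is a collection of subsets of the finite set $[n]$, so if $\cC$ is finite the reordering is trivial, and otherwise one can restrict attention to the finitely many distinct subsets that actually appear):
\begin{equation*}
\sum_{C \in \cC} \sum_{i=1}^n a_{i,C}\,\mathds{1}_{\{i\in C\}}
\;=\; \sum_{i=1}^n \sum_{C \in \cC} a_{i,C}\,\mathds{1}_{\{i\in C\}}.
\end{equation*}

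Finally, for each fixed $i$, the inner sum over $C \in \cC$ is supported exactly on those $C$ that contain $i$, so
\begin{equation*}
\sum_{C \in \cC} a_{i,C}\,\mathds{1}_{\{i\in C\}} \;=\; \sum_{C \in \cC \,:\, i \in C} a_{i,C},
\end{equation*}
yielding the right-hand side of the claimed identity. There is no real obstacle here; the only care needed is to justify the swap, which is immediate in the finite setting that the paper uses (all the applications of this lemma in the appendix have $\cC = \{B \subseteq [n] : |B| = b\}$, a finite collection).
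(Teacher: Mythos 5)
Your proof is correct. The paper actually states Lemma~\ref{lem:2count} without any proof at all (it is treated as a standard counting identity and invoked informally as a ``double counting argument'' in the proofs of the simple bound and of Lemma~\ref{lem:lemma_expect_B_minus_i}), and your indicator-function rewriting followed by an exchange of the two finite sums is exactly the canonical justification; the finiteness worry is moot since $\cC$ is a collection of subsets of $[n]$ and hence has at most $2^n$ elements.
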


\begin{algorithm}[tb]
    \caption{\textsc{JacSketch practical implementation of $b$-nice SAGA}}
    \label{alg:practical_SAGA_implementation}
 \begin{algorithmic}
    \STATE {\bfseries Input:} mini-batch size $b$, step size $\gamma > 0$
    \STATE {\bfseries Initialize:} $w^0 \in \R^d$, $J^0 \in \R^{d \times n}$, $u^0 = \frac{1}{n} J^0 e$
    \FOR{$k=0, 1, 2, \ldots$}
    \STATE Sample a fresh batch $B \subseteq [n]$ s.t. $|B| = b$
    \STATE $\mathrm{aux} = \sum_{i\in B} (\nabla f_i(w^k)- J^k_{:i})$ \tcp*[f]{update the auxiliary vector} %\COMMENT{update auxiliary vector}
    \STATE $g^k = u^k  + \frac{1}{b}\mathrm{aux}$ \tcp*[f]{update the unbiased gradient estimate} %\COMMENT{update unbiased gradient estimate}
    \STATE $u^{k+1} = u^k  + \frac{1}{n}\mathrm{aux}$ \tcp*[f]{update the biased gradient estimate} %\COMMENT{update biased gradient estimate}
    \STATE  $J^{k+1}_{:i} = \begin{cases} J^k_{:i} & \quad i\notin B\\
    		                              \nabla f_i(w^k) & \quad i\in B.\end{cases}$ \tcp*[f]{update the Jacobian estimate} %\COMMENT{update Jacobian estimate}
    \STATE $w^{k+1} = w^{k} - \gamma g^k$ \tcp*[f]{take a step}
    \ENDFOR
\end{algorithmic}
\end{algorithm}

\section{Additional Experiments}
\label{appendix:additional_experiments}
%%%%%%%%%%%%%%%%%%%%%%%%%%%%%%%%%%%%%%%%%%%%%%%%%%%%%%%%%%%%%%%%%%%%%%%%%%%%%%%
%%%%%%%%%%%%%%%%%%%%%%%%%%%%%%%%%%%%%%%%%%%%%%%%%%%%%%%%%%%%%%%%%%%%%%%%%%%%%%%

%%%%%%%%%%%%%%%%%%%%%%%%%%%%%%%%%%%%%%%%%%%%%%%%%%%%%%%%%%%%%%%%%%%%%%%%%%%%%%%
\subsection{Experiment 1: estimates of the expected smoothness constant for artificial datasets}
\label{appendix:exp1}
%%%%%%%%%%%%%%%%%%%%%%%%%%%%%%%%%%%%%%%%%%%%%%%%%%%%%%%%%%%%%%%%%%%%%%%%%%%%%%%

As described in \Cref{sec:numerical_study}, we compute our the \emph{simple} and \emph{Bernstein bounds}, our \emph{practical estimate} and the true $\cL$ for ridge regression applied to small artificial datasets: \emph{uniform} $(n=24, d=50)$, \emph{staircase eigval} $(n=d=24)$ and \emph{alone eigval} $(n=d=24)$. \Cref{fig:exp1_general_conclusion} shows first that the \emph{practical estimate} is a very close approximation of $\cL$. On the one hand, we observe in \Cref{fig:exp1_general_conclusion_unif} that the \emph{Bernstein bound} performs poorly since the feature dimension is very small $d=50$. On the other hand, \Cref{fig:exp1_general_conclusion_staircase} shows a regime change for $b \approx 10$, which highlight the usefulness of combining our bounds to approximate the expected smoothness constant. Finally, we observe that for the \emph{alone eigval} dataset \Cref{fig:exp1_general_conclusion_alone}, which has one very large eigenvalue far from the rest of the spectrum, the \emph{simple bound} matches $\cL$ because the gap between $\Lbarconst$ and $L$ shrinks.
Indeed, in this configuration $\Lbarconst \approx L \approx \frac{L_{\max}}{n}$.
When the spectrum is more concentrated, like for \emph{staircase eigval}, we get a significant gap between $\Lbarconst$ and $L$ as shown in \Cref{fig:exp1_general_conclusion_staircase}, where the \emph{simple bound} is far from $\cL$ when $b=n$.

\begin{figure}[ht]
  \vskip 0.2in
  \begin{center}
      \begin{subfigure}[b]{0.32\textwidth}
        \includegraphics[width=\textwidth]{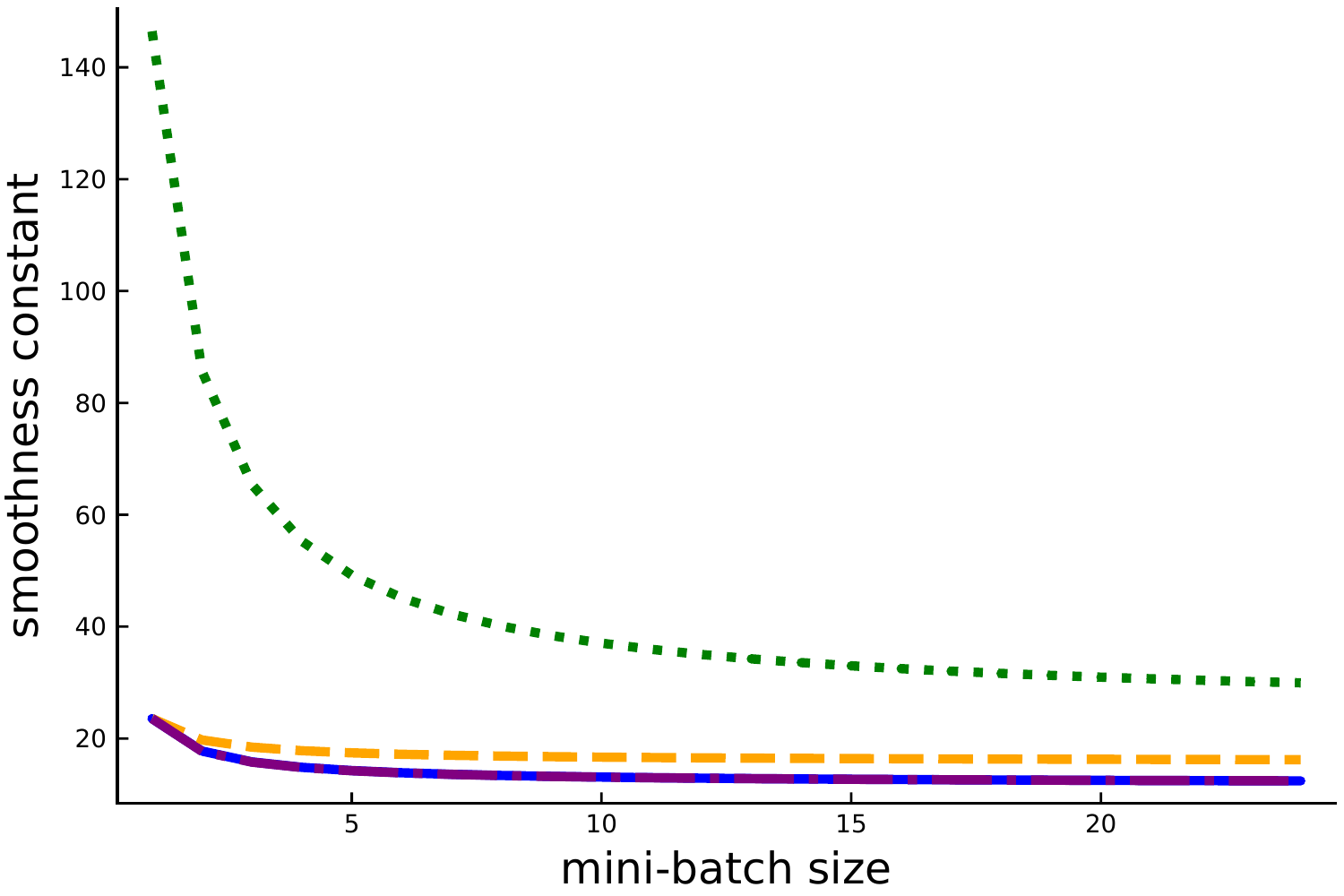}
        \caption{\emph{uniform}.}
        \label{fig:exp1_general_conclusion_unif}
      \end{subfigure}
      \begin{subfigure}[b]{0.32\textwidth}
        \includegraphics[width=\textwidth]{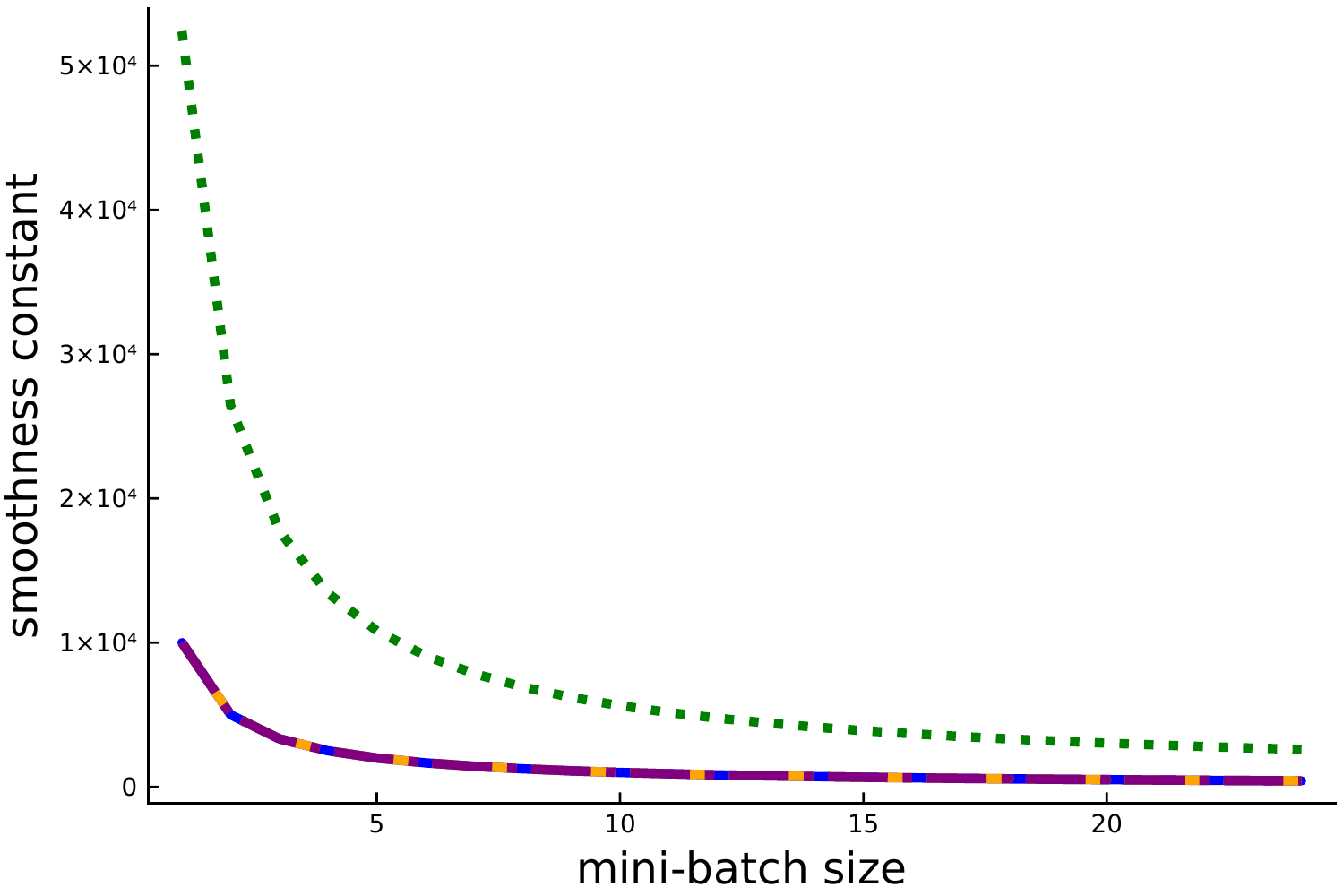}
        \caption{\emph{alone eigval}.}
        \label{fig:exp1_general_conclusion_alone}
      \end{subfigure}
        \begin{subfigure}[b]{0.32\textwidth}
        \centering
        \includegraphics[width=\textwidth]{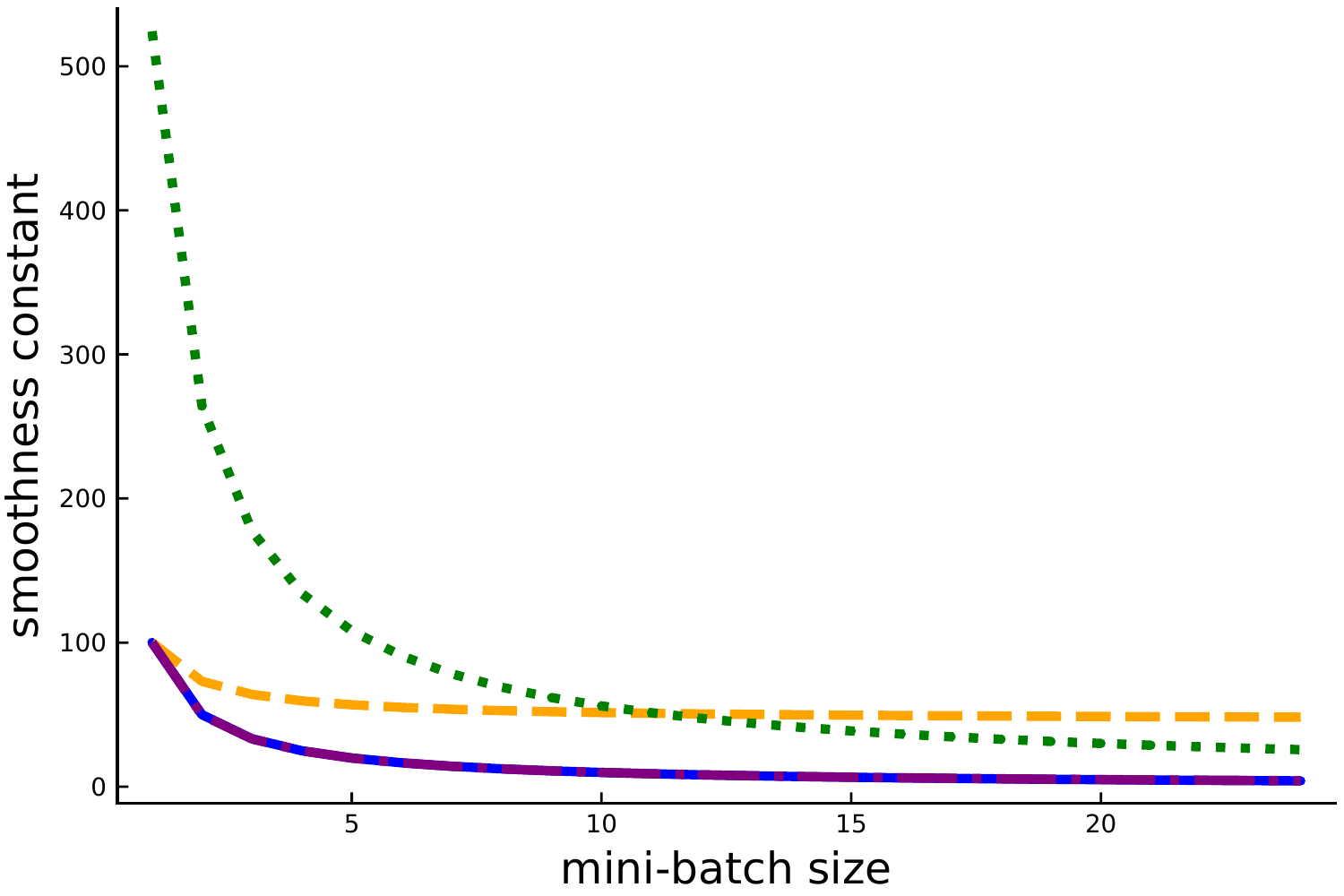}
        \caption{\emph{staircase eigval}.}
        \label{fig:exp1_general_conclusion_staircase}
        \end{subfigure}
        \centerline{\includegraphics[width=0.5\textwidth]{exp1/exp1-expsmoothbounds-legend-with-true}}
  \captionsetup{justification=centering}
  \caption{Expected smoothness constant $\cL$ and its upper-bounds the mini-batch size $b$ varies (unscaled datasets, $\lambda = 10^{-1}$).}
  \label{fig:exp1_general_conclusion}
  \end{center}
\end{figure}

We also report the influence of changing the value of the regularization parameter $\lambda$. \Cref{fig:exp1_influence_lambda} shows that this parameter has little impact on the general shape of the bounds and of $\cL$.

\begin{figure}[ht]
    \vskip 0.2in
    \begin{center}
        \begin{subfigure}[b]{0.31\textwidth}
          \includegraphics[width=\textwidth]{exp1/ridge_gauss-50-24-0_0_seed-1-none-regularizor-1e-01-exp1-expsmoothbounds}
          \caption{\emph{uniform}, $\lambda = 10^{-1}$.}
          % \label{fig:2}
        \end{subfigure}
        \begin{subfigure}[b]{0.31\textwidth}
          \includegraphics[width=\textwidth]{exp1/ridge_diagalone-24-0_0-100_seed-1-none-regularizor-1e-01-exp1-expsmoothbounds}
          \caption{\emph{alone eigval}, $\lambda = 10^{-1}$.}
          % \label{fig:6}
        \end{subfigure}
        \begin{subfigure}[b]{0.31\textwidth}
          \includegraphics[width=\textwidth]{exp1/ridge_diagints-24-0_0-100_seed-1-none-regularizor-1e-01-exp1-expsmoothbounds}
          \caption{\emph{staircase eigval}, $\lambda = 10^{-1}$.}
          % \label{fig:4}
        \end{subfigure}
        \\
        \begin{subfigure}[b]{0.31\textwidth}
          \includegraphics[width=\textwidth]{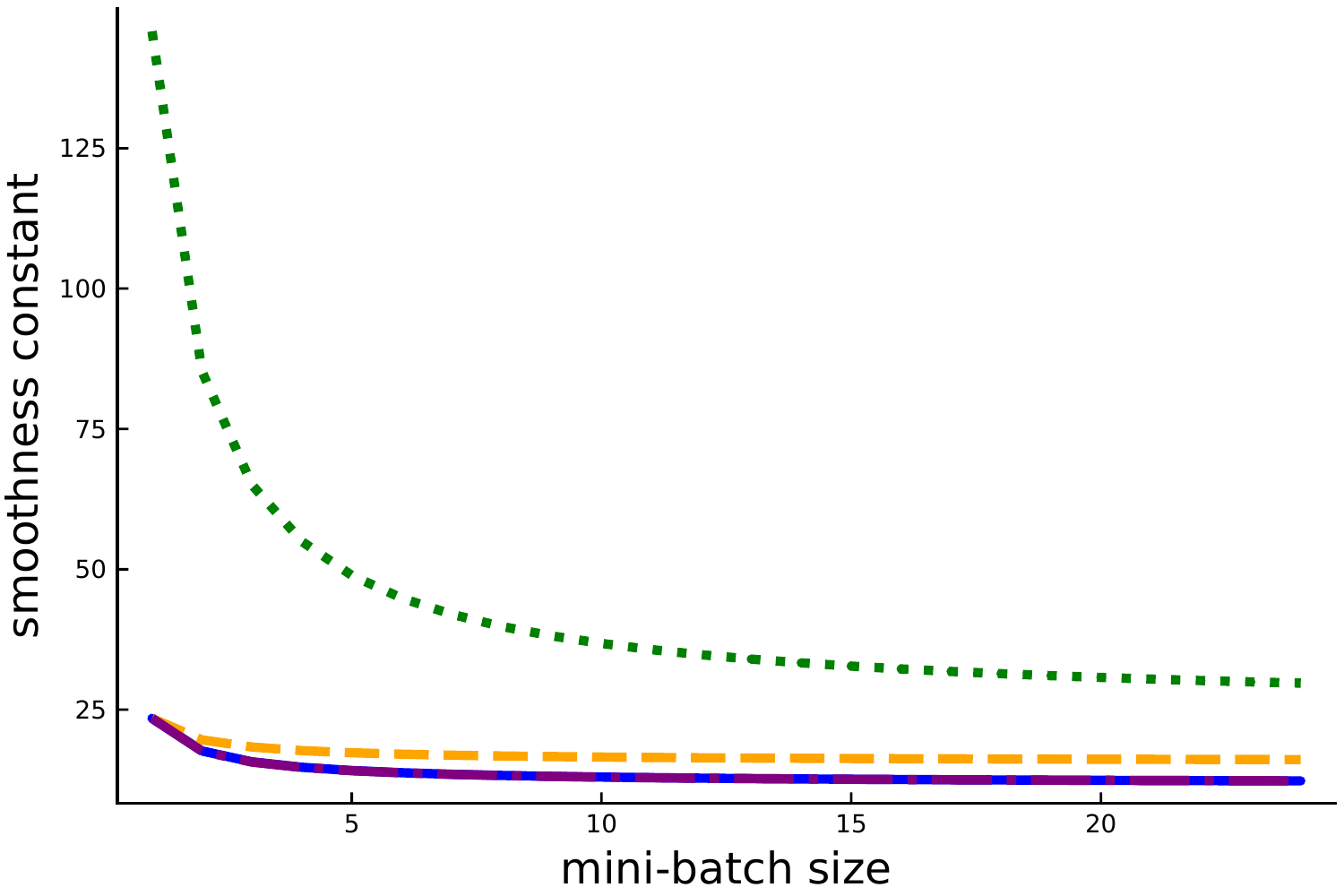}
          \caption{\emph{uniform}, $\lambda = 10^{-3}$.}
          % \label{fig:1}
        \end{subfigure}
        \begin{subfigure}[b]{0.31\textwidth}
          \includegraphics[width=\textwidth]{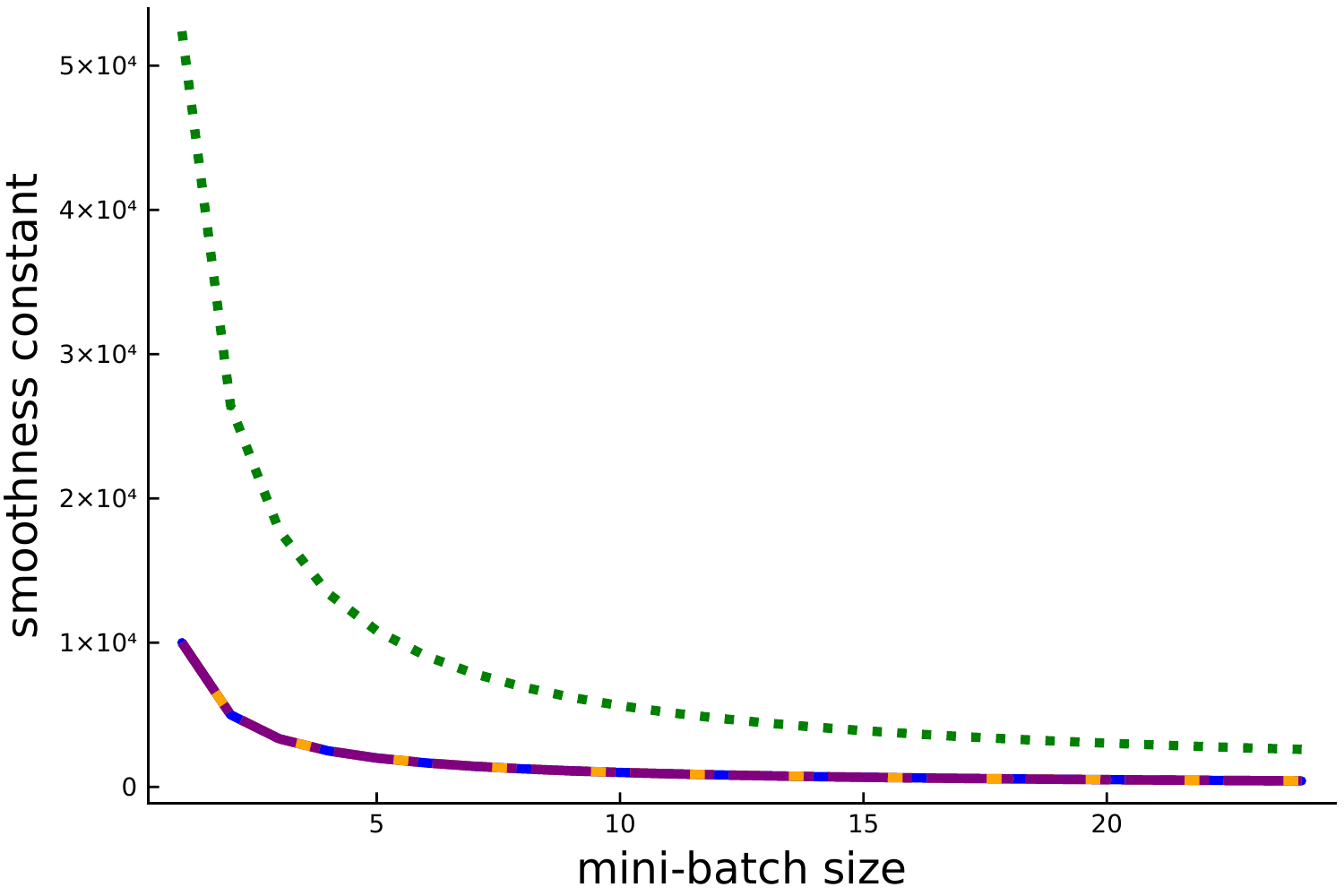}
          \caption{\emph{alone eigval}, $\lambda = 10^{-3}$.}
          % \label{fig:5}
        \end{subfigure}
        \begin{subfigure}[b]{0.31\textwidth}
          \includegraphics[width=\textwidth]{exp1/ridge_diagints-24-0_0-100_seed-1-none-regularizor-1e-03-exp1-expsmoothbounds}
          \caption{\emph{staircase eigval}, $\lambda = 10^{-3}$.}
          % \label{fig:3}
        \end{subfigure}
        \begin{subfigure}[b]{\textwidth}
          \centerline{\includegraphics[width=0.5\textwidth]{exp1/exp1-expsmoothbounds-legend-with-true}}
        \end{subfigure}
    \captionsetup{justification=centering}
    \caption{Expected smoothness constant $\cL$ and its upper-bounds as a function of the mini-batch size for unscaled datasets with $\lambda = 10^{-1}$ (top) and $\lambda = 10^{-3}$ (bottom).}
    \label{fig:exp1_influence_lambda}
    \end{center}
    \vskip -0.2in
\end{figure}

Finally, we study the impact of scaling or standardizing (\ie removing the mean and dividing by the standard deviation for each feature) our artificial datasets.
In order not to benefit from the diagonal shape of the \emph{alone eigval} and \emph{staircase eigval} datasets we also give examples of the bounds of $\cL$ after a rotation of the data.
The rotation aims at preserving the spectrum while erasing the diagonal structure of the covariance matrix $A A^\top$.
This rotation procedure consists in transforming $A$ into $Q^\top A Q$, where $Q$ is the orthogonal matrix given by the QR decomposition of a random squared matrix (with dimension the same as the one of $A$) with uniformly random coefficients $M$, such that $M = QR$.

We observe in \Cref{fig:difference_not_rotated_and_rotated_data} that rotations do not affect our estimates of $\cL$, because they preserve the spectrum.
Scaling non-diagonal datasets does not change the general shape neither.
As predicted, scaling diagonal matrices leads to a particular case where the spectrum of the covariance matrix is flattened and for all $i \in [n]$, $L_i \approx L_{\max} \approx \Lbarconst$.
This is why we get a flat \emph{simple bound} in \Cref{fig:staircase_scaled_flat_simple,fig:alone_scaled_flat_simple}.
Even after those different types of preprocessing (rotation and scaling) and with different values of $\lambda$, we end up with the same strong observation that the \emph{practical estimate} is a very sharp approximation of the expected smoothness constant.
% \nidham{It seems that scaling makes the simple and the Bernstein bound cross, \ie $2L + \frac{1}{n}\frac{3}{4}\log d \Lmax \leq \Lbarconst$.
%             Column-scaling flattens the simple bound. My intuition is that the spectrum of the covariance matrix gets \emph{tightened} such that \ie $L_i \approx \Lmax \, \forall i \in [n]$, and thus $\Lbarconst \approx \Lmax$.\\
%             In \Cref{fig:5}, there is a large alone eigenvalue, the gap between $L$ and $\Lbarconst$ shrinks, \ie we have $L \approx \Lbarconst \approx \frac{\Lmax}{n}$.}

\begin{figure}[ht]
  \vskip 0.2in
  \begin{center}
      \begin{subfigure}[b]{0.4\textwidth}
        \includegraphics[width=\textwidth]{exp1/ridge_diagints-24-0_0-100_seed-1-none-regularizor-1e-03-exp1-expsmoothbounds}
        \caption{\emph{staircase eigval}}
      \end{subfigure}
      \begin{subfigure}[b]{0.4\textwidth}
        \includegraphics[width=\textwidth]{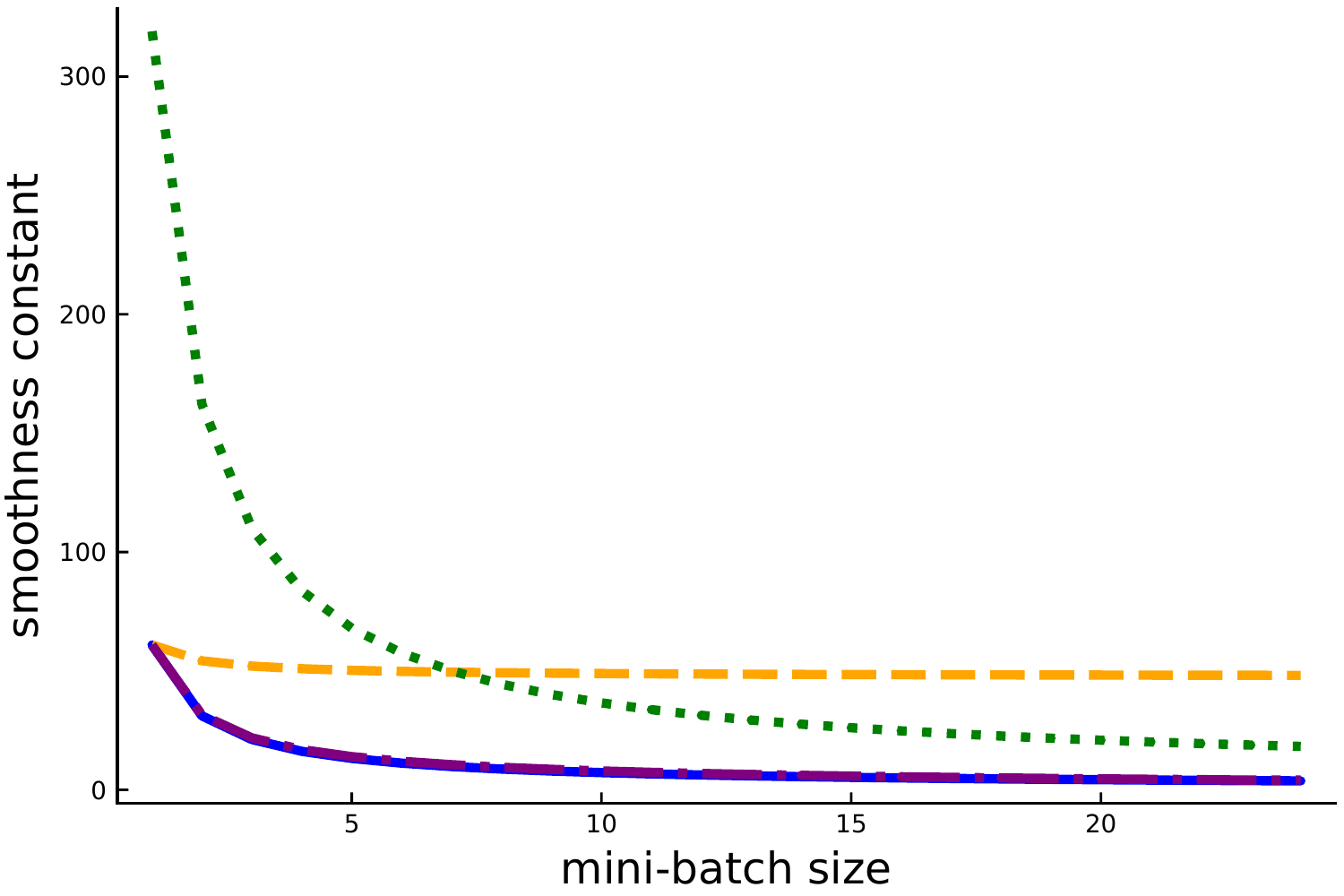}
        \caption{\emph{staircase eigval} rotated}
      \end{subfigure}\\
      \begin{subfigure}[b]{0.4\textwidth}
        \includegraphics[width=\textwidth]{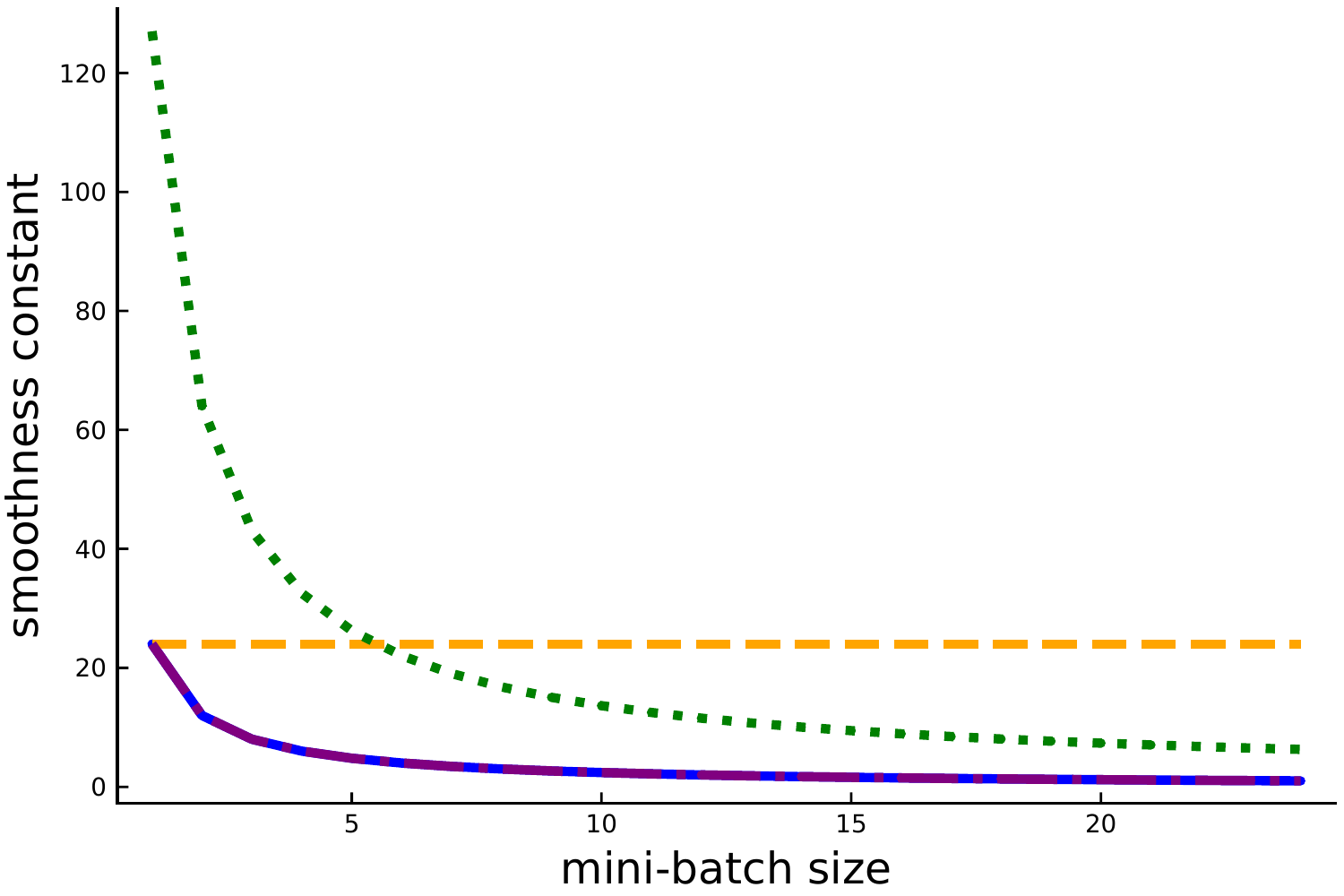}
        \caption{\emph{staircase eigval} scaled}
        \label{fig:staircase_scaled_flat_simple}
      \end{subfigure}
      \begin{subfigure}[b]{0.4\textwidth}
        \includegraphics[width=\textwidth]{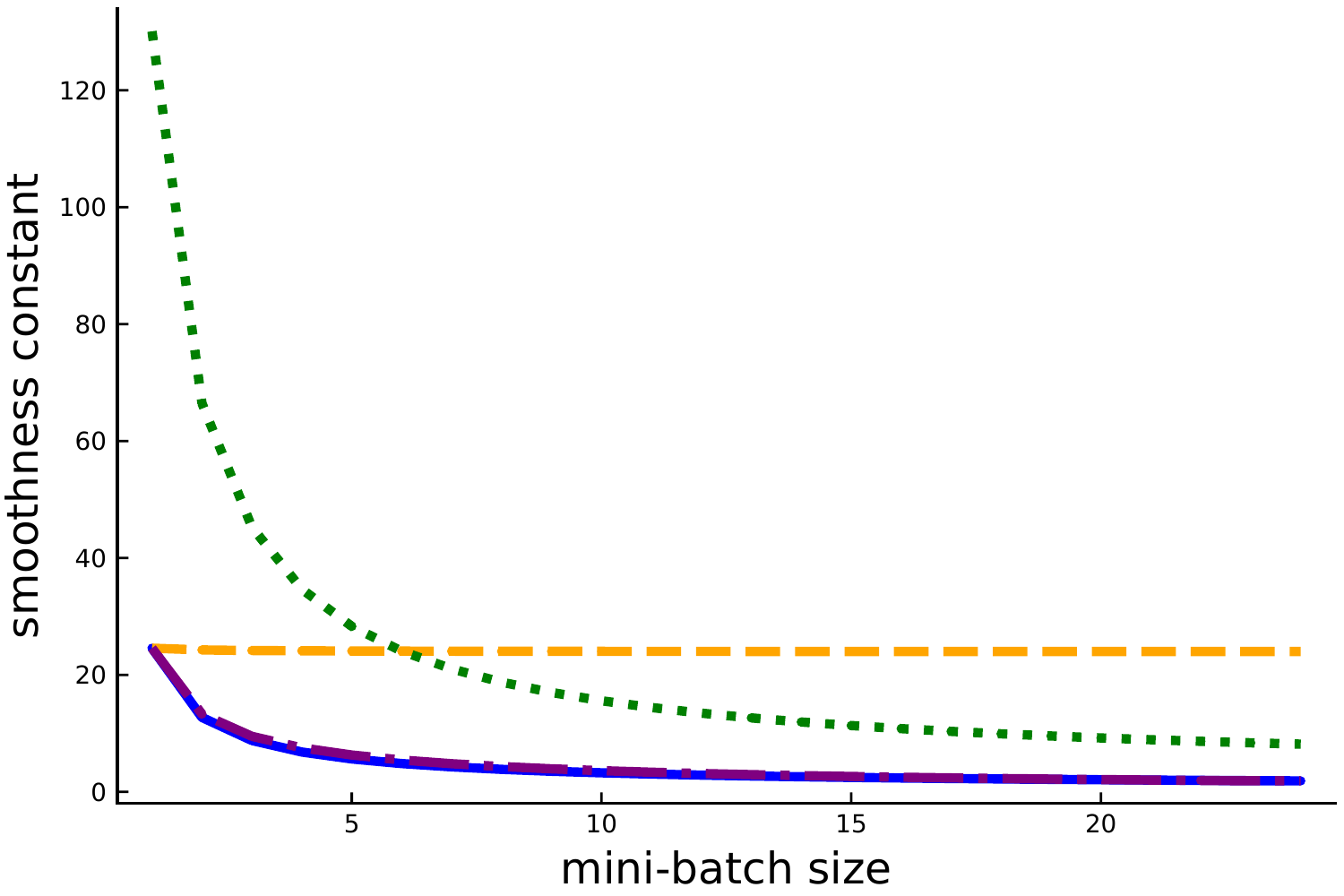}
        \caption{\emph{staircase eigval} rotated then scaled}
      \end{subfigure}\\
      \begin{subfigure}[b]{0.4\textwidth}
        \includegraphics[width=\textwidth]{exp1/ridge_diagalone-24-0_0-100_seed-1-none-regularizor-1e-03-exp1-expsmoothbounds}
        \caption{\emph{alone eigval}}
      \end{subfigure}
      \begin{subfigure}[b]{0.4\textwidth}
        \includegraphics[width=\textwidth]{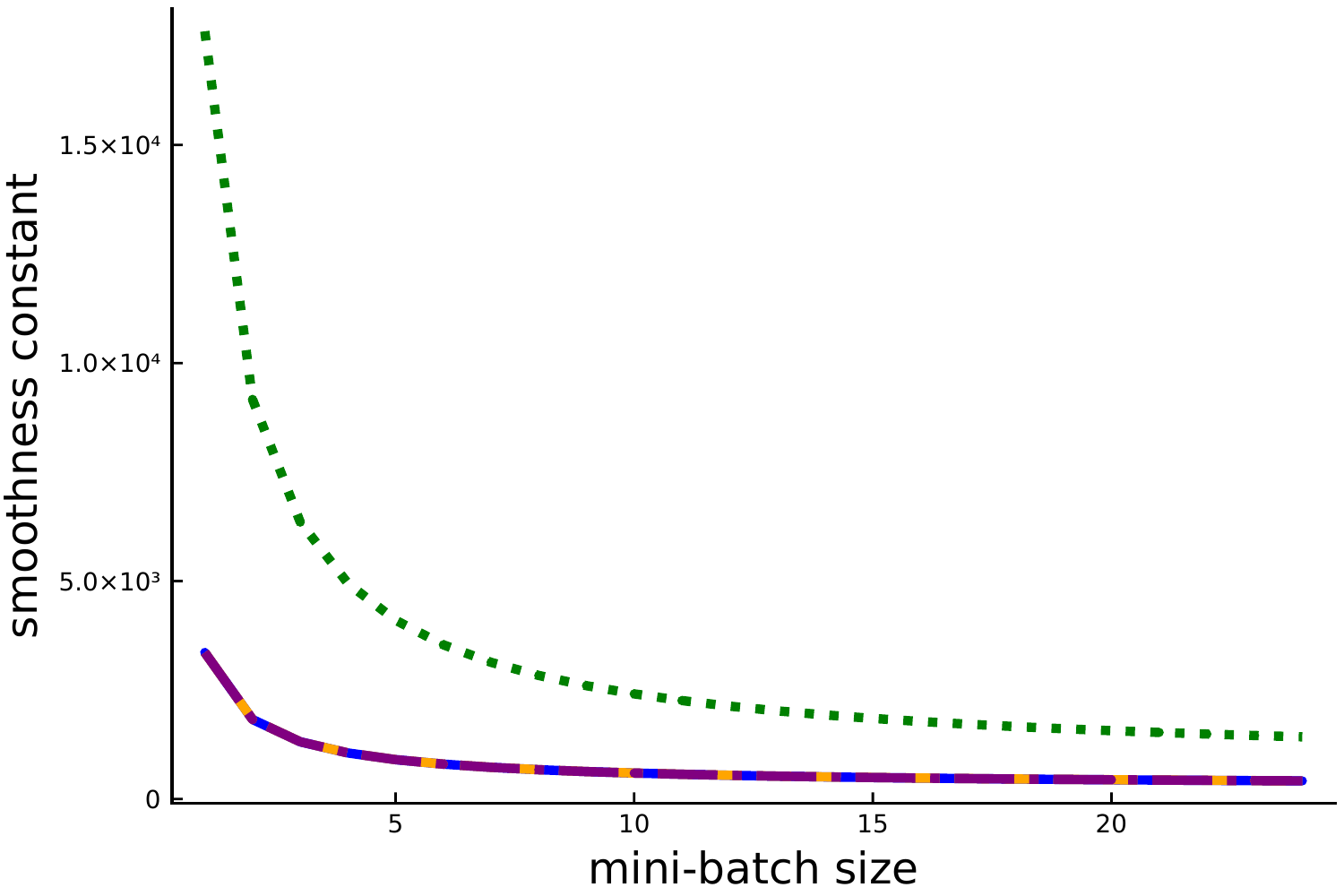}
        \caption{\emph{alone eigval} rotated}
      \end{subfigure}\\
      \begin{subfigure}[b]{0.4\textwidth}
        \includegraphics[width=\textwidth]{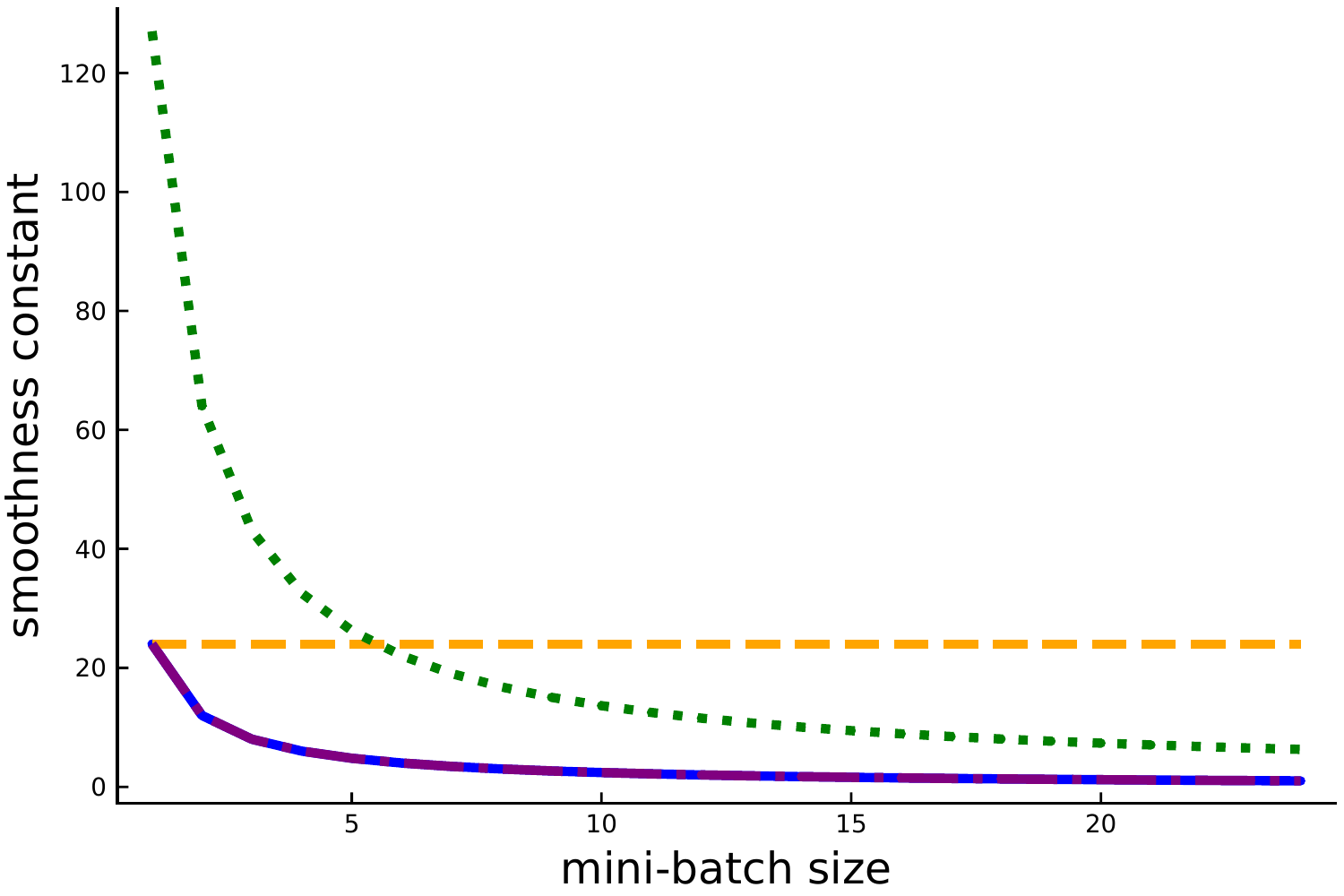}
        \caption{\emph{alone eigval} scaled}
        \label{fig:alone_scaled_flat_simple}
      \end{subfigure}
      \begin{subfigure}[b]{0.4\textwidth}
        \includegraphics[width=\textwidth]{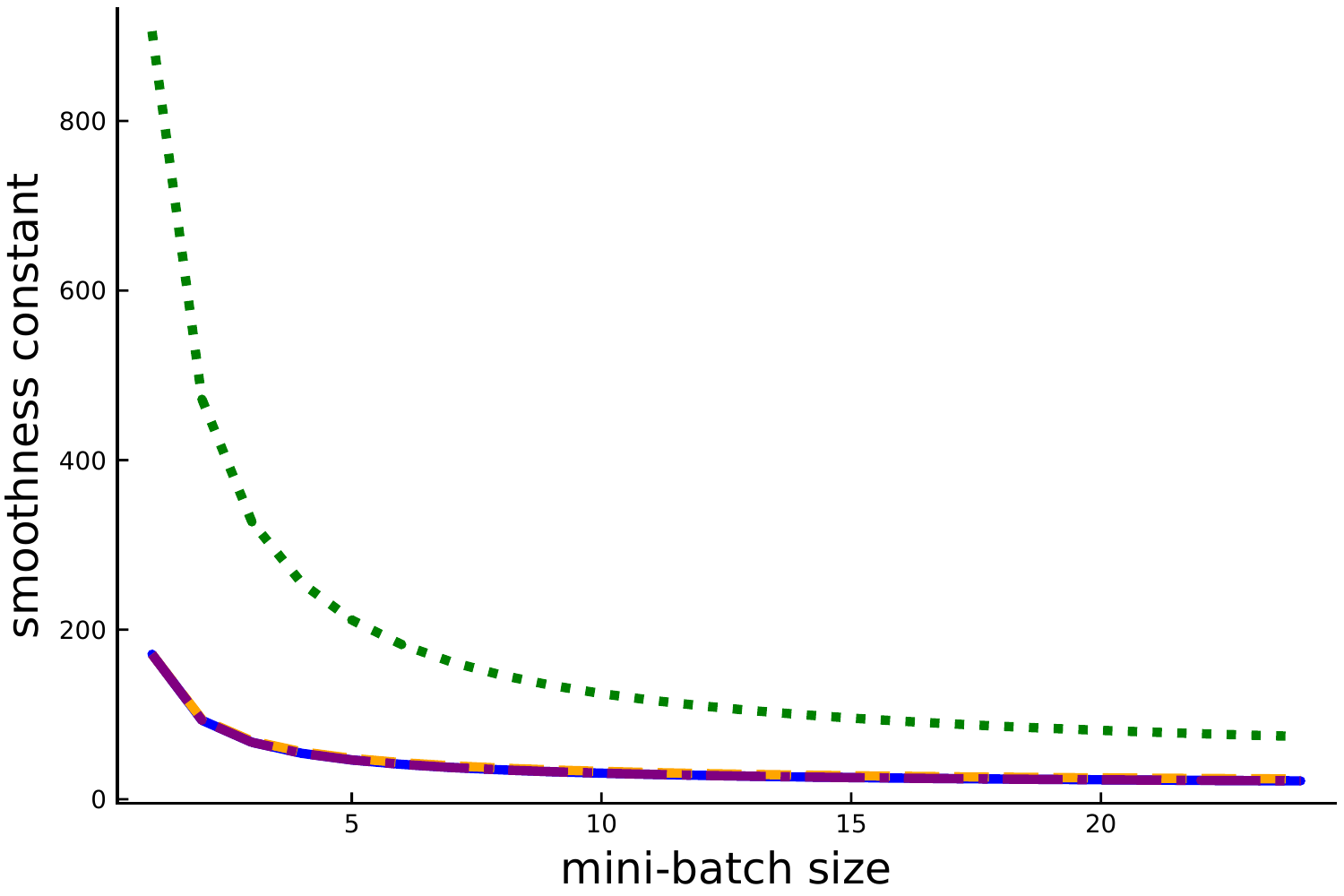}
        \caption{\emph{alone eigval} rotated then scaled}
      \end{subfigure}\\
      \begin{subfigure}[b]{\textwidth}
        \centerline{\includegraphics[width=0.5\textwidth]{exp1/exp1-expsmoothbounds-legend-with-true}}
      \end{subfigure}
  \caption{Upper-bounds of the expected smoothness constant $\cL$ for non-rotated (left) and rotated (right) datasets ($\lambda = 10^{-3}$).}
  \label{fig:difference_not_rotated_and_rotated_data}
  \end{center}
  \vskip -0.2in
\end{figure}

\subsection{Experiment 1: estimates of the expected smoothness constant for real datasets}
\label{appendix:exp_1_real_datasets}
%%%%%%%%%%%%%%%%%%%%%%%%%%%%%%%%%%%%%%%%%%%%%%%%%%%%%%%%%%%%%%%%%%%%%%%%%%%%%%%
In what folows, we also used publicly available datasets from LIBSVM\footnote{\tiny{\url{https://www.csie.ntu.edu.tw/~cjlin/libsvmtools/datasets/}}} provided by \citet{chang2011libsvm} and from the UCI repository\footnote{\tiny\url{https://archive.ics.uci.edu/ml/datasets/}} provided by \citet{Dua:2017}. We applied ridge regression to the following datasets: \textit{YearPredictionMSD} $(n=515,345,d=90)$ from LIBSVM and \textit{slice} $(n=53,500,d=384)$ from UCI. We also applied regularized logistic regression for binary classification on \textit{ijcnn1} $(n=141,691,d=22)$, \textit{covtype.binary} $(n=581,012,d=54)$, \textit{real-sim} $(n=72,309,d=20,958)$, \textit{rcv1.binary} $(n=697,641,d=47,236)$ and \textit{news20.binary} ($n=19,996,d=1,355,191$) from LIBSVM. When a test set was available, we concatenated it with the train set to have more samples.

One can observe in \Cref{fig:exp_1_real_datasets}, that for unscaled datasets the \emph{Bernstein bound} performs better than the \emph{simple bound}, except for \textit{YearPredictionMSD} $(n=515,345,d=90)$ and \textit{covtype.binary} $(n=581,012,d=54)$. From \Cref{fig:exp_1_real_datasets_scaled}, we observe that after feature-scaling, the \emph{Bernstein bound} is always a tighter upper bound of $\cL$ than the \emph{simple bound}.

\begin{figure}[ht]
  \vskip 0.2in
  \begin{center}
    \begin{subfigure}[b]{0.35\textwidth}
      \includegraphics[width=\textwidth]{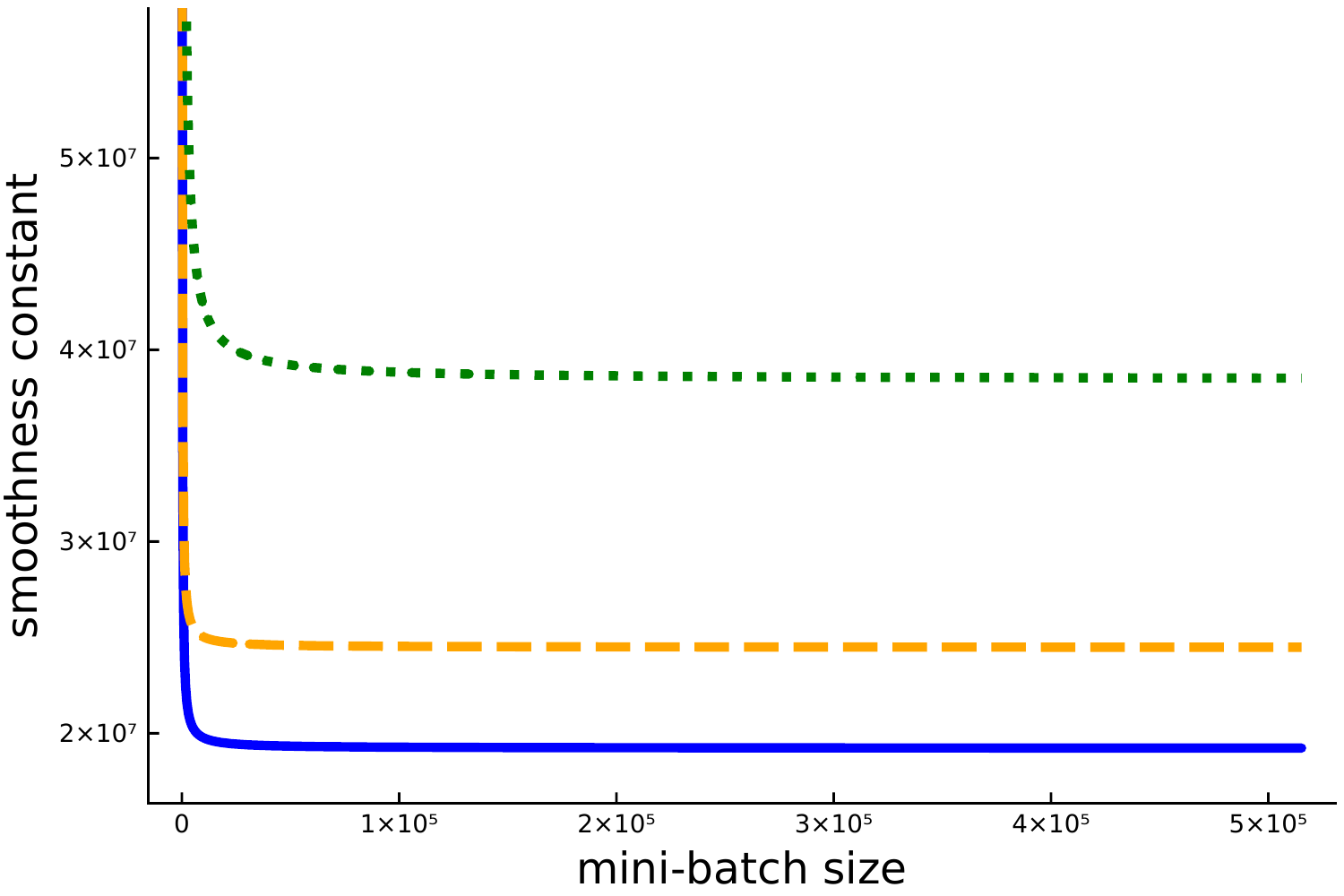}
      \caption{\textit{YearPredictionMSD}}
    \end{subfigure}%
    \begin{subfigure}[b]{0.35\textwidth}
      \includegraphics[width=\textwidth]{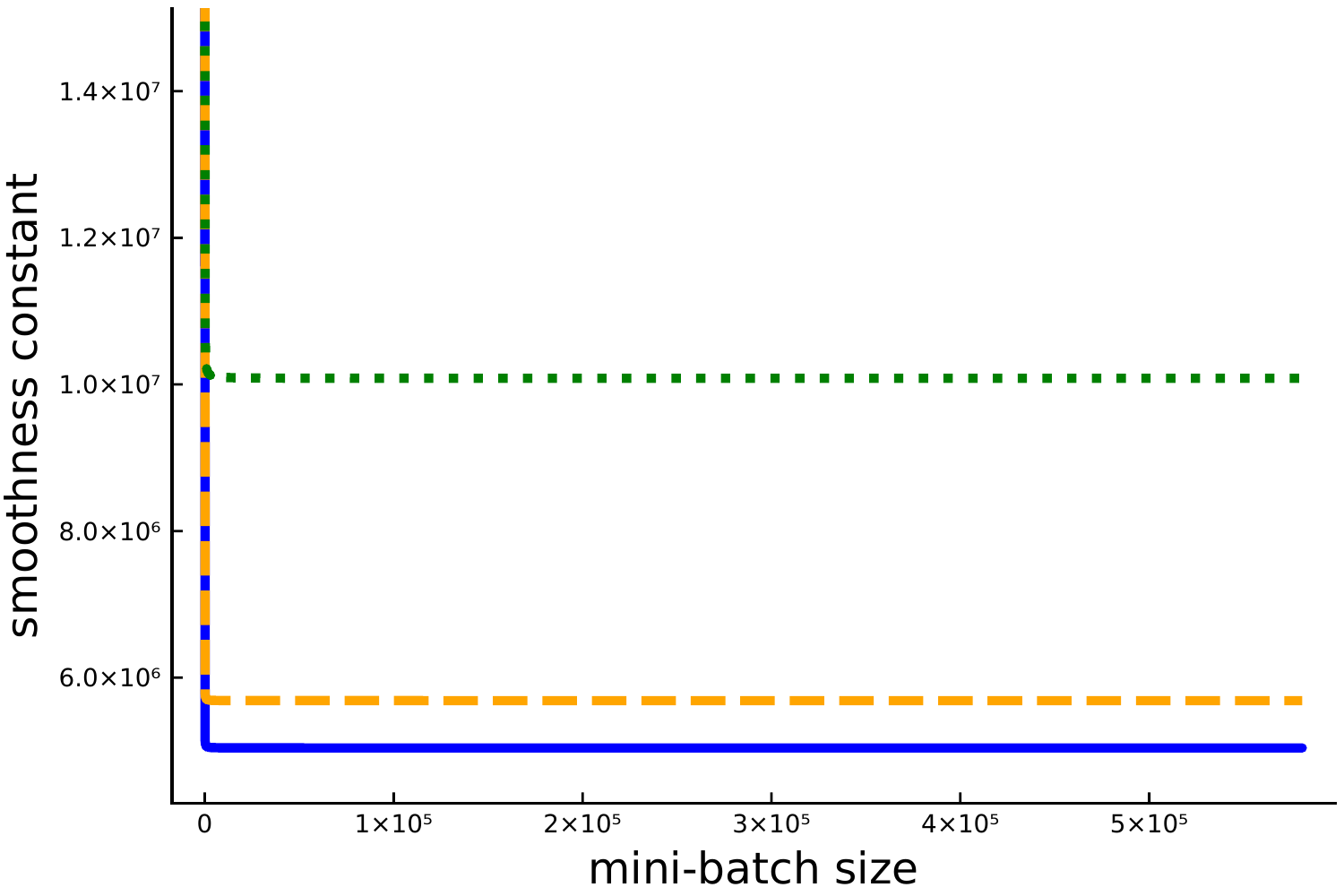}
      \caption{\textit{covtype.binary}}
    \end{subfigure}\\
    \begin{subfigure}[b]{0.35\textwidth}
      \includegraphics[width=\textwidth]{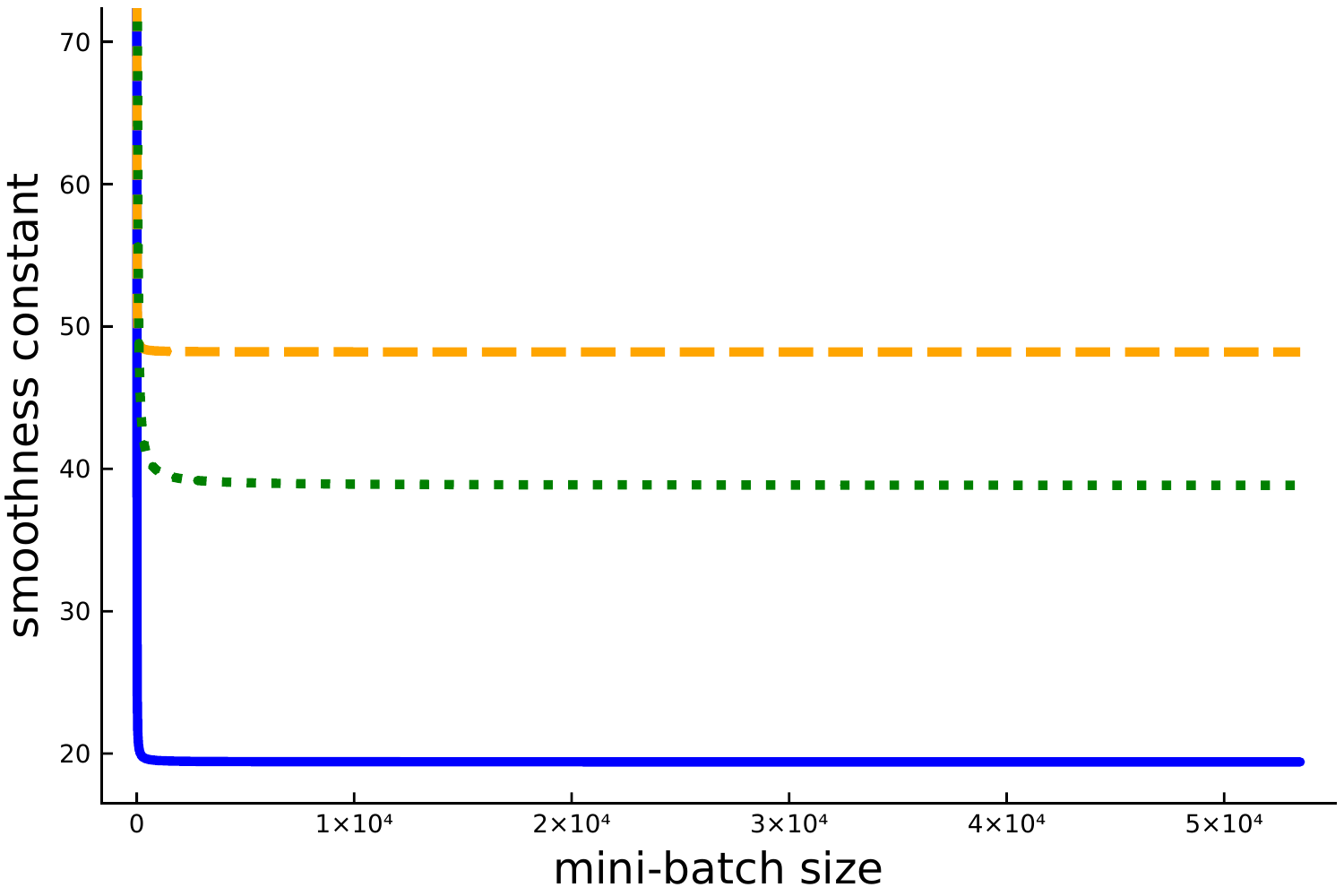}
      \caption{\textit{slice}}
    \end{subfigure}%
    \begin{subfigure}[b]{0.35\textwidth}
      \includegraphics[width=\textwidth]{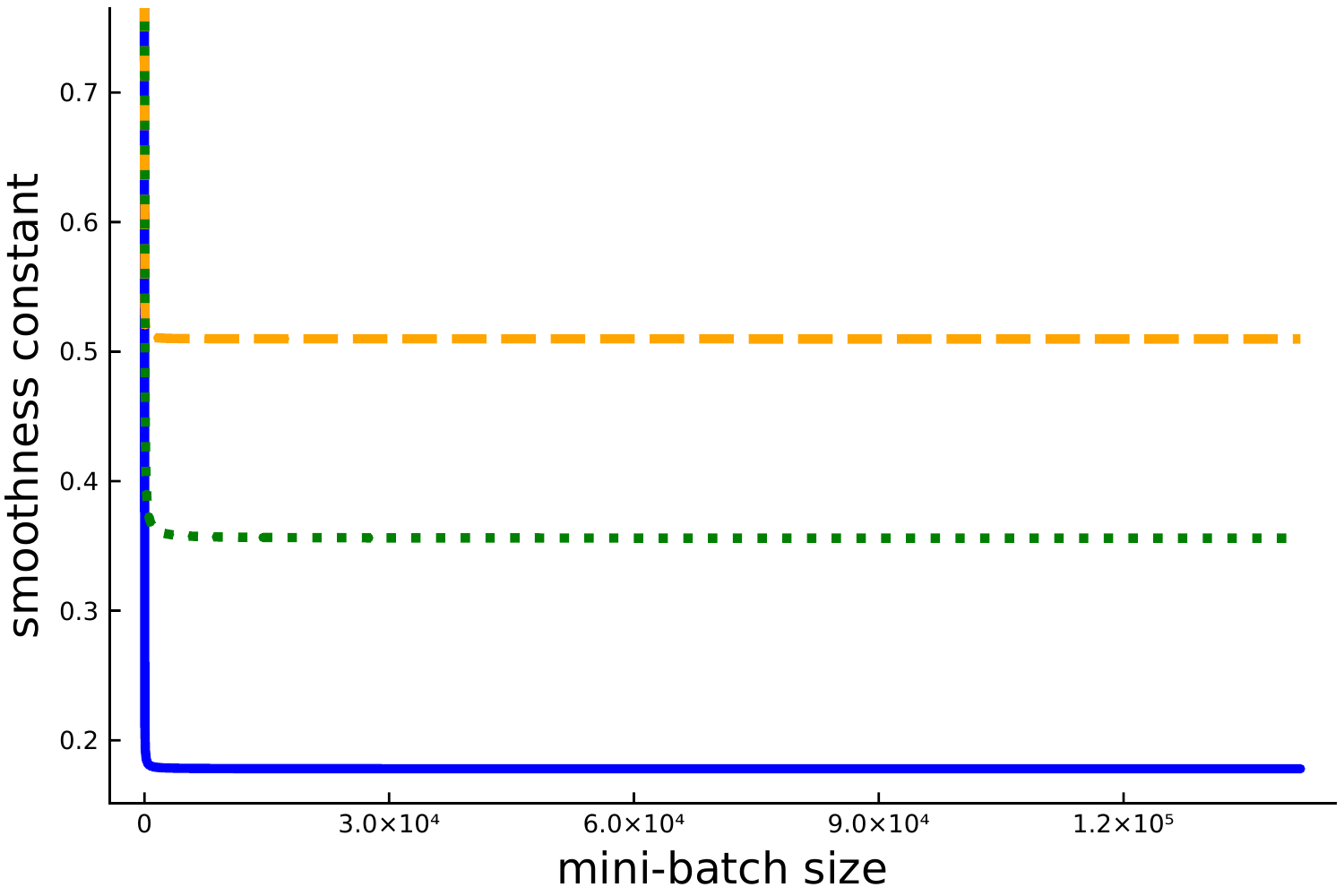}
      \caption{\textit{ijcnn1}}
    \end{subfigure}\\
    \begin{subfigure}[b]{0.35\textwidth}
      \includegraphics[width=\textwidth]{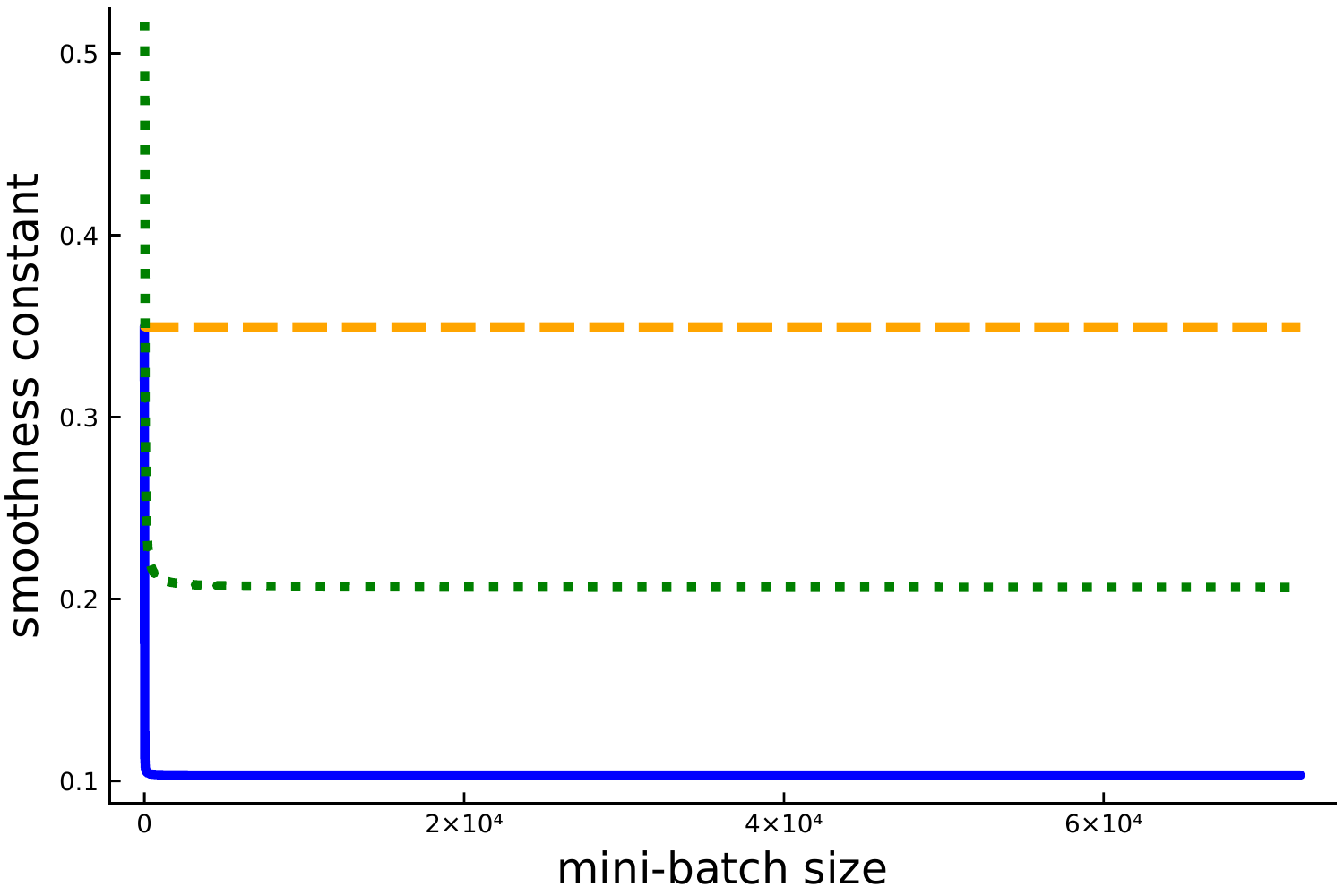}
      \caption{\textit{real-sim}}
    \end{subfigure}%
    \begin{subfigure}[b]{0.35\textwidth}
      \includegraphics[width=\textwidth]{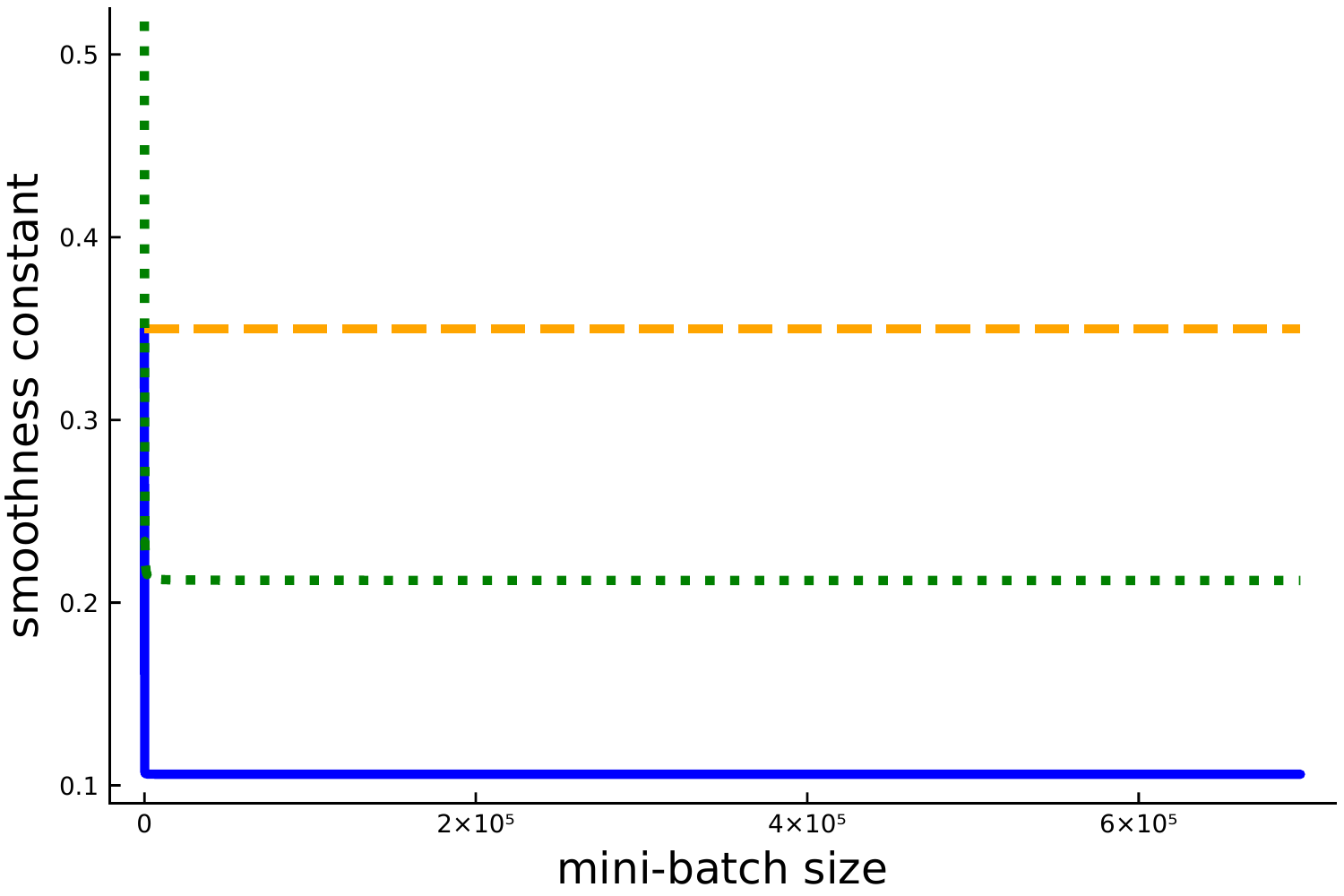}
      \caption{\textit{rcv1.binary}}
    \end{subfigure}\\
    \begin{subfigure}[b]{0.35\textwidth}
      \includegraphics[width=\textwidth]{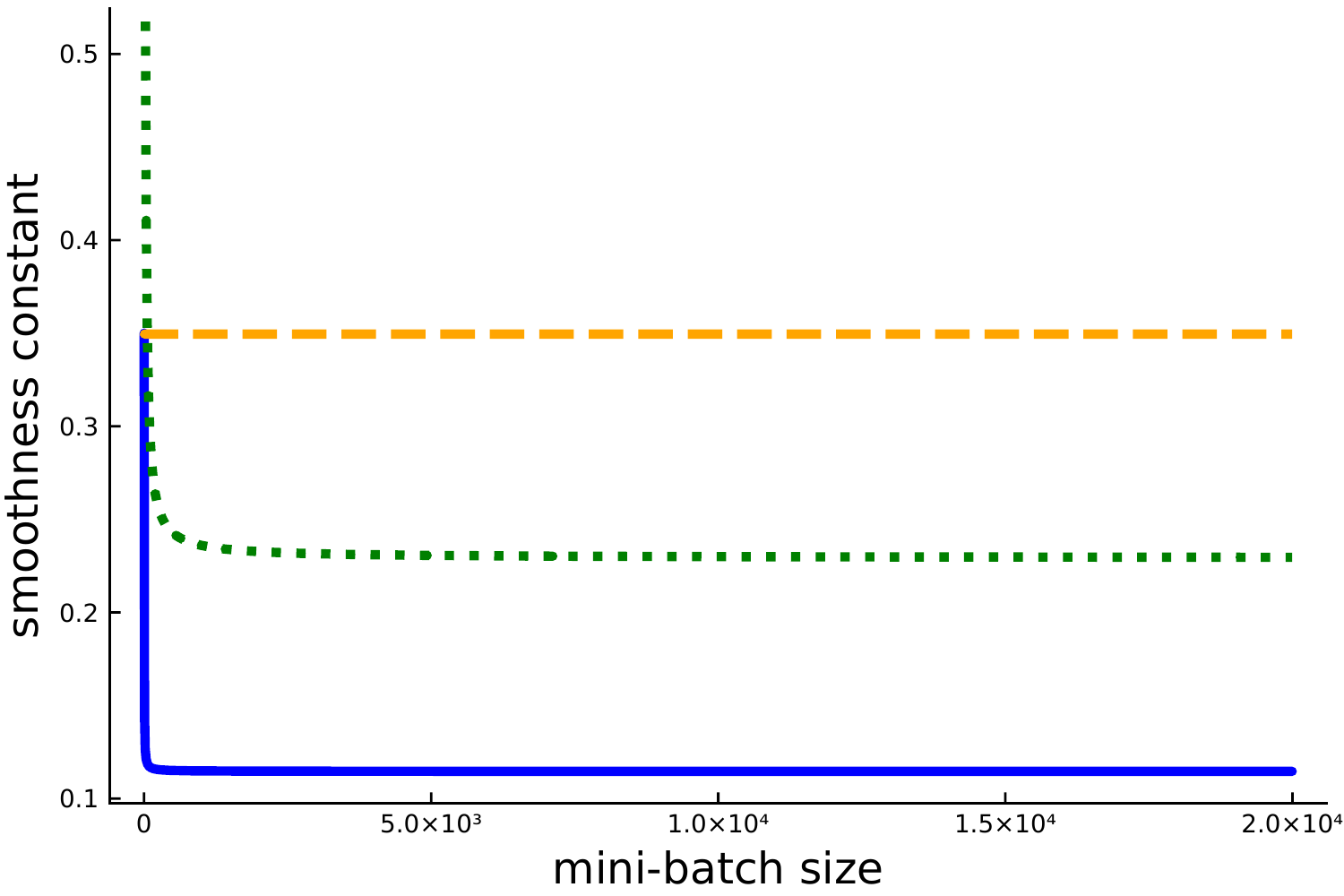}
      \caption{\textit{news20.binary}}
    \end{subfigure}\\
    \begin{subfigure}[b]{\textwidth}
      \centerline{\includegraphics[width=0.5\textwidth]{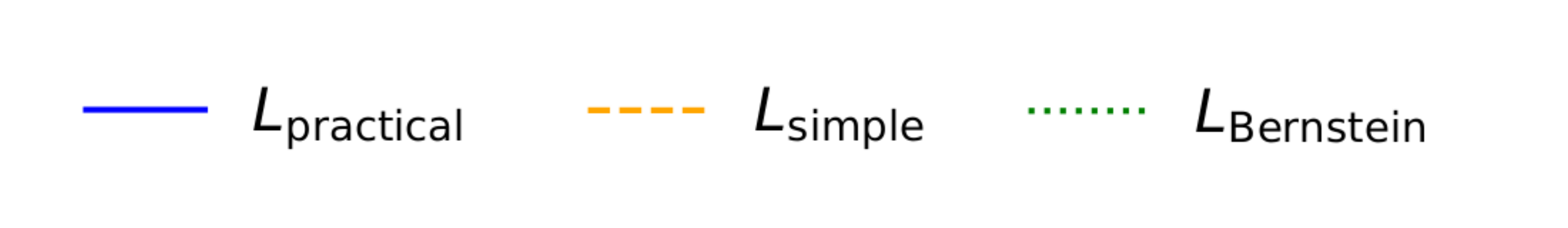}}
    \end{subfigure}
  \caption{Upper-bounds of the expected smoothness constant for real unscaled datasets ($\lambda = 10^{-1}$).}
  \label{fig:exp_1_real_datasets}
  \end{center}
  \vskip -0.2in
\end{figure}

\begin{figure}[ht]
  \vskip 0.2in
  \begin{center}
    \begin{subfigure}[b]{0.35\textwidth}
      \includegraphics[width=\textwidth]{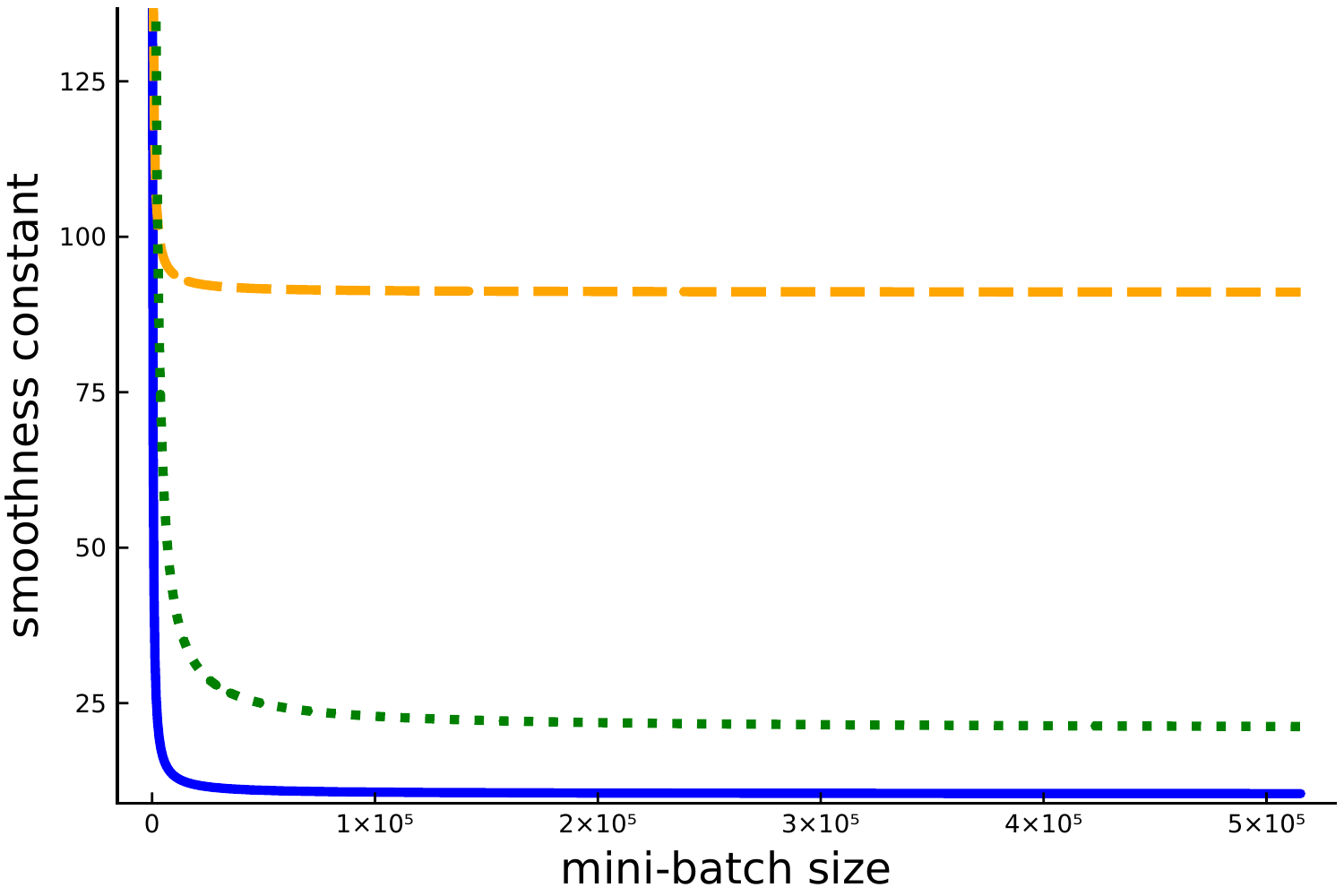}
      \caption{ \textit{YearPredictionMSD}}
    \end{subfigure}%
    \begin{subfigure}[b]{0.35\textwidth}
      \includegraphics[width=\textwidth]{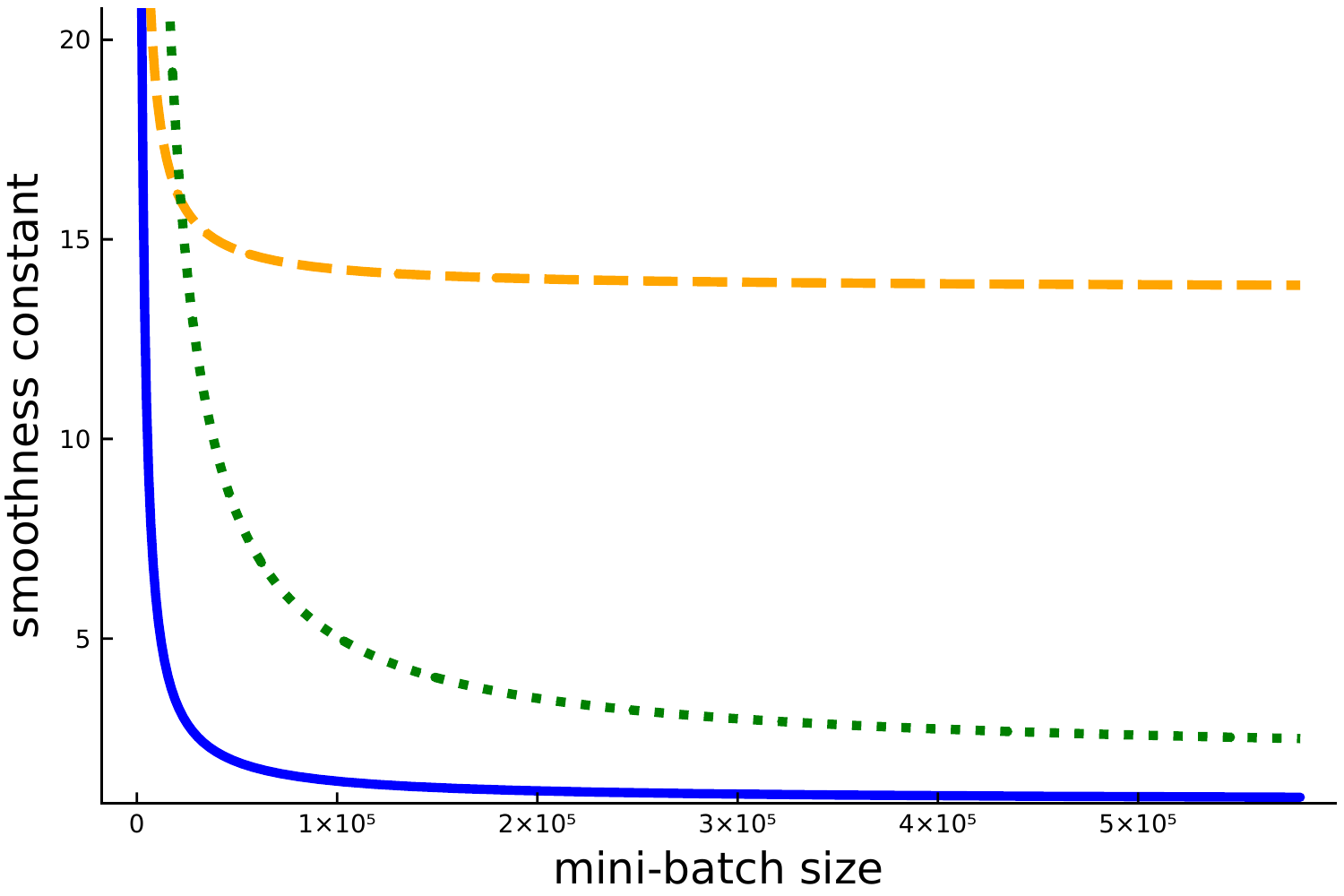}
      \caption{\textit{covtype.binary}}
    \end{subfigure}\\
    \begin{subfigure}[b]{0.35\textwidth}
      \includegraphics[width=\textwidth]{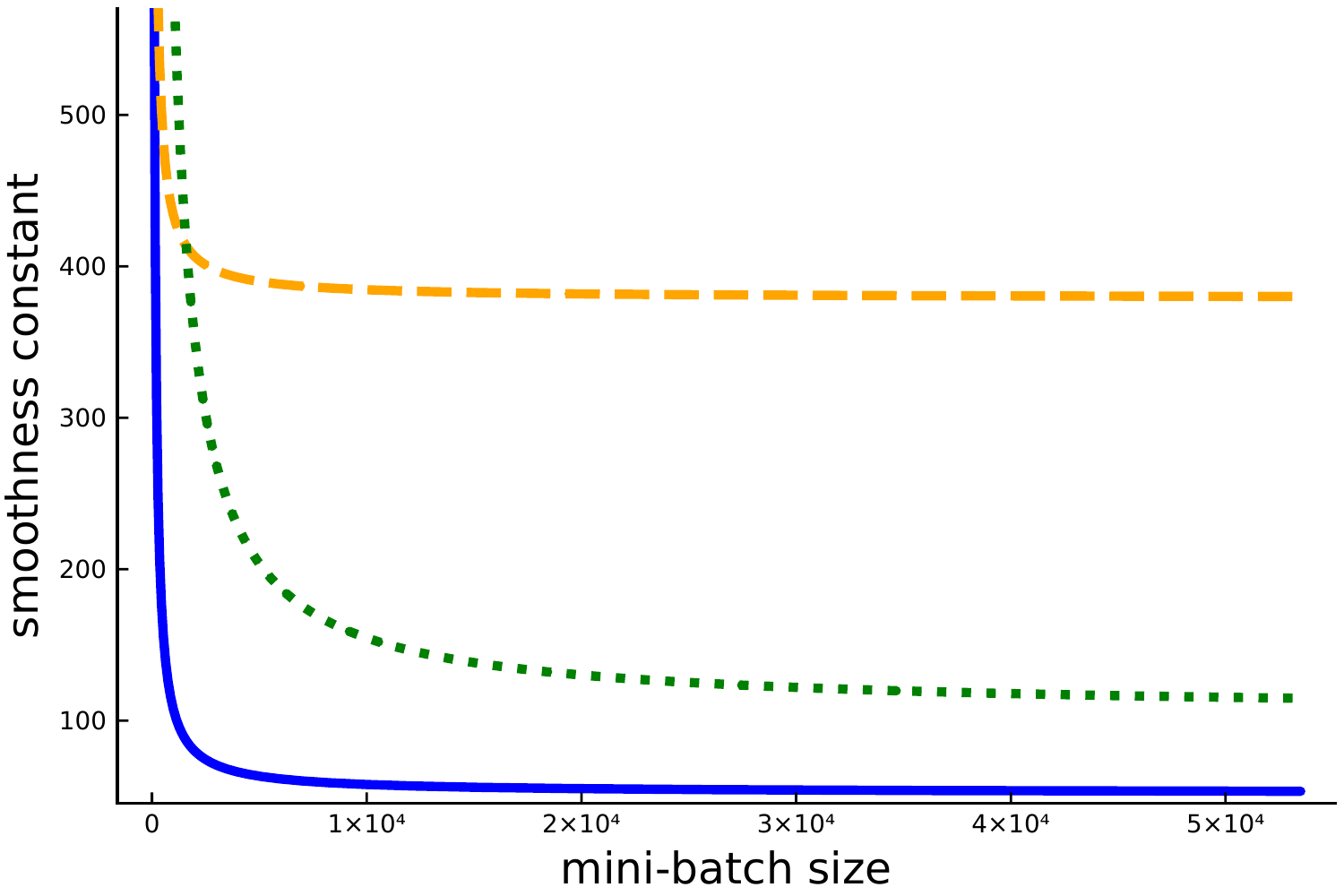}
      \caption{\textit{slice}}
    \end{subfigure}%
    \begin{subfigure}[b]{0.35\textwidth}
      \includegraphics[width=\textwidth]{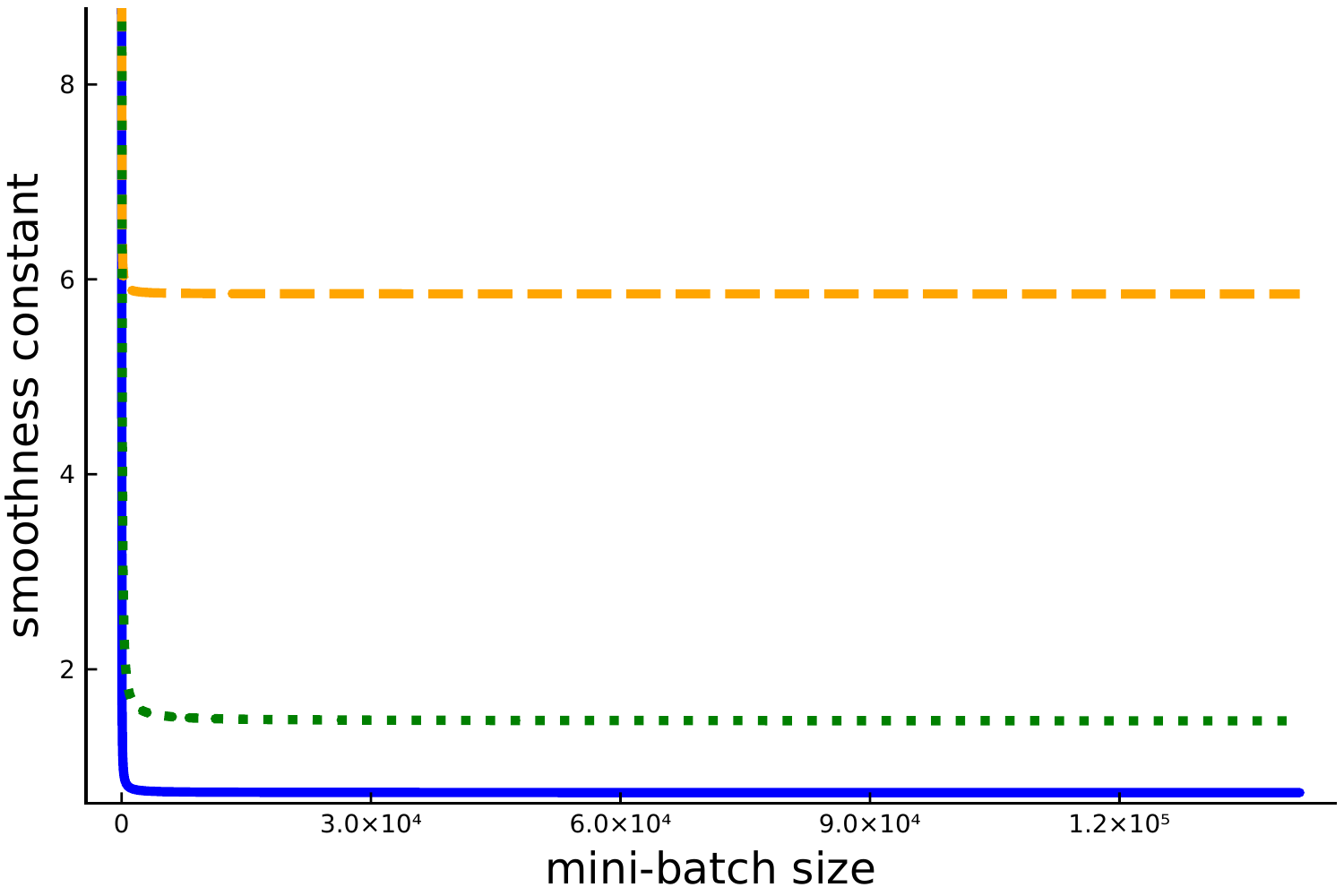}
      \caption{\textit{ijcnn1}}
    \end{subfigure}
    \begin{subfigure}[b]{\textwidth}
      \centerline{\includegraphics[width=0.5\textwidth]{exp1/exp1-expsmoothbounds-legend}}
    \end{subfigure}
  \caption{Upper-bounds of the expected smoothness constant of $\cL$ for real feature-scaled datasets ($\lambda =10^{-1}$).}
  \label{fig:exp_1_real_datasets_scaled}
  \end{center}
  \vskip -0.2in
\end{figure}

% \clearpage

%%%%%%%%%%%%%%%%%%%%%%%%%%%%%%%%%%%%%%%%%%%%%%%%%%%%%%%%%%%%%%%%%%%%%%%%%%%%%%%
\subsection{Experiment 2: step size estimates for artificial datasets}
\label{appendix:exp_2_artificial_datasets}
%%%%%%%%%%%%%%%%%%%%%%%%%%%%%%%%%%%%%%%%%%%%%%%%%%%%%%%%%%%%%%%%%%%%%%%%%%%%%%%

In this section we give the step sizes estimate corresponding to the expected smoothness constant, the \emph{simple} and \emph{Bernstein} upper-bounds and the \emph{practical} estimate for our small artificial datasets. In \Cref{fig:exp2_stepsizes_artificial_unscaled_data}, we show that the \emph{practical} step size estimate is larger than all others. Moreover, for except for small value sof $b$, our $\gamma_{\text{simple}}$ or $\gamma_{\text{Bernstein}}$ estimates are in most cases larger than the one proposed in \citep{hofmann2015variance}.

\begin{figure}[ht]
  \vskip 0.2in
  \begin{center}
      \begin{subfigure}[b]{0.33\textwidth}
        \includegraphics[width=\textwidth]{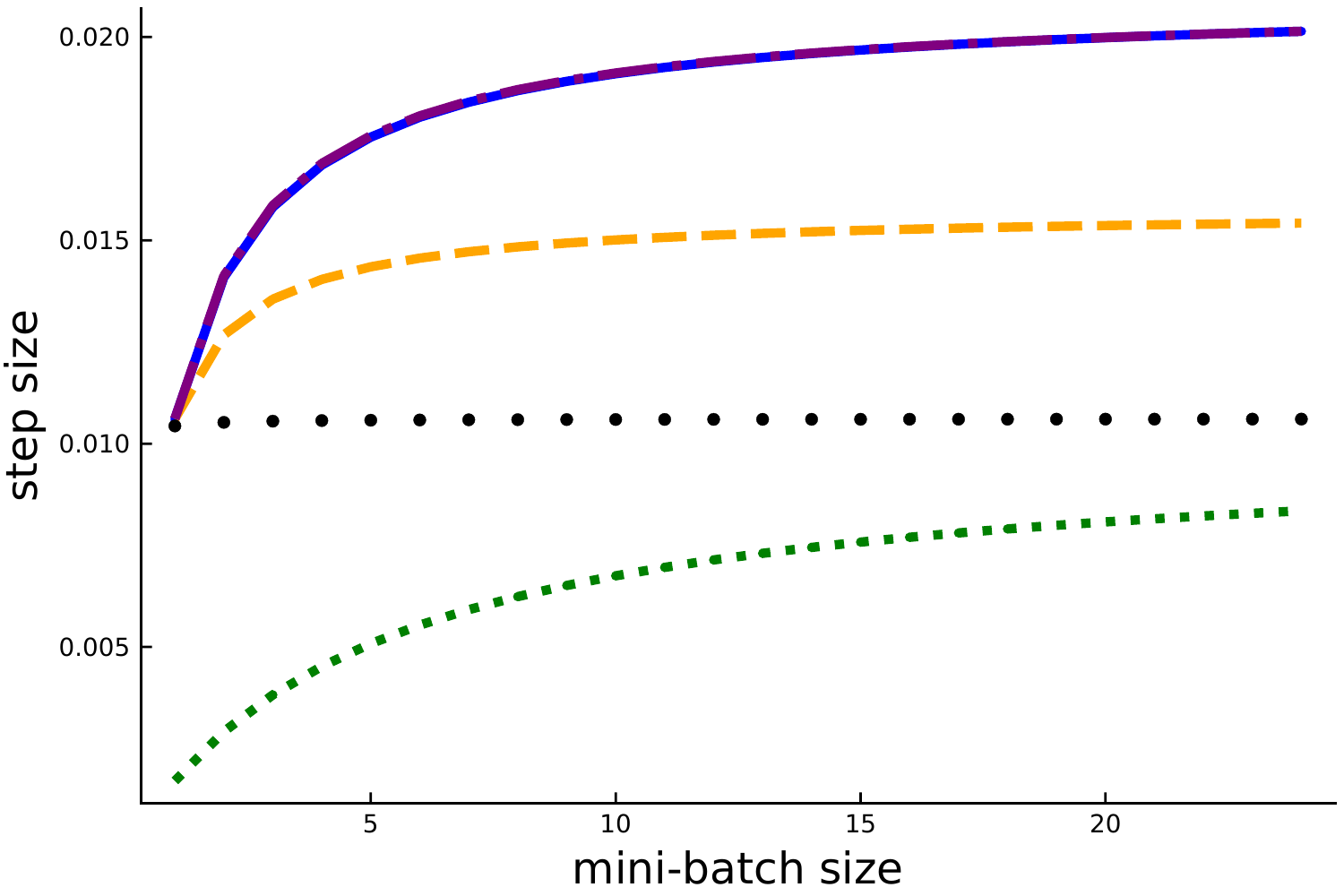}
        \caption{\emph{uniform}.}
      \end{subfigure}
      \begin{subfigure}[b]{0.33\textwidth}
        \includegraphics[width=\textwidth]{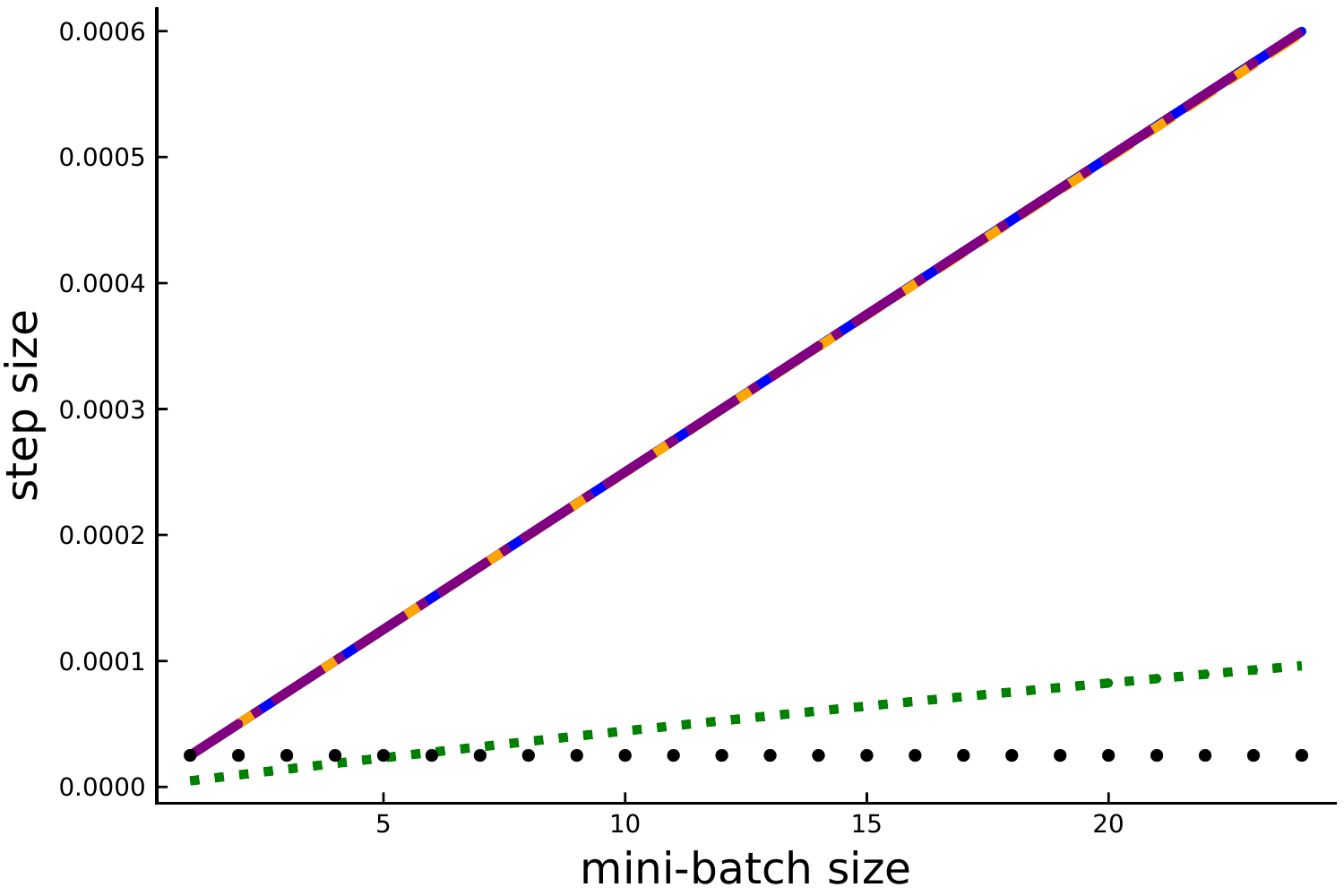}
        \caption{\emph{alone eigval}.}
      \end{subfigure}
      \begin{subfigure}[b]{0.33\textwidth}
        \includegraphics[width=\textwidth]{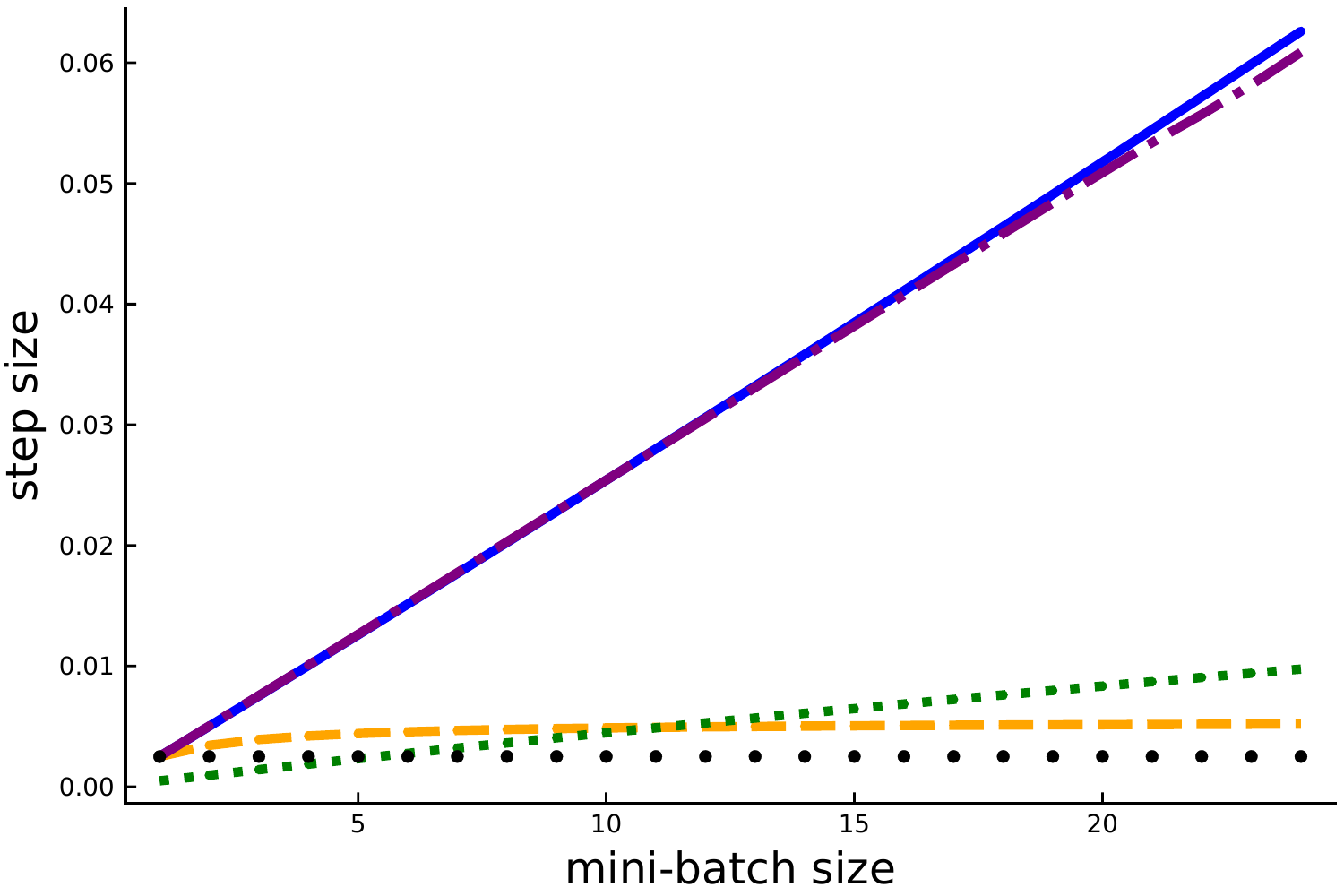}
        \caption{\emph{staircase eigval}.}
      \end{subfigure}\\
      \begin{subfigure}[b]{\textwidth}
        \centerline{\includegraphics[width=0.65\textwidth]{exp2/exp2-stepsizes-legend-with-true}}
      \end{subfigure}
  \caption{Step size estimates as a function the mini-batch size for unscaled artificial datasets ($\lambda = 10^{-1}$).}
  \label{fig:exp2_stepsizes_artificial_unscaled_data}
  \end{center}
  \vskip -0.2in
\end{figure}

\subsection{Experiment 2: step size estimates for real datasets}
\label{appendix:exp_2_real_datasets}
%%%%%%%%%%%%%%%%%%%%%%%%%%%%%%%%%%%%%%%%%%%%%%%%%%%%%%%%%%%%%%%%%%%%%%%%%%%%%%%

Here we show the step sizes estimate corresponding to the \emph{simple} and \emph{Bernstein} upper-bounds and the \emph{practical} estimate for real datasets detailed in \Cref{appendix:exp_1_real_datasets}. On these real data, unscaled in \Cref{fig:exp2_real_datasets_unscaled} and scaled in \Cref{fig:exp2_real_datasets_scaled}, we see that the gap between our step size estimates and $\gamma_{\text{Hofmann}}$ is even larger. We observe in \Cref{fig:exp2_real_datasets_unscaled}, accordlingly to previous remarks in \Cref{appendix:exp_1_real_datasets}, that \emph{Bernstein bound} leads to larger step sizes than the \emph{simple} one, except for the unscaled \textit{YearPredictionMSD} and \textit{covtype.binary} datasets. Yet, as noticed before, \Cref{fig:exp2_real_datasets_scaled} seems to show that scaling the data leads to $\gamma_{\text{Bernstein}}$ larger than $\gamma_{\text{simple}}$.

\begin{figure}[ht]
  \vskip 0.2in
  \begin{center}
    \begin{subfigure}[b]{0.35\textwidth}
      \includegraphics[width=\textwidth]{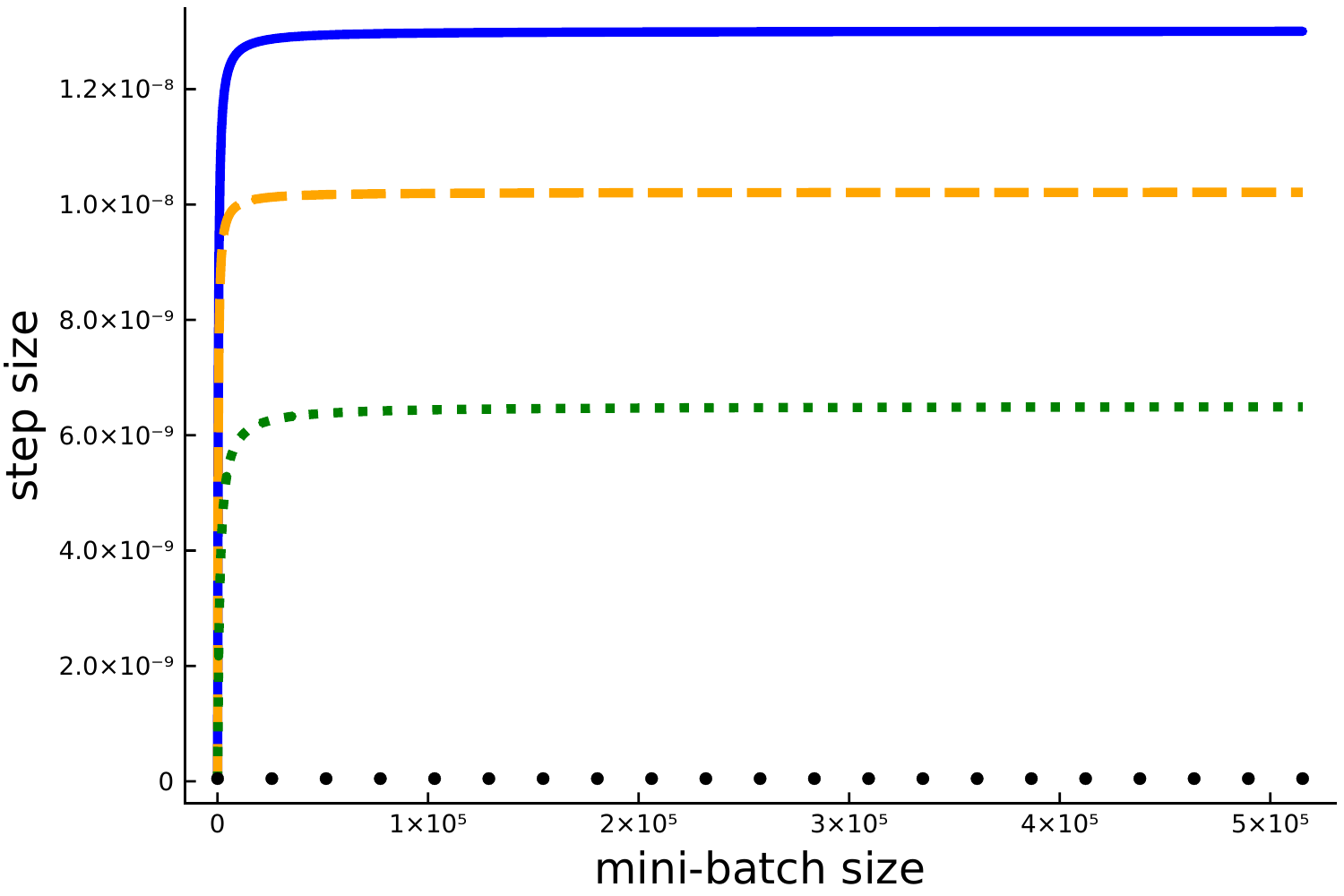}
      \caption{\textit{YearPredictionMSD}}
    \end{subfigure}%
    \begin{subfigure}[b]{0.35\textwidth}
      \includegraphics[width=\textwidth]{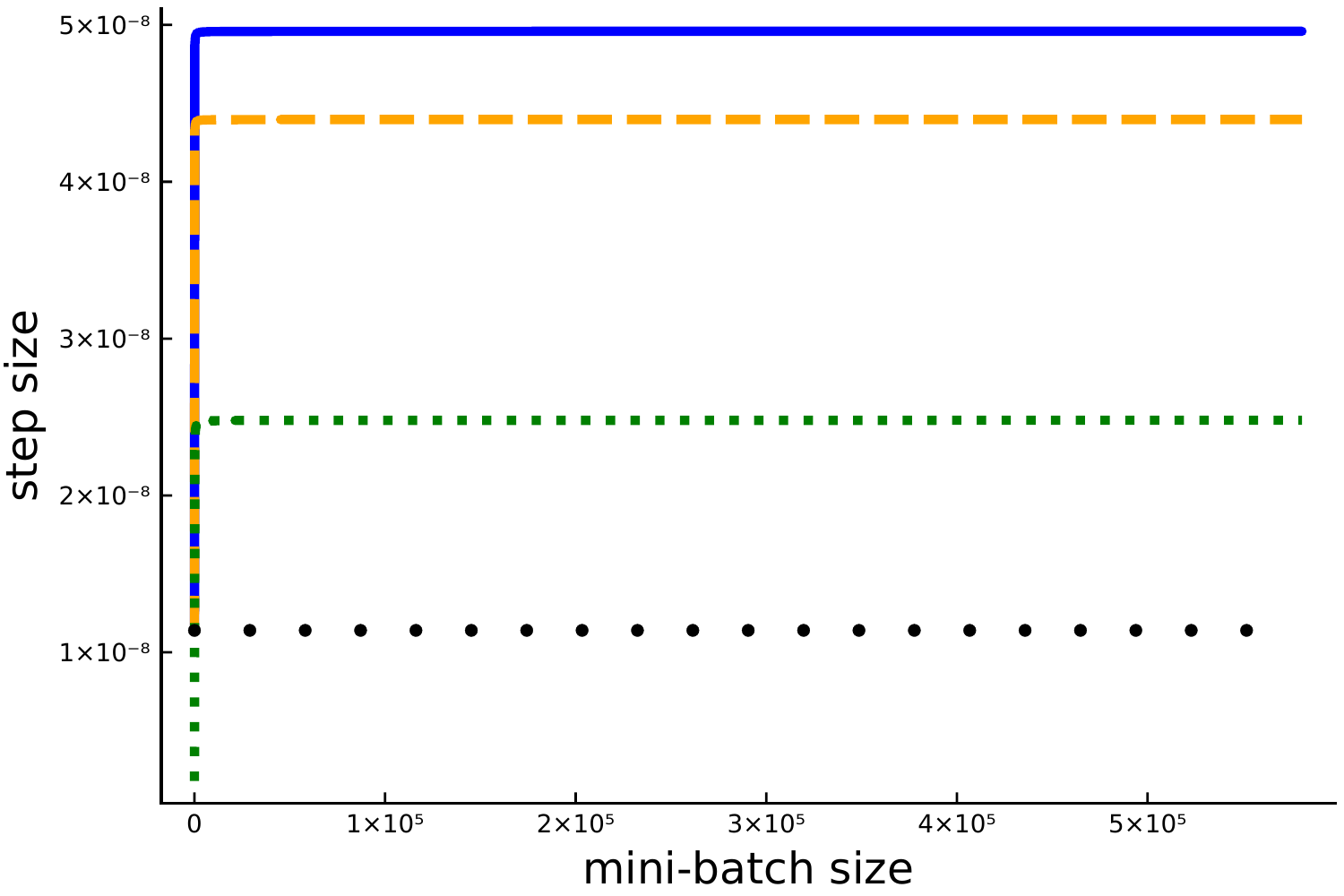}
      \caption{\textit{covtype.binary}}
    \end{subfigure}\\
    \begin{subfigure}[b]{0.35\textwidth}
      \includegraphics[width=\textwidth]{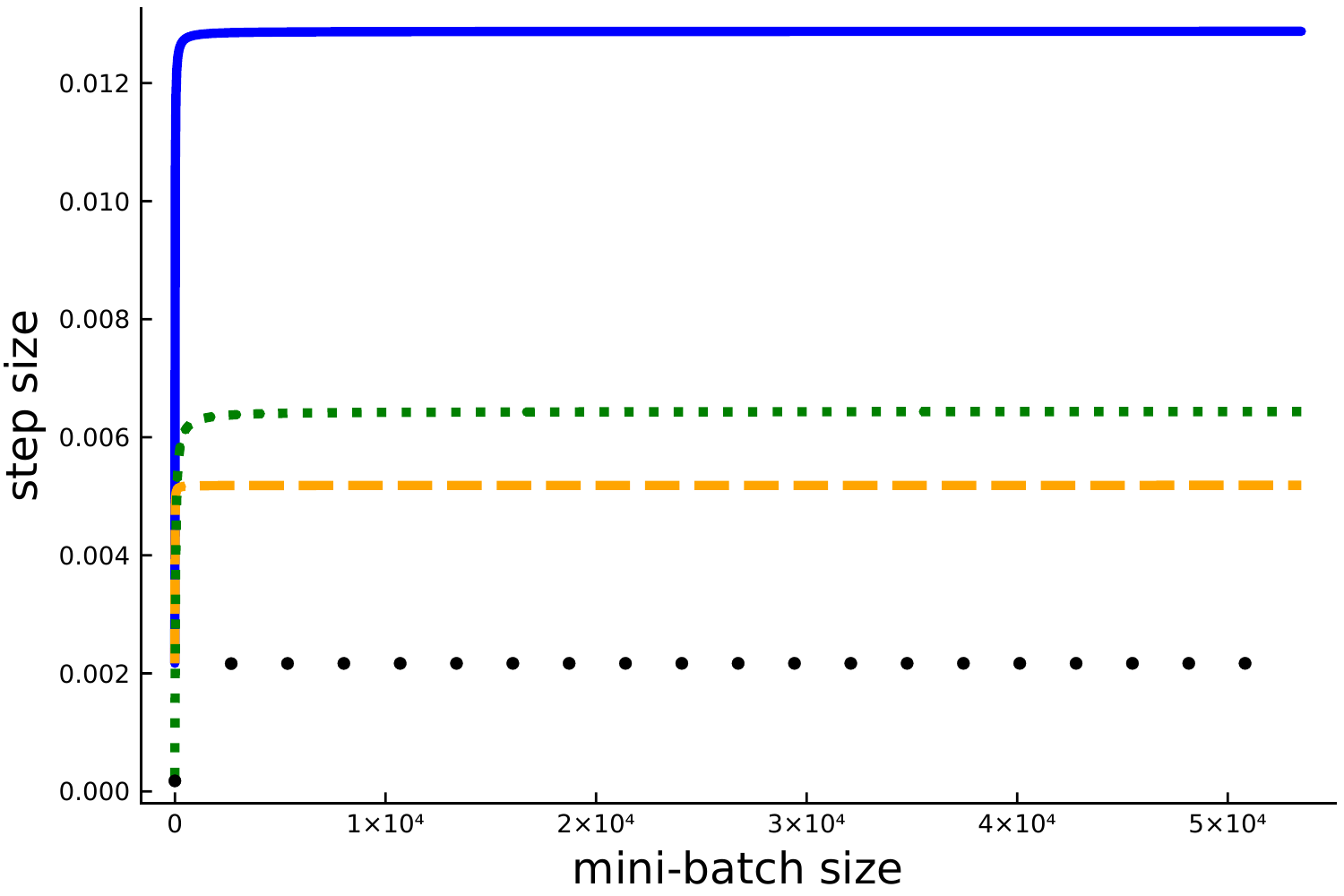}
      \caption{\textit{slice}}
    \end{subfigure}%
    \begin{subfigure}[b]{0.35\textwidth}
      \includegraphics[width=\textwidth]{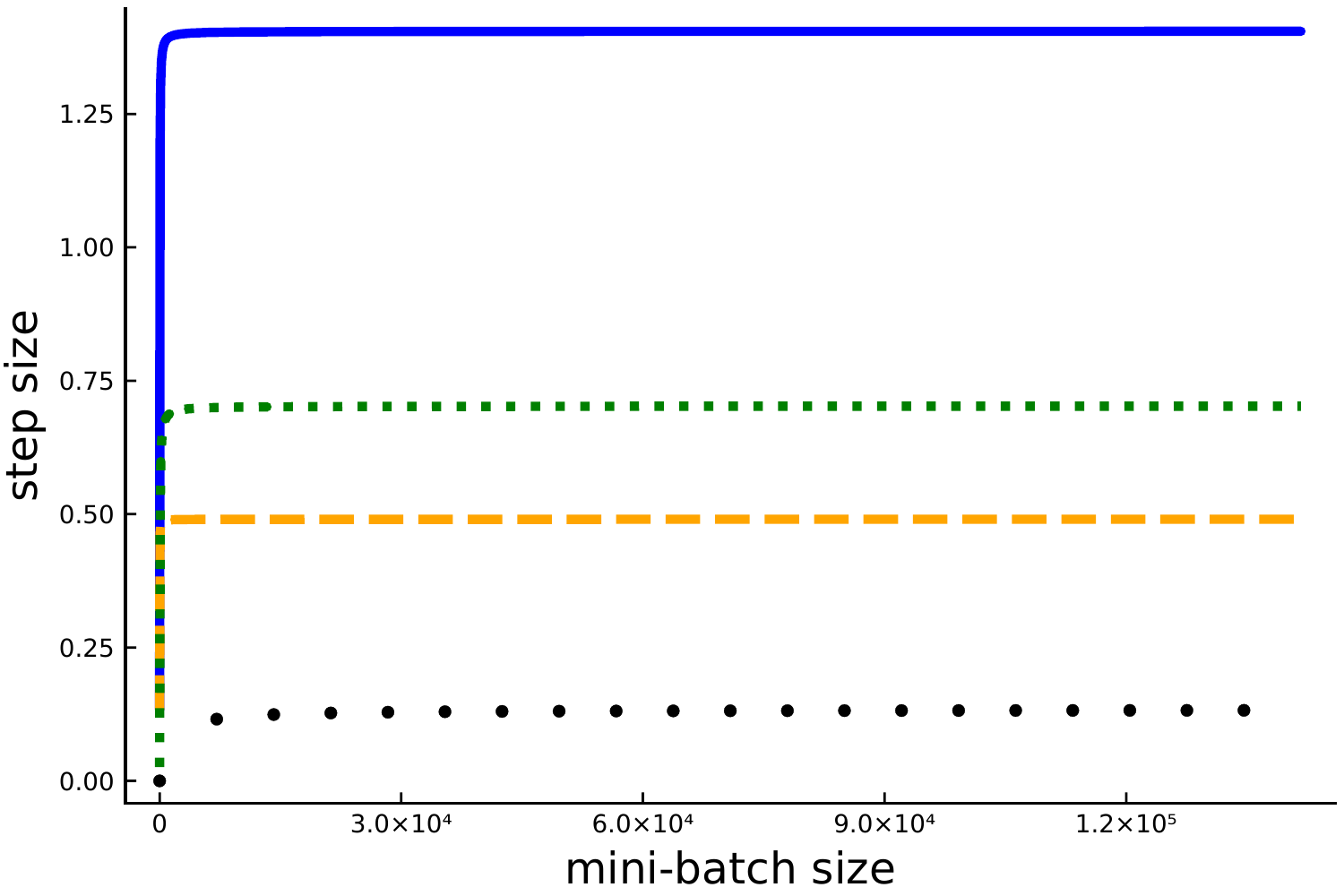}
      \caption{\textit{ijcnn1}}
    \end{subfigure}\\
    \begin{subfigure}[b]{0.35\textwidth}
      \includegraphics[width=\textwidth]{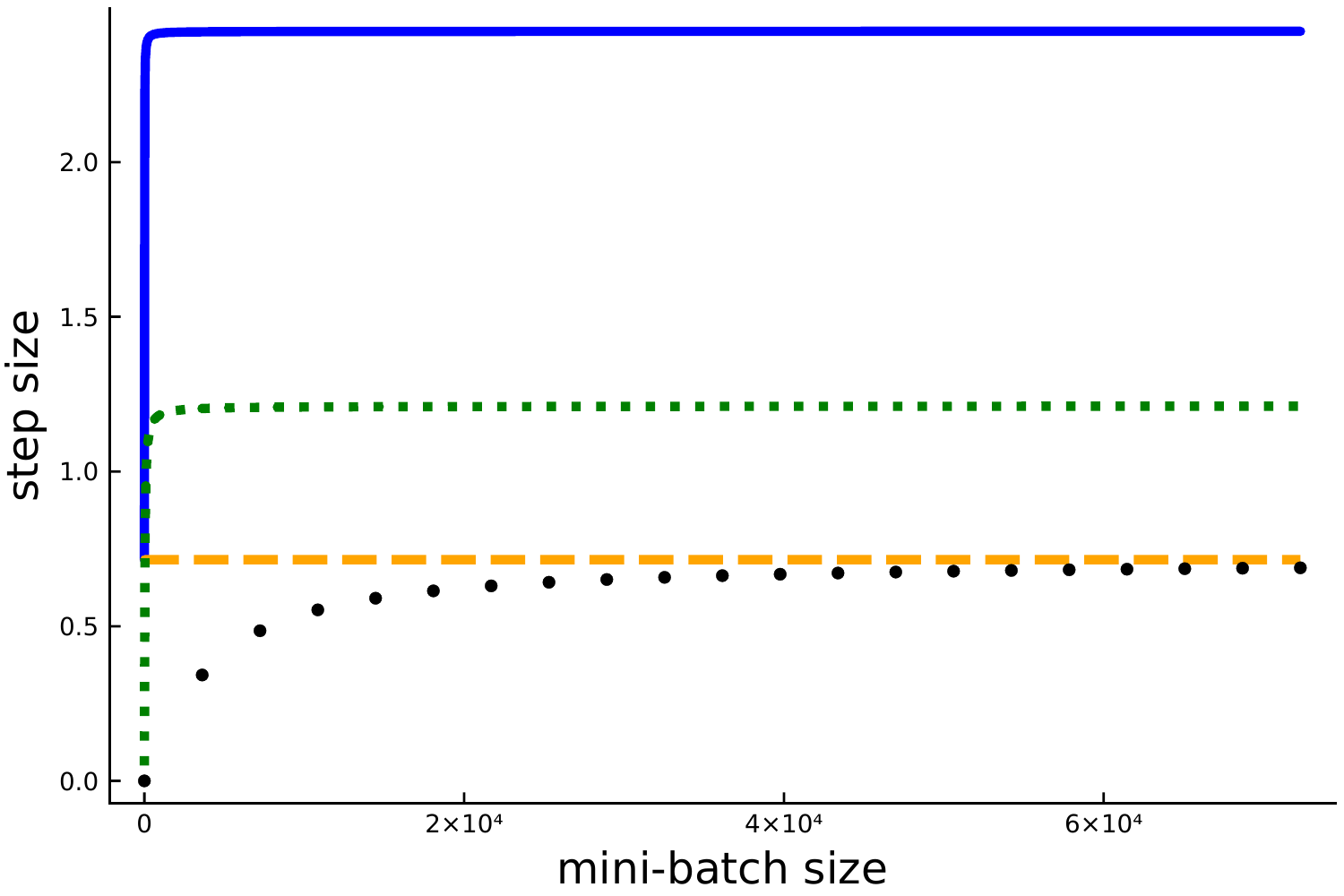}
      \caption{\textit{real-sim}}
    \end{subfigure}%
    \begin{subfigure}[b]{0.35\textwidth}
      \includegraphics[width=\textwidth]{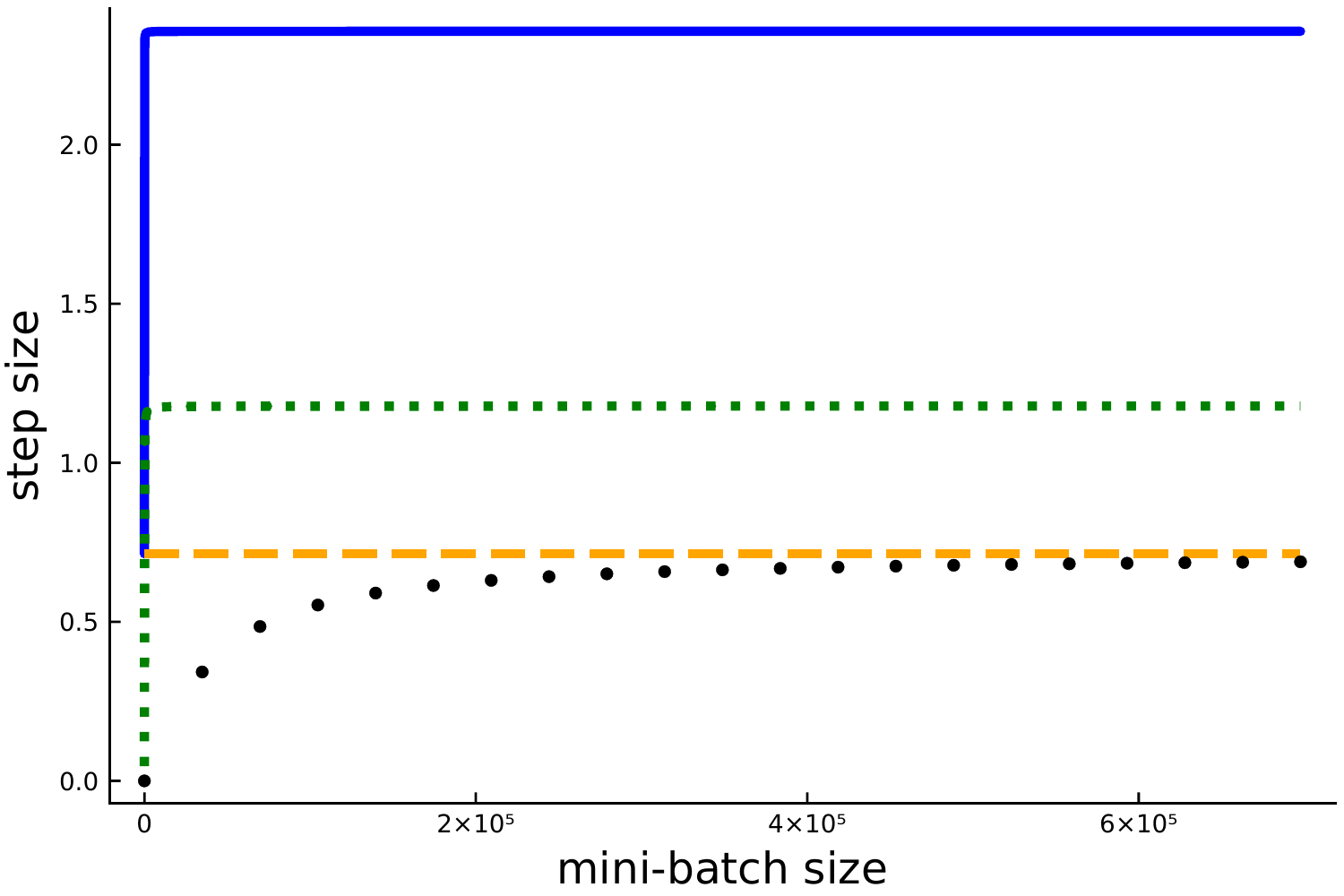}
      \caption{\textit{rcv1.binary}}
    \end{subfigure}\\
    \begin{subfigure}[b]{0.35\textwidth}
      \includegraphics[width=\textwidth]{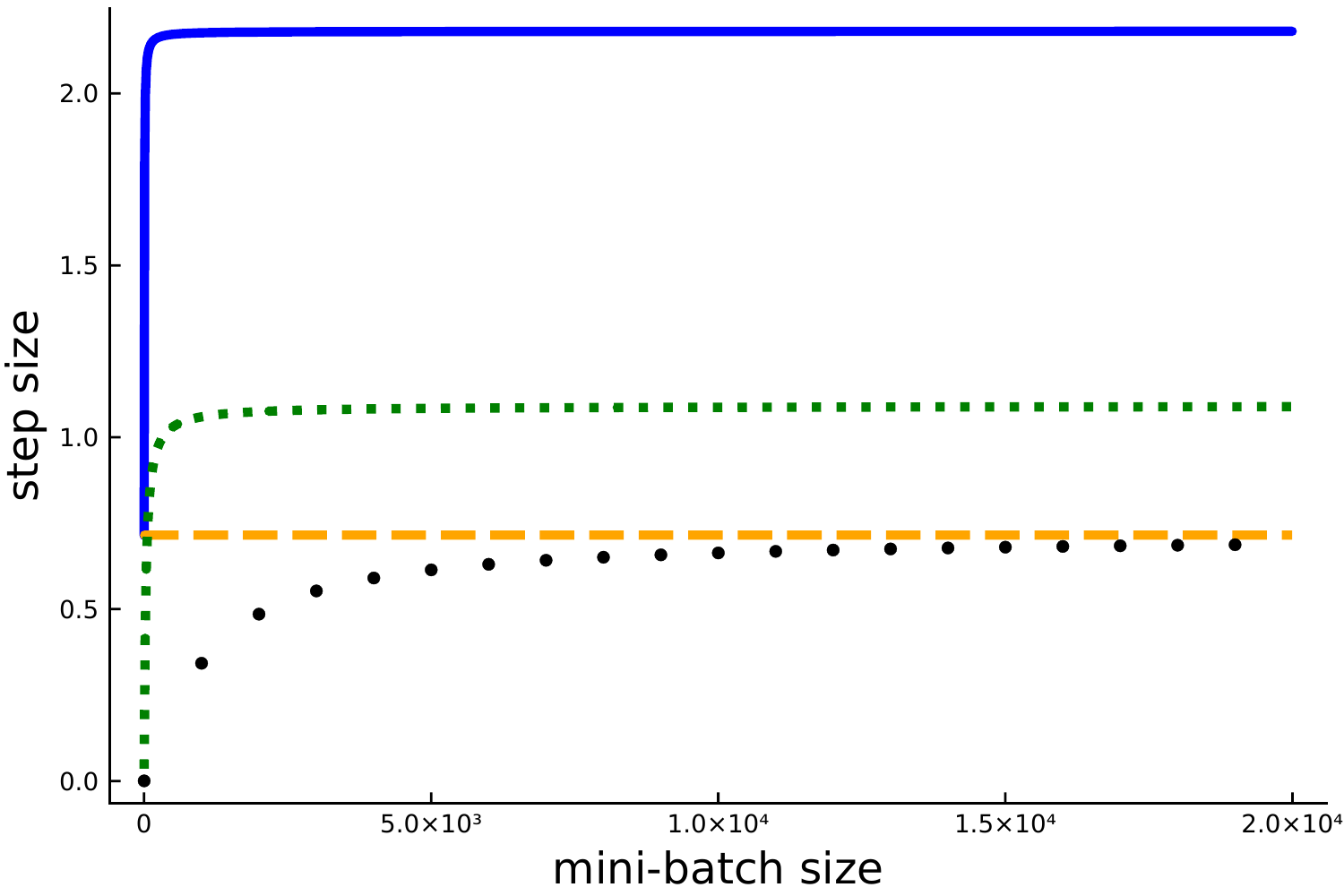}
      \caption{\textit{news20.binary}}
    \end{subfigure}\\
    \begin{subfigure}[b]{\textwidth}
      \centerline{\includegraphics[width=0.5\textwidth]{exp2/exp2-stepsizes-legend}}
    \end{subfigure}
  \caption{Step size estimates as a function the mini-batch size for real unscaled datasets ($\lambda = 10^{-1}$).}
  \label{fig:exp2_real_datasets_unscaled}
  \end{center}
  \vskip -0.2in
\end{figure}

\begin{figure}[!ht]
  \vskip 0.2in
  \begin{center}
    \begin{subfigure}[b]{0.35\textwidth}
      \includegraphics[width=\textwidth]{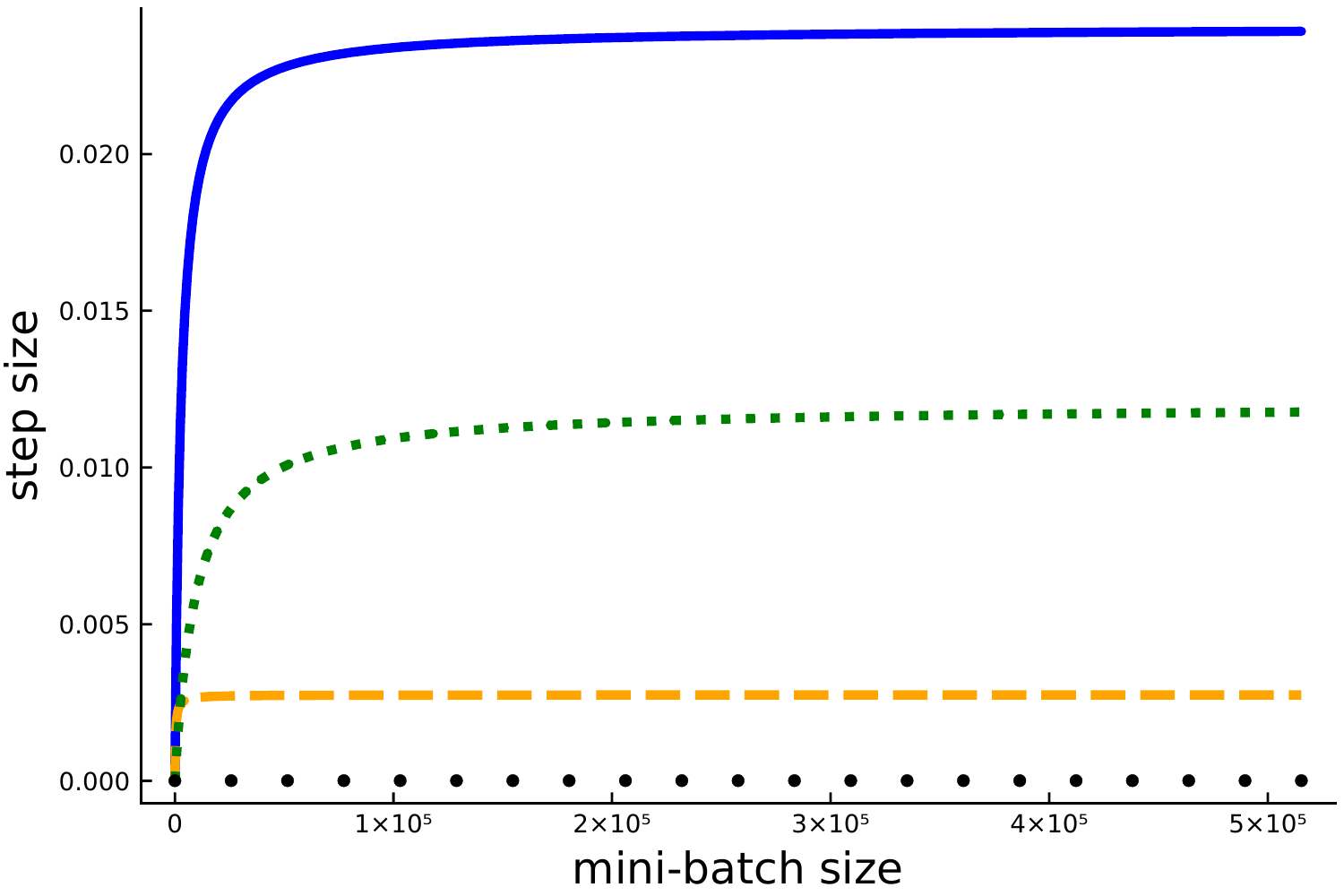}
      \caption{ \textit{YearPredictionMSD}}
    \end{subfigure}%
    \begin{subfigure}[b]{0.35\textwidth}
      \includegraphics[width=\textwidth]{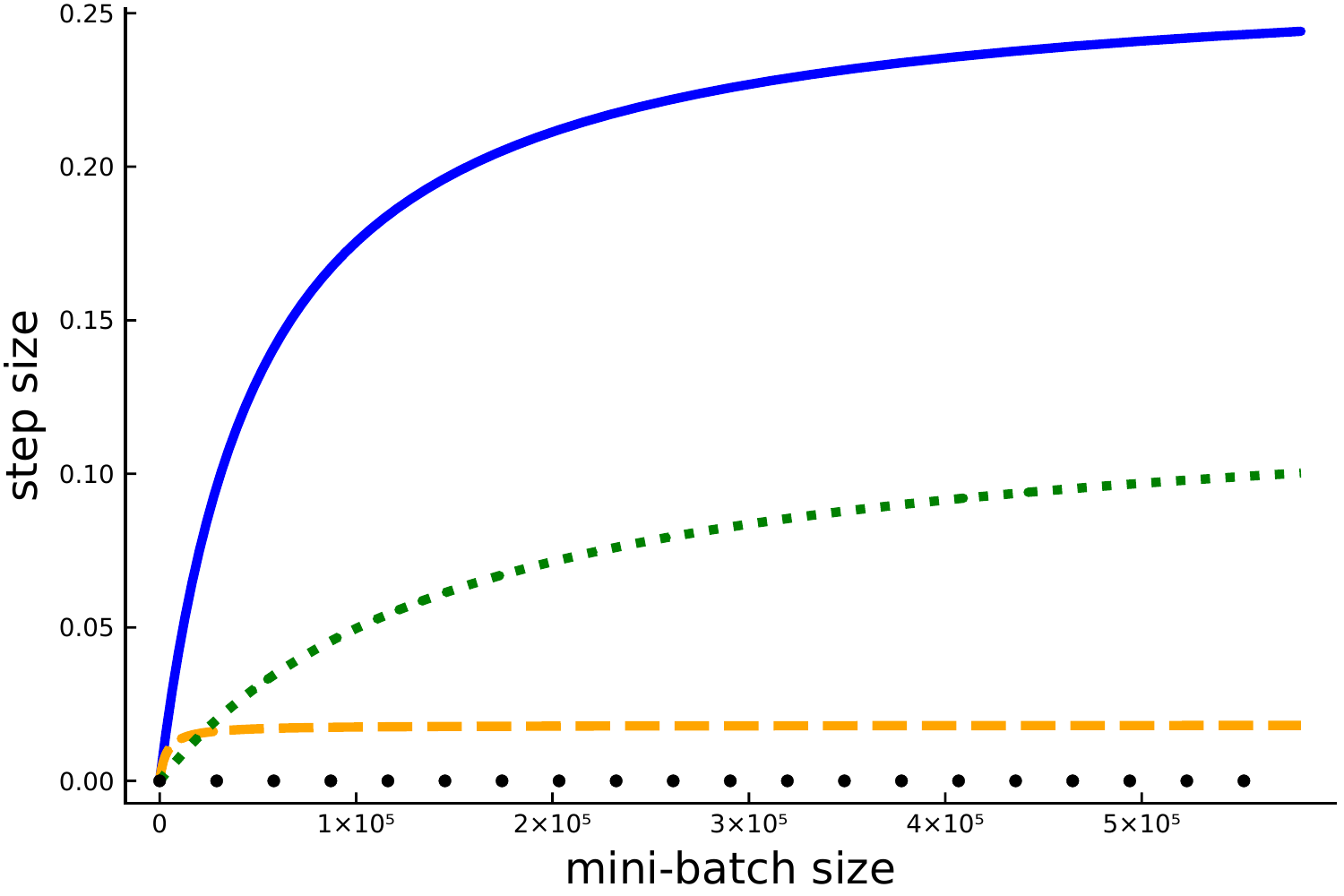}
      \caption{\textit{covtype.binary}}
    \end{subfigure}\\
    \begin{subfigure}[b]{0.35\textwidth}
      \includegraphics[width=\textwidth]{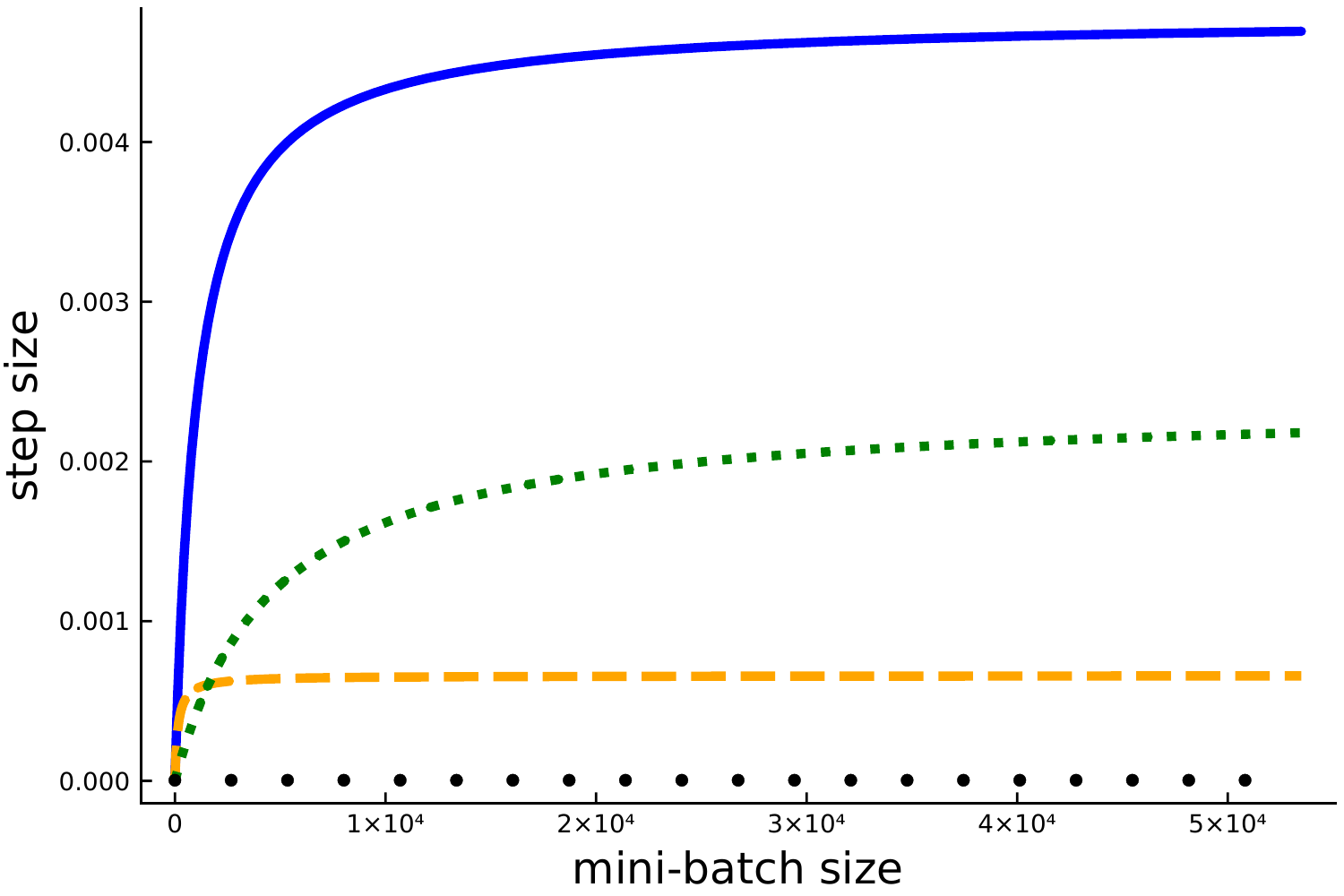}
      \caption{\textit{slice}}
    \end{subfigure}%
    \begin{subfigure}[b]{0.35\textwidth}
      \includegraphics[width=\textwidth]{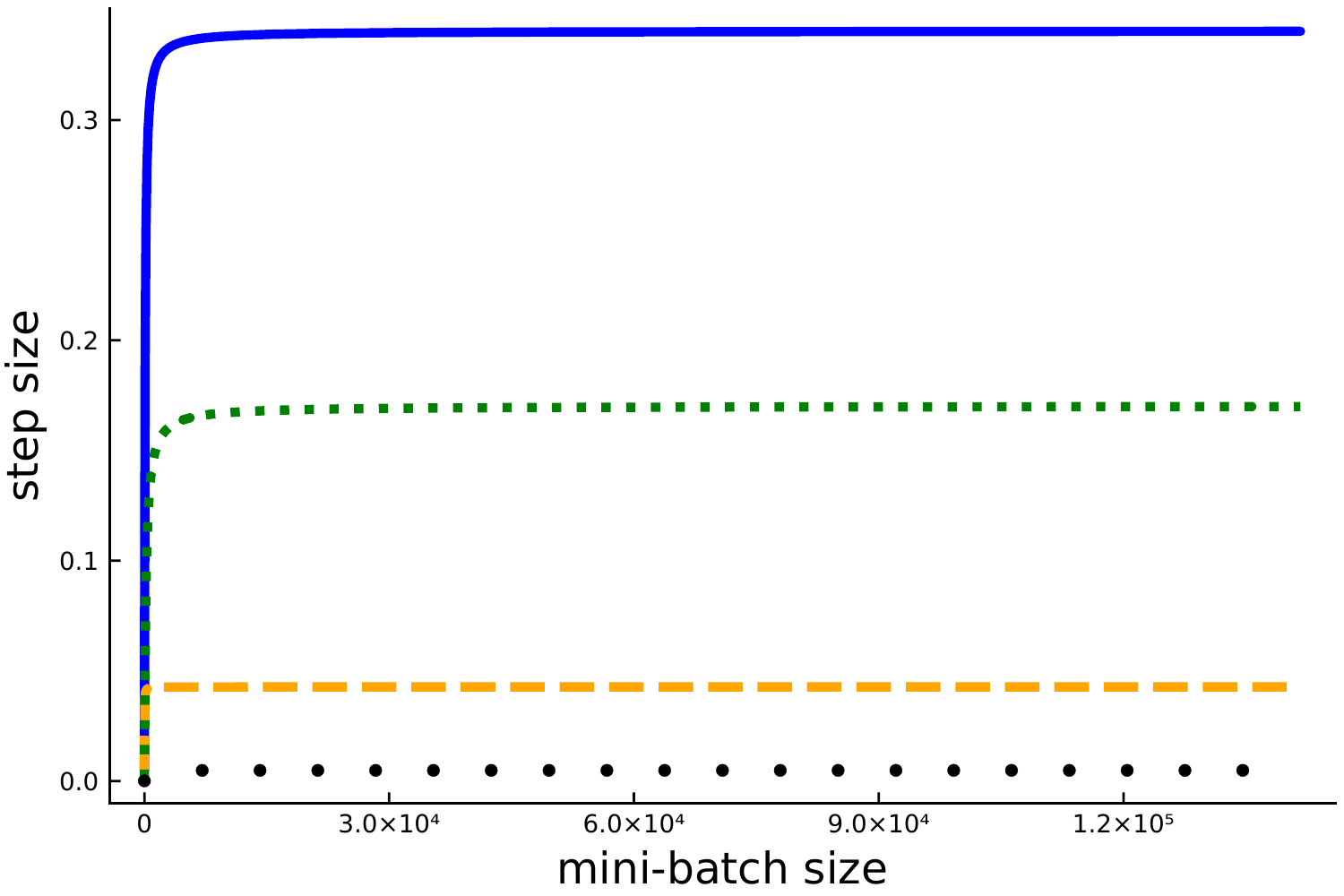}
      \caption{\textit{ijcnn1}}
    \end{subfigure}\\
    \begin{subfigure}[b]{\textwidth}
      \centerline{\includegraphics[width=0.5\textwidth]{exp2/exp2-stepsizes-legend}}
    \end{subfigure}
  % \caption{Upper-bounds of the expected smoothness constant of $\cL$ for real feature-scaled datasets ($\lambda =10^{-1}$).}
  \caption{Step size estimates as a function the mini-batch size for real feature-scaled datasets ($\lambda = 10^{-1}$).}
  \label{fig:exp2_real_datasets_scaled}
  \end{center}
  \vskip -0.2in
\end{figure}

\subsection{Experiment 3: comparison with previous SAGA settings}
\label{appendix:exp_3}
%%%%%%%%%%%%%%%%%%%%%%%%%%%%%%%%%%%%%%%%%%%%%%%%%%%%%%%%%%%%%%%%%%%%%%%%%%%%%%%

In this section we provide more example of the performance of our practical settings compared to previously known SAGA settings. In \Cref{fig:exp3_scaled_ijcnn1,fig:exp3_scaled_covtype,fig:exp3_scaled_YearPredicitonMSD,fig:exp3_scaled_slice,fig:exp3_unscaled_slice,fig:exp3_unscaled_realsim} we run our experiments on real datasets introduced in detail in \Cref{appendix:exp1}. SAGA implementations are run until the suboptimality reaches a relative error $(f(w^k)-f(w^*))/(f(w^0)-f(w^*))$ of $10^{-4}$, except in some cases where the Hofmann's runs exceeded our maximal number of epochs like in \Cref{fig:exp3_scaled_covtype}. In \Cref{fig:exp3_scaled_slice}, the curves corresponding to Hofmann's settings are not displayed because they achieve a total complexity which is too large. \Cref{fig:exp3_hofmann_fail} shows an example of such a configuration.

These experiments show that our settings ($b_{\text{practical}}, \gamma_{\text{practical}}$) most of the time outperforms whether the classical ($b=1, \gamma_{\text{Defazio}}$) or the ($b=20, \gamma_{\text{Hofmann}}$) settings both in terms of epochs and running time.

%%%%%%%%%%%%%%%%%%%%%%%%%%%%%%%%%%%%%%%%%%%%%%%%%%%%%%%%%%%%%%%%%%%%%%%%%%%%%%%
% EXPERIMENT 3 FEATURE-SCALED
%%%%%%%%%%%%%%%%%%%%%%%%%%%%%%%%%%%%%%%%%%%%%%%%%%%%%%%%%%%%%%%%%%%%%%%%%%%%%%%
\begin{figure}[ht]
  \vskip 0.2in
  \begin{center}
  \includegraphics[width=0.5\textwidth]{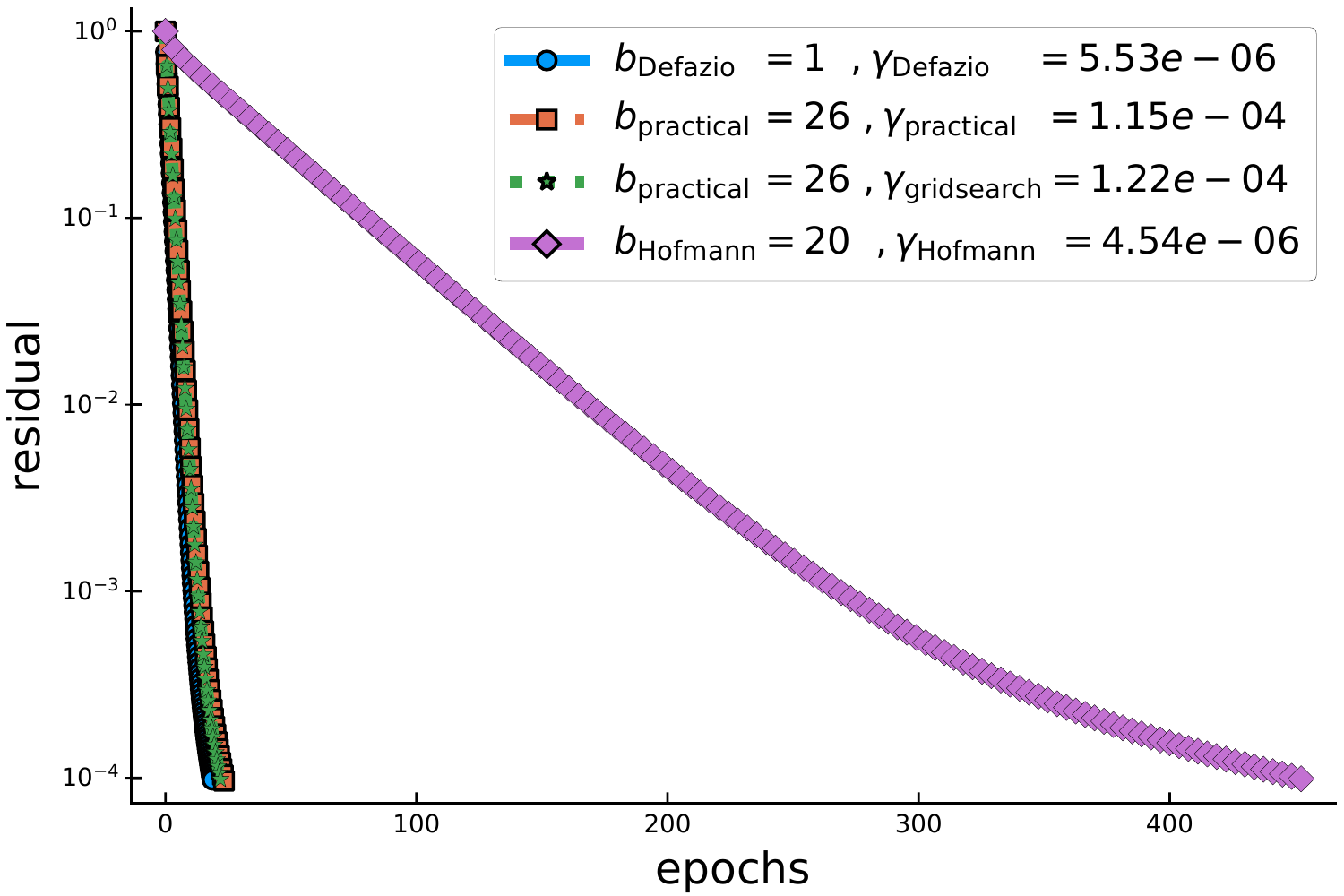}
  \caption{Poor performance of Hofmann's settings for the feature-scaled dataset \textit{slice} ($\lambda = 10^{-1}$).}
  \label{fig:exp3_hofmann_fail}
  \end{center}
  \vskip -0.2in
\end{figure}

\begin{figure}[!ht]
  \vskip 0.2in
  \begin{center}
    \begin{subfigure}[b]{\textwidth}
      \centering
      \includegraphics[width=0.35\textwidth]{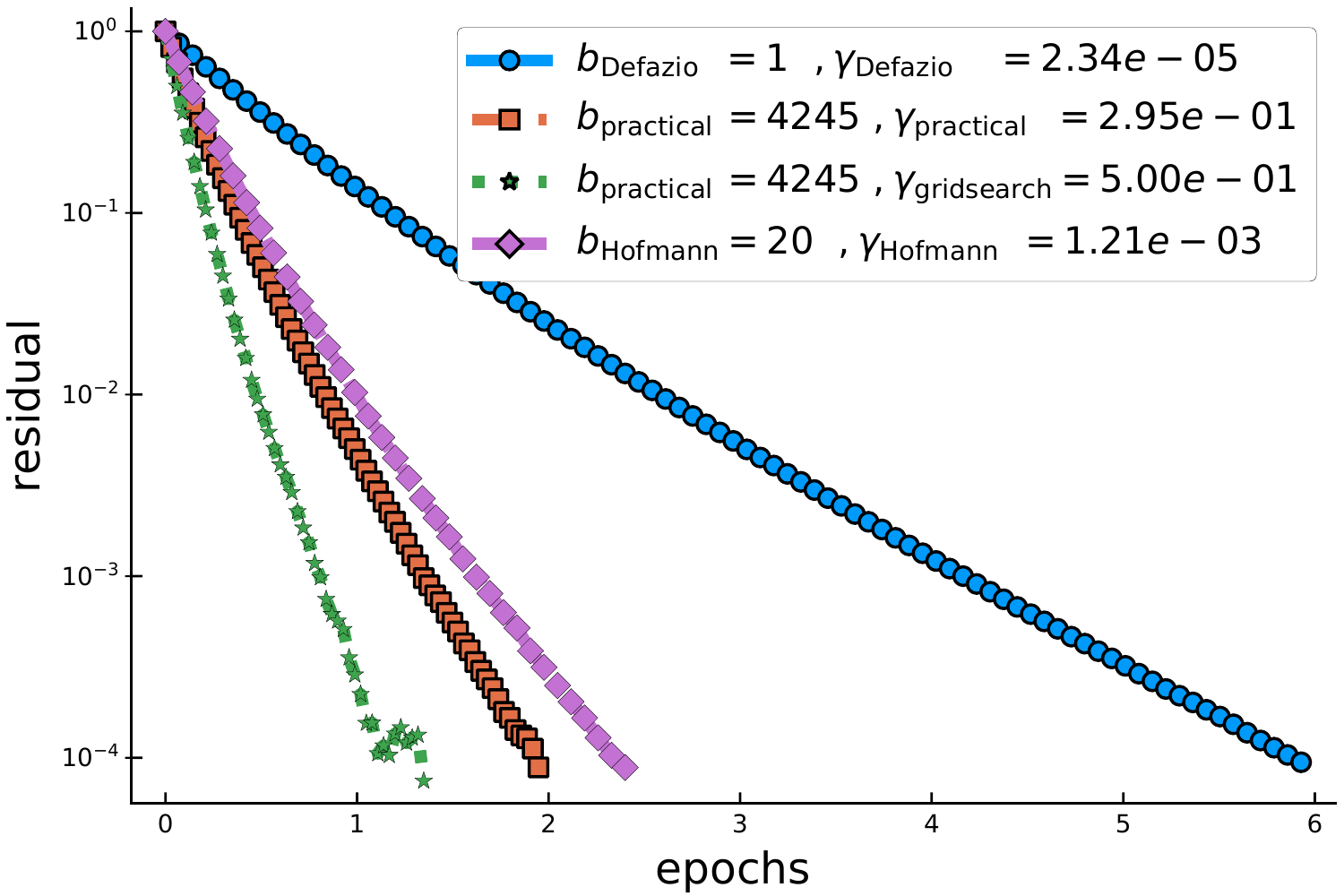}%
      \includegraphics[width=0.35\textwidth]{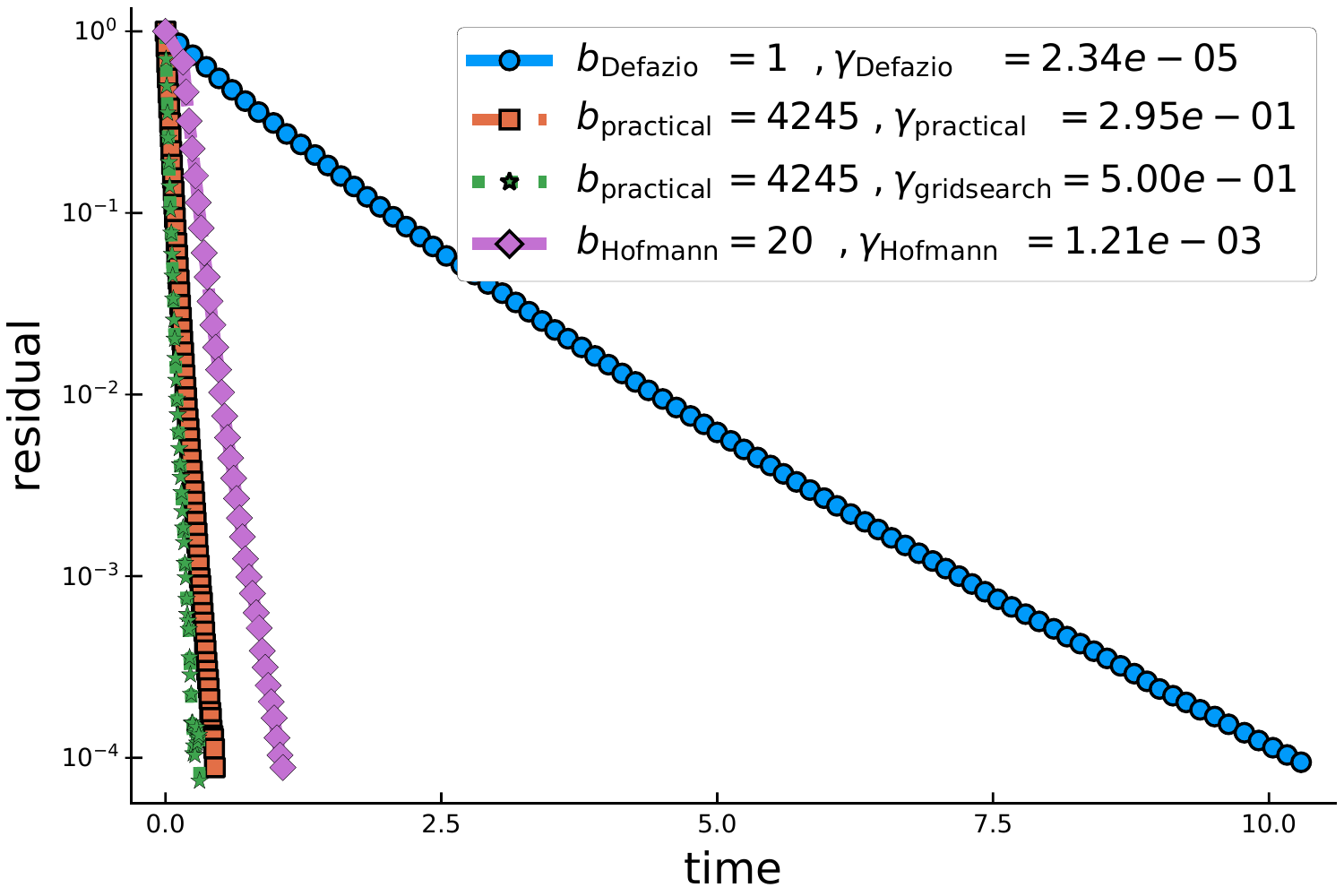}
      \caption{$\lambda = 10^{-1}$}
    \end{subfigure}\\
    \begin{subfigure}[b]{\textwidth}
      \centering
      \includegraphics[width=0.35\textwidth]{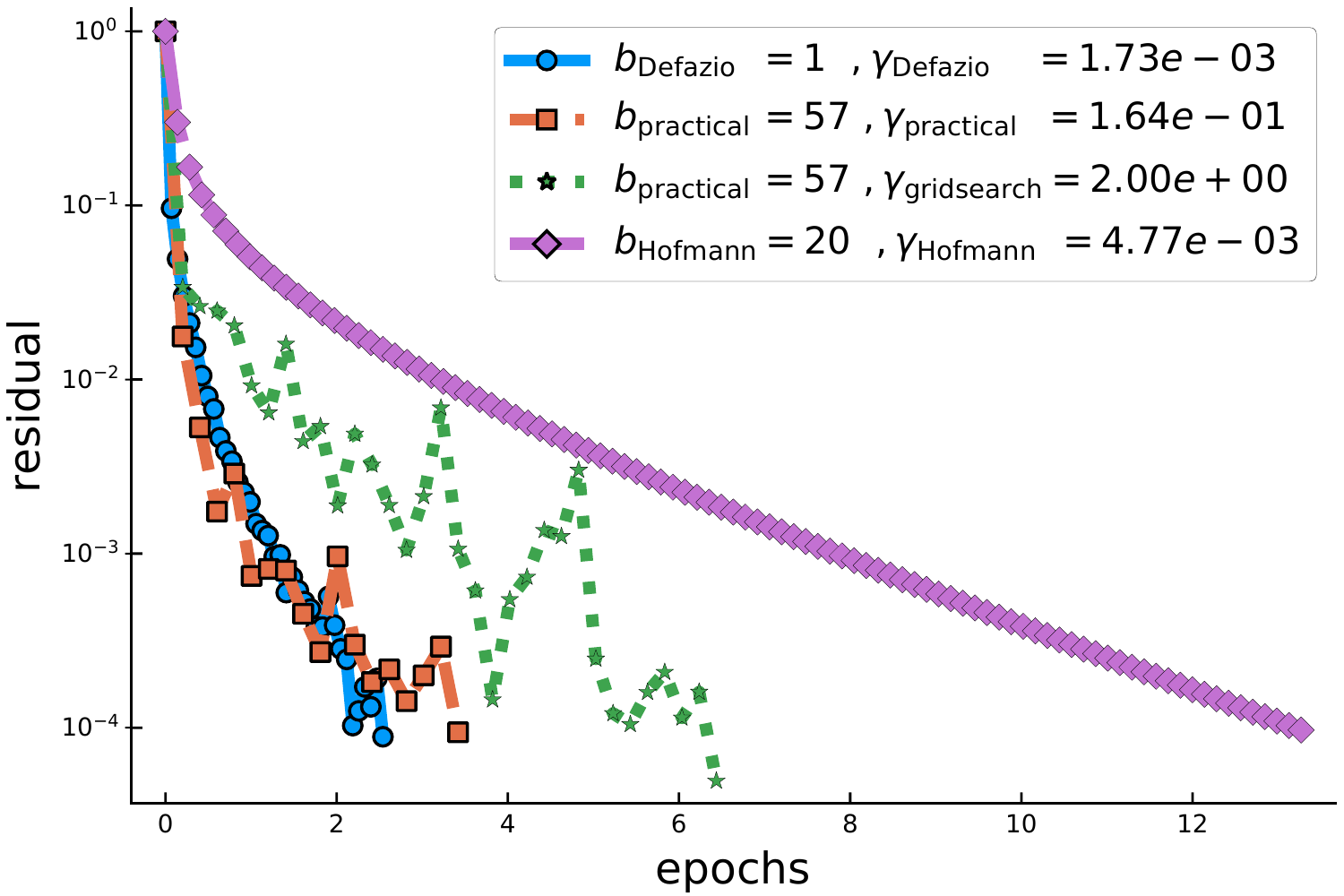}%
      \includegraphics[width=0.35\textwidth]{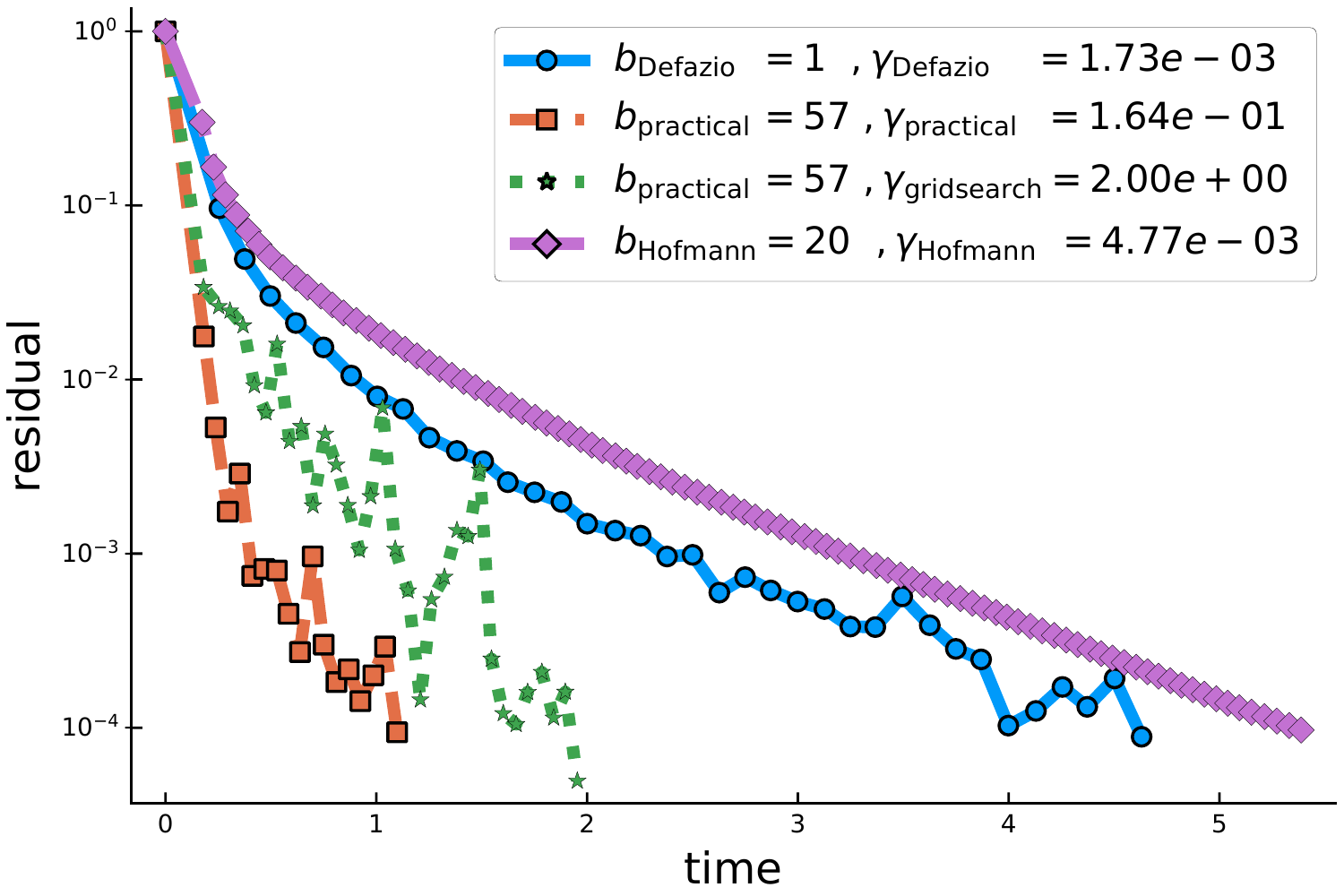}
      \caption{$\lambda = 10^{-3}$}
    \end{subfigure}
  \caption{Performance of SAGA implementations for the feature-scaled dataset \textit{ijcnn1}.}
  \label{fig:exp3_scaled_ijcnn1}
  \end{center}
  \vskip -0.2in
\end{figure}

\begin{figure}[!ht]
  \vskip 0.2in
  \begin{center}
    \begin{subfigure}[b]{\textwidth}
      \centering
      \includegraphics[width=0.35\textwidth]{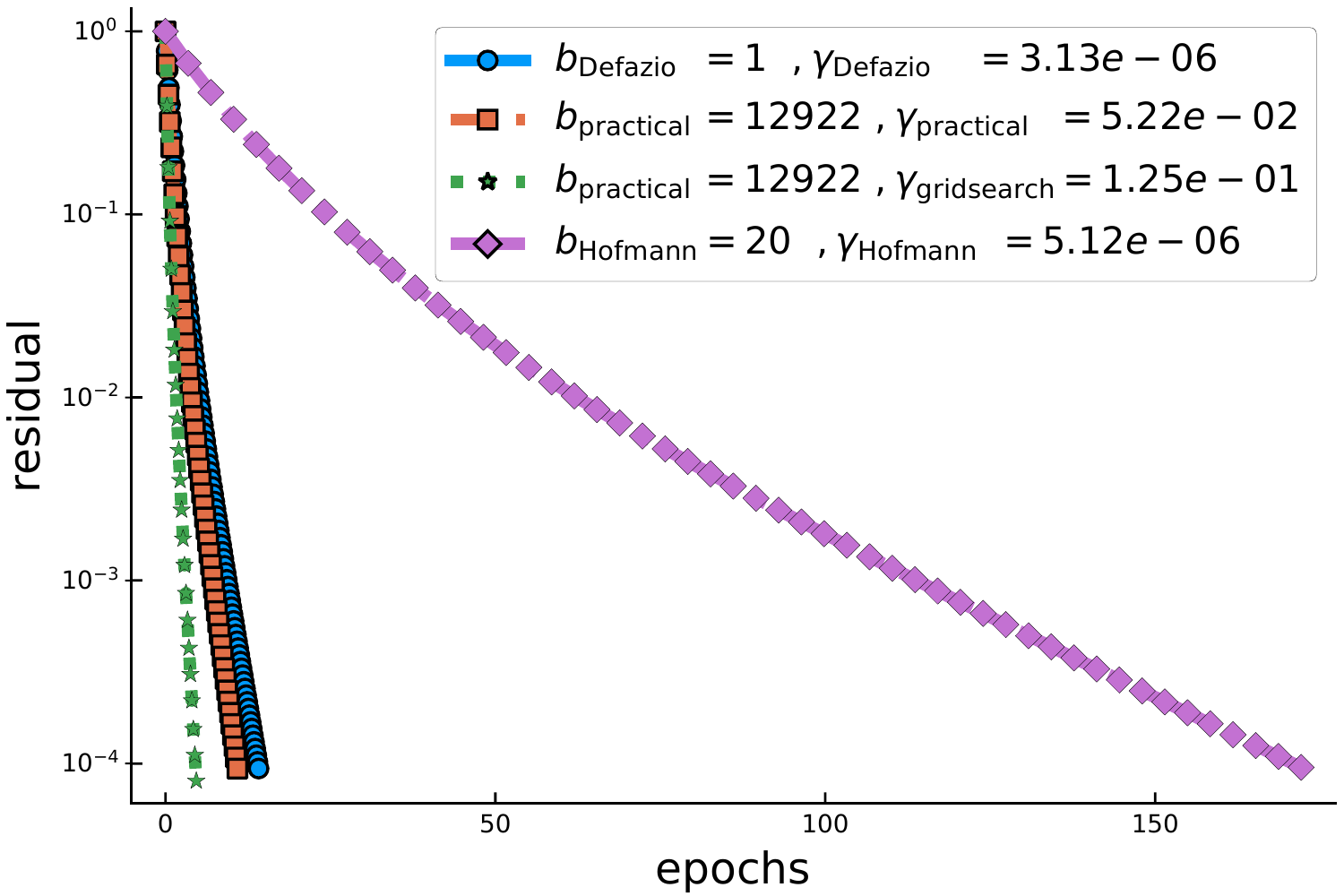}%
      \includegraphics[width=0.35\textwidth]{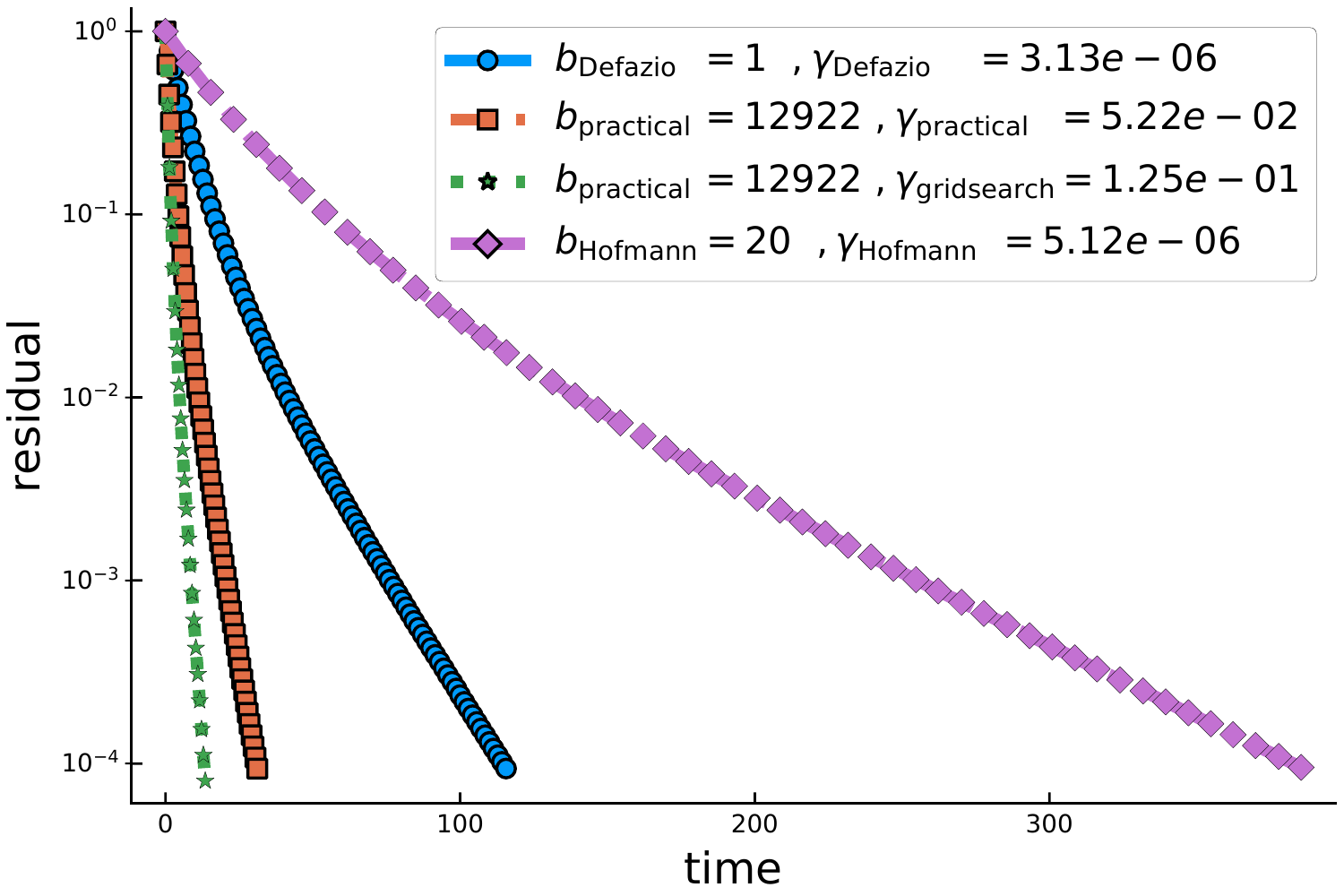}
      \caption{$\lambda = 10^{-1}$}
    \end{subfigure}\\
    \begin{subfigure}[b]{\textwidth}
      \centering
      \includegraphics[width=0.35\textwidth]{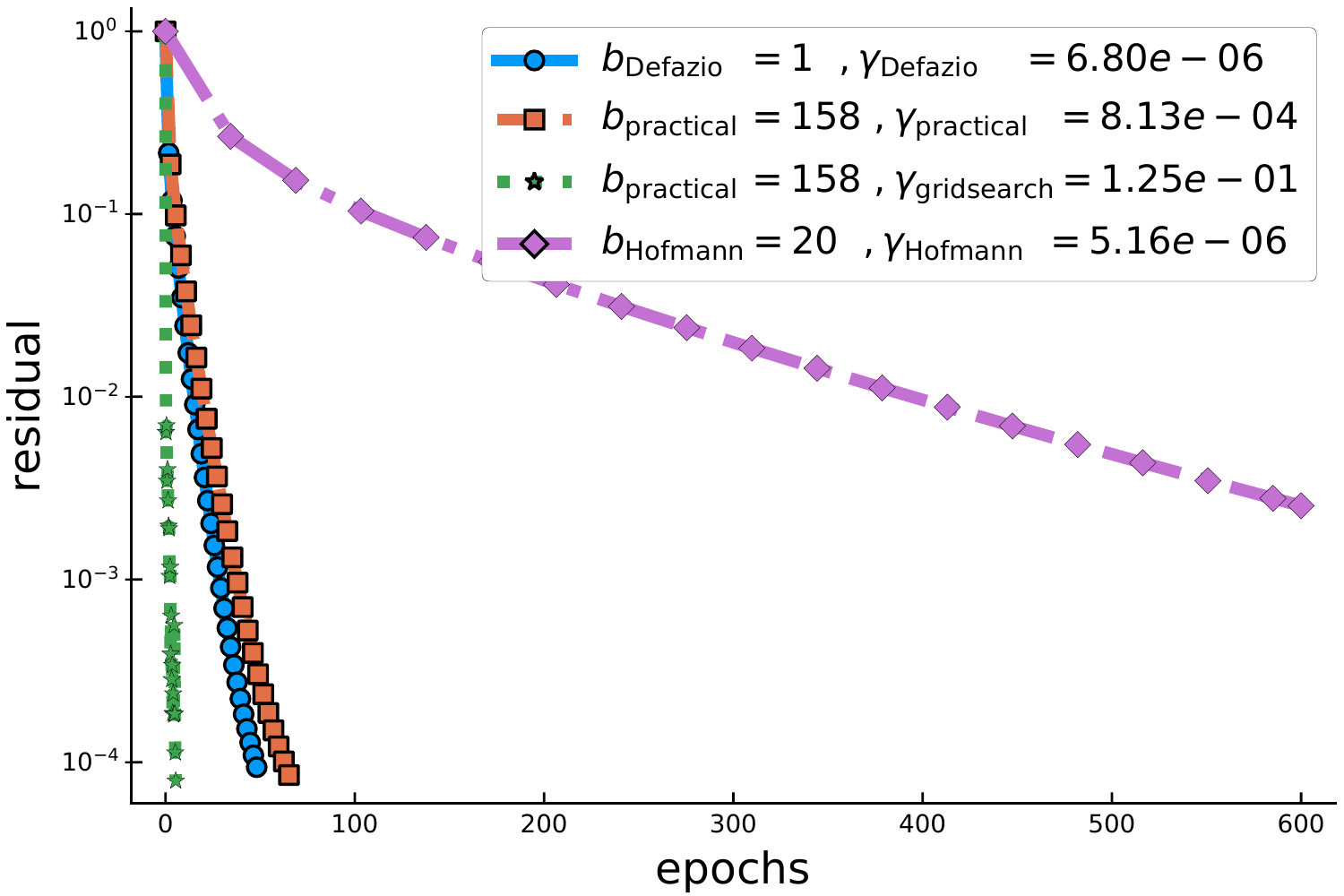}%
      \includegraphics[width=0.35\textwidth]{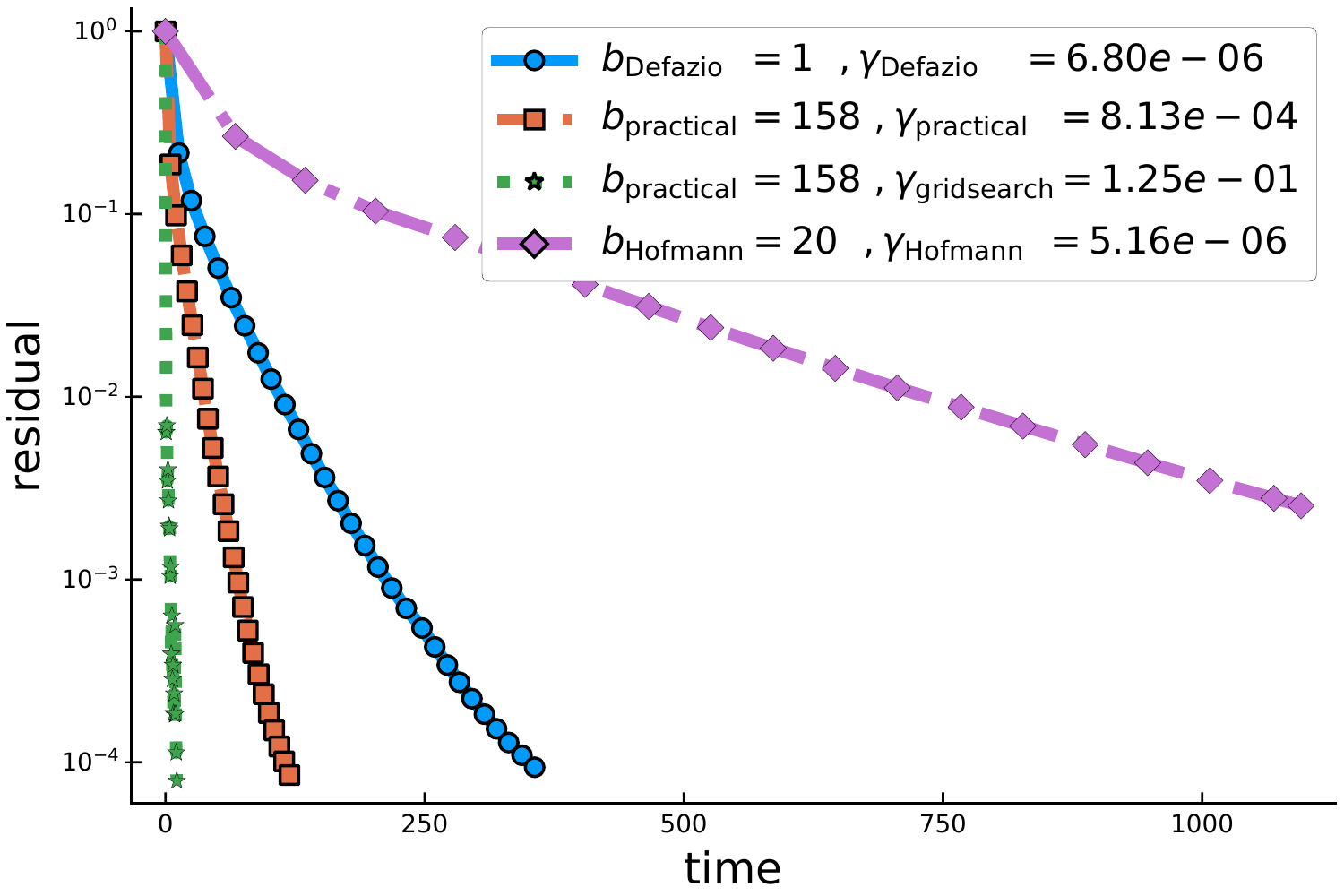}
      \caption{$\lambda = 10^{-3}$}
    \end{subfigure}
  \caption{Performance of SAGA implementations for the feature-scaled dataset \textit{covtype.binary}.}
  \label{fig:exp3_scaled_covtype}
  \end{center}
  \vskip -0.2in
\end{figure}

\begin{figure}[!ht]
  \vskip 0.2in
  \begin{center}
    \begin{subfigure}[b]{\textwidth}
      \centering
      \includegraphics[width=0.35\textwidth]{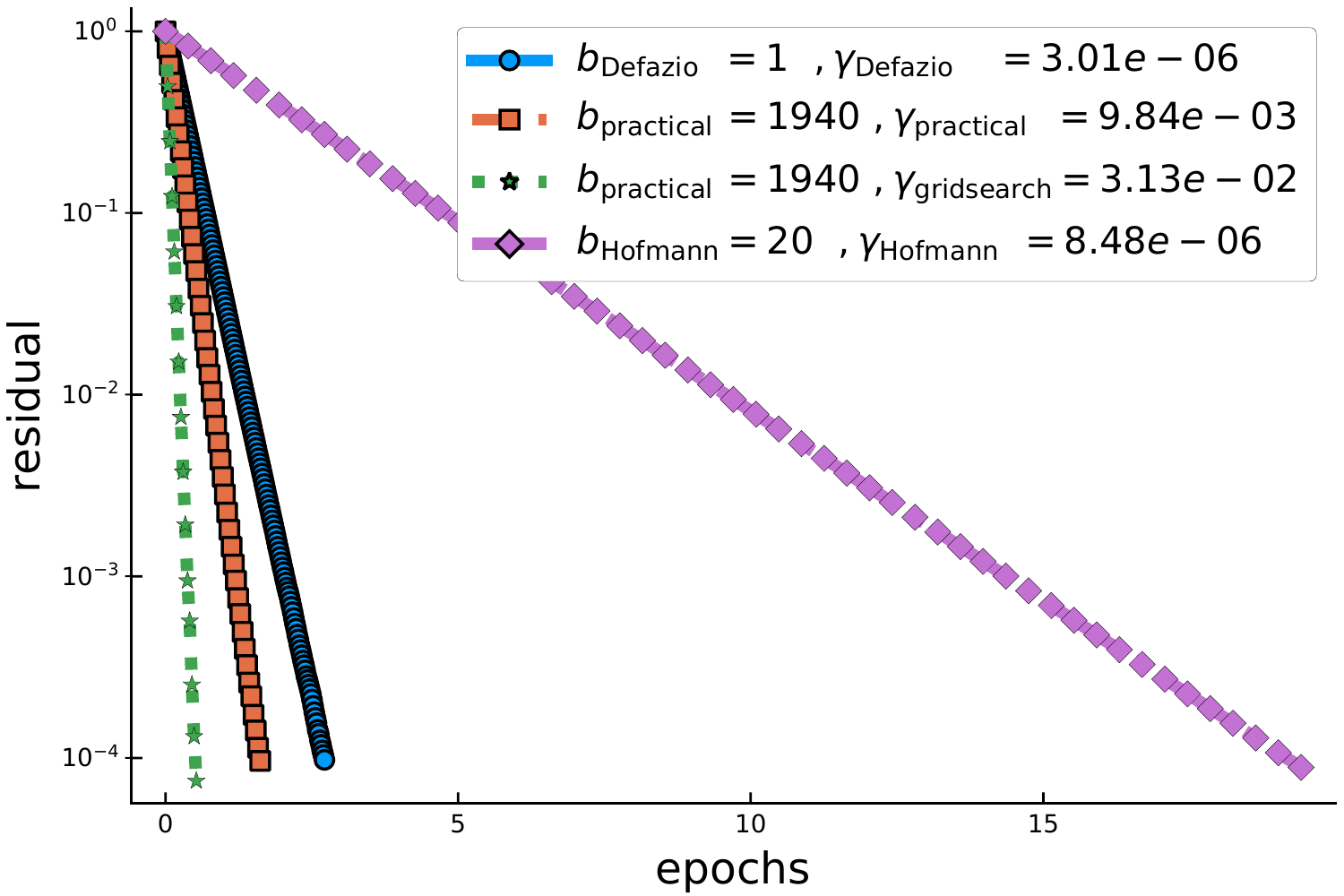}%
      \includegraphics[width=0.35\textwidth]{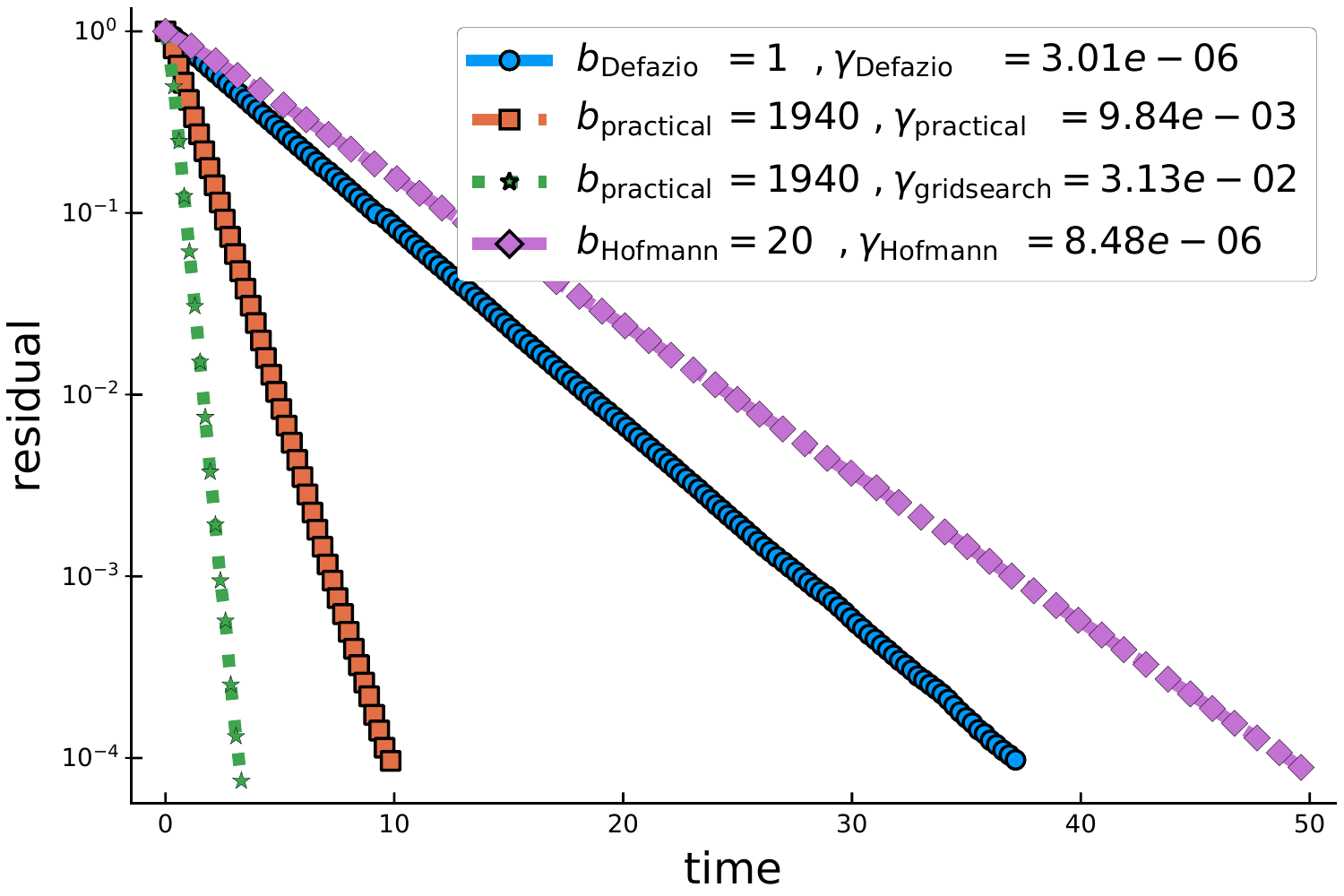}
      \caption{$\lambda = 10^{-1}$}
    \end{subfigure}\\
    \begin{subfigure}[b]{\textwidth}
      \centering
      \includegraphics[width=0.35\textwidth]{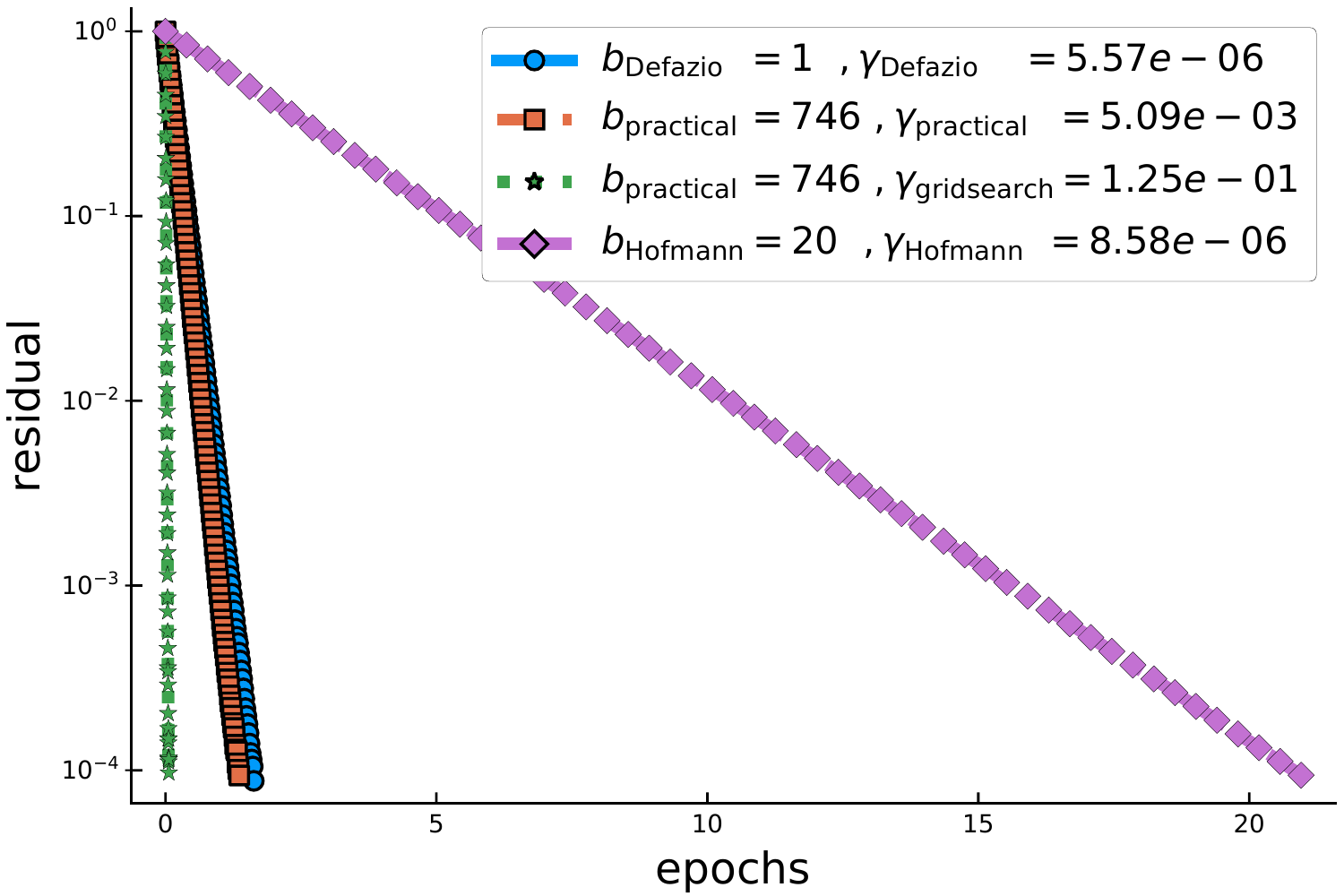}%
      \includegraphics[width=0.35\textwidth]{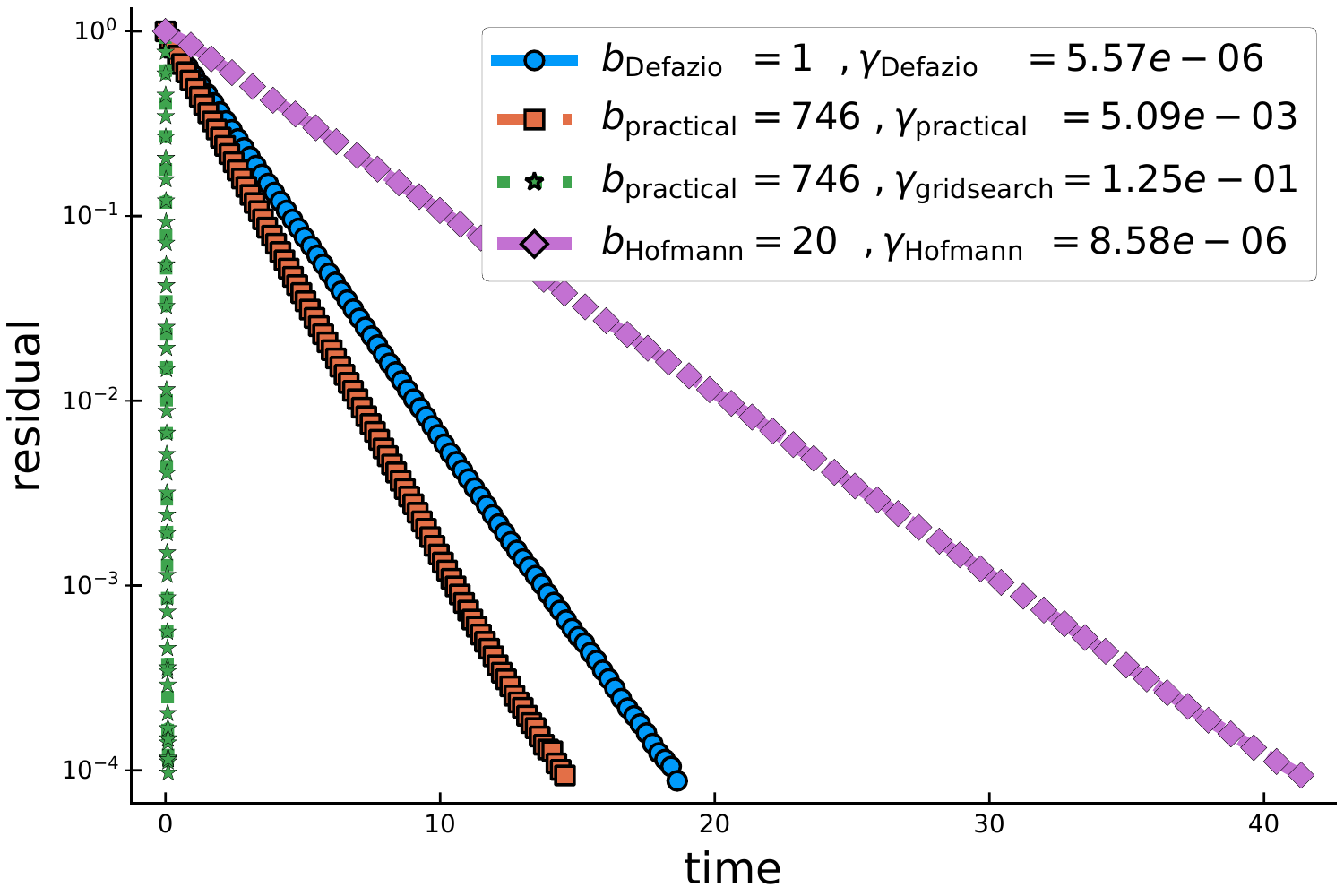}
      \caption{$\lambda = 10^{-3}$}
    \end{subfigure}
  \caption{Performance of SAGA implementations for the feature-scaled dataset \textit{YearPredictionMSD}.}
  \label{fig:exp3_scaled_YearPredicitonMSD}
  \end{center}
  \vskip -0.2in
\end{figure}

\begin{figure}[!ht]
  \vskip 0.2in
  \begin{center}
    \begin{subfigure}[b]{\textwidth}
      \centering
      \includegraphics[width=0.35\textwidth]{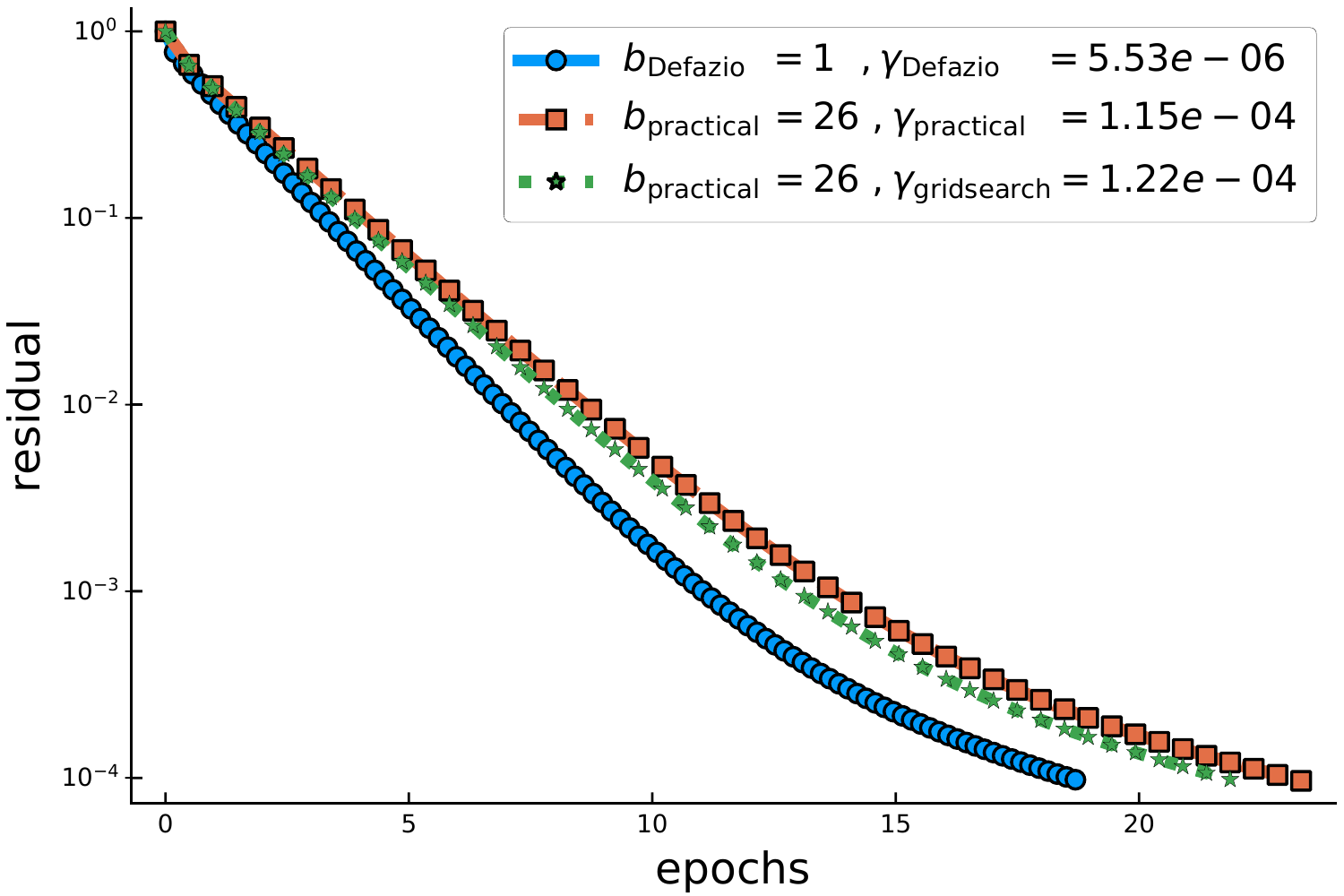}%
      \includegraphics[width=0.35\textwidth]{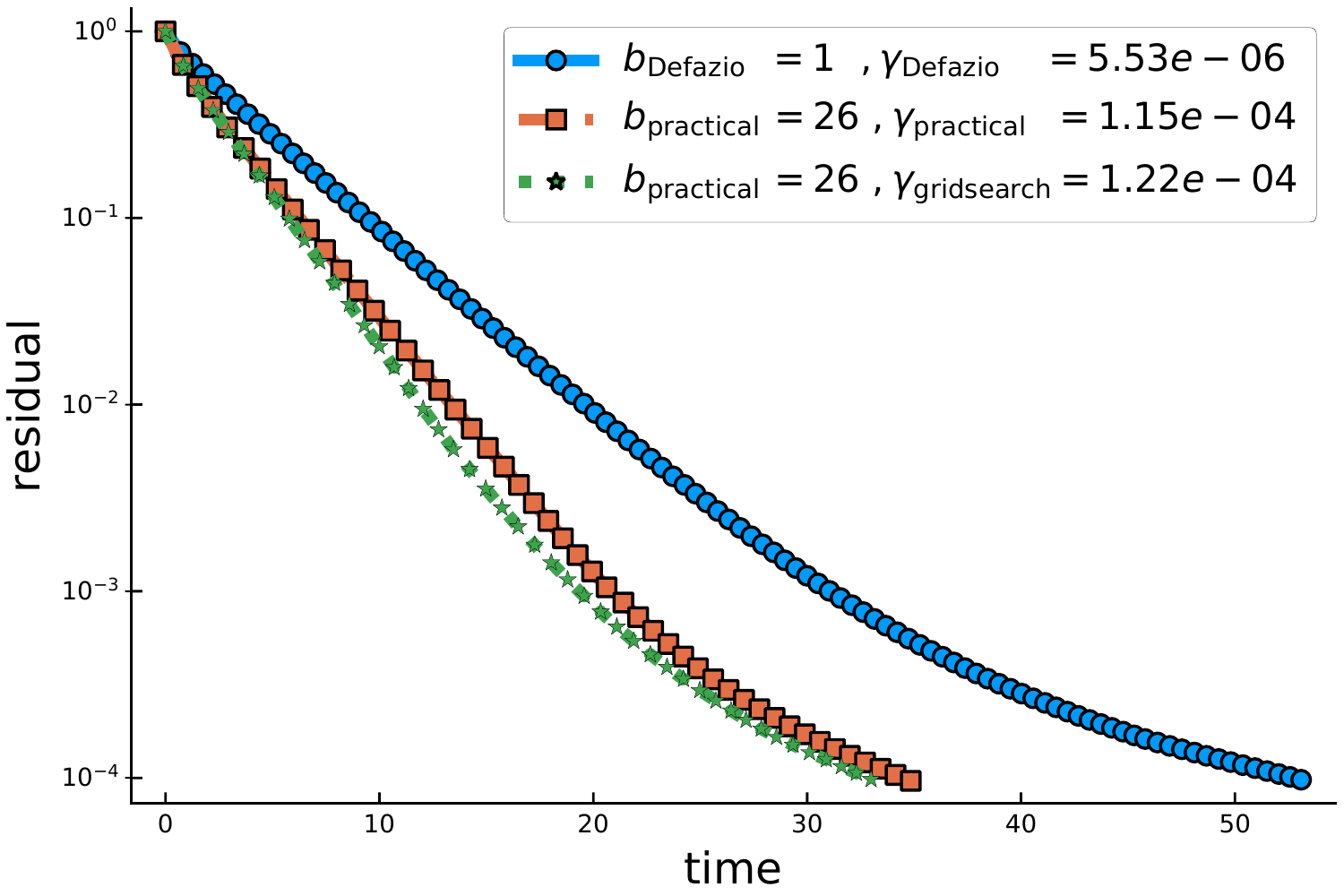}
      \caption{$\lambda = 10^{-1}$}
    \end{subfigure}\\
    \begin{subfigure}[b]{\textwidth}
      \centering
      \includegraphics[width=0.35\textwidth]{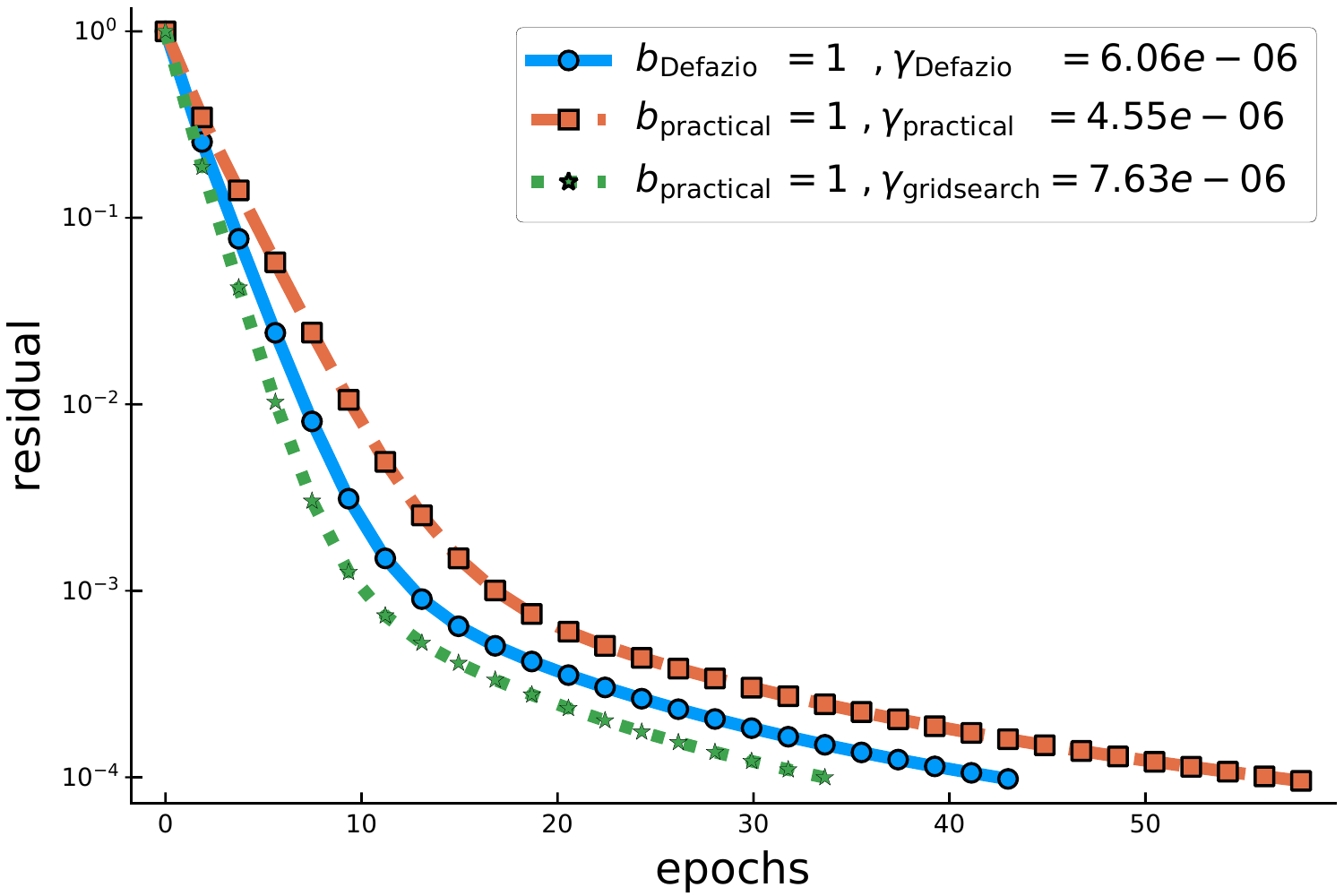}%
      \includegraphics[width=0.35\textwidth]{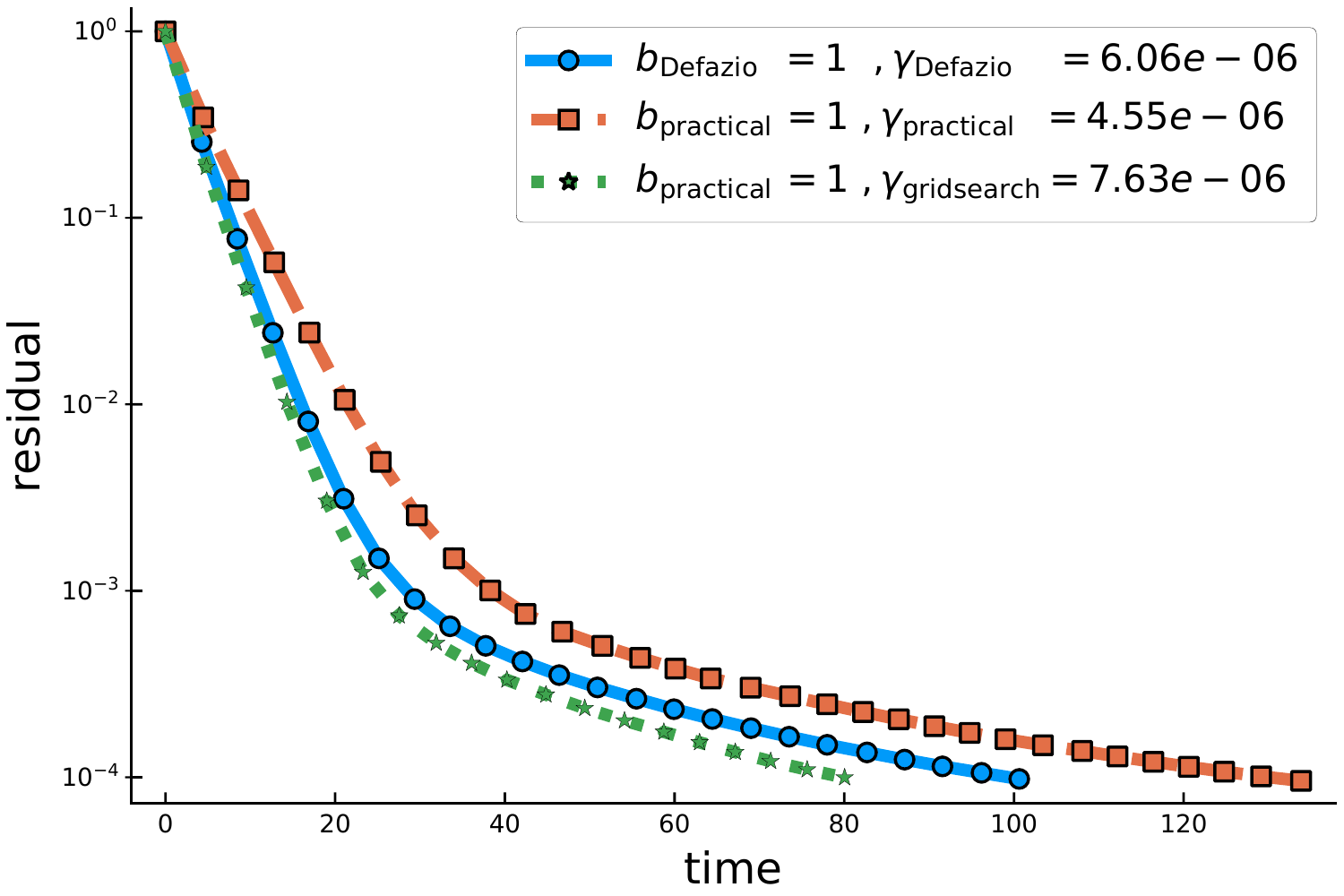}
      \caption{$\lambda = 10^{-3}$}
    \end{subfigure}
  \caption{Performance of SAGA implementations for the feature-scaled dataset \textit{slice}.}
  \label{fig:exp3_scaled_slice}
  \end{center}
  \vskip -0.2in
\end{figure}

%%%%%%%%%%%%%%%%%%%%%%%%%%%%%%%%%%%%%%%%%%%%%%%%%%%%%%%%%%%%%%%%%%%%%%%%%%%%%%%
% EXPERIMENT 3 UNSCALED
%%%%%%%%%%%%%%%%%%%%%%%%%%%%%%%%%%%%%%%%%%%%%%%%%%%%%%%%%%%%%%%%%%%%%%%%%%%%%%%
\begin{figure}[!ht]
  \vskip 0.2in
  \begin{center}
    \begin{subfigure}[b]{\textwidth}
      \centering
      \includegraphics[width=0.35\textwidth]{exp3/slice/no_scaling/ridge_slice-none-regularizor-1e-01-exp3-epoc}%
      \includegraphics[width=0.35\textwidth]{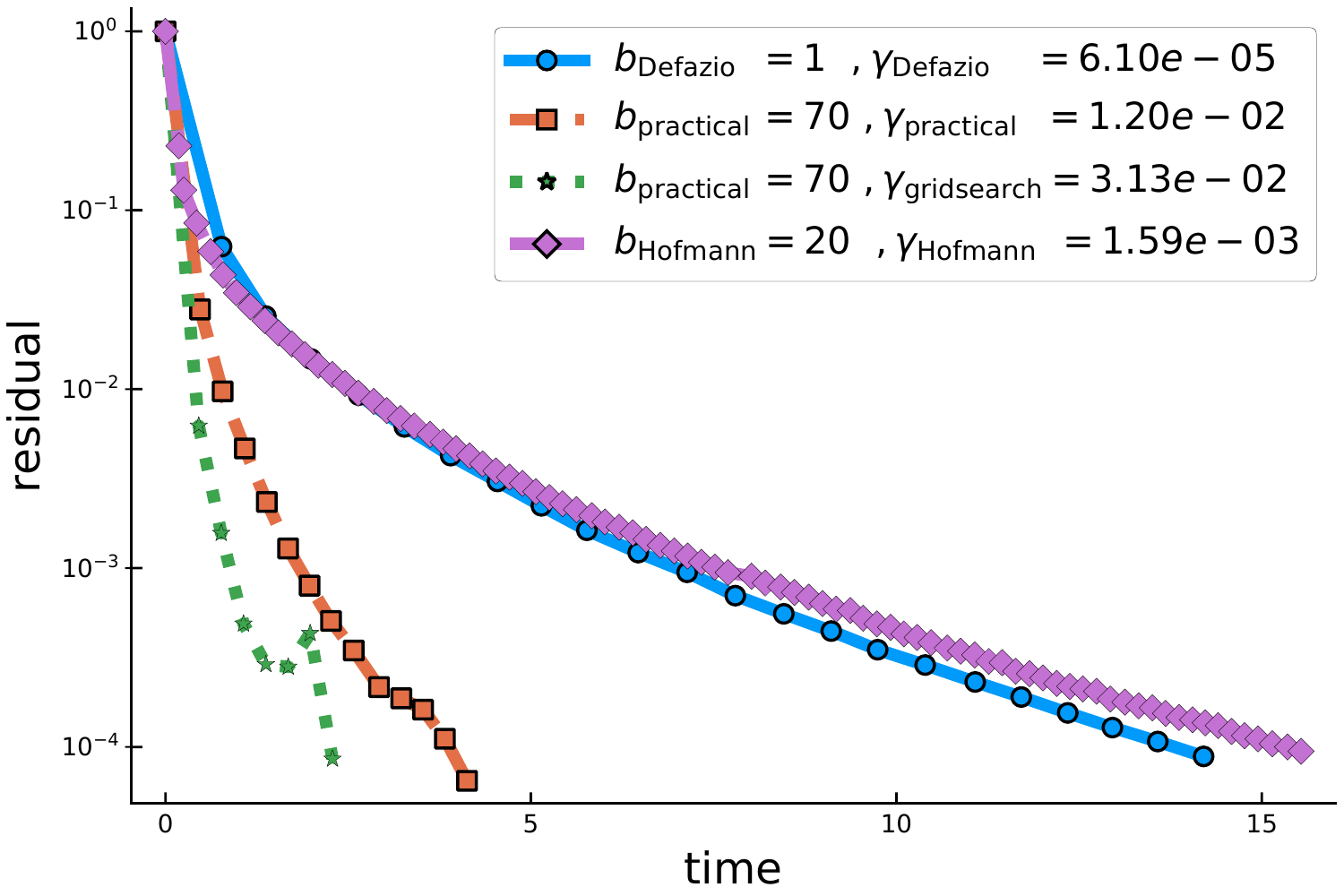}
      \caption{$\lambda = 10^{-1}$}
    \end{subfigure}\\
    \begin{subfigure}[b]{\textwidth}
      \centering
      \includegraphics[width=0.35\textwidth]{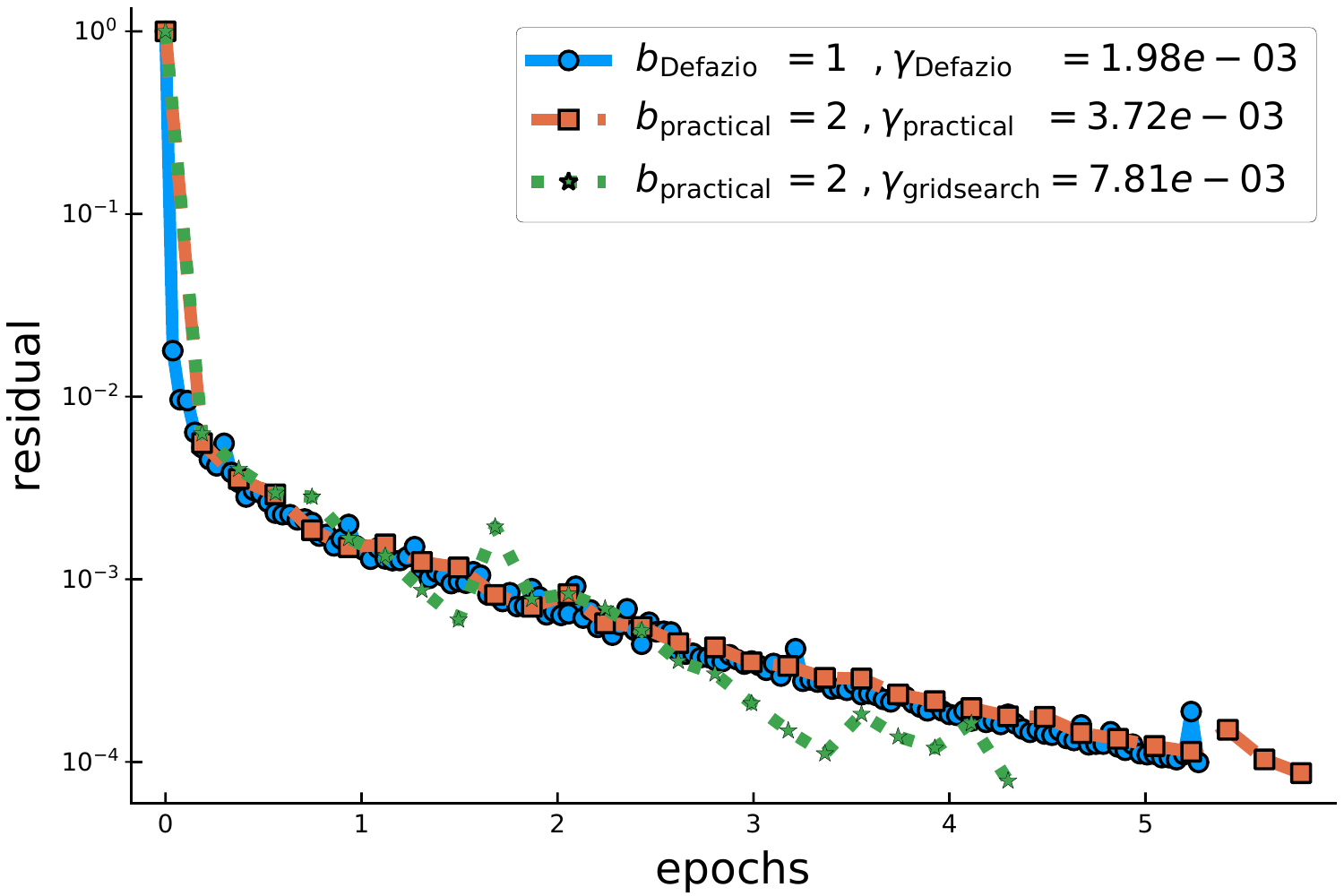}%
      \includegraphics[width=0.35\textwidth]{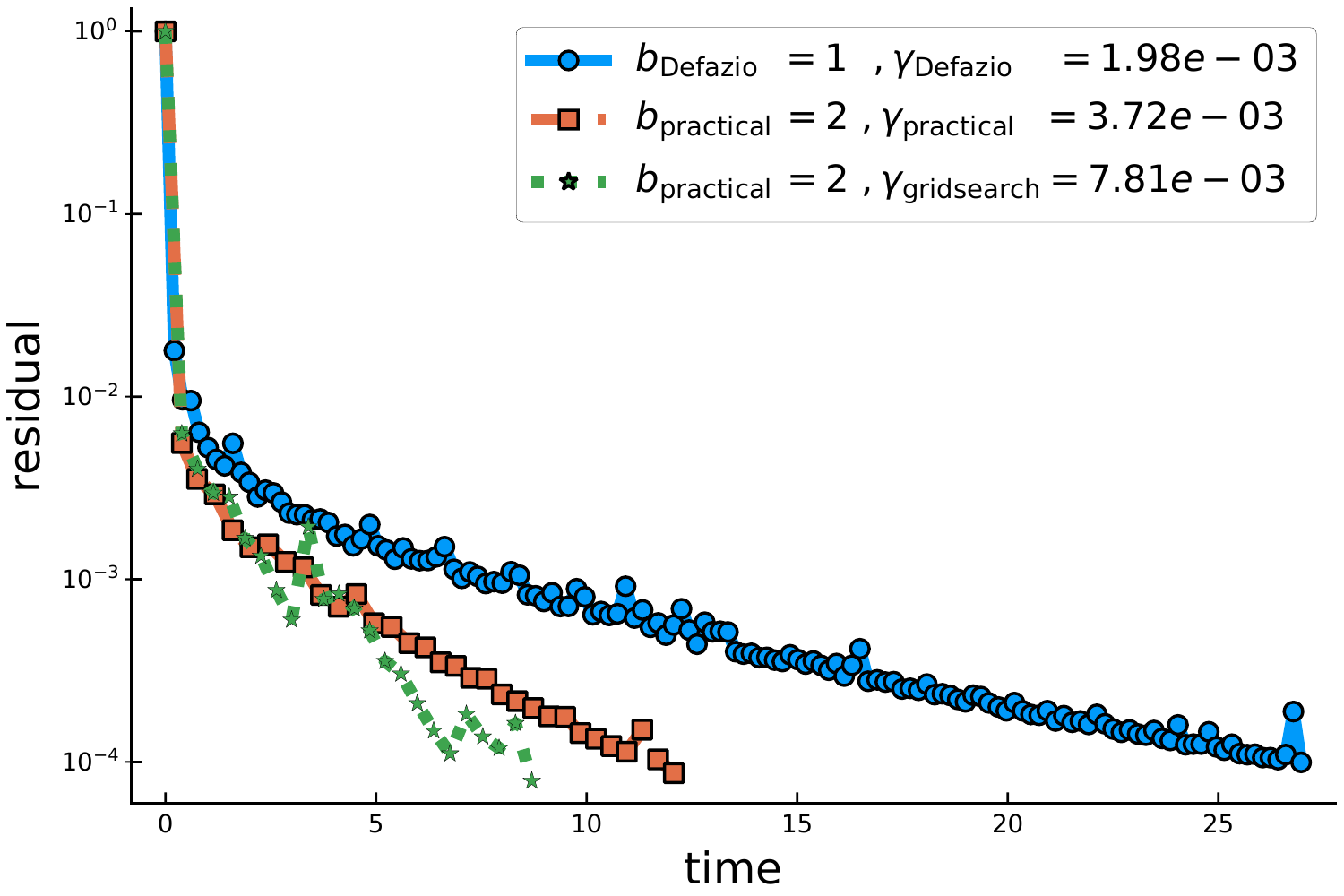}
      \caption{$\lambda = 10^{-3}$}
    \end{subfigure}
  \caption{Performance of SAGA implementations for the unscaled dataset \textit{slice}.}
  \label{fig:exp3_unscaled_slice}
  \end{center}
  \vskip -0.2in
\end{figure}

\begin{figure}[!ht]
  \vskip 0.2in
  \begin{center}
    \begin{subfigure}[b]{\textwidth}
      \centering
      \includegraphics[width=0.35\textwidth]{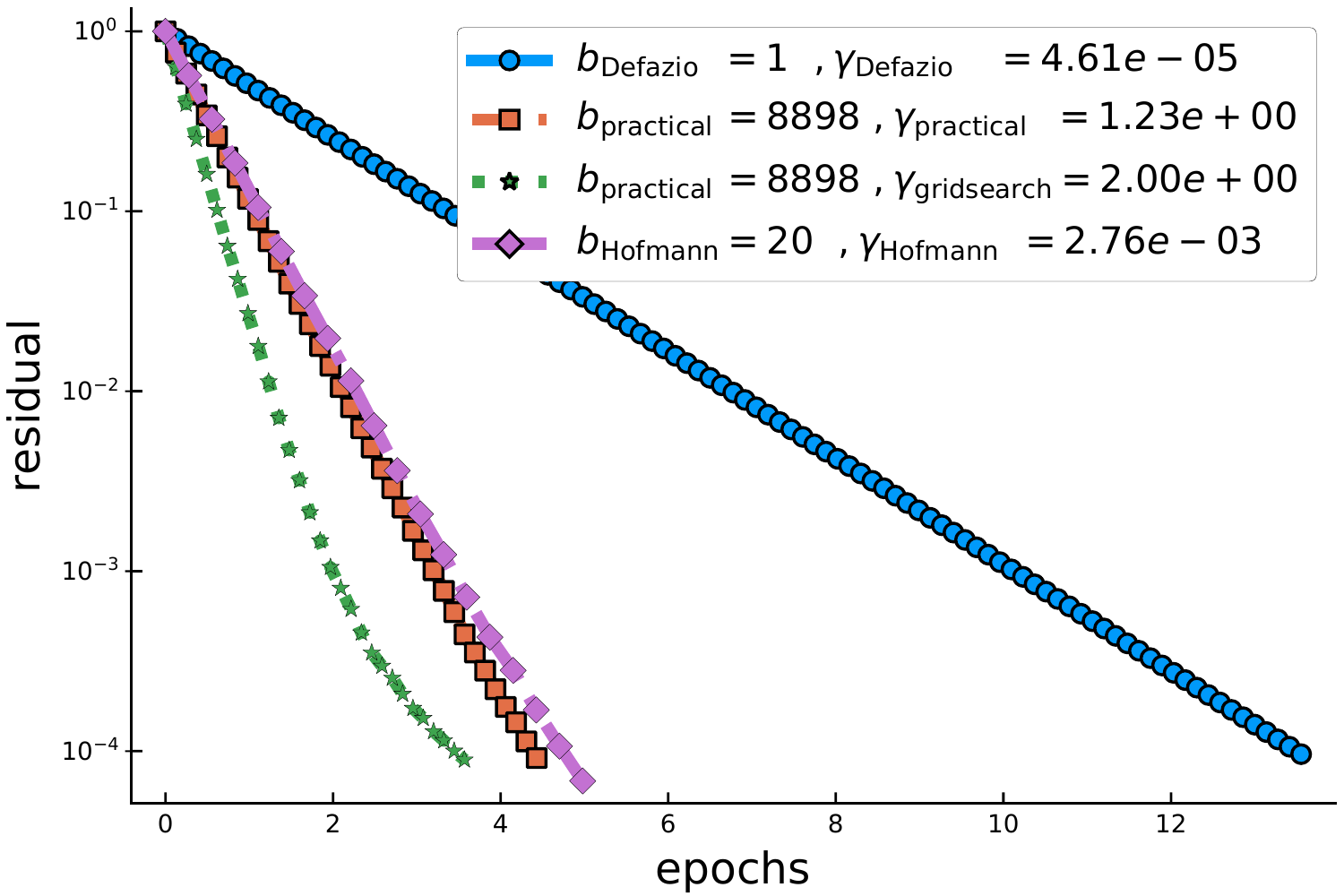}%
      \includegraphics[width=0.35\textwidth]{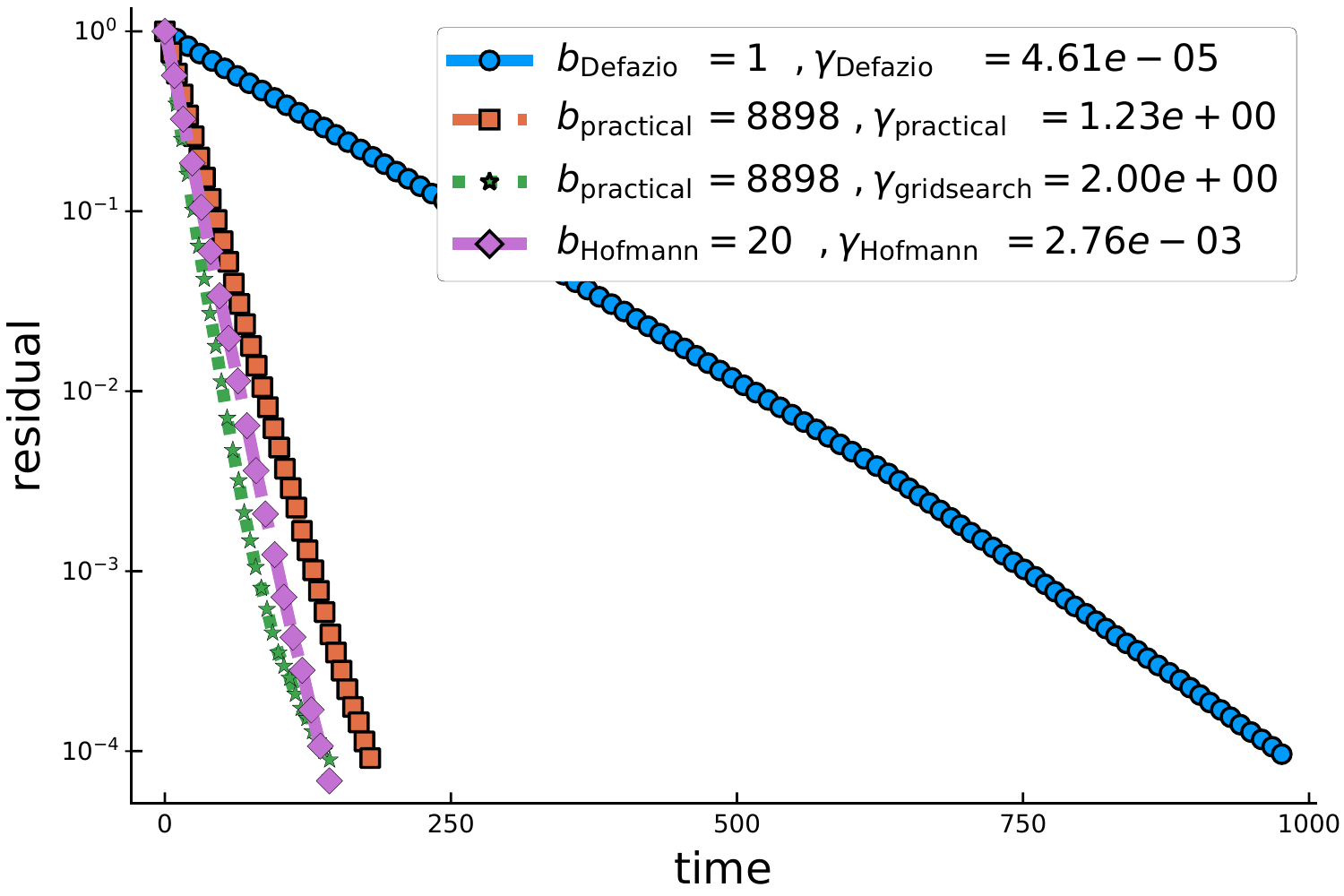}
      \caption{$\lambda = 10^{-1}$}
      \label{fig:exp3_unscaled_realsim_GD_better}
    \end{subfigure}\\
    \begin{subfigure}[b]{\textwidth}
      \centering
      \includegraphics[width=0.35\textwidth]{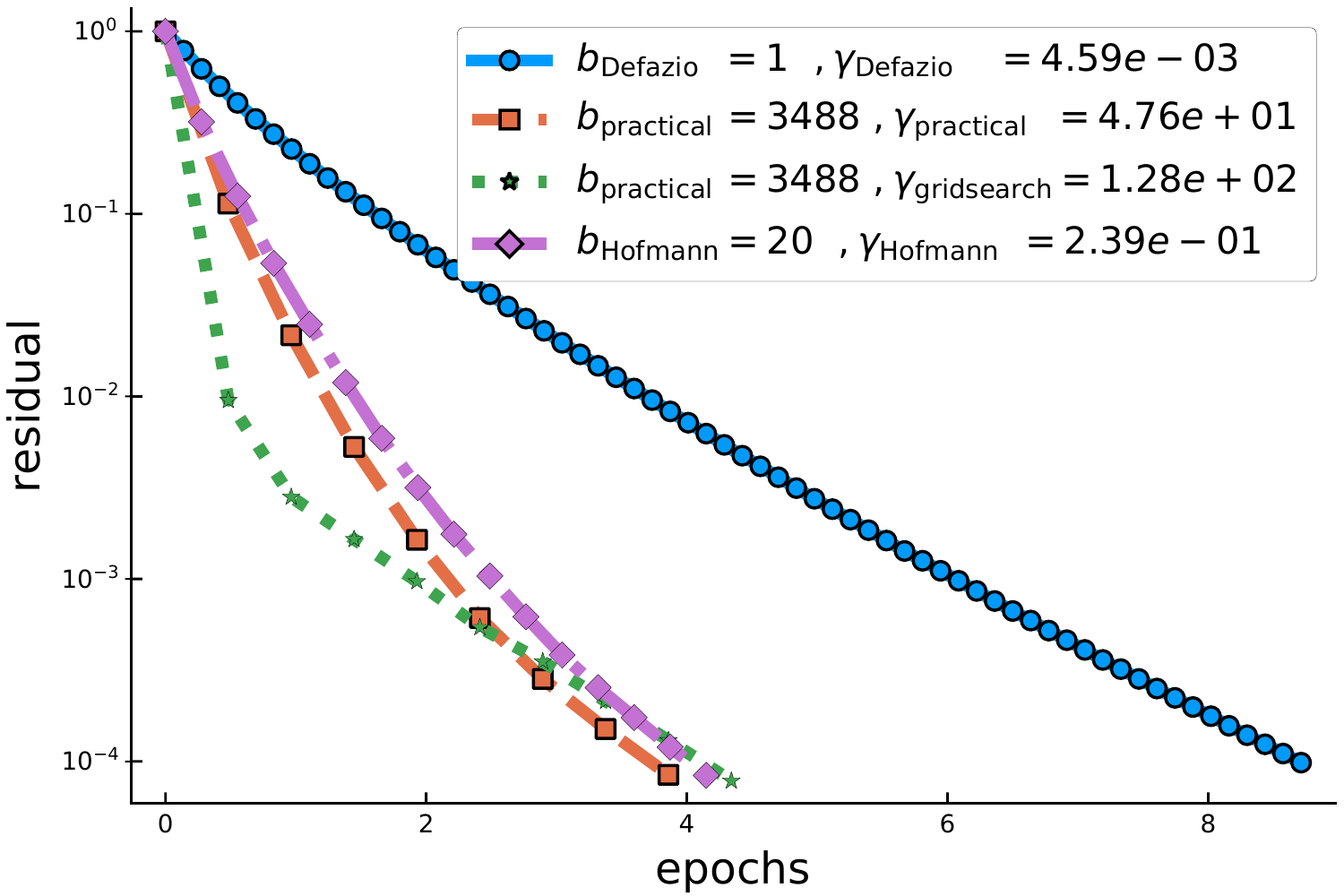}%
      \includegraphics[width=0.35\textwidth]{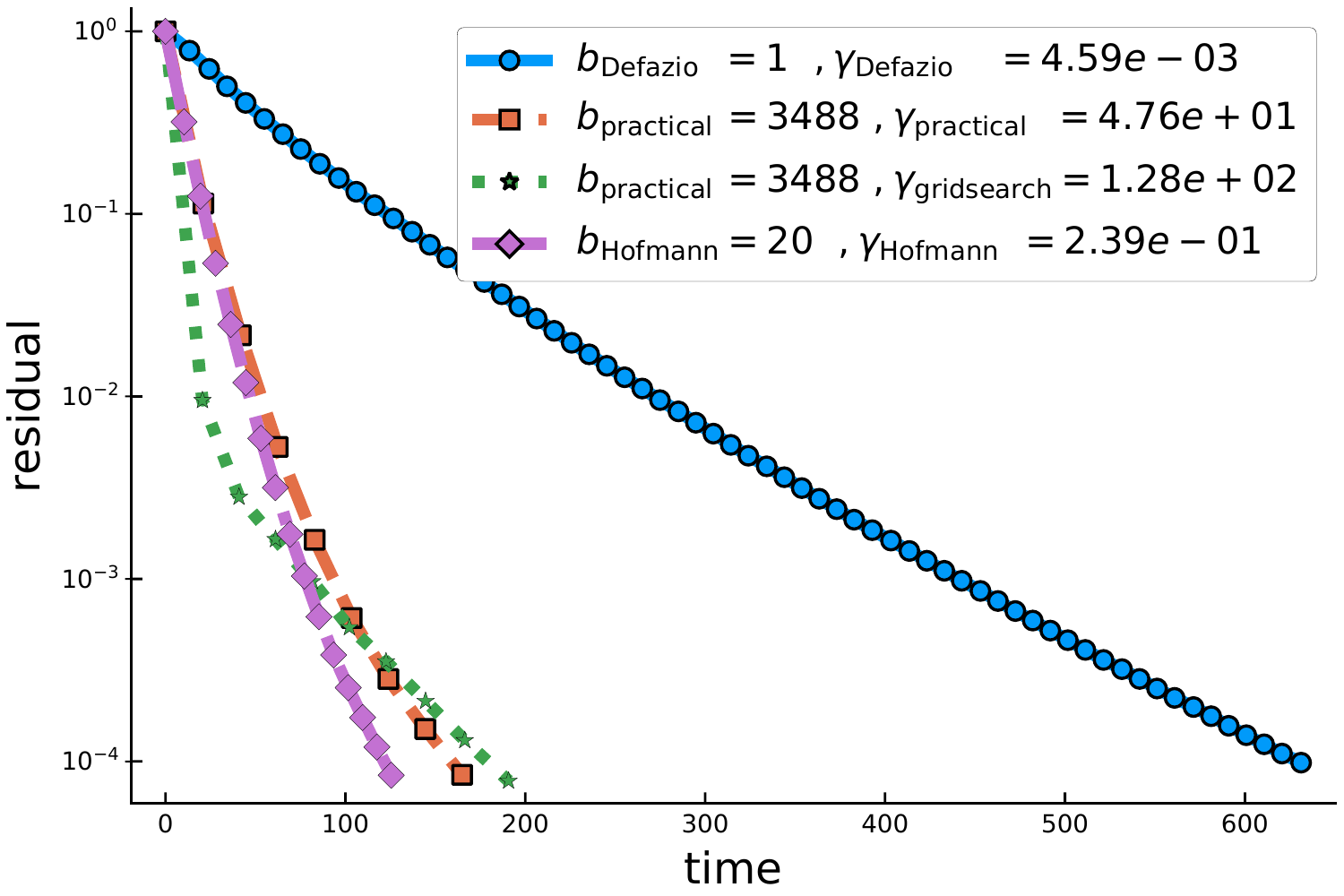}
      \caption{$\lambda = 10^{-3}$}
    \end{subfigure}
  \caption{Performance of SAGA implementations for the unscaled dataset \textit{real-sim}.}
  \label{fig:exp3_unscaled_realsim}
  \end{center}
  \vskip -0.2in
\end{figure}

%%%%%%%%%%%%%%%%%%%%%%%%%%%%%%%%%%%%%%%%%%%%%%%%%%%%%%%%%%%%%%%%%%%%%%%%%%%%%%%
\subsection{Experiment 4: optimality of the mini-batch size}
\label{appendix:exp_4}
%%%%%%%%%%%%%%%%%%%%%%%%%%%%%%%%%%%%%%%%%%%%%%%%%%%%%%%%%%%%%%%%%%%%%%%%%%%%%%%

This experiment aims to estimate how close is our practical estimate $b_{\text{practical}}$ to the empirical best mini-batch size one could get running a grid search. We recall that we use the following grid for the mini-batch sizes:
$\{2^i, i=0,\dots, 14\}$, with $2^{16}, 2^{18}$ and $n$ added in some cases. We show in the log-scaled \Cref{fig:exp4_scaled_ijcnn1,fig:exp4_scaled_covtype,fig:exp4_scaled_YearPredictionMSD,fig:exp4_scaled_slice,fig:exp4_unscaled_slice,fig:exp4_unscaled_real-sim} the empirical total complexity $K_{\mathrm{total}}$, \eg the number of computed stochastic gradients to reach a relative error of $10^{-4}$, as a function of the mini-batch $b$. For each mini-batch of the grid $b$, the step size used is the one corresponding to the \emph{practical estimate}, \ie when replacing $\cL(b)$ by $\cL_{\text{practical}}(b)$ in \Cref{eq:gammamaster}.

We always observe a change of regime in the empirical complexity. For small values of $b$, the complexity is of the same order of magnitude, then, for values greater than the empirical optimal mini-batch size, the complexity explodes. This experiment shows that our optimal mini-batch size $b_{\text{practical}}$ correctly designates the largest mini-batch achieving the best complexity as large as possible, without reaching the regime where the total complexity explodes.

%%%%%%%%%%%%%%%%%%%%%%%%%%%%%%%%%%%%%%%%%%%%%%%%%%%%%%%%%%%%%%%%%%%%%%%%%%%%%%%
% EXPERIMENT 4 FEATURE-SCALED
%%%%%%%%%%%%%%%%%%%%%%%%%%%%%%%%%%%%%%%%%%%%%%%%%%%%%%%%%%%%%%%%%%%%%%%%%%%%%%%
\begin{figure}[!ht]
  \vskip 0.2in
  \begin{center}
    \begin{subfigure}[b]{0.4\textwidth}
      \includegraphics[width=\textwidth]{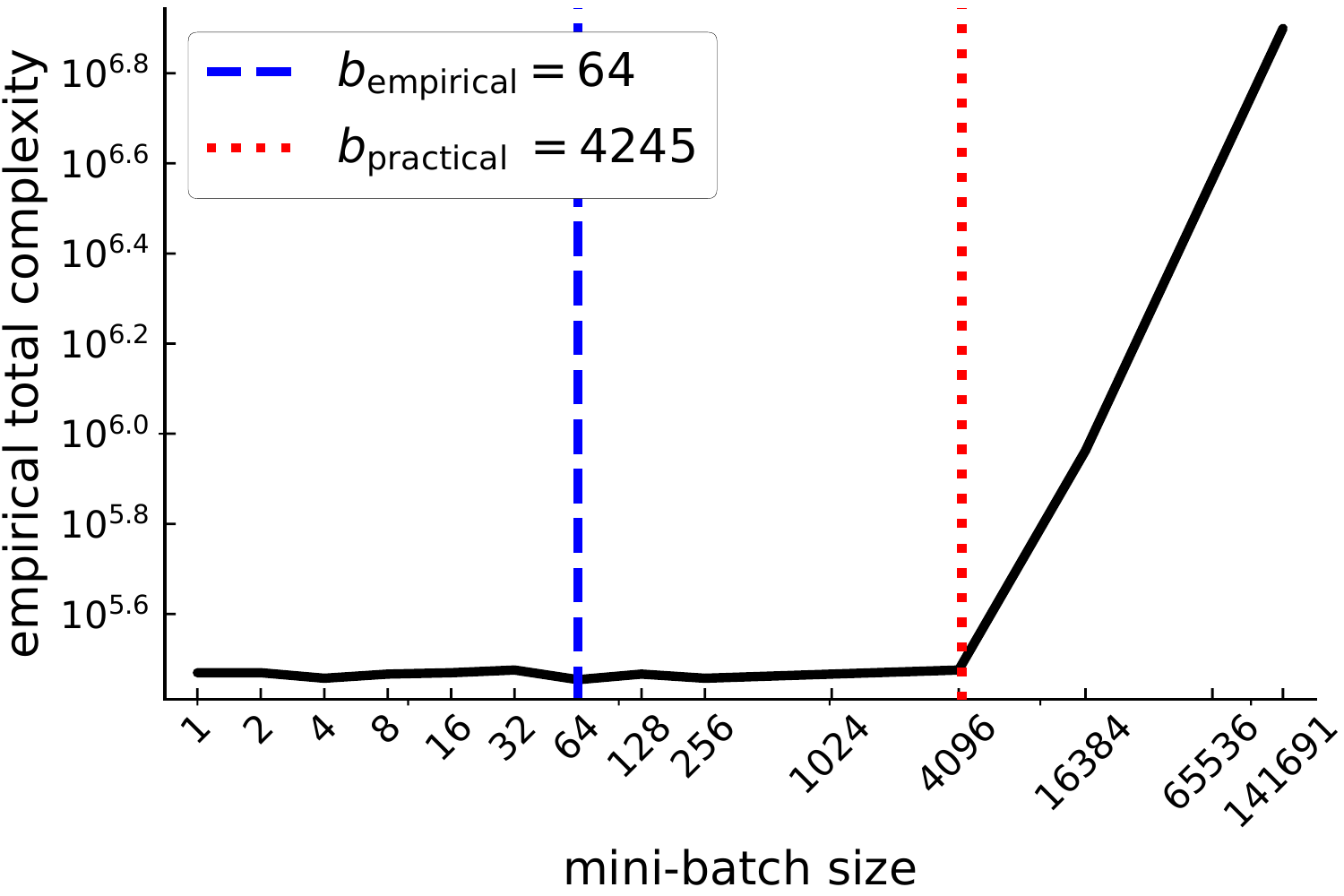}
      \caption{$\lambda = 10^{-1}$}
    \end{subfigure}%
    \begin{subfigure}[b]{0.4\textwidth}
      \includegraphics[width=\textwidth]{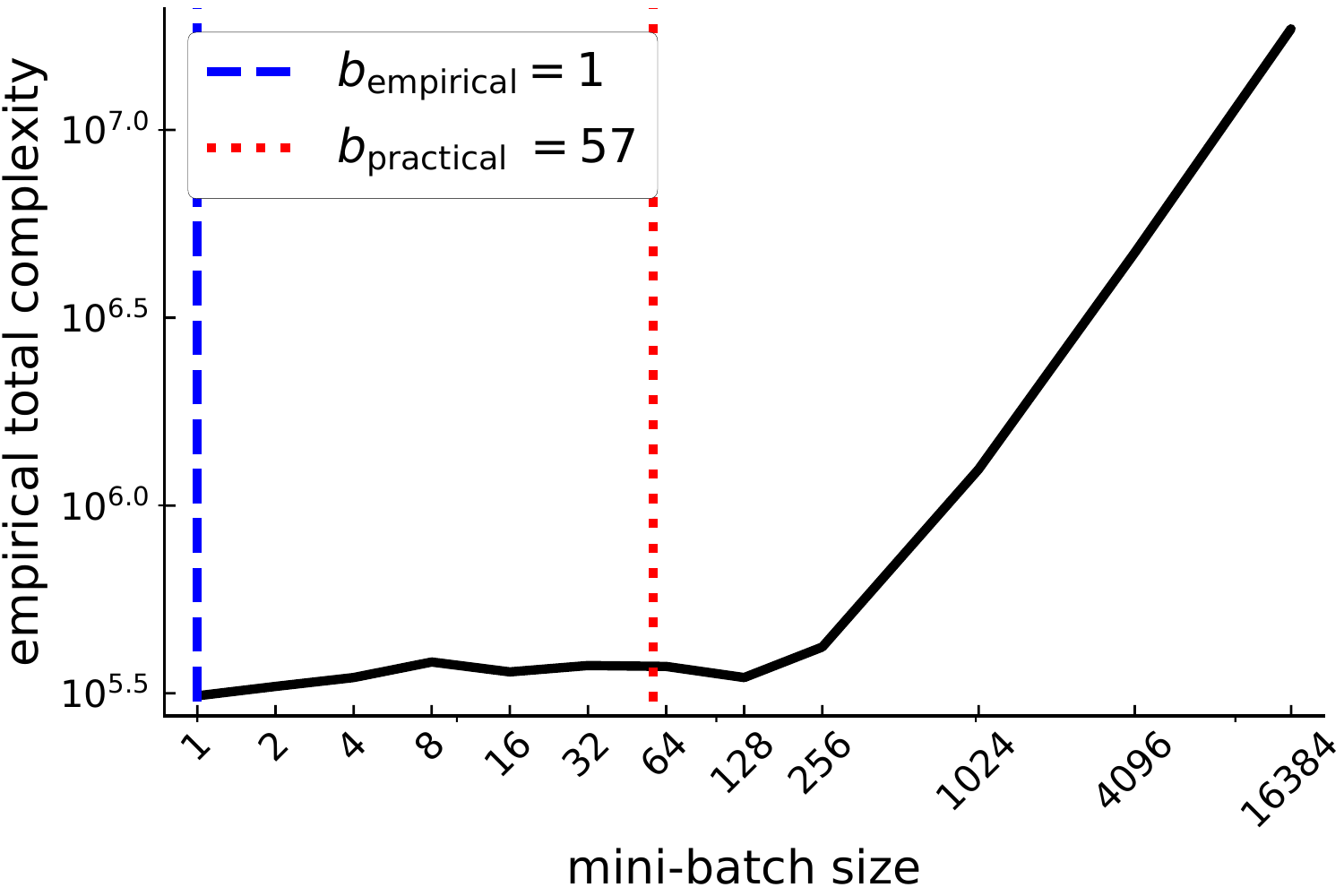}
      \caption{$\lambda = 10^{-3}$}
    \end{subfigure}
  \end{center}
  \caption{Empirical total complexity versus mini-batch size for the feature-scaled \emph{ijcnn1} dataset.}
  \label{fig:exp4_scaled_ijcnn1}
  \vskip -0.2in
\end{figure}

\begin{figure}[!ht]
  \vskip 0.2in
  \begin{center}
    \begin{subfigure}[b]{0.4\textwidth}
      \includegraphics[width=\textwidth]{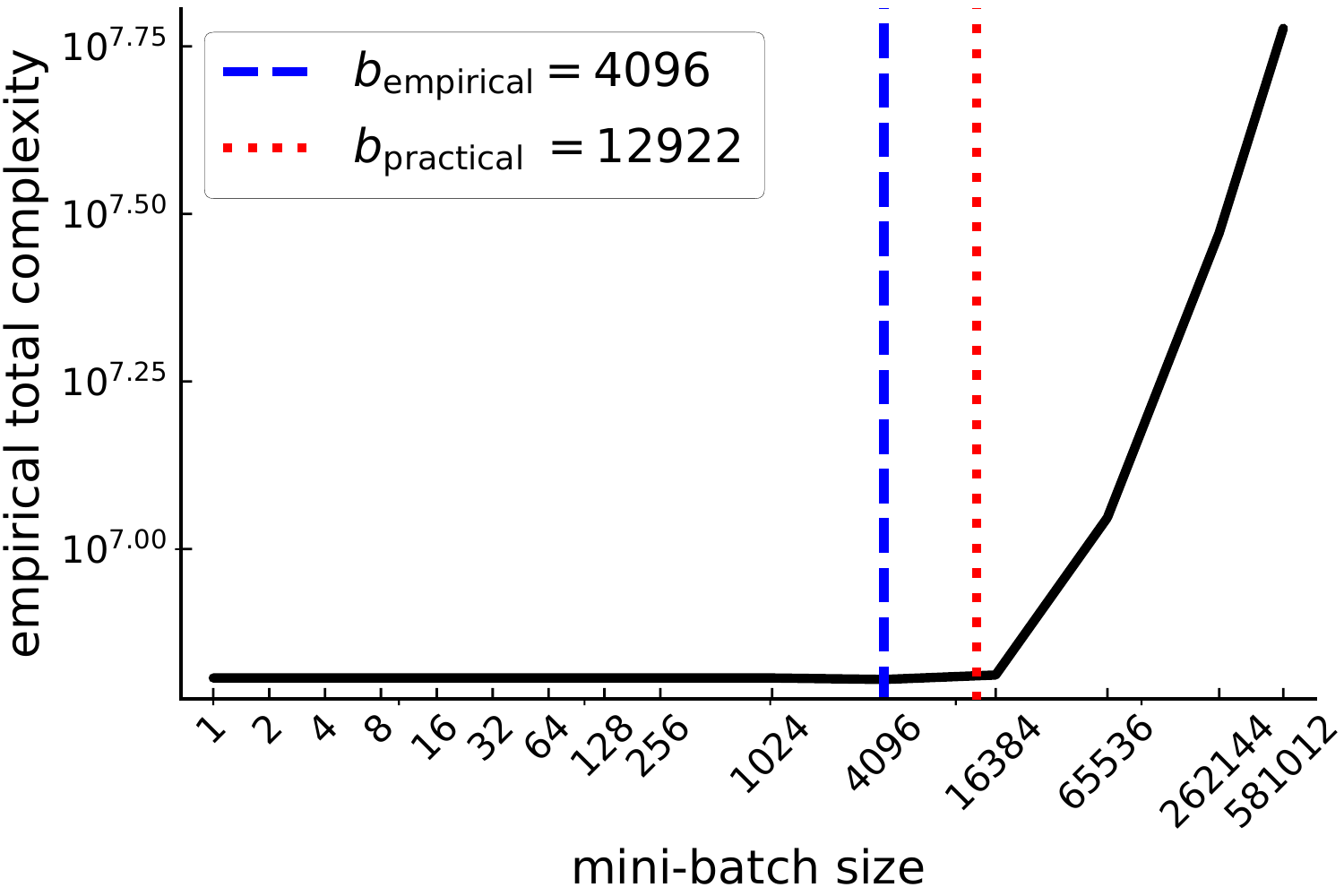}
      \caption{$\lambda = 10^{-1}$}
    \end{subfigure}%
    \begin{subfigure}[b]{0.4\textwidth}
      \includegraphics[width=\textwidth]{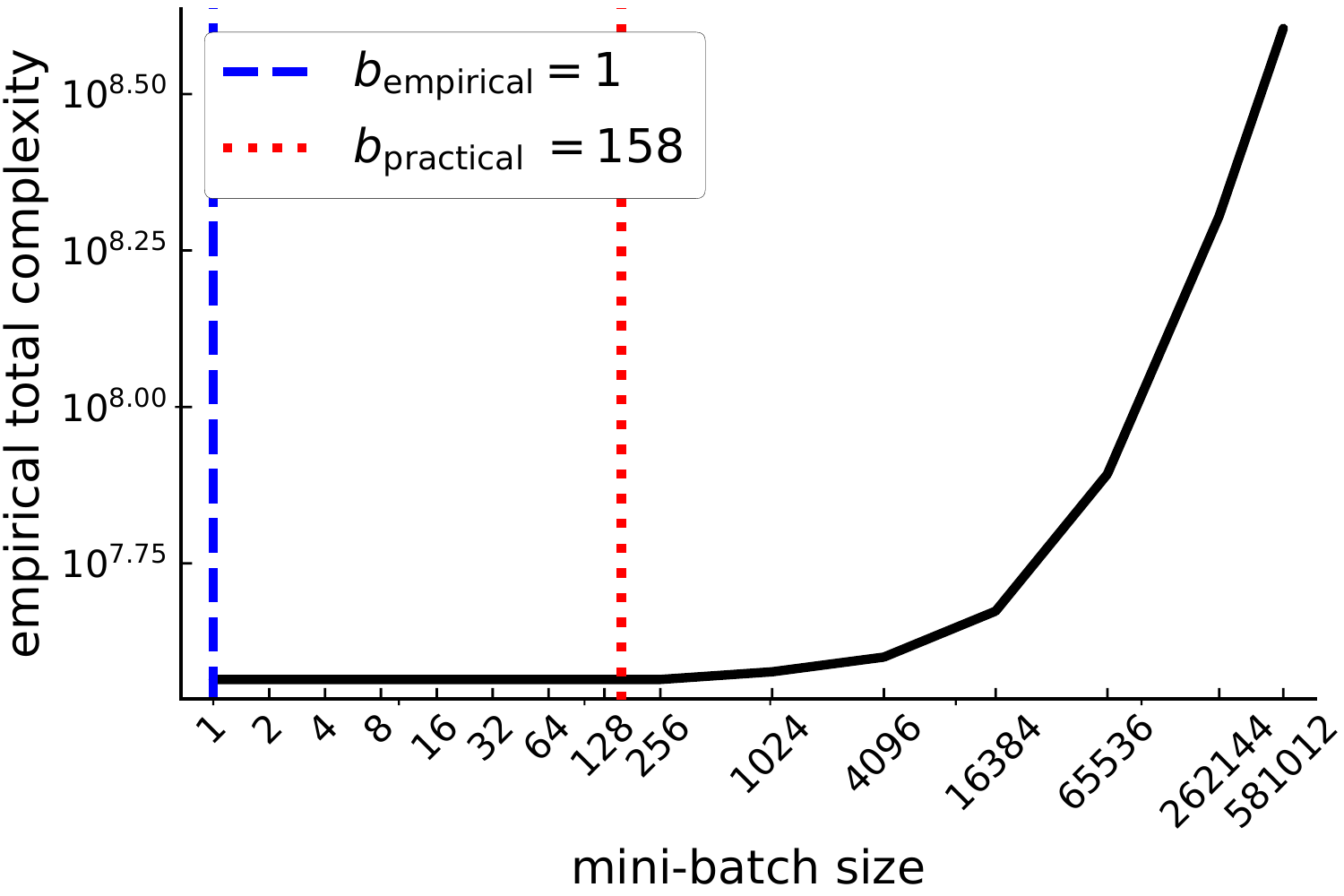}
      \caption{$\lambda = 10^{-3}$}
    \end{subfigure}
  \end{center}
  \caption{Empirical total complexity versus mini-batch size for the feature-scaled \emph{covtype.binary} dataset.}
  \label{fig:exp4_scaled_covtype}
  \vskip -0.2in
\end{figure}

\begin{figure}[!ht]
  \vskip 0.2in
  \begin{center}
    \begin{subfigure}[b]{0.4\textwidth}
      \includegraphics[width=\textwidth]{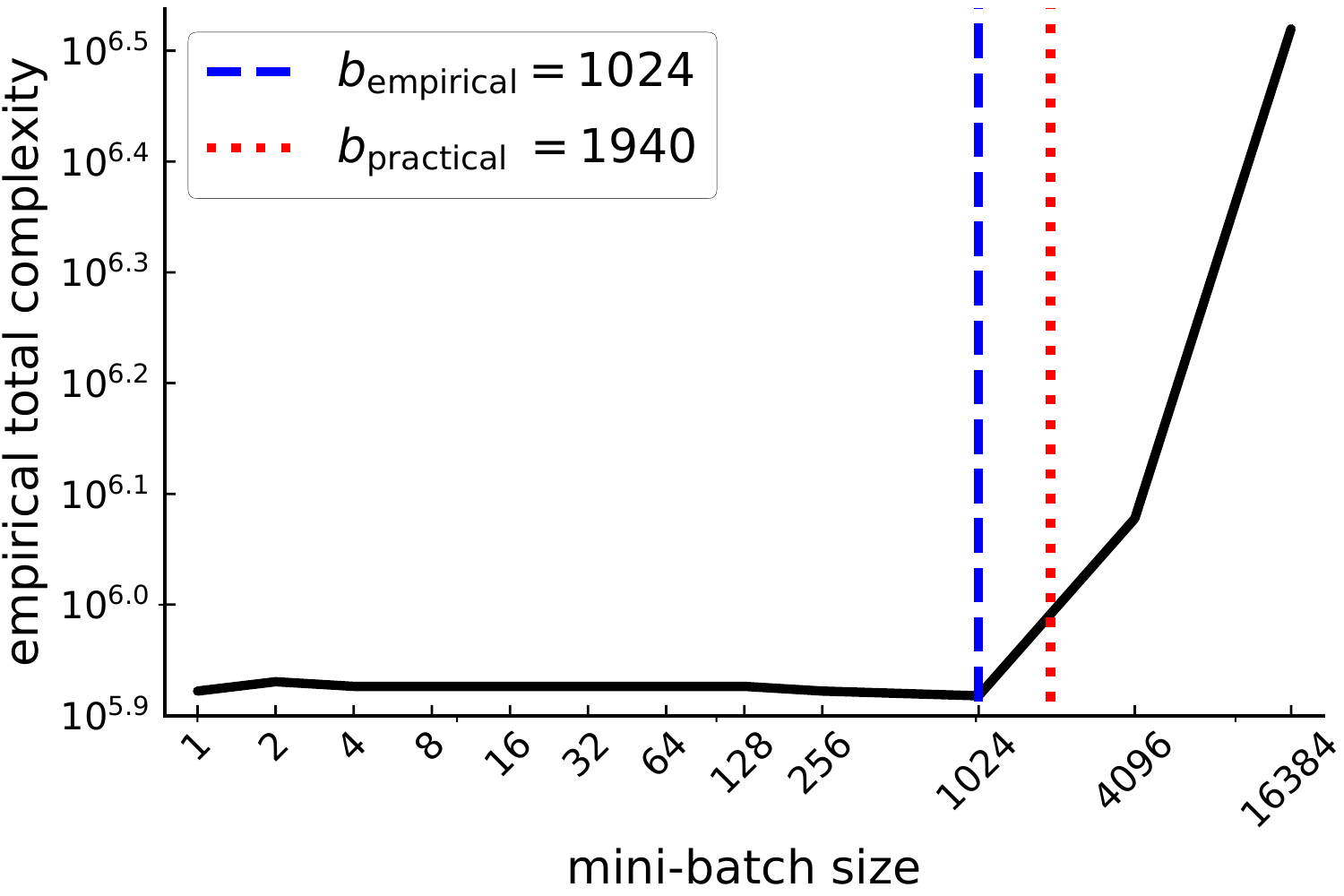}
      \caption{$\lambda = 10^{-1}$}
    \end{subfigure}%
    \begin{subfigure}[b]{0.4\textwidth}
      \includegraphics[width=\textwidth]{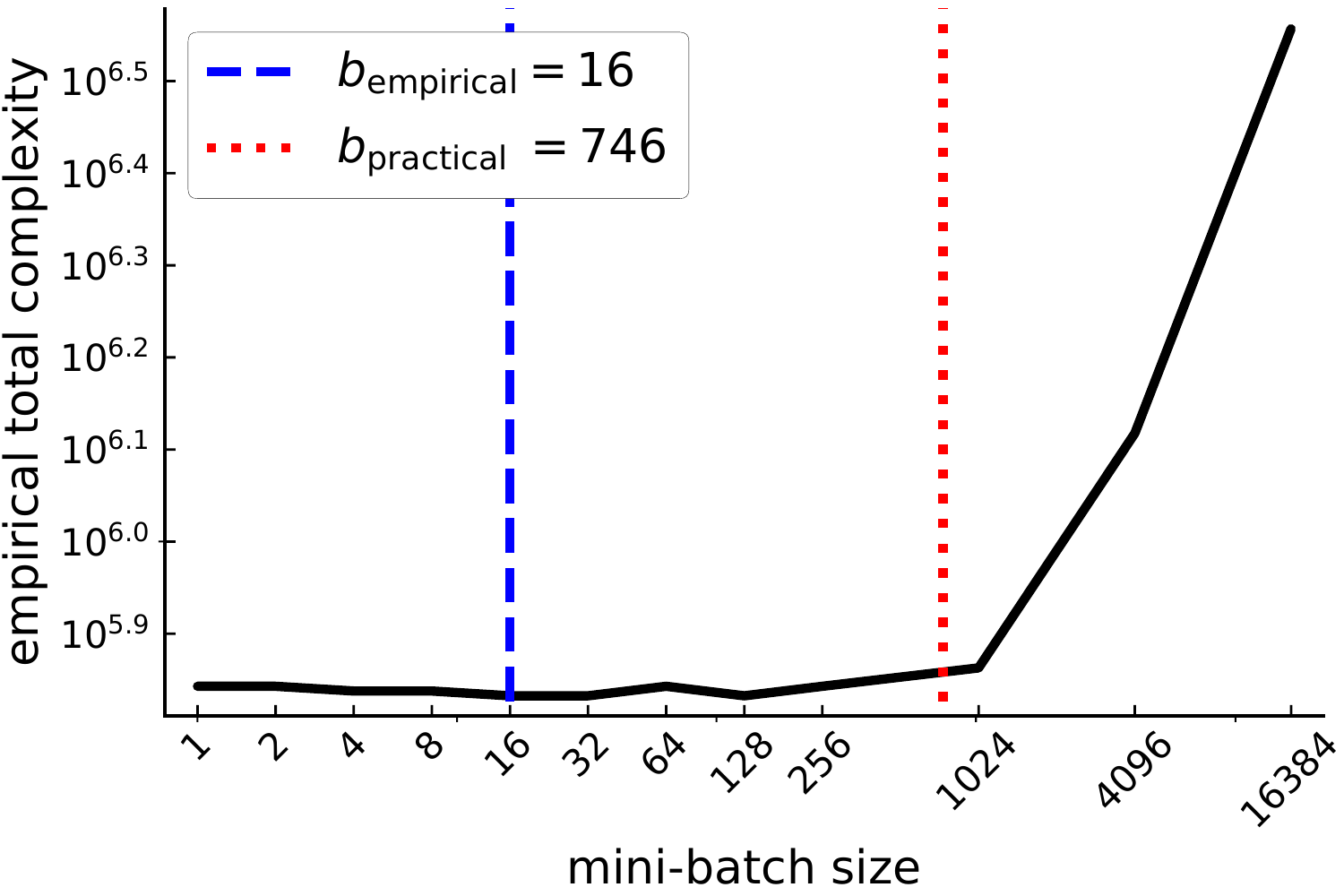}
      \caption{$\lambda = 10^{-3}$}
    \end{subfigure}
  \end{center}
  \caption{Empirical total complexity versus mini-batch size for the feature-scaled \emph{YearPredictionMSD} dataset.}
  \label{fig:exp4_scaled_YearPredictionMSD}
  \vskip -0.2in
\end{figure}

\begin{figure}[!ht]
  \vskip 0.2in
  \begin{center}
    \begin{subfigure}[b]{0.4\textwidth}
      \includegraphics[width=\textwidth]{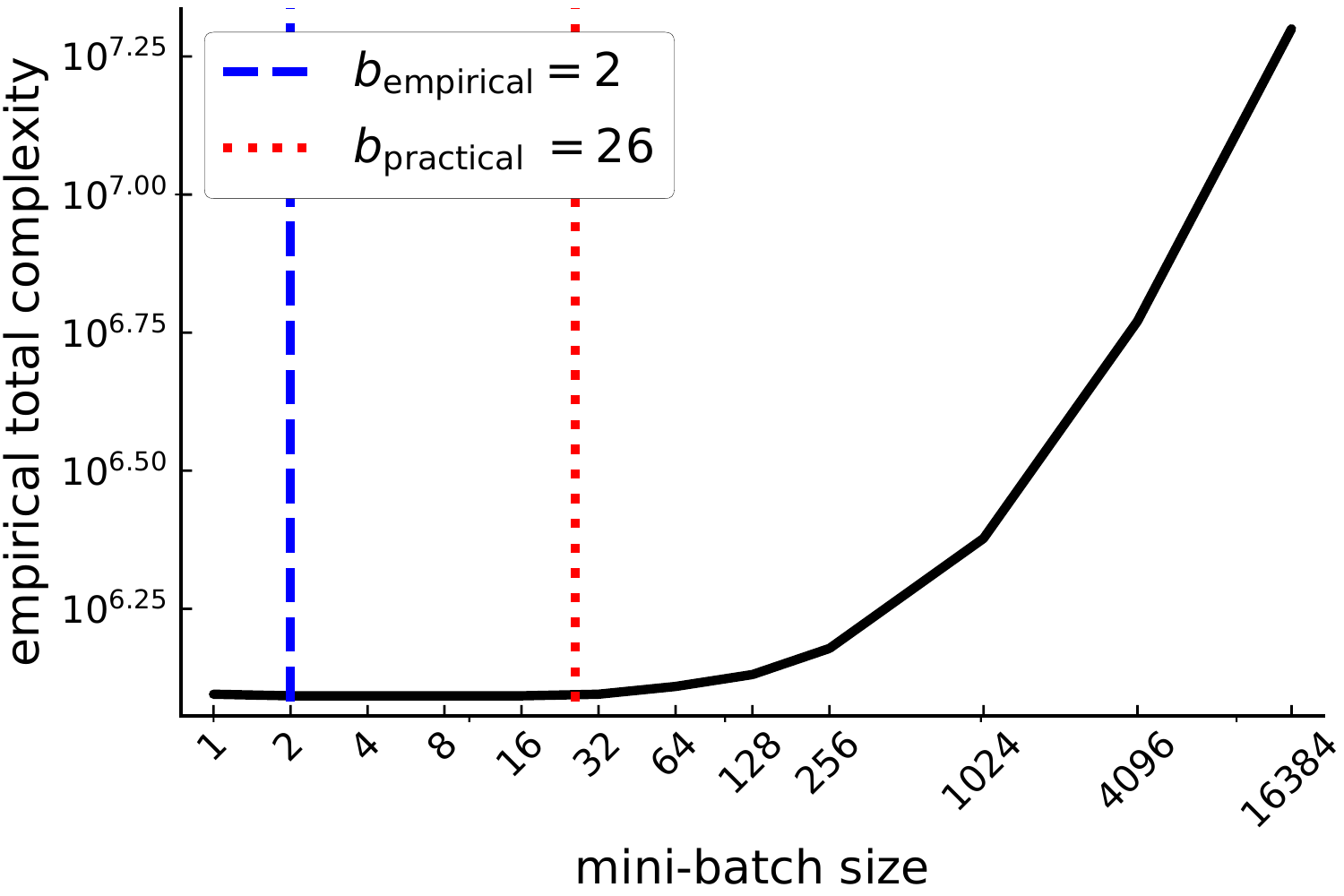}
      \caption{$\lambda = 10^{-1}$}
    \end{subfigure}%
    \begin{subfigure}[b]{0.4\textwidth}
      \includegraphics[width=\textwidth]{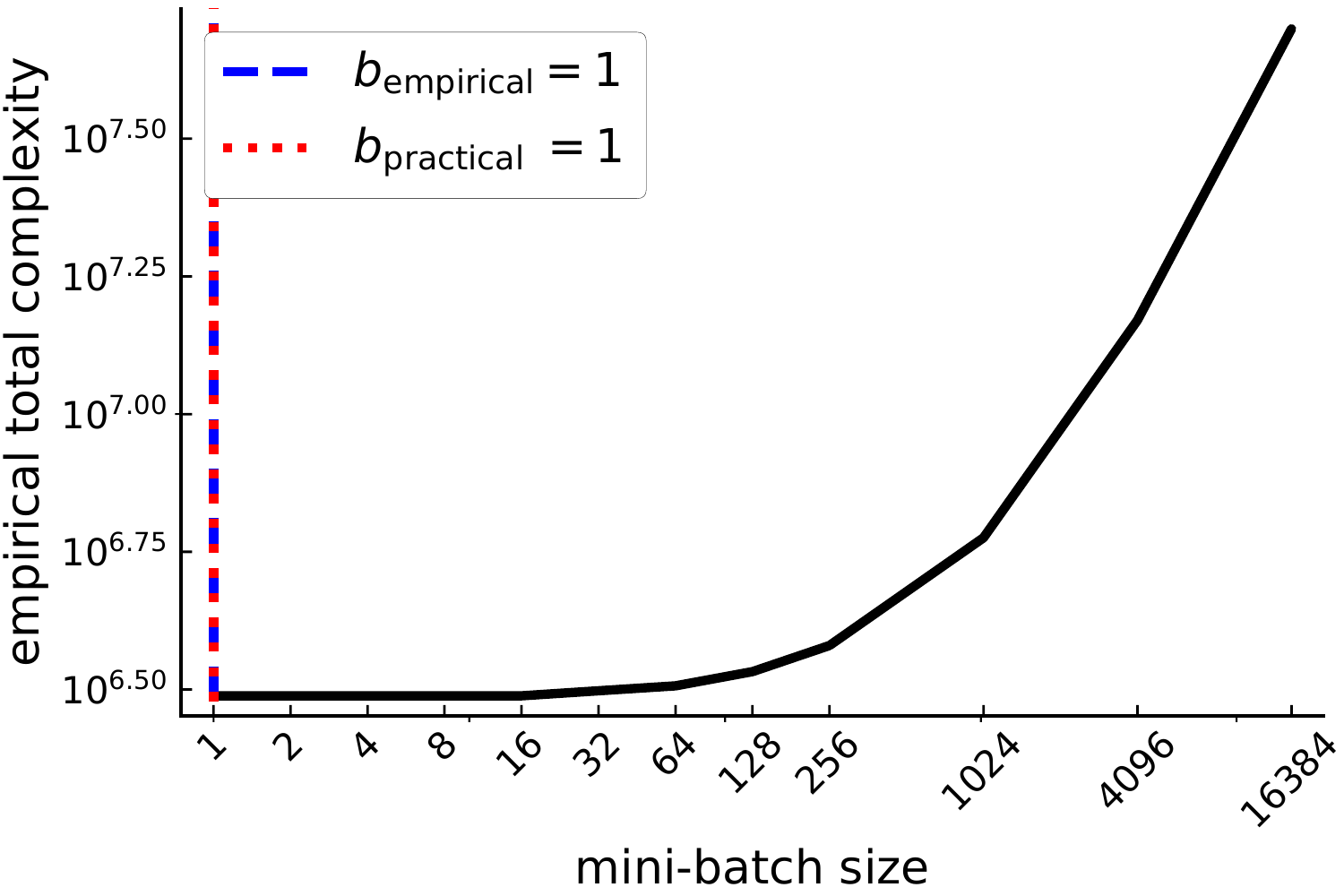}
      \caption{$\lambda = 10^{-3}$}
    \end{subfigure}
  \end{center}
  \caption{Empirical total complexity versus mini-batch size for the feature-scaled \emph{slice} dataset.}
  \label{fig:exp4_scaled_slice}
  \vskip -0.2in
\end{figure}

%%%%%%%%%%%%%%%%%%%%%%%%%%%%%%%%%%%%%%%%%%%%%%%%%%%%%%%%%%%%%%%%%%%%%%%%%%%%%%%
% EXPERIMENT 4 UNSCALED
%%%%%%%%%%%%%%%%%%%%%%%%%%%%%%%%%%%%%%%%%%%%%%%%%%%%%%%%%%%%%%%%%%%%%%%%%%%%%%%
\begin{figure}[!ht]
  \vskip 0.2in
  \begin{center}
    \begin{subfigure}[b]{0.4\textwidth}
      \includegraphics[width=\textwidth]{exp4/ridge_slice-none-regularizor-1e-01-exp4-empcomplex-1-avg_skip_mult_0_1}
      \caption{$\lambda = 10^{-1}$}
    \end{subfigure}%
    \begin{subfigure}[b]{0.4\textwidth}
      \includegraphics[width=\textwidth]{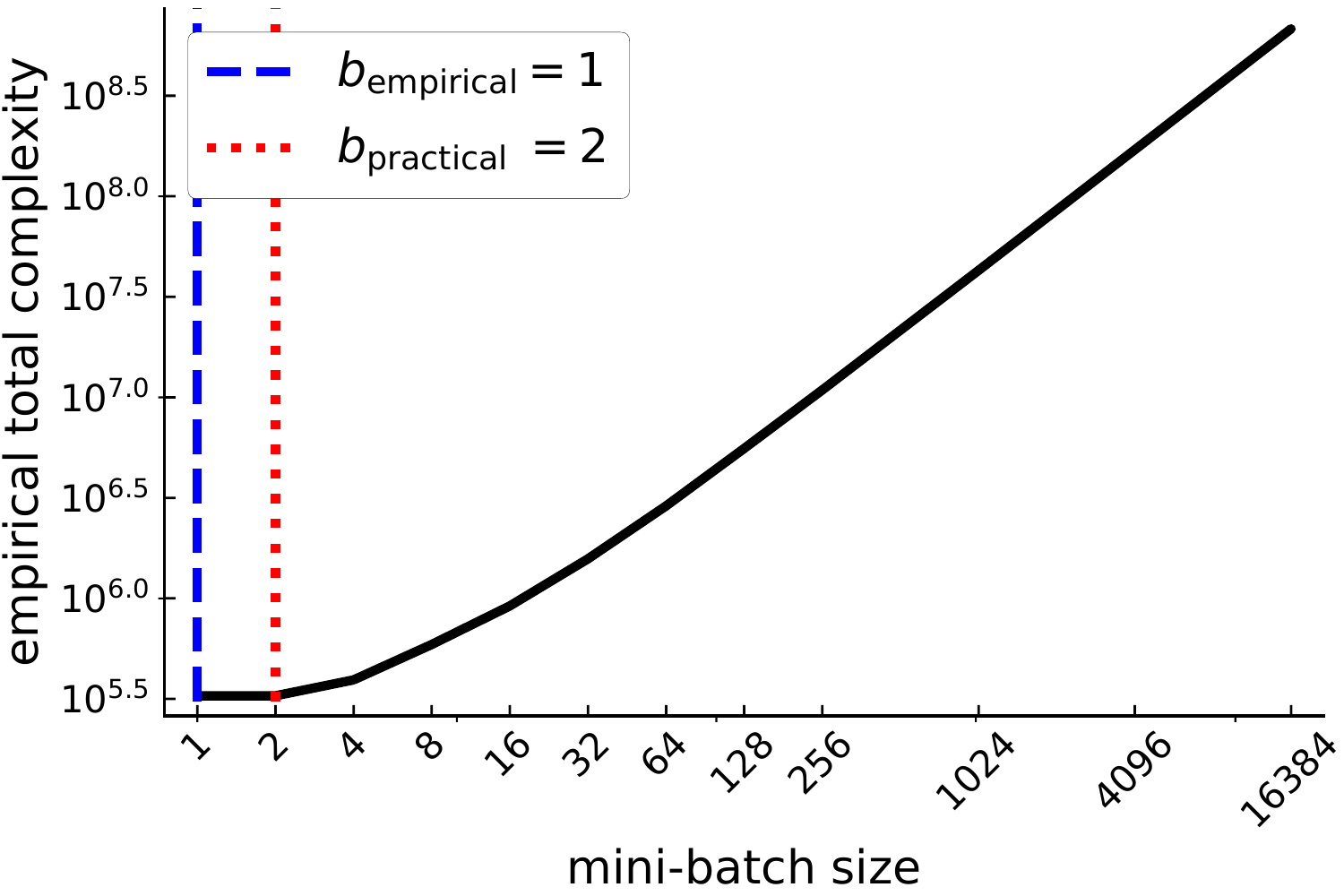}
      \caption{$\lambda = 10^{-3}$}
      \label{fig:exp4_saturation}
    \end{subfigure}
  \end{center}
  \caption{Empirical total complexity versus mini-batch size for the unscaled \emph{slice} dataset.}
  \label{fig:exp4_unscaled_slice}
  \vskip -0.2in
\end{figure}

\begin{figure}[!ht]
  \vskip 0.2in
  \begin{center}
    \begin{subfigure}[b]{0.4\textwidth}
      \includegraphics[width=\textwidth]{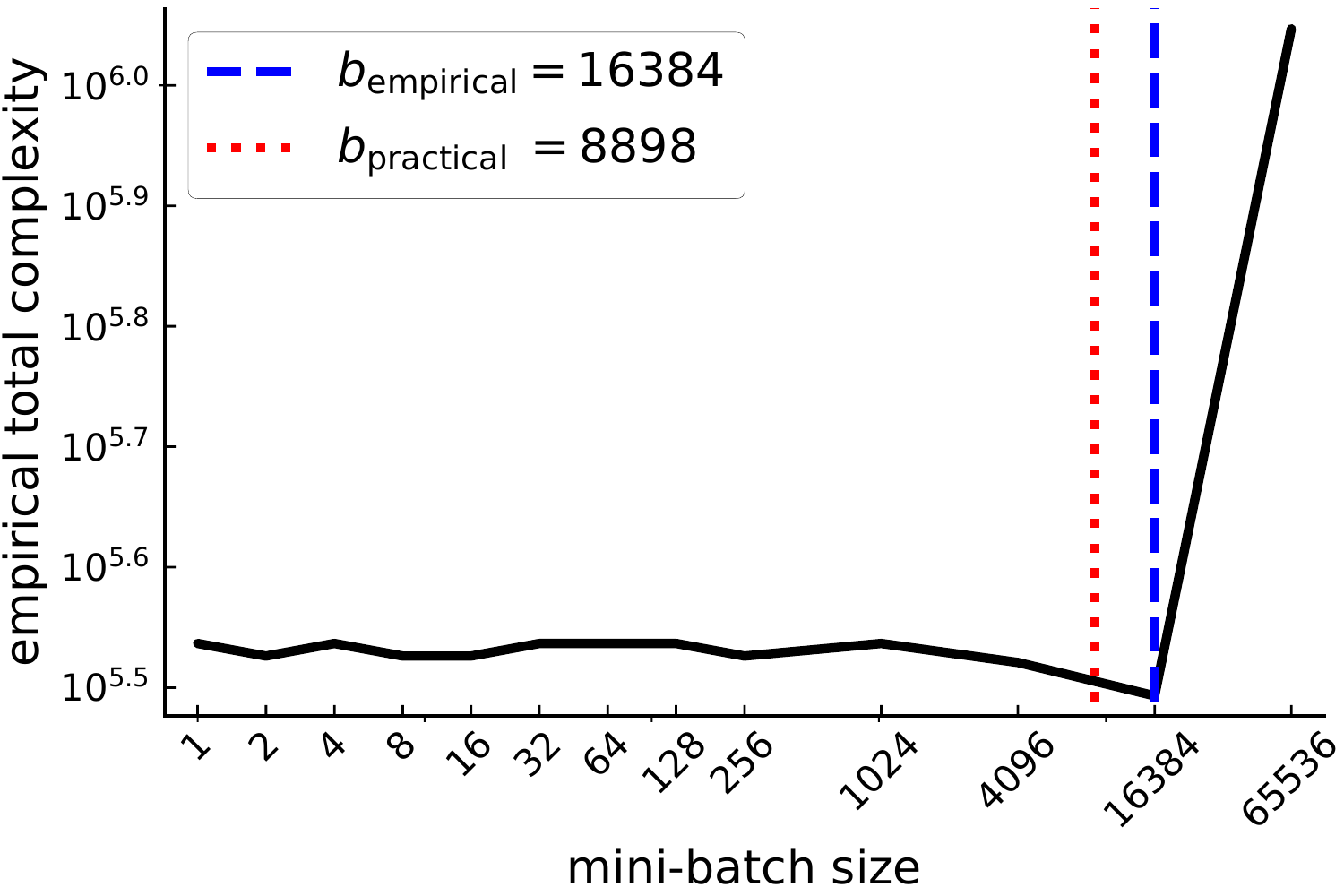}
      \caption{$\lambda = 10^{-1}$}
    \end{subfigure}%
    \begin{subfigure}[b]{0.4\textwidth}
      \includegraphics[width=\textwidth]{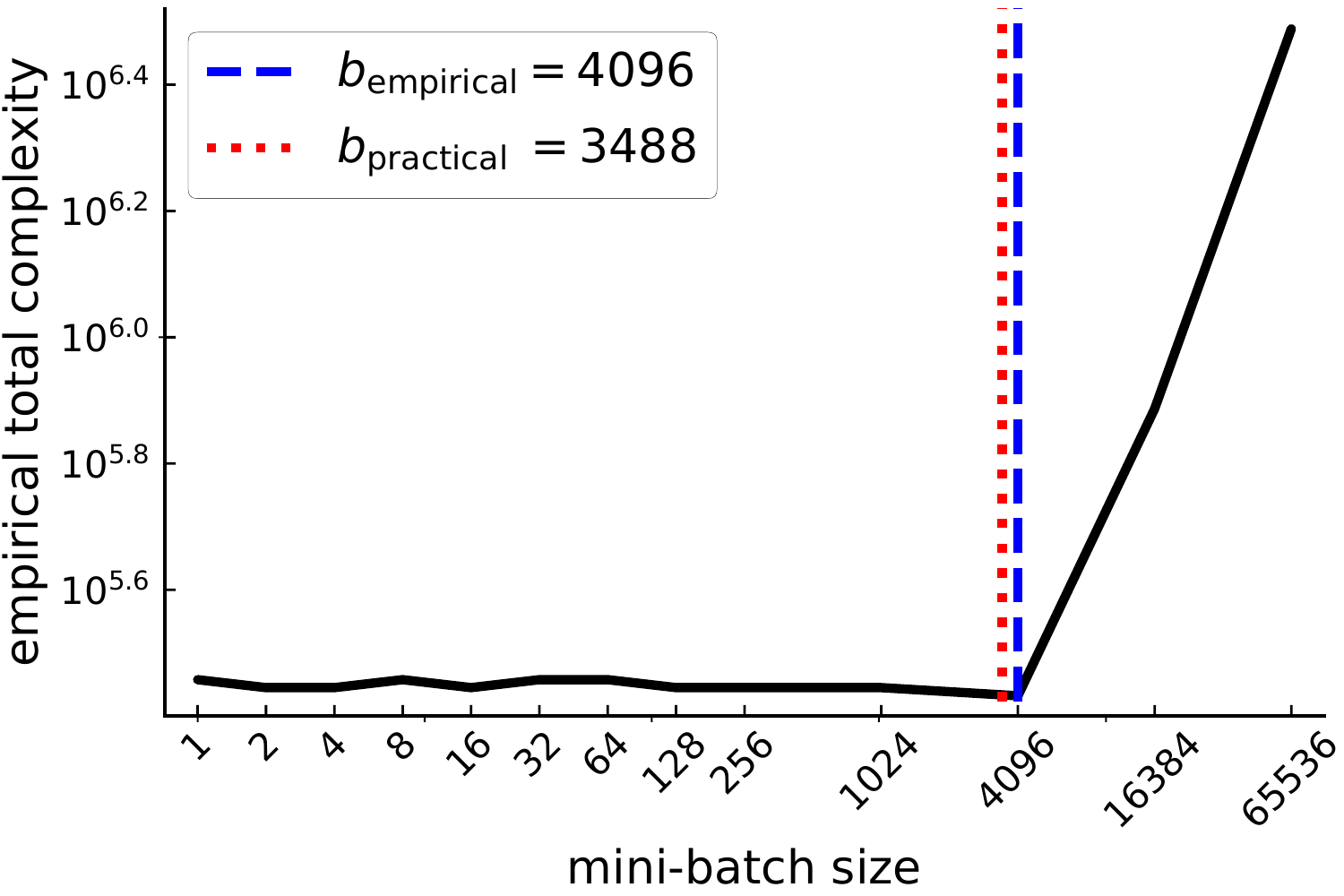}
      \caption{$\lambda = 10^{-3}$}
    \end{subfigure}
  \end{center}
  \caption{Empirical total complexity versus mini-batch size for the unscaled \emph{real-sim} dataset.}
  \label{fig:exp4_unscaled_real-sim}
  \vskip -0.2in
\end{figure}
%%%%%%%%%%%%%%%%%%%%%%%%%%%%%%%%%%%%%%%%%%%%%%%%%%%%%%%%%%%%%%%%%%%%%%%%%%%%%%%

\end{document}